\newcounter{thm}
\newtheorem{theorem}[thm]{Theorem}
\newtheorem*{theorem*}{Theorem}
\newtheorem{lemma}[thm]{Lemma}
\newtheorem{proposition}[thm]{Proposition}
\newtheorem*{conjecture*}{Conjecture}
\newtheorem{corollary}[thm]{Corollary}
\newtheorem{example}[thm]{Example}
\newtheorem{remark}[thm]{Remark}
\theoremstyle{definition}
\newtheorem{definition}[thm]{Definition}
\newcommand{\Sing}{\mathop{\mathrm{Sing}}}
\newcommand{\Sep}{\mathop{\mathrm{Sep}}}
\newcommand{\Per}{\mathop{\mathrm{Per}}}
\newcommand{\eps}{\varepsilon}
\newcommand{\bbR}{\mathbb R}
\newcommand{\bbN}{\mathbb N}
\numberwithin{thm}{section}
\author{N. Goncharuk}
\author{Yu.Ilyashenko}
\thanks{The authors were supported in part by the grant RFBR 16-01-00748}
\title{Large bifurcation supports}
\keywords{planar vector fields, bifurcations, polycycles}
\subjclass{34C23, 37G99, 37E35}
\date{}
\begin{document}
\begin{abstract}
 In the study of global bifurcations of vector fields on $S^2$, it is important to distinguish a set "where the bifurcation actually occurs", -- the bifurcation support. Hopefully, it is  sufficient to study the bifurcation in a neighborhood of the support only.

The first definition of bifurcation support was proposed by V.Arnold in \cite{AAIS}. However this set appears to be too small, see \cite{I}. In particular, the newly discovered effect, an open domain in the space of  three-parametric families on $S^2$ with no structurally stable families \cite{IKS}, is not visible in a neighborhood of the bifurcation support.

In this article, we give a new definition of "large bifurcation support" that accomplishes the task. Roughly speaking, if we know the topological type of the phase portrait of a vector field, and we also know the bifurcation in a neighborhood of the large bifurcation support, then we know the bifurcation on the whole sphere.
\end{abstract}
\maketitle

\tableofcontents
\newpage

\section{Introduction}
\label{sec-intro}
Global bifurcation theory in the plane and two-sphere has two major goals (amidst others): classification of the bifurcations met in generic few-parameter families (with one, two, three parameters), and the study of structural stability (or instability) of these families. In \cite{IKS}, locally generic structurally unstable three parameter families of vector fields on the sphere were discovered. After that the question whether a generic unfolding of a  particular class of degeneracies is structurally stable becomes non-trivial; an \emph{a priori} valid answer ``yes'' (conjectured in \cite{AAIS}) is no more expected.
The present paper is designed to be helpful in the study of any classification and structural stability problem in the global bifurcation theory on the sphere.

\subsection{Who bifurcates?}

Consider a non-hyperbolic, i.e. structurally unstable, vector field. It may have both hyperbolic and non-hyperbolic singular points and limit cycles. Under a generic perturbation, the hyperbolic singular points and limit cycles do not bifurcate, but the non-hyperbolic ones do. A natural question arises: what subset of the phase portrait of a non-hyperbolic vector field actually bifurcates?
The goal of this paper is to answer this question.

For any perturbation of a non-hyperbolic vector field, a closed invariant subset of the phase portrait of this field called \emph{large bifurcation support} (abbreviated as LBS) is distinguished. (We would prefer to use the simpler term \emph {bifurcation support}, but it is already introduced by Arnold \cite[Sec. 3.2]{AAIS}, and has a different meaning.) In order to check whether two perturbations of two orbitally topologically equivalent vector fields are equivalent as the families of  vector fields  on the whole sphere, one has to check only that these families are equivalent in arbitrary small  neighborhoods of their  large bifurcation supports. For example, two generic two-parameter perturbations of orbitally topologically equivalent  vector fields  with a polycycle    ``heart'', see Fig. \ref{fig:lh-heart}, are equivalent iff they are equivalent in an arbitrary small neighborhood  of the polycycle (this is an easy consequence of the results of this paper, but its  proof is not yet written). A similar statement for  vector fields  with a polycycle  ``lune'', Fig. \ref{fig:lh-lune}, is wrong. The reason is that a large bifurcation support  for any perturbation of the first degeneracy coincides with the polycycle ``heart''; for the perturbation of the second degeneracy, the  large bifurcation support may be much larger than the ``lune''.

\begin{figure}[h]
\centering
\subcaptionbox{\label{fig:lh-heart}}{ \includegraphics[width=0.3\textwidth]{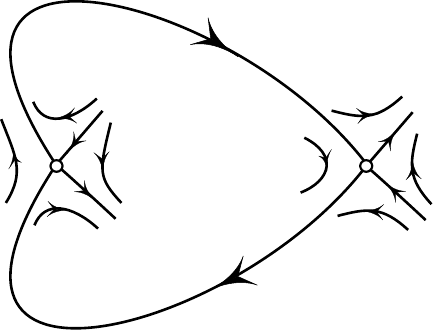}}
 \hfil
\subcaptionbox{\label{fig:lh-lune}}{ \includegraphics[width=0.35\textwidth]{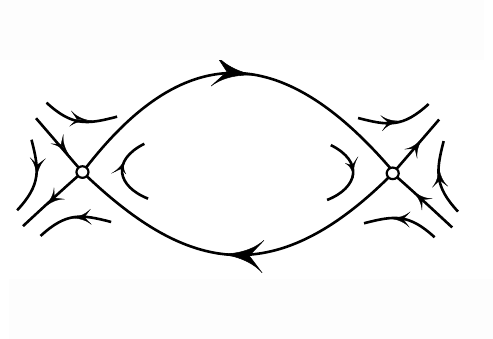}}
\caption{``Heart'' and ``lune'' }\label{fig:lh}
 \end{figure}

This paper is heavily based on one of the key results of the qualitative theory of planar differential equations:
the complete topological classification of their phase portraits. This classification in different equivalent forms is given in \cite{ALGM}, \cite{Mark}, \cite{Neum}, \cite{Peix}. We used the most recent form, so called LMF graphs suggested in \cite{Fed}. This form allows us to make use of  the theory of planar graphs.

A large part of our arguments is based on the topology of planar vector fields  in the spirit of the Poincar\'{e}-Bendixon theorem. When we started working on this paper, we could never suggest that this topology would be so rich; see, e.g., Boundary lemma \ref{lem-U-arcs-top} below.

%

\subsection{Vector fields with finiteness properties}

\begin{definition}
We say that a vector field $v\in Vect(S^2)$ satisfies a
\emph{Lojasiewicz inequality} at $0$ if there is  a $k\in \bbN$, $k\ge 1$, and $c>0$ such that $\|v(x)\| \ge c \|x\|^k$ on some neighborhood of $0$.
\end{definition}

Let $Vect\, S^2$ be the set of all $C^\infty $ vector fields on $S^2$, and $Vect^*\,S^2$ be the set of all vector fields with isolated singular points satisfying Lojasievicz inequality and with finitely many cycles. It is known that  analytic vector fields with isolated singular points have finitely many limit cycles, but this  is a difficult result  \cite{I91}, \cite{E}. 

\begin{conjecture*}  Smooth vector fields met in a generic finite-parameter family belong to $Vect^*\,S^2$.
\end{conjecture*}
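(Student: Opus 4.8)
The plan is to split the conjecture into the two finiteness properties that define $Vect^*\,S^2$ -- (a) isolated singular points satisfying a Lojasiewicz inequality, and (b) finitely many limit cycles -- and to attack each by genericity arguments in the space of $d$-parameter families $(v_\lambda)$, $\lambda\in(\bbR^d,0)$. Throughout, ``generic'' would mean that the jet extension $(x,\lambda)\mapsto j^N_x v_\lambda(x)$ is transverse, for every $N$, to a suitable finite stratification of the ``bad'' locus in the jet space $J^N(S^2,TS^2)$; Thom's transversality theorem for families then gives genericity of the surviving families.

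Property (a) I expect to be the routine part. The set of $N$-jets of a planar vector field whose $N$-jet at a point vanishes identically has codimension $\sim 2\binom{N+2}{2}$, which exceeds $\dim(S^2\times\bbR^d)=d+2$ as soon as $N$ is large; hence for generic $(v_\lambda)$ every singular point of every $v_\lambda$ carries a nonzero jet of some bounded order. If the lowest nonzero homogeneous component of $v_\lambda$ at a singular point has no zero on the unit circle, a Lojasiewicz inequality with that exponent is immediate; otherwise one blows up, and a further stratification argument shows that for generic families the desingularization terminates after finitely many steps with a nondegenerate leading part, giving both isolatedness and the Lojasiewicz bound.

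The hard part will be property (b). Infinitely many limit cycles can only arise through accumulation onto a limit periodic set -- a singular point, a periodic orbit, or a polycycle -- which forces the associated displacement map $\Delta_\lambda$ to vanish along a sequence converging to that set, i.e.\ to be flat there. One would like flatness of $\Delta_\lambda$ to be an infinite-codimension phenomenon and thus absent from generic finite-parameter families. The obstruction is that $\Delta_\lambda$ is built from correspondence maps near the vertices of the polycycle, and in the $C^\infty$ category these Dulac-type maps are not controlled by any finite jet of $v$ -- a smooth vector field can be flat enough near a hyperbolic saddle that the correspondence map is wild -- so non-flatness of $\Delta_\lambda$ is not a closed condition on finitely many jets, and Thom's theorem does not apply to it.

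Consequently, the genuine content of a proof would be a ``generic smooth finiteness theorem'': a statement that in a generic $d$-parameter family, near every limit periodic set, $v_\lambda$ is finitely-smoothly equivalent to an analytic (or at least Dulac-regular) family, so that the Ilyashenko--\'{E}calle finiteness theorem and its parametric versions apply. I would try to reach this through (i) showing that genericity forces each singular point on a candidate polycycle into finite-smooth normal form with parameters, (ii) deriving Dulac-type asymptotic expansions with estimates for each correspondence map, uniformly in $\lambda$, and (iii) proving that the resulting class of functions is closed under composition and has only isolated zeros, an o-minimality-type tameness statement. Step (iii) -- taming compositions of smooth Dulac maps uniformly in parameters -- is where I expect the real difficulty to lie, and is essentially why the statement appears here only as a conjecture.
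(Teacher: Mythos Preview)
Your proposal is not really a proof but an honest outline of where the difficulties lie, and in that respect it matches the paper perfectly: the paper offers no proof of this statement. It is labeled a \emph{Conjecture} and immediately followed by the sentence ``We will simply assume that all vector fields that we consider have this property.'' So there is nothing to compare against; the authors take membership in $Vect^*\,S^2$ as a standing hypothesis rather than a consequence of genericity.

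Your diagnosis of why the statement is open is essentially correct. Part (a) is indeed routine transversality (finite jet determinacy of isolated singular points under a Lojasiewicz condition is standard). Part (b) is exactly the obstruction: finiteness of limit cycles for individual analytic vector fields is the Ilyashenko--\'Ecalle theorem, already deep, and its extension to all members of a generic smooth finite-parameter family is not known. Your identification of the core difficulty---that Dulac-type correspondence maps near polycycle vertices are not controlled by any finite jet in the $C^\infty$ category, so one cannot reduce non-flatness of the displacement map to a finite-codimension jet condition amenable to Thom transversality---is the right one, and your sketch of what a proof would need (parametric normal forms, uniform Dulac asymptotics, closure under composition with isolated zeros) is a reasonable roadmap for an attack. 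But none of this constitutes a proof, and you are right to conclude that this is why the statement remains a conjecture.
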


We will simply assume that all vector fields that we consider have this property. That is, all through the paper by default a vector field $v$ belongs to $Vect^*\,S^2$.

\subsection{Axiomatic description of the LBS}

Here and below $B$ is an open ball in $\bbR^n$.

\begin{definition}
    \label{def:fam}
    A smooth \emph{family of vector fields} on $S^2$ with the base $B\subset \bbR^n$ is a smooth vector field $V$ on $B \times S^2$ tangent to the fibers $\{\eps\}\times  S^2$, $\eps \in B$.
    The dimension of a family is the dimension of its base.

    We will also write $V=\{v_{\eps}\}_{\eps\in B}$ where $v_{\eps}$ are restrictions of $V$ to each fiber.

    A smooth \emph{local family of vector fields} is a germ of a family of vector fields at $S^2\times \{0\} \subset S^2 \times B$. In other words, it is the family of vector fields with the base $(\bbR^n,0)$.
\end{definition}

In Section \ref{ssec-moderate} below, we define \emph {moderate topological equivalence} of vector fields. For vector fields with hyperbolic singular points on the sphere, this notion is defined in \cite{IKS}. We also recall a classical notion of a weak topological equivalence.
\begin{definition}   \label{def-supp-axiom1}
Suppose that for any local smooth family of vector fields $V =\{v_{\eps}\}_{\eps\in (B,0)} \subset  Vect^*\,S^2$, a closed $v_0$-invariant subset $\Lambda (V)\subset S^2$ is defined. This set is called the \emph{large bifurcation support } of $V$ if it has the following property:

\emph{Let two vector fields $v_0$ and $w_0$ be orbitally topologically equivalent on $S^2$. Let $V$ and $W$ be unfoldings of $v_0$ and $w_0$ that are moderately equivalent in some neighborhoods of $\Lambda (V)$, $\Lambda (W)$; let this moderate equivalence agree with the topological equivalence for $\eps=0$. Then the families $V$ and $W$ are weakly topologically equivalent on the whole sphere.}
\end{definition}

Roughly speaking, equivalence of the unperturbed vector fields on the whole sphere and moderate equivalence of their perturbations in  neighborhoods of their large bifurcation supports imply weak equivalence of the perturbations on the whole sphere. In this defintion, ``to be a large bifurcation support'' is a property of the mapping $V\mapsto \Lambda(V)$, not of an individual set $\Lambda(V)$.

The whole sphere $S^2$ (i.e. the mapping $V\mapsto \Lambda(V)=S^2$) is obviously a large bifurcation support, but it is trivial. Below we give an explicit description of the large bifurcation support for any family of vector fields from $Vect^*\,S^2$, which is in general much smaller than $S^2$. The main result of the paper claims that this set is a large bifurcation support in the sense of Definition~\ref{def-supp-axiom1}. Below we will use the term ``large bifurcation support'' for the set we construct in Sec. \ref{subsub:def-LBS}.

\subsection{Applications}

Classification problems form an essential part of the catastrophe theory. It is crucial to know large bifurcation supports  for the classification of global bifurcations in $k$-parametric families with small $k$.

\section{Definitions and the main result}

\subsection{Separatrices}

Here we briefly recall some known definitions and introduce some new ones.

\begin{definition}
 A singular point $P$ of a vector field $v$ is called \emph{hyperbolic} if both real parts of its two eigenvalues at $P$ are non-zero.
\end{definition}

%

 \begin{figure}[h]
 \begin{center}
\includegraphics[width=\textwidth]{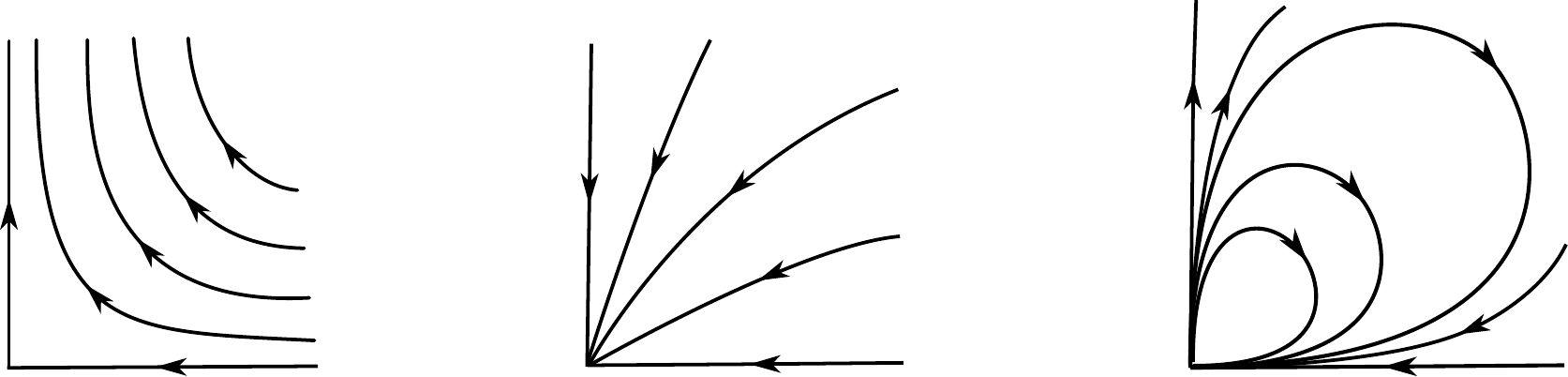}
\end{center}
\caption{Hyperbolic sector, parabolic sector and elliptic sector}\label{fig-sectors}
 \end{figure}

\begin{definition}  \label{def-char} A phase curve of a differential equation on the plane is called a \emph{characteristic trajectory of a singular point} if, as $t \to +\infty $ or $t \to -\infty $, it approaches the singular point and becomes tangent to a straight line.

If a singular point has a characteristic trajectory, it is called \emph{characteristic}.
\end{definition}

The following classical theorem can be found in many sources. It relies on the desingularization theorem \cite{Dum}; see \cite[Sec. 1.5]{DumLlibArt} for the explicit statement and Sec. 3 of the same book for the reduction to the desingularization theorem.

\begin{theorem}  \label{thm:char-pt} Suppose that a $C^{\infty}$-smooth vector field $v$ satisfies Lojasiewicz inequality at all singular points.  Then in a neighborhood of each singular point, it may
\begin{itemize}
 \item be topologically equivalent to a center or a focus;
 \item have a finite sectorial decomposition: namely, it has a neighborhood that is split by characteristic trajectories into a finite union of sectors with smooth boundaries, and in each sector,  $v$ is topologically equivalent to one of the fields shown in Fig. \ref{fig-sectors}. These sectors are called hyperbolic, parabolic and elliptic sector respectively.
\end{itemize}

\end{theorem}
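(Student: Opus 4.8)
The plan is to reduce the statement to the desingularization theorem of Dumortier \cite{Dum}, following the route of \cite[Sec.~3]{DumLlibArt}, and then to ``blow down'' the resulting elementary phase portrait. Fix a singular point $P$ and work in a local chart with $P=0$. The Łojasiewicz inequality $\|v(x)\|\ge c\|x\|^k$ guarantees first that $0$ is an isolated singular point, and second — this is the reason the hypothesis is imposed — that the algorithm of successive blow-ups in the desingularization theorem terminates after finitely many steps. So there is a finite composition of blow-ups $\Phi\colon M\to(\bbR^2,0)$, whose exceptional divisor $D=\Phi^{-1}(0)$ is a finite union of circles glued along a tree, together with a $C^\infty$ vector field $\tilde v$ on $M$ that is orbitally equivalent to $\Phi^*v$ off $D$, such that $\tilde v$ has only finitely many singular points on $D$ and each of them is \emph{elementary}: at least one eigenvalue of its linearization is nonzero. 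In the non-dicritical case $D$ is a union of trajectories and singular points of $\tilde v$; in the dicritical case finitely many components of $D$ are crossed transversally by $\tilde v$ away from finitely many points.

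Next I would invoke the classical local theory of elementary singular points. A hyperbolic one is a node, a focus or a saddle; a semihyperbolic one reduces, via a $C^\infty$ center manifold, to a node, a saddle or a saddle-node. In each case the germ has a finite decomposition into sectors of the three types (in fact only hyperbolic and parabolic ones occur here), with separatrices as boundaries, and every separatrix lying on $D$ or crossing $D$ transversally is among those boundaries. Then comes the blow-down. Trajectories of $v$ in a punctured neighborhood of $0$ are the $\Phi$-images of trajectories of $\tilde v$ near $D$, and such a trajectory is characteristic for $v$ precisely when the corresponding trajectory of $\tilde v$ tends to a point of $D$ along one of the finitely many separatrices transverse to $D$ — or, in the dicritical case, crosses a dicritical component, which forces an entire parabolic sector. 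Since there are finitely many elementary points on $D$ and each has finitely many separatrices, $v$ has finitely many characteristic trajectories at $0$; cut a punctured neighborhood of $0$ along them.

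Each region so obtained lifts to a chain of model sectors of elementary points strung along an arc of $D$, and I would check, using the direction of the flow along $D$ and the types at the endpoints of the arc, that the total region is topologically one of the three models: parabolic if the flow enters (or leaves) $0$ throughout the region, hyperbolic if only the two bounding trajectories reach $0$, elliptic if the region is swept out by loops homoclinic to $0$. The bounding curves can then be replaced by smooth arcs — the characteristic trajectories themselves, smoothed near $0$, or nearby transversal cross-sections — without altering the topological type, which yields the asserted finite sectorial decomposition. Finally, if there is no characteristic trajectory at all, then no separatrix of $\tilde v$ meets $D$ transversally and there is no dicritical component, so a Poincar\'e--Bendixson argument on a thin annulus around $0$ shows that every nearby orbit either spirals toward or away from $0$ or is closed, i.e.\ $0$ is a focus or a center.

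The main obstacle I anticipate is the bookkeeping in the blow-down: tracking how the separatrices and sectors of the several elementary points on $D$ propagate through the composition $\Phi$, verifying that the regions cut out by the characteristic trajectories are exactly the three model sectors (so that gluing creates no new pathology), and arranging the smoothness of the sector boundaries. All of the genuinely hard analysis — that desingularization terminates and produces only elementary points — is supplied by the cited theorem of Dumortier and is used as a black box.
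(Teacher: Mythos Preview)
The paper does not prove this theorem at all: it is quoted as a classical result, with the remark that it ``relies on the desingularization theorem \cite{Dum}; see \cite[Sec.~1.5]{DumLlibArt} for the explicit statement and Sec.~3 of the same book for the reduction to the desingularization theorem.'' Your proposal is precisely a sketch of that standard reduction, so it matches what the paper points to rather than anything the paper itself supplies.
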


\begin{definition}  A \emph{separatrix} is a phase curve that contains one of two bounding phase curves of  a hyperbolic sector of some singular point $P$.
The separatrix is called \emph{stable} if its $\omega$-limit set is $P$, and \emph{unstable} if its $\alpha$-limit set is $P$. If a curve is a stable and an unstable separatrix simultaneously (for two different singular points or for one and the same), then it is called a \emph{separatrix connection}.
\end{definition}
\begin{remark}   For a hyperbolic saddle, the definition of a separatrix above coincides with the classical one.
\end{remark}

For $v\in Vect^*\,S^2$, we will use the classical Poincare-Bendixson theorem (see e.g. \cite[Sec. 1.7, Corollary 1.30]{DumLlibArt}):
\begin{theorem}[Poincare--Bendixson]
 For each vector field $v\in Vect^*\,S^2$, the $\omega$-limit set (and the $\alpha$-limit set) of each point is one of the following:
  \begin{itemize}
  \item a singular point;
  \item a cycle;
  \item a monodromic polycycle.
 \end{itemize}

\end{theorem}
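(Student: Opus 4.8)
The plan is to run the classical Poincar\'e--Bendixson argument and then sharpen its conclusion using the finiteness built into $Vect^*\,S^2$. Fix a point $p\in S^2$ and put $K=\omega(p)$; the $\alpha$-case follows by reversing time. Since $S^2$ is compact, $K$ is non-empty, compact, invariant, and connected (otherwise the orbit of $p$ would have to leave some pair of disjoint neighbourhoods of the two pieces of $K$ infinitely often, producing a limit point outside $K$).

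Now I would split into cases according to what $K$ contains. If $K$ contains a cycle $C$, then $K=C$: either the orbit of $p$ lies eventually on $C$, in which case this is immediate, or it meets a transversal to $C$ in a monotone sequence tending to a point of $C$, and the standard transversal/Jordan-curve argument on $S^2$ traps the forward orbit of $p$ on one side of $C$ and shows $K$ meets that transversal only at the single point, whence $K=C$. If every point of $K$ is a singular point, then, since $v$ has finitely many singular points and $K$ is connected, $K$ is a single singular point. In the remaining case $K$ contains a non-singular point $y$ whose orbit $\gamma_y$ is not periodic; since $K$ is closed and invariant, $\gamma_y\subset K$ and $\omega(y),\alpha(y)\subset K$. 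Applying the transversal argument once at $y$ and once at a hypothetical non-singular point of $\omega(y)$ (respectively $\alpha(y)$), the monotonicity of the crossing sequence would force $\gamma_y$ to be periodic --- a contradiction. Hence $\omega(y)$ and $\alpha(y)$ consist of singular points, and by connectedness and finiteness each is a single singular point. So $K$ is a union of singular points together with non-periodic orbits, each tending in both time directions to a singular point of $K$ --- a graphic.

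It remains to upgrade ``graphic'' to ``finite monodromic polycycle'', and I expect this to be the crux. \emph{Monodromicity:} the monotone crossing sequence of the orbit of $p$ on a transversal $\Gamma$ at a non-singular point $y\in K$ shows that between consecutive crossings the orbit of $p$ travels once around $K$ and returns to $\Gamma$ on the same side, so a one-sided Poincar\'e return map is defined near $y$; that is exactly what it means for $K$ to be monodromic. \emph{Finiteness:} by Theorem~\ref{thm:char-pt}, the germ of $v$ at each of the finitely many singular points $P$ of $K$ admits a finite sectorial decomposition, so only finitely many orbits are characteristic at $P$; combined with the monodromic spiralling of the orbit of $p$, which passes close to $P$ entering along an incoming edge of $K$ and leaving along an outgoing one, this forces each edge of $K$ to be a separatrix of its endpoints, of which there are only finitely many. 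Hence $K$ has finitely many vertices and edges: it is a finite monodromic polycycle. The delicate point --- the real obstacle --- is precisely this last interplay: one must use the monodromic return behaviour of the orbit of $p$ to exclude degenerate passages near $P$ (through parabolic or elliptic sectors, near which infinitely many orbits of $v$ accumulate) and thereby confine $K$ to the finitely many separatrices emanating from the singular points it contains.
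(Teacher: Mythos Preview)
The paper does not prove this statement; it invokes it as the classical Poincar\'e--Bendixson theorem and simply refers to \cite[Sec.~1.7, Corollary~1.30]{DumLlibArt}. Your outline is the standard argument and is essentially correct: the trichotomy cycle / singular point / graphic is the classical transversal-plus-Jordan-curve analysis, and your upgrade from ``graphic'' to ``finite monodromic polycycle'' correctly isolates the extra input needed from $Vect^*\,S^2$, namely the finite sectorial decomposition of Theorem~\ref{thm:char-pt} together with the finiteness of singular points.

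One small sharpening is worth making in the last paragraph. The claim ``only finitely many orbits are characteristic at $P$'' is not literally true (in a parabolic sector every orbit is characteristic); what you really need, and what your conclusion uses, is that only finitely many orbits are \emph{separatrices} at $P$, i.e., boundaries of hyperbolic sectors. The argument you sketch gives exactly this: since the orbit of $p$ follows an incoming edge $\gamma$ of $K$ toward $P$ and must then leave a neighbourhood of $P$ (because $\omega(p)=K\neq\{P\}$), the sector of $P$ on the spiralling side of $\gamma$ is hyperbolic, so $\gamma$ bounds a hyperbolic sector and is a separatrix in the sense of the paper. With that correction the finiteness of edges follows, and the rest of your sketch stands.
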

A polycycle of a vector field  $v$ is a union of a finite number of singular points of $v$ joined by trajectories. The polycycle is called \emph{monodromic} if it admits a Poincare map at least on one side of it.
\subsection{Moderate equivalence}
\label{ssec-moderate}

\begin{definition}
    Two vector fields are called \emph{orbitally topologically equivalent} if there exists a homeomorphism $H$ of the phase space which identifies their phase portraits and preserves the direction of time parametrization on phase curves. We will also say that $H$ conjugates these two vector fields.
\end{definition}

There are three definitions of equivalence of \emph{families} of vector fields: strong, weak, and moderate. Two of them are classical, and the third one is new.

For a family $V=\{v_{\eps}\mid \eps \in B\}$ of vector fields, let $\Sing V$, $\Per V$, and $\Sep V$ be subsets of $B\times S^2$ formed by all singular points,  all limit cycles, and  all  separatrices of $v_{\eps}$ respectively. We will also use the notation $\Sing v \subset S^2$,  $\Per v \subset S^2$, and $\Sep v \subset S^2$ for the union of all singular points, limit cycles, and separatrices of an individual vector field $v$. The set $S(v):=\Sing v\cup \Per v \cup \Sep v$ is an \emph{extended separatrix skeleton} (the set of all \emph{singular trajectories} of $v$) and plays a special role in topological classification of vector fields, see Theorem \ref{th-MNP} below.

\begin{definition}
    \label{def:weak-eq}
    Two local families of vector fields on $S^2$, $V = \{ v_\eps , \eps \in (B,0) \}$ and $ W = \{ w_\eps , \eps \in (B',0) \}$
     are \emph{equivalent} at $\eps=0$ if there exists a map
    \begin{equation}
               \mathbf H: (B,0) \times S^2 \to (B',0)\times S^2, \ (\eps , x) \mapsto (h(\eps ), H_\eps (x)),
    \end{equation}
    such that $h$ is a homeomorphism, $h(0)=0$, and for each $\eps \in (B,0)$ the map $H_\eps : S^2 \to S^2 $ conjugates  $v_\eps $ and $w_{h(\eps )}$.
They are \emph{strongly} equivalent provided that $\mathbf H$ is a homeomorphism on $(B,0)\times S^2$. They are \emph{weakly} equivalent if we do not pose any additional requirements on  $\mathbf H$. They are \emph{moderately} equivalent provided that $\mathbf H$ is continuous with respect to
$(\eps,x )$ on the set
\begin{equation}
\label{eq-set1}
S(v_0)  \cup \partial ((\overline {\Per V} \cup \overline {\Sep V}) \cap \{\eps=0\})
\end{equation}
and $\mathbf H^{-1}$ is continuous with respect to $(\eps,x )$ on the set
\begin{equation}
\label{eq-set2}
S(w_0) \cup \partial ((\overline {\Per W} \cup \overline {\Sep W}) \cap \{\eps=0\})
\end{equation}
\end{definition}

\begin{remark}
 The notion of moderate equivalence  was introduced in \cite{IKS} for hyperbolic vector fields. In this case, \begin{equation}
 \label{eq-subset}
        \partial ((\overline {\Per V} \cup \overline {\Sep V}) \cap \{\eps=0\}) \, \subset \, S(v_0),
\end{equation}
 so it is not necessary to include this set; $\mathbf H$ must be continuous on extended separatrix skeleton only.
 In general, \eqref{eq-subset} does not hold. For example, we may take a bifurcation of a non-hyperbolic node $\dot x = x^3-\eps x, \dot y = y$; the non-hyperbolic node (for $\eps=0$) bifurcates into a saddle surrounded by two nodes (for $\eps>0$). Saddle separatrices of $v_{\eps}$ are vertical and accumulate to trajectories $(x=0, y>0)$ and $(x=0, y<0)$ that do not belong to  $S(v_0)$.
\end{remark}

\begin{remark}    In literature strong equivalence is usually called \textbf{topological equivalence}, weak equivalence is \textbf{weak topological equivalence}.
\end{remark}

Strong equivalence is too restrictive: families with very simple bifurcations may have numerical \cite{MP} and functional \cite{Rous85} invariants that distinguish different equivalence classes.
Weak equivalence is too loose: families with apparently different bifurcations may be weakly topologically equivalent.
Moderate equivalence seems to be more adequate because it takes interesting objects for the one family to the corresponding objects of the other family.

For Definition \ref{def-supp-axiom1} above, we need a local version of moderate equivalence (a moderate equivalence in neighborhoods of given closed invariant subsets). We will apply this version to neighborhoods of large bifurcation supports.
\begin{definition}
\label{def-moderate-local}
    Two local families of vector fields on $S^2$, $V = \{ v_\eps , \eps \in (B,0) \}$ and $ W = \{ w_\eps , \eps \in (B',0) \}$,
     are \emph{moderately equivalent in neighborhoods of closed sets} $Z_1, Z_2\subset S^2$ if
     \begin{enumerate}
      \item $Z_1$ is $v_0$-invariant, and $Z_2$ is $w_0$-invariant;

      \item  There exists a neighborhood $U \supset Z_1$ and a map
    \begin{equation}
               \mathbf H: (B,0) \times U \to (B',0)\times S^2, \ (\eps , x) \mapsto (h(\eps ), H_\eps (x)),
    \end{equation}
    such that $h$ is a homeomorphism, $h(0)=0$, and for each $\eps \in (B,0)$ the map $H_\eps \colon U \to S^2 $ conjugates vector fields $(v_\eps)|_{U} $ and $(w_{h(\eps )})|_{H_{\eps}(U)}$;

 \item $H_0(Z_1)=Z_2$, and moreover,

 \item \label{it-nbhd-cond} For each neighborhood $V$ of $\{\eps=0\}\times Z_1$, its image $\mathbf H(V)$  contains some neighborhood of $\{\eps=0\}\times Z_2$. The same holds for the inverse map $\mathbf H^{-1}$;

    \item \label{it-H-contin}The map $\mathbf H$ is continuous with respect to $(\eps,x )$ on the intersection of its domain with \eqref{eq-set1}.

    The map  $\textbf H^{-1}$ is  continuous with respect to $(\eps, x)$ on the intersection of its domain with \eqref{eq-set2}.

\end{enumerate}
\end{definition}

\begin{remark}
\label{rem-shrink-U}
 If a homeomorphism $\mathbf H$ satisfies the above conditions for two families $V,W$ in neighborhoods of closed sets $Z_1,Z_2$, and the corresponding neighborhood is $U\supset Z_1$, then $\mathbf H$ satisfies these conditions for any smaller neighborhood $U'\subset U$, $U'\supset Z_1$.
\end{remark}

\subsection{Explicit definition of the large bifurcation support}

\subsubsection{Non-interesting limit cycles}

\begin{figure}[h]
\begin{center}
\includegraphics[width=0.3\textwidth]{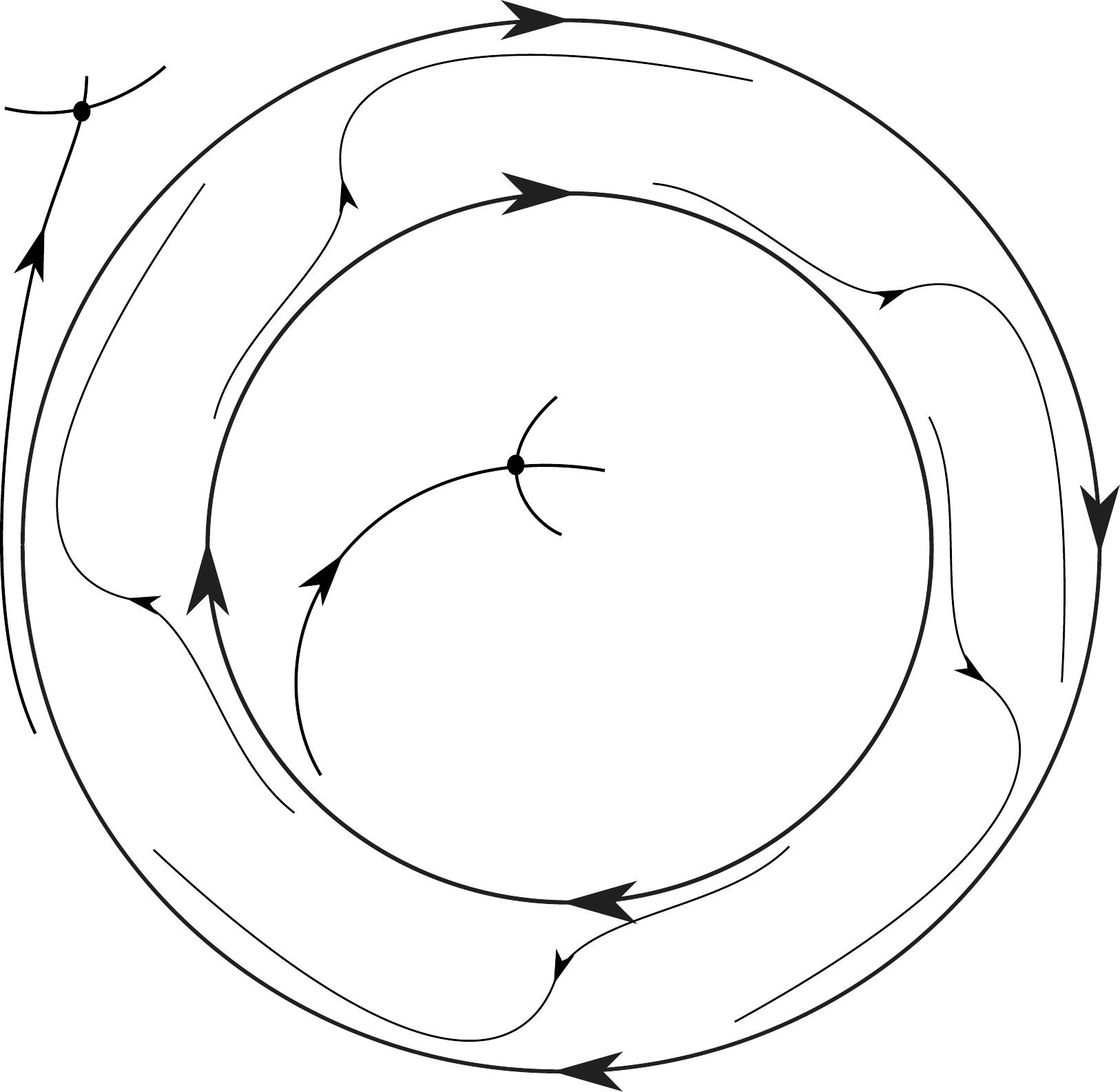}\hfil\includegraphics[width=0.3\textwidth]{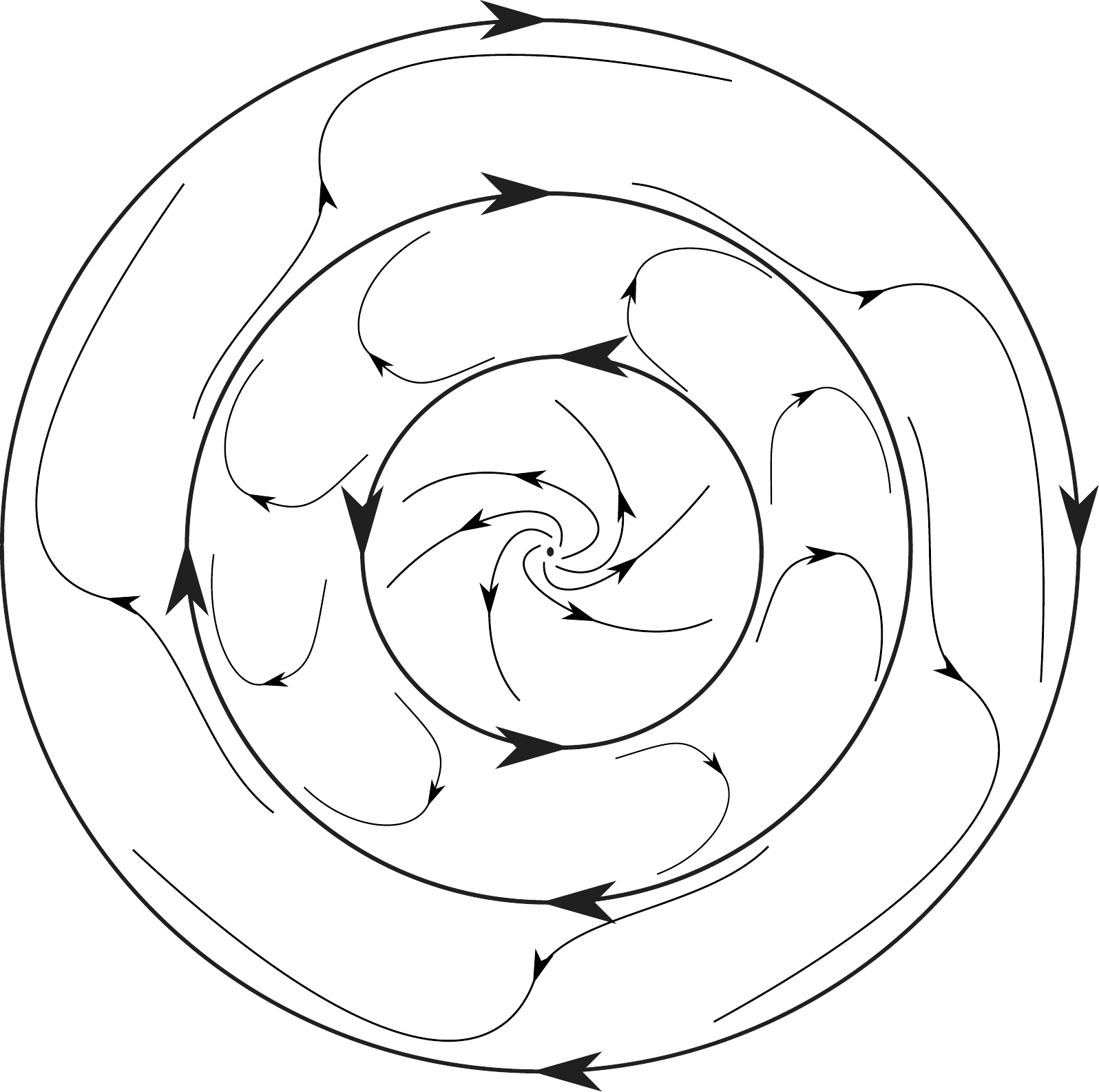}
\end{center}
\caption{Two possible cases for a non-interesting nest}
\end{figure}

\begin{definition} A \emph{nest} of limit cycles of a vector field is the maximal set of nested cycles with no singular points in between them.
\end{definition}
A nest can consist of one limit cycle.

This definition has an additional restriction (absence of singular points between cycles) in comparison with the classical one. 
Clearly, the annulus between two neighboring cycles of one nest is a canonical region, i.e. is filled by trajectories that wind towards these cycles in the past and in the future. To be in one nest is an equivalence relation; given a limit cycle, \emph{its nest} is a unique nest that contains it.

\begin{definition}  A limit cycle is called \emph{semi-stable} if for some choice of coordinate on a small transversal to this cycle,  the Poincar\'{e} map satisfies  $P(x) > x$ for $x\neq 0$ and $P(0)=0$ (here $0$ is the intersection point of the transversal with the cycle).
\end{definition}
Clearly, for $v\in Vect^*\,S^2$, each limit cycle can be attracting, repelling, or semi-stable.
\begin{definition}
\label{def-nic}
A limit cycle of a vector field $v\in Vect^*\,S^2$ is called \emph{non-interesting} if one of the following holds:
\begin{enumerate}
\item \label{it-nonsemist}its nest contains at least one attracting or one repelling limit cycle;
\item \label{it-semist}all the cycles in the nest are semi-stable, but inside the inner cycle or outside the outer one there is only one
hyperbolic singular point.
\end{enumerate}
\end{definition}
\begin{remark}  By the index theorem, this singular point is either attractor or repeller.
\end{remark}

Note that hyperbolic cycles are all  non-interesting due to the definition.
The motivation for this definition is the following: when a non-interesting cycle bifurcates,
nothing interesting happens; there is no interaction between the dynamics inside and outside it.

\begin{definition}  An $\alpha $- or $\omega $-limit set of a non-singular point of a vector field is called \emph{non-interesting} if it is a hyperbolic repeller (respectively, attractor), or a non-interesting limit cycle. Otherwise it is called \emph{interesting}.
\end{definition}

\subsubsection{Large bifurcation support: an explicit definition}
\label{subsub:def-LBS}
\begin{definition}  \label{def:elbs}   \emph{Extra large bifurcation support} $ELBS(v_0)$ of a vector field $v_0$ is the union of all non-hyperbolic singular points and non-hyperbolic limit cycles of $v_0$, plus the closure of the set of all nonsingular points for which both $\alpha $- and $\omega $-limit sets are interesting.
\end{definition}

\begin{remark}
 $ELBS(v_0)$ contains all non-singular points of $v$ except (open) basins of attraction and repulsion of non-interesting $\alpha$-, $\omega$-limit sets. However we retain and include in $ELBS(v_0)$ all non-hyperbolic limit cycles, including non-interesting ones. As for singular points, $ELBS(v_0)$ contains all non-hyperbolic singular points and does not contain hyperbolic attractors and repellers. It contains a hyperbolic saddle if and only if one of its unstable (stable) separatrices has an interesting $\omega$- ($\alpha$-) limit set.
\end{remark}

Now the main definition comes.

\begin{definition}   \label{def:lbs}
\emph{Large bifurcation support} of a local family $V$ of vector fields is
$LBS(V) = ELBS(v_0) \cap \left(\Sing v_0\cup (\overline {\Per V} \cup \overline{\Sep V}) \cap \{\eps=0\}\right)$.
\end{definition}

So $LBS(V)$ contains all singular points and cycles of $v_0$ that belong to $ELBS(v_0)$ (see Remark \ref{rem-shrink-U}) and  all non-singular accumulation points of cycles and separatrices of $v_{\eps}$, $\eps\to 0$, if these accumulation points have interesting $\alpha$- and $\omega$-limit sets.

\begin{remark}
 For vector fields with hypebolic fixed points only, $LBS(V)$ depends only on $v_0$.
 It is not clear whether this is the case for all generic families.
\end{remark}

\begin{figure}[h]
  \includegraphics[width=0.8\textwidth]{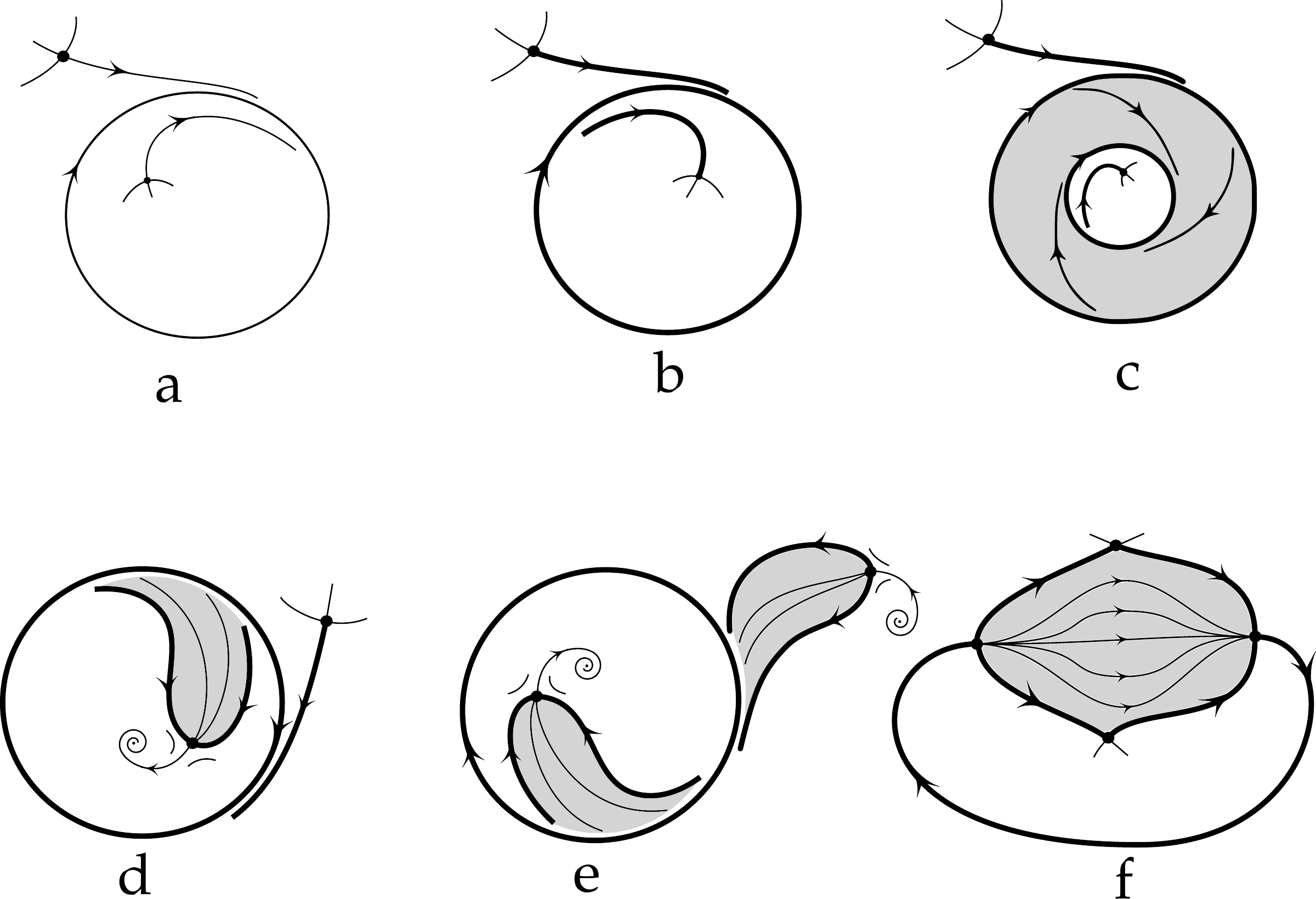}
\caption{Examples of a vector field $v_0$; large bifurcation supports for generic unfoldings of $v_0$ are shown in gray and thick. We only show interesting parts of phase portraits; some hyperbolic sinks and sources are not shown.}\label{LBS-ex}
 \end{figure}

\begin{example}
Consider an unfolding $V$ of each of the vector fields $v_0$ shown on Fig. \ref{LBS-ex}. In each case, the number of parameters in $V$ equals the codimension of the degeneracy of $v_0$, and $V$ is a generic family. The large bifurcation support $LBS(V)$ is shown in thick curves and gray domains. In more details:
\begin{itemize}
 \item Fig. \ref{LBS-ex}a (generic vector field $v_0$): the set $LBS(V)$ is empty.
 \item Fig. \ref{LBS-ex}b (degeneracy of codimension 1): the set $LBS(V)$
contains the limit cycle and the two saddles with their separatrices winding to the cycle in the positive or negative time. The cycle is interesting.
\item  Fig. \ref{LBS-ex}c (degeneracy of codimension 2): the set $LBS(V)$
contains the two saddles, their separatrices that wind to the cycles in the positive or negative time, and the whole annulus between the cycles. Both cycles are interesting; one may prove that saddle connections accumulate to all trajectories inside the annulus.
\item  Fig. \ref{LBS-ex}d (degeneracy of codimension 2): the set $LBS(V)$
contains the outer saddle, its separatrix that winds  onto the cycle, the cycle itself, and the closure of the parabolic sector of the saddlenode.
\item  Fig. \ref{LBS-ex}e (degeneracy of codimension 3): the set  $LBS(V)$
contains  the cycle and the closures of parabolic sectors of  saddlenodes.
\item   Fig. \ref{LBS-ex}f  (degeneracy of codimension 3. This is the polycycle collection ``lips'' studied by Kotova and Stanzo, \cite{KS}):  $LBS(V)$
contains the separatrix connection between saddlenodes and the closure of the common parabolic sector of the saddlenodes, because cycles of $v_{\eps}$, $\eps\to 0$, accumulate to all these ordits, as shown in \cite{KS}.
\end{itemize}
\end{example}

\subsection{Main Theorem}

\begin{theorem}   \label{thm:main}  Large bifurcation support $LBS(V)$ defined above satisfies Definition~\ref{def-supp-axiom1}.
\end{theorem}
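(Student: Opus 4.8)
The goal is to show that the explicitly constructed set $LBS(V)$ satisfies the axiomatic property of Definition~\ref{def-supp-axiom1}: given orbitally topologically equivalent $v_0, w_0$ and unfoldings $V, W$ that are moderately equivalent in neighborhoods $U_1 \supset LBS(V)$, $U_2 \supset LBS(W)$ (with the moderate equivalence agreeing with the orbital equivalence of $v_0$ and $w_0$ at $\eps=0$), one must build a weak topological equivalence $\mathbf H = (h, H_\eps)$ of $V$ and $W$ on the whole sphere. I would build $H_\eps$ by cases on the location of points: on a neighborhood of $LBS(V)$ it is given; the problem is to extend it over the complement. The essential observation is that $ELBS(v_0)$ was designed so that $S^2 \setminus ELBS(v_0)$ is a disjoint union of (open) basins of attraction/repulsion of \emph{non-interesting} $\alpha$-, $\omega$-limit sets — hyperbolic sinks/sources and non-interesting nests of cycles — and $LBS(V)$ differs from $ELBS(v_0)$ only by deleting some non-singular points whose orbits, together with their limit sets, persist robustly under the perturbation. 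So $S^2 \setminus U_1$ lies inside finitely many such basins, and on each the dynamics of $v_\eps$ is, for small $\eps$, a trivial "gradient-like" flow towards a persistent non-interesting attractor (or repeller). The strategy is: first establish the combinatorial/topological classification of the complement regions; then extend the conjugacy region by region using the Markus--Neumann--Peixoto theorem (Theorem~\ref{th-MNP}) together with the persistence of non-interesting limit sets; finally patch and check weak equivalence.

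\textbf{Step 1: decomposition of the complement.} Using the Poincar\'e--Bendixson theorem and the definition of $ELBS$, show that $K_1 := S^2 \setminus U_1$ is contained in a finite union of open sets $N_j$, each being the basin of a non-interesting $\omega$-limit set (a hyperbolic sink or a non-interesting nest of limit cycles) or, dually, of a non-interesting $\alpha$-limit set. The key point is that the boundary of each such basin lies in $ELBS(v_0)$ — boundary orbits must have an \emph{interesting} limit set on at least one end — hence in (a neighborhood of) $LBS(V)$, so that $N_j$ together with the already-defined conjugacy near $\partial N_j$ forms a "collared" region. Here one uses the LMF/MNP graph machinery to enumerate the canonical regions and their adjacencies, and the Boundary lemma \ref{lem-U-arcs-top} to control how $\partial U_1$ sits across these regions. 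A subtlety: a non-interesting nest may contain many semistable cycles of $v_0$; those cycles are \emph{not} in $LBS$, but the definition of "non-interesting" guarantees that inside the innermost (or outside the outermost) cycle there is exactly one hyperbolic singular point, which pins down the topology.

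\textbf{Step 2: persistence and region-by-region extension.} For each complement region $N_j$, argue that for small $\eps$ the field $v_\eps$ still has, in the corresponding region, a single non-interesting attracting (resp. repelling) limit set and is flow-equivalent to $v_0$ there: a hyperbolic sink/source persists by the implicit function theorem; a non-interesting nest of semistable cycles may itself bifurcate, but \emph{all} the resulting cycles remain non-interesting (still a nest with a single hyperbolic singular point on the inner/outer side), so the region remains a union of canonical annuli and a disk/annulus around that singular point — and this local picture is the \emph{same} for $w_{h(\eps)}$ because near $\partial N_j$ the two families are already conjugated by the given $\mathbf H$. Now invoke the topological classification (Theorem~\ref{th-MNP}): two flows on a disk (or annulus) with prescribed behavior on the boundary and with the listed trivial interior dynamics are orbitally topologically equivalent, and — crucially — this equivalence can be chosen to \emph{match} the prescribed boundary homeomorphism. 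Do this for every $\eps$; there is no continuity-in-$\eps$ requirement here, which is exactly why only \emph{weak} equivalence is claimed. Assemble $H_\eps$ on $S^2$ from the given piece on $U_1$ and these pieces on the $N_j$'s; they agree on overlaps (collars) by construction.

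\textbf{Step 3: verification.} Check that the assembled $\mathbf H = (h, H_\eps)$ (with $h$ the homeomorphism of bases coming from the given local moderate equivalence) is well-defined, that each $H_\eps$ is a homeomorphism of $S^2$ conjugating $v_\eps$ to $w_{h(\eps)}$, and that $H_0$ agrees with the orbital equivalence of $v_0$ and $w_0$ — the last point uses the hypothesis that the moderate equivalence agrees with the topological equivalence at $\eps = 0$, so the extension over the $N_j$'s can be taken to be the restriction of the given $v_0$-to-$w_0$ homeomorphism. That is all weak equivalence requires.

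\textbf{Main obstacle.} The hard part is Step 1 together with the matching assertion in Step 2: controlling how $\partial U_1$ (an arbitrary small neighborhood boundary, not adapted to the dynamics) cuts through the canonical regions of $v_0$, and then producing conjugacies on the complement regions that \emph{agree with an already-fixed homeomorphism on their boundary}. This is where the genuinely new "Poincar\'e--Bendixson-flavored" topology of the paper (the Boundary lemma \ref{lem-U-arcs-top} and the LMF-graph analysis) is needed: one must show that, after possibly shrinking $U_1$ (legitimate by Remark~\ref{rem-shrink-U}), the trace of the complement on each canonical region is topologically standard — a union of arcs transverse to the flow bounding sub-annuli/sub-disks — so that the classical extension lemmas for flows on surfaces with boundary apply. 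A secondary but real difficulty is the bookkeeping when a non-interesting nest of $v_0$ genuinely bifurcates (cycles appearing/disappearing in the nest), since then the number and position of canonical annuli in $N_j$ differ between $v_\eps$ and $v_0$ — one must check the $w$-side undergoes a matching bifurcation, which follows because the two sides are conjugate in a fixed neighborhood of $\partial N_j \subset LBS$, and a neighborhood of the whole nest can be taken inside $U_1$ precisely when the nest meets $LBS$, while if it does not, the nest lies entirely in a complement region and its internal bifurcations are invisible to weak equivalence anyway.
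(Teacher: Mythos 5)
Your plan is a direct patching argument: keep the given $H_\eps$ on $U$ and extend it region by region over $S^2\setminus U$. The paper does something genuinely different, and warns against exactly what you are trying to do. It explicitly says \emph{``We will not directly extend $H_{\eps}$ to the whole sphere''} and instead proves that the graphs $LMF(v_{\eps})$ and $LMF(w_{h(\eps)})$ are isomorphic (via the partial combinatorial map $G_\eps$), then invokes Fedorov's criterion (Theorem~\ref{th-Fedorov}) together with the planar-graph isotopy theorem (Theorem~\ref{th-graphs-components}) to \emph{manufacture a brand-new} global conjugacy $\hat H_\eps$. The paper even states that $\hat H_\eps|_U = H_\eps$ ``is not true in the general case'' -- so the object you are proposing to build (a global conjugacy restricting to $H_\eps$ on $U$) is precisely what the authors declined to claim exists.

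The decisive gap is your Step 2. You invoke Theorem~\ref{th-MNP} to assert that on each complement region $N_j$ one can find a flow conjugacy \emph{that matches the already-fixed boundary homeomorphism $H_\eps|_{\partial N_j}$}. Markus--Neumann--Peixoto is a classification theorem for vector fields on the closed sphere by their completed separatrix skeletons; it says nothing about flows on surfaces-with-boundary, and in particular gives no extension lemma of the form ``a prescribed orientation-preserving homeomorphism of $\partial N_j$ extends to a conjugacy of $v_\eps|_{N_j}$ and $w_{h(\eps)}|_{\tilde N_j}$.'' For a disk bounded by a transversal loop and containing a single sink that is elementary, but the complement regions here can be far richer (they may contain nested hyperbolic cycles, spiral canonical regions, and -- crucially -- be traversed by separatrices emanating from saddles \emph{inside} $U$ whose images are pinned by $H_\eps$ on the boundary). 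Whether the boundary values imposed by $H_\eps|_{\partial U}$ are compatible with \emph{any} conjugacy of the complement flows is exactly the content of what the paper calls the separatrices of \emph{mixed location} problem, handled by the Correspondence Lemma~\ref{lem-seps-behave} and the lemmas on images of Type~1/2/3 boundary components (Lemmas~\ref{lem-image-case1}, \ref{lem-image-Case2}, \ref{lem-image-case3}). Your proposal does not identify this obstruction, let alone resolve it, and the paper's warning strongly suggests that the naive extension fails.

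A secondary issue: you mention ``LMF/MNP graph machinery'' in Step~1 only as a tool for enumerating canonical regions, but LMF graphs are introduced in the paper precisely because extended separatrix skeletons are \emph{not} graphs on $S^2$ (separatrices wind infinitely around cycles). Truncation by transversal loops is what makes the combinatorics finite, and deciding where a truncated separatrix of $v_\eps$ crosses a transversal loop outside $U$ -- and showing that $w_{h(\eps)}$ exhibits the matching combinatorics -- is the heart of the paper's argument. If you want to rescue the direct-extension route, you would essentially have to prove the Correspondence Lemma and the Boundary Lemma anyway, and then still face the compatibility-of-boundary-values problem that the paper's graph-isomorphism strategy deliberately sidesteps.
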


\begin{remark}
 The set $ELBS(v_0)$ is also a large bifurcation support in terms of Definition~\ref{def-supp-axiom1}.
 The proof is completely analogous to that for $LBS(V)$; one may check that $ELBS(v_0)$ satisfies the properties listed in Sec. \ref{sec-prop-LBS}, which are the only properties we need in the proof of the Main theorem below.

 However we prefer to prove the stronger result, for a smaller set $LBS(V)$.
\end{remark}

Let us give a more detailed and slightly improved statement of the same theorem.

\begin{theorem}[Main Theorem]  \label{thm:main1}  Let two vector fields $v_0$ and $w_0$ be orbitally topologically equivalent on $S^2$; denote the corresponding homeomorphism by $\hat H $. Let $V=\{v_{\eps}, \eps\in (B,0)\} \subset Vect^*\,S^2, W=\{w_{\eps}, \eps\in (B',0)\} \subset Vect^*\,S^2$ be smooth families unfolding these fields. Suppose that there exists a neighborhood $U$ of $LBS(V)$ and a map
$$
\mathbf H: (B,0) \times U \to  (B',0) \times S^2, \ \mathbf H(\eps ,x) = (h(\eps ), H_\eps (x)),
$$
$h(0)=0$, 
which is a moderate equivalence of $V,W$ in neighborhoods of $LBS(V), LBS(W)$ in the sense of Definition \ref{def-moderate-local}. Suppose moreover that $\hat H|_U = H_0$.

Then the families $V$ and  $W$ are weakly equivalent on the whole sphere; namely there exists a map
$$
\mathbf{\hat H}: (B,0) \times S^2 \to (B',0) \times S^2, \ \mathbf{\hat H}(\eps ,x) = (h(\eps ), \hat H_\eps (x))
$$
that provides a weak equivalence of the families $V$ and $W$.
\end{theorem}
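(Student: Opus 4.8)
The plan is to build the global weak equivalence $\mathbf{\hat H}$ by extending the given moderate equivalence $\mathbf H$ from a neighborhood $U$ of $LBS(V)$ to the whole sphere, fiber by fiber, using the topological classification of planar phase portraits (the LMF graphs of \cite{Fed}, cf.\ Theorem~\ref{th-MNP}). First I would fix $\eps$ small and analyze the complement $S^2\setminus U$. By construction of $LBS(V)$, every point of $S^2\setminus LBS(v_0)$ lies in the basin of a non-interesting $\alpha$- or $\omega$-limit set of $v_0$ — a hyperbolic sink/source or a non-interesting cycle — and the same structure, by the moderate equivalence on $S(v_0)$ and by persistence of hyperbolicity, survives for $v_\eps$ on $S^2\setminus H_\eps(U)$ (after shrinking $U$, cf.\ Remark~\ref{rem-shrink-U}). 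So the regions to be filled are unions of canonical (hyperbolic) regions: basins of hyperbolic attractors/repellers and annular regions around nests of non-interesting cycles, each bounded by pieces of $\partial U$ and by singular trajectories already carried by $\mathbf H$.

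Next I would extend $H_\eps$ over each such region. The key point is that on a canonical region a vector field is orbitally equivalent to a standard flow (a linear node/focus, or a flow on an annulus/strip), and any homeomorphism prescribed on a suitable ``input/output'' boundary transversal extends to a conjugacy over the whole region, uniquely up to the choice inside. Here the boundary data come from $H_\eps$ on $\partial U$ (which conjugates $v_\eps$ to $w_{h(\eps)}$ there) and from the correspondence of singular points / non-interesting cycles already established for $\eps=0$ via $\hat H=H_0$ and for $\eps\neq 0$ via the moderate equivalence. Condition~\eqref{it-nbhd-cond} of Definition~\ref{def-moderate-local} guarantees that $\mathbf H(U)$ is a genuine neighborhood of $LBS(W)$, so the two complements $S^2\setminus U$ and $S^2\setminus \mathbf H(U)_\eps$ really do match up as unions of corresponding canonical regions; I would use the LMF-graph combinatorics to see that the cellular decompositions of the two complements agree and that the boundary identification extends cell-by-cell. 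Since we only want \emph{weak} equivalence, no continuity in $(\eps,x)$ is required off the already-controlled set, so these fiberwise extensions may be performed with no compatibility constraint between different $\eps$, and we simply set $h$ equal to the homeomorphism already furnished by $\mathbf H$.

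A few points need care. One is that $LBS(V)$ is only $v_0$-invariant, and for $\eps\neq 0$ trajectories of $v_\eps$ may cross $\partial U$; I would handle this by choosing $U$ with boundary transverse to $v_\eps$ for all small $\eps$ (possible after shrinking, since $\partial U$ can be taken transverse to $v_0$ away from $LBS(v_0)$, and transversality is open), so that each canonical region of the complement has a well-defined entrance/exit structure depending continuously on $\eps$. A second point is bookkeeping of which hyperbolic saddles and which saddle separatrices belong to $LBS(V)$ versus the complement: a saddle is in $ELBS(v_0)$ iff one of its separatrices has an interesting limit set, so a saddle in the complement has both pairs of separatrices landing on non-interesting limit sets, and its two hyperbolic sectors sit inside the basins being filled — this is exactly what makes the region-by-region extension consistent across the separatrices on $\partial U$.

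The main obstacle I expect is the matching of the complementary regions: showing that $S^2\setminus U$ and $S^2\setminus \mathbf H(U)_\eps$ are decomposed into corresponding canonical cells with compatible boundary behavior, uniformly in $\eps$. This is where one must invoke the full strength of the topological classification of planar flows together with the planar-graph structure of LMF graphs, and where the ``Boundary lemma'' alluded to in the introduction (Lemma~\ref{lem-U-arcs-top}) presumably does the heavy lifting, controlling how arcs of $\partial U$ meet the singular skeleton. Once that combinatorial matching is in hand, the actual construction of $\hat H_\eps$ on each cell is routine: it is the standard extension of a boundary homeomorphism to a conjugacy on a flow box or a linearizing chart.
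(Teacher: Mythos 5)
Your proposal differs from the paper's proof in one essential respect, and that difference is exactly where the gap lies: you propose to \emph{extend} $H_\eps$ from $U$ over the complementary canonical regions, so that the resulting $\hat H_\eps$ would restrict to $H_\eps$ on $U$. The paper explicitly warns (Remark following Theorem~\ref{thm:main1}) that ``we do \textbf{not} assert that $\hat H_\eps|_U = H_\eps$, and this is not true in the general case,'' and the ``Idea of the proof'' section opens with ``We will not directly extend $H_\eps$ to the whole sphere.'' So the route you take is precisely the one the authors found to be blocked.

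Here is a concrete way the extension fails. Suppose a complementary region $R$ is a spiral (annular) canonical region of $v_\eps|_{S^2\setminus U}$ bounded by two transversal loops $l_1\subset\partial U_1$ and $l_2\subset\partial U_2$ lying on two \emph{different} connected components $U_1,U_2$ of $U$. A topological conjugacy on such a spiral region is freely determined by its restriction to a single transversal; extending the conjugacy $H_\eps|_{l_1}$ along the flow through $R$ gives, at the other end, a specific boundary homeomorphism $l_2\to H_\eps(l_2)$, but there is no reason this must coincide with the map $H_\eps|_{l_2}$ already prescribed by the moderate equivalence on $U_2$. The two pieces of data are generated by the dynamics \emph{inside} $U_1$ and $U_2$ separately, and the hypotheses of the theorem do not couple them through the region $R$. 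Thus the region-by-region extension is overdetermined and generically inconsistent. The paper's Empty annuli lemma~\ref{lem-annuliShaped} makes a closely related but strictly weaker assertion: $H_\eps|_{\partial A}$ extends to an \emph{orientation-preserving homeomorphism} of the empty annuli $A\to\tilde A$ — which is all that is needed to compare the embeddings of the LMF graphs — not to a \emph{conjugacy}.

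The paper therefore replaces direct extension by a purely combinatorial argument: it constructs a graph isomorphism $G_\eps\colon LMF(v_\eps)\to LMF(w_{h(\eps)})$ piecewise (using $H_\eps$ on the part of the graph inside $U$, and $\hat H$ together with hyperbolic continuation $\pi_\eps,\tilde\pi_{h(\eps)}$ outside), verifies via the Correspondence lemma~\ref{lem-seps-behave} and Annuli faces lemma~\ref{lem-an-faces} that $G_\eps$ respects cyclic orders at vertices and extends over annular faces, and then invokes Theorem~\ref{th-graphs-components} and Fedorov's theorem~\ref{th-Fedorov} to produce an \emph{abstract} conjugacy $\hat H_\eps$. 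That $\hat H_\eps$ will not in general restrict to $H_\eps$ on $U$; the classification theorem is the device that trades away the overdetermined gluing problem. Your intuition that the Boundary lemma, the LMF-graph combinatorics, and the ``matching of the complementary regions'' are the crux is correct, and indeed the Correspondence lemma is exactly where the paper establishes that matching; but the extension strategy cannot reach that point, because the local conjugacy extensions already conflict before any combinatorics enter.
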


\begin{remark}
 We do \textbf{not} assert that $\hat H_{\eps}|_{U} =H_{\eps}$, and this  is not true in the general case.
\end{remark}

\begin{remark}
 Remark \ref{rem-shrink-U} above shows that moderate equivalence in some neighborhood of $LBS(V)$ implies moderate equivalence in any sufficiently  small neighborhood of $LBS(V)$. We will have to shrink $U$ in the proof. 
\end{remark}

\begin{remark}   Note that the maps $\mathbf H$ and $\mathbf{\hat H}$ are skew products over \emph{the same} map $h$ of the bases. This is the only difference between Theorem \ref{thm:main} and Main Theorem.
\end{remark}

The Main theorem \ref{thm:main1} solves Problem 1 from \cite{I}. Up to now, this is the only general statement about bifurcations in the
families of vector fields with an arbitrary number of parameters. Several tempting conjectures about such bifurcations were suggested in \cite{AAIS}, but they all turned to be wrong \cite{KS}, \cite{IKS}, \cite{YuINS}.
The authors do not know any other non-trivial statement, even a conjecture, about bifurcations in generic families with an arbitrary number of parameters that would seem to be true.

\section{Strategy of the proof}
Our goal is to establish, for small $\eps$, an orbital topological  equivalence of two planar vector fields $v_{\eps}$, $w_{h(\eps)}$. We will use the criterion of orbital topological equivalence due to R.Fedorov \cite{Fed} (based on the classical book \cite{ALGM}); this result is close to the results due to  L.Markus,  D.Neumann, and M.M.Peixoto \cite{Mark}, \cite{Neum}, \cite{Peix}. In the following subsection we present this result. Simultaneously  we recall the notion of canonical regions and describe the properties of these regions needed in the future.

\subsection{Separatrix skeletons and canonical regions} \label{sub:skel}

First, let us formulate the result of L.Markus,  D.Neumann, and M.M.Peixoto \cite{Mark}, \cite{Neum}, \cite{Peix} following the book of Dumortier, Llibre and Artes \cite{DumLlibArt}, Sec.1.9. We only consider the case $v \in Vect^*\,S^2$; the result holds true for arbitrary $C^{\infty}$-smooth vector fields, but definitions should be modified for this general case (see \cite{DumLlibArt}).

\begin{definition} A \emph{ separatrix skeleton} of a vector field $v\in Vect^*\,S^2$ is $\Sing v \cup \Sep v$.
An \emph{extended separatrix skeleton} $S(v)$ of a vector field $v\in Vect^*\,S^2$ is $S(v):=\Sing v \cup \Per v \cup \Sep v$. A \emph{canonical region} of $v$ is a connected component of its complement $\mathbb R^2 \setminus S(v)$.
\end{definition}

We will use the extended separatrix skeleton rather than the  separatrix skeleton.

\begin{proposition}
\label{prop-canonreg-common-limset}
For $v\in Vect^*\, (S^2)$, all points of the same canonical region $R$ have coincident $\alpha$- and $\omega$-limit sets.
\end{proposition}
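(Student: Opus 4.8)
The plan is to prove the stronger local statement that the maps $p\mapsto\omega(p)$ and $p\mapsto\alpha(p)$ are \emph{locally constant} on the open set $\mathbb R^2\setminus S(v)$: every $p\notin S(v)$ should have an open neighbourhood $U\subset\mathbb R^2\setminus S(v)$ on which $\omega(\cdot)\equiv\omega(p)$, and similarly for $\alpha$. Granting this, the proposition follows: a canonical region $R$ is a connected component of $\mathbb R^2\setminus S(v)$, hence open and connected, the sets $\{x\in R:\omega(x)=c\}$ are then open in $R$ and partition it, so by connectedness only one is nonempty, and the same argument applies to $\alpha$. Since reversing time interchanges $\alpha$ and $\omega$, it suffices to treat $\omega$.

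To establish local constancy of $\omega$ at a fixed $p\notin S(v)$, set $L:=\omega(p)$. All points of one orbit share the same $\omega$-limit set, and near the regular point $p$ the flow is a flow box, so it is enough to find on a short transversal $\tau$ through $p$ an open subinterval $I\ni p$ with $\omega(q)=L$ for every $q\in I$; the set of points whose forward orbit meets $I$ is then the required neighbourhood. By the Poincar\'e--Bendixson theorem $L$ is a singular point, a limit cycle, or a monodromic polycycle, and I would argue these three cases separately. If $L=\{P\}$ is a singular point, then by Theorem~\ref{thm:char-pt} a punctured neighbourhood of $P$ is either a focus neighbourhood or decomposes into finitely many hyperbolic, parabolic and elliptic sectors bounded by separatrices; the forward orbit of $p$ eventually enters this neighbourhood and, being a full orbit, stays in a single sector without ever crossing a separatrix, and since $p\notin\Sep v$ it is not one of the bounding curves, so it ends up in the interior of an attracting parabolic or elliptic sector, or in a focus neighbourhood — in each case an open set all of whose points have $\omega$-limit $\{P\}$, whose backward saturation by the flow is an open neighbourhood of $p$. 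If $L$ is a limit cycle $\gamma$, then $p$ spirals onto $\gamma$ from one side, so its forward orbit enters a one-sided half-open tubular neighbourhood $N^{+}$ of $\gamma$ on which $\gamma$ attracts (it is attracting, or semi-stable from that side); the set of points whose forward orbit eventually enters $N^{+}$ is open, has $\omega$-limit $\gamma$ throughout, and contains $p$.

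The remaining case, $L=\Pi$ a monodromic polycycle, is the one I expect to be the main obstacle, since it requires genuine input. On the side of $\Pi$ from which $p$ approaches, take a transversal $\tau$ at a regular point of one of the separatrix connections of $\Pi$, parametrised by $s\in[0,\delta)$ with $s=0$ on $\Pi$; there the first-return map $P$ is defined near $0$ and is monotone increasing, being a composition of Dulac maps at the vertices of $\Pi$ with regular transition maps. Let $s_0>s_1>\cdots$ be the successive $\tau$-crossings of the forward orbit of $p$; since $\omega(p)=\Pi$ these tend to $0$, hence $P$ has no fixed point in $(0,s_0]$ — a fixed point there would be a limit cycle blocking the orbit — so $P(s)<s$ on $(0,s_0]$, and the orbit through every $s\in(0,s_0)$ likewise has its $\tau$-crossings decreasing to $0$. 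By the finiteness of cycles there is no limit cycle between $\Pi$ and this arc, so by Poincar\'e--Bendixson any orbit whose $\tau$-crossings tend to $0$ is trapped in arbitrarily small annular neighbourhoods of $\Pi$ and therefore has full $\omega$-limit set $\Pi$. Taking $I=(0,s_0)$ on $\tau$ and saturating by the flow finishes this case.

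Two points will need care when writing this out. First, the proposition tacitly uses that for $v\in Vect^*\,S^2$ the set $\Per v$ of limit cycles exhausts \emph{all} periodic orbits (no period annuli, equivalently no centers), which follows from the finiteness of cycles; otherwise a point of a period annulus would lie in a canonical region together with points having different $\omega$-limit sets. Second, the polycycle case leans on two facts from the desingularization/sectorial theory and the Poincar\'e--Bendixson circle of ideas — that the return map along a monodromic polycycle of a $Vect^*$ field is a well-defined monotone germ, and that an orbit spiralling into a monodromic polycycle has that polycycle as its full $\omega$-limit set — which I would cite rather than reprove. One could instead deduce the proposition from the Markus--Neumann--Peixoto description of canonical regions, but that is heavier than needed and, moreover, not yet available at this point in the paper.
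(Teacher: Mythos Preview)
Your proof is correct and follows essentially the same strategy as the paper: show that $p\mapsto\omega(p)$ is locally constant on $S^2\setminus S(v)$ by a case analysis on the type of $\omega(p)$, then invoke connectedness of $R$. The singular-point case is handled identically via the sectorial decomposition.

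The one noteworthy difference is your treatment of the polycycle case, which you flag as ``the main obstacle'' and handle via the first-return map, its monotonicity, and the absence of fixed points. The paper dispatches cycles and monodromic polycycles \emph{together} in one line: both admit a transversal loop on the relevant side, the forward orbit of $p$ crosses it, hence so do the forward orbits of all nearby points, and anything crossing that loop is trapped and has the same $\omega$-limit set. This avoids Dulac maps entirely and does not require the vertices of the polycycle to be hyperbolic. Your argument is not wrong, but it is more machinery than needed; the transversal-loop observation is exactly what makes the polycycle case no harder than the cycle case.
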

\begin{proof}
First, prove that the set of points in $R$ with the same $\omega$-limit set is open. This follows from continuous dependence of solutions of ODEs on initial data. Indeed, take $x\in R$. 

If $\omega(x)$ is a cycle or a monodromic polycycle, then the future semi-trajectory of $x$ intersects a transversal loop around this cycle or polycycle. So the trajectories  starting in a neighborhood of $x$ also intersect this loop, and thus have the same $\omega$-limit set as $x$. The statement is proved.

If $ \omega(x)=:P$ is a singular point, we use its sectorial decomposition, see Theorem \ref{thm:char-pt}. Note that the future semi-trajectory of $x$ enters an attracting parabolic sector or an elliptic sector of $P$. It may not enter a hyperbolic sector because $x$ does not belong to a separatrix. If a future semi-trajectory of $x$ enters an attracting parabolic sector, then future semi-trajectories starting in some neighborhood of $x$ enter the same attracting parabolic sector of $P$, thus their $\omega$-limit set is also $P$. If a future semi-trajectory of $x$ enters an elliptic sector, future semi-trajectories starting in its neighborhood may enter either the same elliptic sector of $P$, or an adjacent attracting parabolic  sector of $P$. In any case, their $\omega$-limit set is $P$.

Since $R$ is connected, it cannot be a union of several disjoint open sets. So all points of $R$ have the same $\omega$-limit set. The same arguments apply to the $\alpha$-limit set.
\end{proof}

\begin{definition}
 The \emph{completed} separatrix skeleton of a vector field $v\in Vect^*\,S^2$  is the union of the extended separatrix skeleton  together with one orbit from each one of the canonical regions.

 Two completed separatrix skeletons $C_1,C_2$ are \emph{topologically equivalent}
if there exists a homeomorphism from $S^2$ to $S^2$
that maps the orbits of $C_1$ to the orbits of $C_2$ preserving the orientation.
\end{definition}

\begin{theorem}[Markus--Neumann--Peixoto Theorem]
\label{th-MNP}
Assume    that $v_1, v_2 \in  Vect^*\,S^2$. Then $v_1,v_2$ are topologically equivalent if and only if their completed separatrix skeletons are equivalent.
\end{theorem}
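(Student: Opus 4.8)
The statement splits into two implications. The forward one --- an orbital topological equivalence of $v_1,v_2$ produces an equivalence of their completed separatrix skeletons --- is almost formal. A topological equivalence $H\colon S^2\to S^2$ carries singular points to singular points, limit cycles to limit cycles and separatrices to separatrices, so $H(S(v_1))=S(v_2)$ and $H$ induces a bijection between the canonical regions of $v_1$ and of $v_2$. Hence $H$ maps the completed skeleton $C_1$ of $v_1$ onto \emph{a} completed skeleton of $v_2$, differing from the fixed $C_2$ at most in which orbit has been distinguished inside each canonical region. The one thing to check is therefore the (small) lemma that equivalence of completed skeletons is independent of these choices; this follows from the standard-model description of canonical regions below, which shows that any two orbits of a canonical region can be interchanged by a self-homeomorphism of the region commuting with the flow and equal to the identity on the region's boundary.

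For the converse, let $h\colon C_1\to C_2$ be the given equivalence; the task is to extend it to a homeomorphism of $S^2$ conjugating $v_1$ to $v_2$. The first step is to describe canonical regions. By Proposition~\ref{prop-canonreg-common-limset} all points of a canonical region $R$ of $v\in Vect^*\,S^2$ share $\alpha$- and $\omega$-limit sets, and $R$ contains no singular points and no separatrices; combining this with the Poincar\'e--Bendixson theorem and the finiteness hypotheses, one sees that $R$, with its restricted flow, admits a cross-section meeting every orbit exactly once (in the periodic case, one whose return map is the identity). Sliding along the flow then identifies $R$ flow-equivalently with one of three standard models: a horizontal strip flow on $\bbR^2$, a spiralling flow on an open annulus, or an annulus of periodic orbits.

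The extension proceeds region by region. Given a canonical region $R$ of $v_1$ with distinguished orbit $\gamma_R$, the orbit $h(\gamma_R)$ lies in a unique canonical region $R'$ of $v_2$, and $h$ is already defined on $\partial R\subset S(v_1)$, mapping it onto $\partial R'$ and matching orbits and time directions. Identifying $R$ and $R'$ with the same standard model, $h|_{\partial R}$ becomes a boundary homeomorphism commuting with the model flow and taking $\gamma_R$ to $\gamma_{R'}$; such a map extends to a flow-commuting homeomorphism $R\to R'$ by transporting it along the flow from a cross-section, and the requirement $\gamma_R\mapsto\gamma_{R'}$ pins down the residual ambiguity (a twist, or the choice between the two gluings in the annular model). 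Taking the union of $h|_{S(v_1)}$ with all these extensions yields a bijection $H\colon S^2\to S^2$ sending orbits of $v_1$ to orbits of $v_2$ and preserving the time direction.

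The hard part will be showing that this $H$ is a homeomorphism --- that the independently built pieces fit together continuously across $S(v_1)$, and likewise for $H^{-1}$. Canonical regions attach to the skeleton in intricate ways: a single separatrix may bound $R$ on both sides, $\partial R$ may accumulate on a limit cycle or a monodromic polycycle, and several regions may meet at one singular point. I would handle this by classifying the local picture at each point $p\in S(v_1)$ --- $p$ an isolated singular point with its finite sectorial decomposition (Theorem~\ref{thm:char-pt}), or $p$ on a separatrix, a limit cycle, or a monodromic polycycle --- and checking in each case that the standard models of the adjacent canonical regions are compatible with the local normal form of $v_1$ near $p$, so that the region-wise extensions glue to a map that is continuous near $p$ with continuous inverse. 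The finiteness hypotheses guarantee finitely many canonical regions and finitely many such local pictures, so this reduces to a finite, if laborious, verification; and the distinguished orbits are precisely what make each per-region extension rigid enough to force the separately constructed pieces to agree on their overlaps.
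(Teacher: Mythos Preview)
The paper does not prove this theorem at all: it is stated as a classical result with references to Markus, Neumann, and Peixoto, and is used as a tool (the paper's actual working criterion for equivalence is Fedorov's Theorem~\ref{th-Fedorov} on LMF graphs). So there is no ``paper's own proof'' to compare your attempt against.

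Your sketch follows the standard line of argument and is reasonable as an outline. One small inaccuracy: you list an annulus of periodic orbits among the possible models for a canonical region, but for $v\in Vect^*\,S^2$ this case is excluded by the finiteness of cycles, as the paper notes after Proposition~\ref{prop-parall}. More substantively, you yourself flag the genuine difficulty --- continuity of the assembled map across $S(v_1)$ --- and propose only a case-by-case verification without carrying it out; this is where the real work of the theorem lies (handling accumulation on limit cycles and monodromic polycycles, and the sectorial picture at singular points), and your proposal remains a plan rather than a proof at that point.
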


The following proposition, see \cite[Proposition 1.42, p. 34]{DumLlibArt}, gives a list of possible canonical regions. It motivates the fact that separatrix skeletons classify vector fields: on the complement to $S$, the dynamics is trivial.
    \begin{proposition}
    \label{prop-parall}
      Every canonical region of $v\in Vect^*\,S^2$ is parallel, i.e. topologically equivalent to one of the following:
\begin{itemize}
 \item A strip flow, defined on $\bbR^2$ by the system of differential equations $\dot x=1, \dot y=0$;
\item A spiral flow, defined on $\bbR^2 \smallsetminus \{0\}$ the system of differential equations $\dot r=r$, $\dot \theta=0$ in polar coordinates.
\end{itemize}
    \end{proposition}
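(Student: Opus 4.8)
The plan is to prove that the flow of $v$ restricted to a canonical region $R$ is \emph{parallelizable}, i.e.\ admits a global cross-section, and then to read off the two model flows from the topology of that section.

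First I record the structure of $R$. As a connected component of $S^2\setminus S(v)$, the region $R$ contains no singular point of $v$. It also contains no periodic orbit: by Proposition~\ref{prop-canonreg-common-limset} all points of $R$ share a common $\omega$-limit set, so a periodic orbit $\gamma\subset R$ would \emph{be} that common $\omega$-limit set, which forces every orbit through a point of $R$ near $\gamma$ to spiral toward $\gamma$; then $\gamma$ is an isolated periodic orbit, hence a limit cycle, hence $\gamma\subset\Per v\subset S(v)$ --- a contradiction. Denote by $L^{+}$ and $L^{-}$ the common $\omega$- and $\alpha$-limit sets of the points of $R$. I claim $L^{\pm}\cap R=\emptyset$. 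By the Poincar\'{e}--Bendixson theorem each of $L^{\pm}$ is a singular point, a periodic orbit, or a monodromic polycycle; singular points, and the singular points on a polycycle, lie in $\Sing v\subset S(v)$, hence not in $R$; a periodic orbit meeting $R$ is disjoint from $\Sing v$ and from $\Sep v$, so (being connected) lies entirely in $R$, which was just excluded. Consequently no orbit in $R$ is recurrent, and, since every forward (backward) semitrajectory in $R$ accumulates only on $L^{+}$ (on $L^{-}$), it eventually leaves every compact subset of $R$ in both time directions.

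Next comes the construction of the section, the heart of the matter. Cover $R$ by countably many flow boxes $U_{i}$, in each of which $v$ is conjugate to the horizontal translation flow on $(-1,1)^{2}$ with transversal slice $\tau_{i}$. Non-recurrence together with the monotonicity of the successive intersections of an orbit with a transversal segment (a standard consequence of the Jordan curve theorem in the plane) shows that every orbit meets each $\tau_{i}$ in only finitely many points; this makes it possible to shrink and amalgamate the $\tau_{i}$ into a $1$-dimensional embedded submanifold $\Sigma\subset R$ without boundary, transverse to $v$, that meets every orbit of $R$ in \emph{exactly} one point. Then $(x,t)\mapsto \phi_{t}^{v}(x)$ is a homeomorphism $\Sigma\times\bbR\to R$ conjugating $v|_{R}$ to the flow $\dot t=1$ on the second factor. (One may instead simply invoke the classical parallelizability theorem for fixed-point-free flows all of whose orbits are non-recurrent.)

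Finally, since $R\cong\Sigma\times\bbR$ is connected, $\Sigma$ must be connected: two components of $\Sigma$ would saturate to two disjoint non-empty open, hence also closed, subsets of $R$. A connected $1$-manifold without boundary is homeomorphic to $\bbR$ or to $S^{1}$. In the first case $R\cong\bbR\times\bbR$ with $v|_{R}$ conjugate to the strip flow $\dot x=1,\ \dot y=0$; in the second $R\cong S^{1}\times\bbR\cong\bbR^{2}\setminus\{0\}$ with $v|_{R}$ conjugate to the spiral flow $\dot r=r,\ \dot\theta=0$. These are exactly the two alternatives asserted. The main obstacle is the middle step --- producing a \emph{single} global transversal that hits each orbit exactly once; once this parallelization is in hand, the rest is bookkeeping together with the Poincar\'{e}--Bendixson theorem.
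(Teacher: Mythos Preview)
Your proof is correct and essentially self-contained. Note, however, that the paper does not prove this proposition in the main text; it cites \cite[Proposition 1.42]{DumLlibArt}. A proof sketch appears only later, in the generalization to subdomains (Proposition~\ref{prop-canonreg-parall-subdomain}), and that sketch takes a somewhat different route from yours: instead of building a global cross-section $\Sigma$ directly, one passes to the orbit quotient space $R/\!\sim$, argues that it is a Hausdorff connected $1$-manifold, and observes that the natural projection $R\to R/\!\sim$ is a locally trivial fibration with fiber $\bbR$. The three possibilities $\bbR\times\bbR$, $S^1\times\bbR$, $\bbR\times S^1$ then correspond to strip, spiral, and annular flow, the last being excluded by the finiteness of periodic orbits in $Vect^*\,S^2$.

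The two approaches are dual: your global section $\Sigma$ is precisely a section of that fibration, and the hard step in each version is the same --- establishing Hausdorffness of the orbit space, or equivalently, that nearby orbits can be consistently separated by a transversal. Your argument for this (amalgamating flow-box transversals using non-recurrence and monotonicity of crossings) is the classical Markus construction; the paper's sketch simply names the conclusion. A minor simplification: to rule out periodic orbits in $R$ you need not invoke Proposition~\ref{prop-canonreg-common-limset}; since $v\in Vect^*\,S^2$ has only finitely many periodic orbits, each is isolated, hence a limit cycle, hence in $\Per v\subset S(v)$.
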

    The book \cite{DumLlibArt} also lists the case of an annular flow, given on $\bbR^2 \smallsetminus \{0\}$ by $\dot r=0, \dot \theta =1$ in polar coordinates.  This case corresponds to the infinite set of cycles, hence is not possible for $v \in Vect^* \, S^2$.

We will also need the following corollary of Proposition \ref{prop-canonreg-common-limset}:
\begin{proposition}
\label{prop-dELBS}
For $v \in Vect^*(S^2)$, each canonical region of $v$ either belongs to $ELBS(v)$, or does not intersect it. In particular, $\partial ELBS(v) \subset S(v)$.
\end{proposition}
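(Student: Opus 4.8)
The plan is to reduce everything to Proposition~\ref{prop-canonreg-common-limset}, which guarantees that all points of a fixed canonical region $R$ share the same $\alpha$-limit set and the same $\omega$-limit set. So I would fix a canonical region $R$ and split into two cases according to whether both of these common limit sets are interesting, or at least one of them is non-interesting.

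In the first case — both limit sets interesting — every point of $R$ is by construction a nonsingular point with both $\alpha$- and $\omega$-limit sets interesting, hence (by Definition~\ref{def:elbs}) lies in the set $A$ whose closure enters $ELBS(v)$; therefore $R \subset ELBS(v)$ outright. In the second case I claim $R \cap ELBS(v) = \emptyset$. Since $R$ is a connected component of $S^2 \setminus S(v)$, it meets neither $\Sing v$ nor $\Per v$, so it contains no non-hyperbolic singular point and no non-hyperbolic limit cycle; thus the only way $R$ could meet $ELBS(v)$ is through the closure $\overline A$ of the set $A$ of nonsingular points with both limit sets interesting. But $A \cap R = \emptyset$ (every point of $R$ has a non-interesting limit set), and $R$ is open; so if some $x \in R$ belonged to $\overline A$, a neighborhood of $x$ contained in $R$ would meet $A$, contradicting $A \cap R = \emptyset$. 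Hence $R \cap \overline A = \emptyset$ and $R \cap ELBS(v) = \emptyset$. This establishes the dichotomy.

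The ``in particular'' clause then follows formally. The sphere $S^2$ is the disjoint union of $S(v)$ and the canonical regions; a point $p \notin S(v)$ lies in some canonical region $R$, which is open and, by the dichotomy just proved, is either contained in $ELBS(v)$ or disjoint from it. In the first subcase $R$ is an open neighborhood of $p$ inside $ELBS(v)$, in the second subcase $R$ is an open neighborhood of $p$ inside its complement; either way $p \notin \partial ELBS(v)$. Therefore $\partial ELBS(v) \subset S(v)$.

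I do not expect a serious obstacle here; the only point requiring a little care is the closure operation in the definition of $ELBS(v)$ — one must make sure that no point of a ``non-interesting'' canonical region sneaks into $ELBS(v)$ as a limit of ``interesting'' points — and this is exactly what the openness of canonical regions, together with the constancy of limit sets on each region, rules out.
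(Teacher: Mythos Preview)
Your proof is correct and follows essentially the same approach as the paper: reduce to Proposition~\ref{prop-canonreg-common-limset}, observe that the set of nonsingular points with both limit sets interesting is a union of canonical regions, and use openness of canonical regions to handle the closure in the definition of $ELBS(v)$. The paper's version is terser (it simply notes that $ELBS(v)$ is the closure of this union of regions plus a subset of $S(v)$), but your more explicit case split and your separate verification of the ``in particular'' clause are fine and arguably clearer.
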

\begin{proof}
 Consider the set of all points in $S^2\setminus S(v)$ whose $\alpha$- and $\omega$-limit sets are interesting. Due to Proposition \ref{prop-canonreg-common-limset}, this set is a union of several canonical regions of $v$. Note that $ELBS(v)$ is the closure of this set plus some subset of $S(v)$, which implies the statement.
\end{proof}

\subsection{LMF graphs}
\label{sec-LMF}
The extended separatrix skeleton is not a graph on a sphere, because separatrices can wind around limit cycles. However we may turn it into a graph if we truncate the separatrices to their intersections with transversal loops of their $\alpha$- or $\omega$-limit sets.

In \cite{Fed}, R. Fedorov assigned a graph to each vector field on the plane and proved that two vector fields are orbitally topologically equivalent if their graphs are isotopic in $S^2$. The proof was based on the classical book \cite{ALGM} where the complete set of topological invariants was given in the form of  ''schemes``. We will use the graphs introduced by Fedorov, and we will call them  \emph{LMF graphs} (Leontovich, Mayer, Fedorov graphs) of planar vector fields.

In this section, we recall the construction of LMF graphs.
We only consider vector fields from $Vect^*\,S^2$.

Choose an orientation on $S^2$.

\begin{definition}
For a domain in $S^2$ with smooth boundary, we say that the boundary is oriented clockwise (resp. counterclockwise) with respect to the domain if the domain is to the right  (resp. to the left) of its oriented boundary.

 Let a closed curve $\gamma$ on $S^2$ be disjoint to a connected set $D$. We say that $\gamma$ is oriented clockwise (resp. counterclockwise) with respect to $D$ if it is oriented clockwise (resp. counterclockwise) with respect to the disk it bounds on the sphere that contains $D$.
\end{definition}

\textbf{Transversal loops around $\alpha$- and $\omega$-limit sets}

Given a smooth vector field $v$ on $S^2$, choose a transversal loop for each side of each its limit cycle, each monodromic side of each its polycycle, and around each attracting or repelling singular point of $v$. We assume that the annulus between the transversal loop and the corresponding  $\alpha$- or $\omega$-limit set does not  contain points of other transversal loops, and  the vector field $v$ in this annulus is orbitally topologically equivalent to the standard vector field $\dot r = \pm (1-r), \dot \phi =1$ in $\{r>1\}$.

Fix a counterclockwise orientation on the chosen loop with respect to the corresponding  cycle, polycycle or singular point. From now on, we always consider transversal loops with this orientation.

\textbf{Truncated separatrices}

If some  separatrix $\gamma$ of a singular point $P$ of $v$ crosses a transversal loop $l$ chosen above,  consider a \emph{truncated separatrix}: an arc of $\gamma$ between $P$ and the cross-point of $\gamma$ with $l$. 

\begin{remark}
Assume that an outgoing separatrix $\gamma$ of $P$ does not cross such loops. Poincare-Bendixson theorem implies that its $\omega$-limit set can only  be a characteristic point,  $\omega(\gamma)=Q$. So this separatrix is a characteristic trajectory for $Q$, and  its germ at $Q$ is $C^1$-smooth (see Theorem \ref{thm:char-pt}). We conclude that all non-truncated separatrices are $C^1$-smooth curves that join singular points of $v$.
\end{remark}

\begin{definition}
\label{def-LMF}
\emph{LMF graph} of a vector field $v\in Vect^* \, S^2$ is a graph $LMF(v)$ embedded in $S^2$ which consists of the following elements:
    \begin{itemize}
        \item Vertices:

        \begin{enumerate}
            \item All singular points of $v$;

            \item All \emph{truncation vertices}: cross-points of separatrices of $v$ with transversal loops chosen above;

            \item A point on each cycle;

            \item A point on each \emph{empty} transversal loop, i.e. on the transversal loop that does not cross separatrices of $v$.
        \end{enumerate}

        \item Edges:

        \begin{enumerate}
            \item Unstable (stable) separatrices of singular points, if their $\omega$- (resp., $\alpha$-)limit sets are characteristic points.

            \item Truncated unstable (stable) separatrices of singular points, if their $\omega$- (resp., $\alpha$-) limit sets are not characteristic points.

            \item Limit cycles (this edge starts and ends at the vertex of type 3).

              \item Pieces of transversal loops between subsequent truncation vertices, or the whole empty transversal loops.

              \item One homoclinic trajectory of $v$ in each elliptic sector of a non-hyperbolic singular point.
    \end{enumerate}
    \end{itemize}
    \textbf{Orientation}

	 The orientation of edges of types 1, 2, 3, 5 is induced by time parametrization. The orientation of edges of type 4  is counterclockwise with respect to the $\alpha$- or $\omega$-limit set corresponding to the transversal loop, as mentioned above.

    \textbf{Labeling}

    LMF graph is considered together with the following labels.

    Each vertex is labeled by the description of its type, namely the labels say \emph{Singular Point (SP),  Truncation Vertex (TV), Vertex on a Limit Cycle (VLC), Vertex on an Empty Transversal Loop (VETL)}. Similarly, the labels on the edges say  \emph{Stable Separatrix (SS), Unstable Separatrix (US), Separatrix Connection (SC), Stable Truncated Separatrix (STS), Unstable Truncated Separatrix (UTS), Limit Cycle (LC), Outgoing Transversal Loop (OTL), Ingoing Transversal Loop (ITL), Trajectory in the Elliptic Sector (TES)}. We say that a transversal loop is \emph{ingoing} if this is a loop around its $\omega$-limit set; otherwise we say that the transversal loop is \emph{outgoing}. 

    \end{definition}

Fig. \ref{fig-Mayer} shows the part of the phase portrait of a vector field and the corresponding part of the LMF graph. We used abbreviations of the labels described above. 

   The relation of the $LMF$ graphs with  separatrix skeletons is the following.
    The  edges of the $LMF$ graph except transversal loops and loops in elliptic sectors belong to the extended separatrix skeleton,  and their orientation is induced by the time parametrization. The face of an LMF graph may be:
    \begin{itemize}
     \item  a canonical region of $v$, possibly truncated by transversal loops of its $\alpha$- and $\omega$-limit sets, which depends on types of these $\alpha$-, $\omega$-limit sets. It will be possibly cut by a loop in an elliptic sector;
     \item a petal in an elliptic sector;
     \item an annulus between an $\alpha$- or $\omega$-limit set of $v$ and its transversal loop.
    \end{itemize}
  The orbits in canonical regions that are included to the completed separatrix skeleton keep the same information as labeling.

\begin{figure}[h]
\begin{center}
\includegraphics[width=\textwidth]{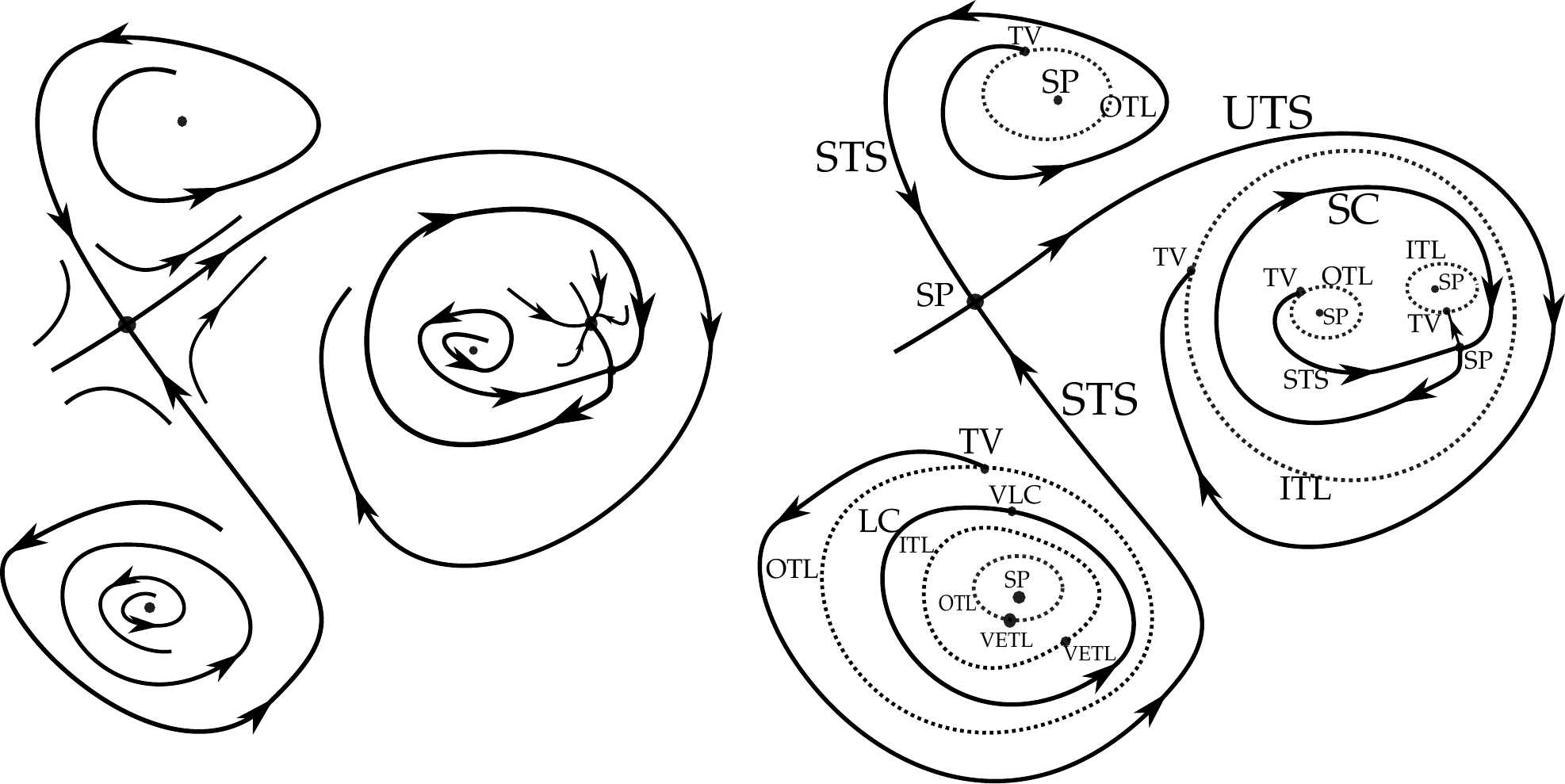}
\end{center}
\caption{A phase portrait of a vector field and its LMF graph. For the meaning of labels, see Definition \ref{def-LMF}.}\label{fig-Mayer}
\end{figure}

The classification of canonical regions (Proposition \ref{prop-parall}) yields the following classification of faces of $LMF$ graphs:
\begin{lemma}[Faces of the LMF graph]
    \label{lem-faces}
      Each open face $F$ of the LMF graph of a smooth vector field $v\in Vect^* \, S^2$ is either a topological disc, or a topological annulus. In the second case, the following cases are possible:

      \begin{itemize}
        \item $F$ is a domain between an $\alpha$- or $\omega$- limit set (sink or source, cycle, or polycycle) of $v$ and its transversal loop;
        \item $F$ is a domain between two transversal loops (of sinks, sources, cycles or   polycycles) of $v$.
      \end{itemize}

    \end{lemma}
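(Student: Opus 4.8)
The plan is to determine the topology of each face of $LMF(v)$ by sorting faces into the three kinds already identified in the discussion preceding the lemma: (a) a canonical region of $v$, possibly truncated by transversal loops of its $\alpha$- and $\omega$-limit sets and possibly cut by a loop in an elliptic sector; (b) a petal in an elliptic sector; (c) the annulus between an $\alpha$- or $\omega$-limit set and its transversal loop. For each kind I would read off the topology from Proposition~\ref{prop-parall} together with the construction of the graph in Definition~\ref{def-LMF}.

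First I would handle kinds (b) and (c) directly. A TES edge is a homoclinic trajectory in an elliptic sector of a singular point $P$, so together with $P$ it forms a Jordan curve meeting the rest of the graph only at the vertex $P$; it therefore cuts off two regions, each a topological disc with $P$ on its boundary, the petal being one of them. For kind (c), the face is bounded on one side by the transversal loop (a circle) and on the other by the limit set itself: when the limit set is a sink, source or topological focus it contributes a single boundary point (a puncture), when it is a cycle it contributes a circle, and when it is a monodromic polycycle the monodromic side carries a return map and the frontier of the collar is an embedded circle traversed once. In all three cases the face is a topological annulus, which is exactly the first bullet of the statement.

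Next, for faces of kind (a) I would invoke Proposition~\ref{prop-parall}: the underlying canonical region $R$ is either a strip flow, hence a topological disc, or a spiral flow, hence a topological annulus. The three facts I need are: (i) a spiral flow winds infinitely onto each of its limit sets, so neither limit set can be a characteristic point, and hence by Theorem~\ref{thm:char-pt} each is a cycle, a topological focus, or a monodromic polycycle --- precisely the objects that were assigned transversal loops; thus a spiral-flow face is truncated at both ends and sits between two transversal loops, still an annulus, realizing the second bullet; (ii) a TES loop is homoclinic to a characteristic point $P$, so by Proposition~\ref{prop-canonreg-common-limset} the canonical region containing it has coinciding $\alpha$- and $\omega$-limit set $P$ and is therefore a strip flow, so elliptic-sector cuts occur only inside strip-flow regions; and (iii) truncating a disc by a transversal loop (which meets $R$ in an arc joining two boundary points of $R$) and cutting a disc along a TES loop based at a boundary point each leave a topological disc. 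Combining these, a face of kind (a) is a disc unless $R$ is a spiral flow, in which case it is an annulus between two transversal loops.

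The step demanding the most care is (iii), together with its prerequisites: one must check that inside a strip-flow region the intersection with the transversal loop of its limit set is a single properly embedded arc (so that "truncation" is cutting along an arc, not along a closed curve), that in a spiral-flow region removing the forward and backward collars really yields an annulus even when a separatrix spirals onto the limit set, and that no spiral-flow region escapes being truncated at one of its ends. I also expect a small argument to confirm that the frontier of the collar between a monodromic polycycle and its transversal loop is an embedded circle even when the polycycle revisits a vertex; this is where the existence of the return map on the monodromic side is used. Everything else follows routinely from Propositions~\ref{prop-canonreg-common-limset} and~\ref{prop-parall} and the construction of the LMF graph.
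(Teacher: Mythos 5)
Your proposal is correct and follows essentially the same route as the paper: both proofs rest on Proposition~\ref{prop-parall} to classify canonical regions as strip or spiral flows, treat the collar annuli and elliptic-sector cuts explicitly, and observe that truncation by transversal loops (arcs transversal to the flow) and cutting along TES loops (arcs along the flow) preserve disc topology while a spiral region becomes the annulus between two transversal loops. The paper's write-up is terser but the decomposition into the three kinds of face and the key observations (a spiral region's limit sets are necessarily surrounded by transversal loops; the truncating arcs are topological intervals) are the same; your added attention to the monodromic-polycycle collar and to TES loops lying in strip-flow regions only makes explicit what the paper leaves implicit.
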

\begin{proof}
 This follows from Proposition \ref{prop-parall} above. Indeed, some faces of the LMF graph of $v$ are annuli between  $\alpha$- or $\omega$- limit sets of $v$ and their transversal loops. To obtain all other faces of the  LMF graph, we can take all canonical regions of $v$ and truncate them by transversal loops mentioned above: we cut off pieces of canonical regions that are between $\alpha$- or $\omega$- limit sets of $v$ and their transversal loops, and possibly cut along loops in elliptic sectors.
 
If a canonical region carries a strip flow, its  $\alpha$- and $\omega$-limit sets may be surrounded or not surrounded  by transversal loops; these loops will intersect the canonical region in topological intervals transversal to the flow. If this canonical region contains a loop in elliptic sector, this loop  intersects a canonical region in a topological interval along the flow. In any case, after truncation and cutting, this canonical region will produce face(s) of $LMF(v)$ topologically equivalent to disc(s). 

For a  canonical region carrying a spiral flow, its $\alpha$- and $\omega$-limit sets are necessarily surrounded by transversal loops. This canonical region after truncation will become an annular face of $LMF(v)$  between two transversal loops of   $\alpha$- and $\omega$-limit sets.
\end{proof}

    We use the following result of R. Fedorov \cite{Fed}, based on the previous result of Andronov, Leontovich, Gordon, Mayer \cite{ALGM}. This result is close to Theorem \ref{th-MNP}.

\begin{theorem}[R. Fedorov, \cite{Fed}]
\label{th-Fedorov}
    If two LMF graphs $\Gamma_1 = LMF(v), \Gamma_2=LMF(w)$ of two vector fields $v,w$ are isotopic on the sphere (i.e. there exists an orientation-preserving homeomorphism of the sphere which maps one to another, preserves orientation on edges and matches labels on edges and vertices), then $v$ and $w$ are orbitally topologically equivalent.
\end{theorem}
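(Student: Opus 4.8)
The plan is to reduce Theorem \ref{th-Fedorov} to the Markus--Neumann--Peixoto Theorem \ref{th-MNP} by showing that an isotopy of LMF graphs induces a topological equivalence of completed separatrix skeletons. So, given an orientation-preserving homeomorphism $\Phi\colon S^2\to S^2$ carrying $LMF(v)$ onto $LMF(w)$, matching labels and edge orientations, I would produce a (possibly different) homeomorphism $H$ carrying the completed separatrix skeleton $C(v)$ onto $C(w)$ and preserving the orientation of orbits; Theorem \ref{th-MNP} then yields that $v$ and $w$ are orbitally topologically equivalent. A direct construction of an orbital equivalence face by face is essentially the same argument packaged differently, and I would use whichever is cleaner in the write-up.

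First I would recover $C(v)$ from $LMF(v)$ in a canonical way. The vertices of type SP together with the edges of types 1, 3, 5 are genuine pieces of $S(v)$: untruncated separatrices, limit cycles, and the homoclinic loops in elliptic sectors. A truncated separatrix (type 2) is obtained from a genuine separatrix of $v$ by cutting it at a transversal loop of its $\alpha$- or $\omega$-limit set, and it can be \emph{un-truncated}, i.e. extended past its truncation vertex so as to spiral onto that limit set; the side from which it spirals and its cyclic position among the other truncation vertices on the same transversal loop are completely encoded by the annular face of $LMF(v)$ adjacent to that loop, together with the ITL/OTL label. Deleting all transversal-loop edges (type 4) and the auxiliary VLC/VETL/TV vertices, and replacing truncated separatrices by their un-truncations, reconstructs $\Sing v\cup\Per v\cup\Sep v$. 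Finally, by the discussion after Definition \ref{def-LMF} and by Lemma \ref{lem-faces}, each face is either an annulus bounded by a limit set and its transversal loop (which vanishes when the loop is deleted), an annulus between two transversal loops (which reassembles into a spiral-flow canonical region), a petal in an elliptic sector, or a strip-flow canonical region possibly cut along a loop in an elliptic sector; in each of the last cases the combinatorial type of the face determines the canonical region, hence a choice, canonical up to isotopy rel boundary, of one orbit to adjoin. This produces $C(v)$, well-defined up to topological equivalence.

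Next I would transport $\Phi$ to an equivalence of completed skeletons. On untruncated separatrices, cycles, singular points and elliptic homoclinics, $\Phi$ already does the job, since these are edges of both LMF graphs and $\Phi$ matches their orientations and labels. For a truncated separatrix of $v$, $\Phi$ sends it to a truncated separatrix of $w$ with the same SS/US label and sends the adjacent annular face to the corresponding annular face of $w$; hence the un-truncations spiral onto corresponding limit sets from the corresponding side, and $\Phi$ can be isotoped, inside the annuli between the limit sets and their transversal loops, to carry un-truncations onto un-truncations preserving orbit orientation. Here one uses that on such an annulus both fields are orbitally equivalent to the model $\dot r=\pm(1-r),\dot\phi=1$, so a boundary homeomorphism respecting the flow extends to an orbit equivalence of the annular flows, and the cyclic order of truncation vertices preserved by $\Phi$ makes the boundary data consistent. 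For the orbits adjoined inside canonical regions, $\Phi$ maps faces to faces of matching combinatorial type, so after a further isotopy inside each face it carries the adjoined $v$-orbit to the adjoined $w$-orbit. The outcome is a homeomorphism $H$ with $H(C(v))=C(w)$, orientation-preserving on orbits, which is exactly a topological equivalence of completed separatrix skeletons.

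The main obstacle is the bookkeeping in the second step near limit cycles and monodromic polycycles: one must verify that the combinatorial data carried by $\Phi$ --- the cyclic order of truncation vertices on each transversal loop, the ITL/OTL labels, and the face-adjacency pattern --- is exactly enough to pin down how all separatrices wind onto that limit set, so that the local orbit equivalences on the annuli can be chosen compatibly with what has already been fixed on the skeleton edges and then glued. Polycycles, where several hyperbolic sectors abut a single monodromic loop, and elliptic sectors with several nested petals, are the delicate cases; everything else reduces to the standard fact that a homeomorphism between the boundaries of two flow boxes, or of two copies of the standard annular flow, respecting the flow extends over the interior. Since that local extension machinery is precisely what is developed in \cite{ALGM} and \cite{Fed}, in the write-up I would cite those works for the local lemmas and concentrate on the reconstruction of $C(v)$ from $LMF(v)$ and on the descent of $\Phi$ to an equivalence of skeletons.
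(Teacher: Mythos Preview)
The paper does not supply its own proof of Theorem~\ref{th-Fedorov}: it is quoted as a result of Fedorov, itself based on \cite{ALGM}. The only argument the paper gives is the Remark immediately following the statement, which reconciles the authors' variant of the LMF graph (extra transversal loops around characteristic attractors/repellers, empty transversal loops, ITL/OTL labels on loops rather than on limit sets) with Fedorov's original construction, so that isotopy of the paper's LMF graphs implies isotopy of Fedorov's graphs and the cited theorem applies.

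Your proposal goes a different and more self-contained route: you derive the statement from the Markus--Neumann--Peixoto Theorem~\ref{th-MNP} by reconstructing the completed separatrix skeleton from the LMF graph and showing that an isotopy of LMF graphs descends to a topological equivalence of completed skeletons. This is a perfectly reasonable strategy---the paper itself remarks that Fedorov's result is ``close to Theorem~\ref{th-MNP}''---and your outline correctly identifies the main technical point (un-truncating separatrices and matching their winding onto limit sets using the cyclic order of truncation vertices and the ITL/OTL labels). What your approach buys is an argument that stays entirely within the framework already set up in the paper (canonical regions, Lemma~\ref{lem-faces}, the model annular flow), rather than deferring to \cite{Fed}; what it costs is the bookkeeping you flag near monodromic polycycles and nested elliptic petals, for which you still end up invoking the local extension machinery from \cite{ALGM}. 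Since the paper is content to cite Fedorov outright, your version is more detailed than what the paper does, not less.
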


\begin{remark}
Theorem \ref{th-Fedorov} in \cite{Fed} was proved for a slightly different construction of the graph. Below we list the differences.

 \begin{itemize}
  \item We label  transversal loops as ingoing or outgoing, while Fedorov puts these labels on singular points and each side of cycles or polycycles themselves.
  \item  Fedorov does not add transversal loops around singular points if they are characteristic attractors or repellers.
  \item Fedorov does not add empty transversal loops.
  \item We do not describe a labeling for center-type vertices (as Fedorov does) because they do not appear in $Vect^* \, S^2$, due to its definition.
 \end{itemize}

 These differences do not affect Theorem \ref{th-Fedorov}. Indeed, let two vector fields $v_1,v_2$ have isotopic LMF graphs. Recall that all attractors (both characteristic and non-characteristic) are locally topologically equivalent, and the same holds for repellers. So we may and will assume that all attractors and repellers of $v_1,v_2$ are characteristic.  Prove that $v_1$ and $v_2$ have isotopic Fedorov's graphs. 

Indeed, looking at the graph (with its embedding into $S^2$), one can  determine which transversal loop corresponds to which $\alpha$- or $\omega$-limit set, and put labels (\emph{attracting, repelling}) on each side of these sets as in Fedorov's graph. Further, we erase empty transversal loops from both graphs (the only information they bear is labeling). Finally, we remove transversal loops around all attractors and repellers and let truncated separatrices that terminated at these loops enter the singular points themselves. Since the LMF graphs were isotopic, the resulting Fedorov's graphs will be isotopic. 

This reduces Theorem \ref{th-Fedorov} for Fedorov's graphs to Theorem \ref{th-Fedorov} for LMF graphs described above. 
%
\end{remark}

The proof of Main Theorem will consist of proving the isotopy of LMF graphs of $v_{\eps}$, $w_{h(\eps)}$ for small $\eps$.

\subsection{Isotopy of graphs on $S^2$}

 We use the following theorem from graph theory (see \cite[Theorem 2]{Graph} for the more general result).

    \begin{theorem}
    \label{th-graphs}
     Suppose that two oriented connected planar graphs $\Gamma_1, \Gamma_2$ are embedded in $S^2$ by maps $\phi_1 \colon \Gamma_1 \to S^2, \phi_2 \colon \Gamma_2 \to S^2$. Choose an orientation in $S^2$.

	Suppose that $g\colon\Gamma_1\to \Gamma_2 $ is an isomorphism of oriented graphs $\Gamma_1, \Gamma_2$, and suppose that the graph isomorphism $g$ preserves a counterclockwise order of edges at each vertex (induced by the immersions $\phi_1, \phi_2$).
	
        Then the map $\phi_2\circ g \circ \phi_1^{-1}$ can be extended to the orientation-preserving homeomorphism of $S^2$, in particular $\phi_1(\Gamma_1)$ is isotopic to $\phi_2(\Gamma_2)$.
    \end{theorem}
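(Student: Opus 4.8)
The plan is to reduce the statement to the classical fact that a connected graph embedded in $S^2$ is determined, up to an orientation-preserving homeomorphism of the sphere, by its \emph{rotation system} --- the collection of counterclockwise cyclic orders of edge-ends at the vertices --- and then to build the extension face by face. Throughout I treat $\Gamma_1,\Gamma_2$ as finite topological $1$-complexes (as in our applications) and $g$ as a homeomorphism $\Gamma_1\to\Gamma_2$ which is simplicial and respects edge orientations; write $\Phi:=\phi_2\circ g\circ\phi_1^{-1}\colon \phi_1(\Gamma_1)\to\phi_2(\Gamma_2)$, a homeomorphism between the two embedded graphs.

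First I would record that, since $\Gamma_1$ is connected, every connected component of $S^2\setminus\phi_1(\Gamma_1)$ is an open disc, and likewise for $\Gamma_2$. (One way: by Alexander duality $\tilde H_1(S^2\setminus\phi_1(\Gamma_1))\cong \tilde H^0(\phi_1(\Gamma_1))=0$, so each component is a simply connected open subsurface of $S^2$, hence an open disc.) Call these components the \emph{faces}. Next, the boundary of a face is traversed by a closed edge-walk of the graph which is recovered purely combinatorially from the rotation system by the face-tracing rule: from a directed edge $(u\to w)$ one passes to the directed edge leaving $w$ that comes next, in the counterclockwise cyclic order at $w$, after the edge $(w\to u)$. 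This is the standard Heffter--Edmonds corner-turning algorithm, and on $S^2$ it produces exactly the boundary walks of the faces above; I would either invoke \cite{Graph} for this or include the short argument using a regular neighbourhood of $\phi_1(\Gamma_1)$.

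With that in hand, the correspondence of faces is immediate: since $g$ is a graph isomorphism carrying the counterclockwise cyclic order at each vertex of $\Gamma_1$ to that at the corresponding vertex of $\Gamma_2$, the face-tracing rule commutes with $g$, so $g$ induces a bijection $F_1\mapsto F_2$ between the faces of $\phi_1(\Gamma_1)$ and those of $\phi_2(\Gamma_2)$ under which the boundary walk of $F_1$ is carried by $\Phi$ onto the boundary walk of $F_2$. I would then extend $\Phi$ over each pair $(F_1,F_2)$: each $\overline{F_i}$ is the image of a closed disc $D^2$ under a map that is a homeomorphism on the interior and realizes the boundary walk on $\partial D^2$; the matching of boundary walks gives a homeomorphism $\partial D^2\to\partial D^2$ over $\Phi|_{\partial F_1}$, which I extend to $D^2\to D^2$ by coning (Alexander's trick), in the orientation-preserving way. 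These extensions agree with $\Phi$ on $\phi_1(\Gamma_1)$ and hence glue to a homeomorphism $\widehat\Phi\colon S^2\to S^2$ extending $\Phi$. Finally $\widehat\Phi$ is orientation-preserving: in a small disc around a vertex $\phi_1(v)$ it sends edge-germs to edge-germs preserving their counterclockwise cyclic order, so it preserves orientation near $\phi_1(v)$, and since $S^2$ is connected it preserves orientation globally; it also respects the edge orientations automatically, since $g$ does.

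The main obstacle is the combinatorial core: proving that the rotation system recovers the faces together with their boundary walks (equivalently, that a connected graph cellularly embedded in $S^2$ is rigid given its rotation data), and handling cleanly the degenerate boundary walks --- a face boundary walk may traverse an edge twice and revisit vertices, so $\overline{F_i}$ need not be an embedded closed disc and its ``boundary circle'' is really $\partial D^2$ mapping non-injectively into $S^2$. Once this is set up correctly --- or simply quoted from \cite{Graph} --- the remainder is the routine face-by-face extension. Connectedness of $\Gamma_1$ is used essentially here: for disconnected graphs the faces need not be discs and the rotation system does not determine the embedding.
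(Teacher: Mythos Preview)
Your proposal is correct and follows exactly the approach the paper indicates: the paper does not prove this theorem but cites \cite[Theorem~2]{Graph} and remarks that ``the idea of the proof of this theorem is to establish the correspondence of faces of $\Gamma_1, \Gamma_2$ using the information on the order of edges in each vertex, and to define a sphere homeomorphism inside each face.'' You have fleshed out precisely this idea via the Heffter--Edmonds rotation system and face-by-face Alexander extension, which is the standard argument underlying the cited reference.
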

        The idea of the proof of this theorem is to establish the correspondence of faces of $\Gamma_1, \Gamma_2$ using the information on the order of edges in each vertex, and to define a sphere homeomorphism inside each face.

    LMF graphs are usually not connected; some of their faces can be annuli, see Lemma \ref{lem-faces} above.  We will use the following theorem.

      \begin{theorem}
    \label{th-graphs-components}
        Suppose that two oriented planar graphs $\Gamma_1, \Gamma_2$ (not neccessarily connected) are embedded in $S^2$ by maps $\phi_1 \colon \Gamma_1 \to S^2, \phi_2 \colon \Gamma_2 \to S^2$, and their (open) faces in $S^2$ are topological discs or annuli. Choose an orientation in $S^2$.

	Suppose that these graphs are isomorphic as oriented graphs. Suppose that the graph isomorphism $g$ preserves a counterclockwise order of edges at each vertex (induced by the immersions $\phi_1, \phi_2$). Suppose that the map $\phi_2\circ g \circ \phi_1^{-1}$ extends to an orientation-preserving homeomorphism of the annuli-shaped faces.
	
        Then the the map $\phi_2\circ g \circ \phi_1^{-1}$ can be extended to the orientation-preserving homeomorphism of $S^2$, so $\phi_1(\Gamma_1)$ is isotopic to $\phi_2(\Gamma_2)$.
    \end{theorem}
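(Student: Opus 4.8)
The plan is to reduce the statement to the connected case, Theorem~\ref{th-graphs}, by inserting extra edges that turn the annular faces into discs. I will use two elementary facts about a graph $\Gamma$ embedded in $S^2$, both obtained by regarding the embedding as the $1$-skeleton of a CW structure on $S^2$. First, if $\Gamma$ is connected then all of its faces are open discs; conversely, if every face of a (possibly disconnected) embedded graph is an open disc, then the graph is connected. Indeed, a $2$-cell is attached along a connected set, so $2$-cells cannot join distinct components of the $1$-skeleton; hence a disconnected $1$-skeleton would give a disconnected $S^2$. The first half is Euler's formula together with cellularity. Second, an orientation-preserving homeomorphism between the embedded images of two graphs that respects the counterclockwise cyclic order of edges at every vertex carries the facial boundary walks of one embedding onto those of the other; this is precisely the combinatorial input of Theorem~\ref{th-graphs}, the facial walks being the orbits of the ``next-dart'' permutation built from the rotation system.

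Now put $\Phi_0 := \phi_2\circ g\circ\phi_1^{-1}$, a homeomorphism of the embedded graphs extending, by hypothesis, to an orientation-preserving homeomorphism $\psi_i\colon \overline{F_i}\to\overline{F_i'}$ on each annular face $F_i$ of $\phi_1(\Gamma_1)$, where $F_i'$ is the corresponding annular face of $\phi_2(\Gamma_2)$. Each of the two boundary walks of $F_i$ contains at least one vertex, so I can choose an embedded arc $a_i\subset\overline{F_i}$ running through the open annulus from a vertex on one boundary walk to a vertex on the other, and set $a_i':=\psi_i(a_i)$, a spanning arc of $F_i'$ whose endpoints are the $\Phi_0$-images of those vertices. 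Adjoin all the $a_i$ to $\Gamma_1$ to obtain a graph $\Gamma_1^+$ embedded by $\phi_1^+$, and similarly adjoin the $a_i'$ to $\Gamma_2$ to obtain $\Gamma_2^+$; extend the isomorphism by $g^+(a_i)=a_i'$. Because $\psi_i$ is an orientation-preserving homeomorphism of $\overline{F_i}$, it prescribes — consistently on both sides — the position in the cyclic order of edges at each endpoint into which the new edge is inserted, so $g^+$ still preserves the counterclockwise rotation system at every vertex. A spanning arc cuts an annulus into a disc, hence every face of $\Gamma_1^+$ (the surviving disc faces of $\Gamma_1$, together with the two halves of each $F_i$) is an open disc; by the first fact, $\Gamma_1^+$ is connected, and likewise $\Gamma_2^+$.

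Thus Theorem~\ref{th-graphs} applies to $\Gamma_1^+,\Gamma_2^+$ with the isomorphism $g^+$: the map $\phi_2^+\circ g^+\circ(\phi_1^+)^{-1}$ extends to an orientation-preserving homeomorphism $\widehat{\Phi}$ of $S^2$. Since $g^+$ restricts to $g$ on $\Gamma_1$, the restriction of $\widehat{\Phi}$ to $\phi_1(\Gamma_1)$ coincides with $\Phi_0=\phi_2\circ g\circ\phi_1^{-1}$. Therefore $\widehat{\Phi}$ is the required extension, and $\phi_1(\Gamma_1)$ is isotopic to $\phi_2(\Gamma_2)$.

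I expect the main obstacle to be the bookkeeping in the middle step: checking that the new edges can be inserted on the two sides so that the rotation systems remain matched — this is exactly where the second fact (that $\Phi_0$ respects facial walks, so the boundary walks of $F_i$ go to those of $F_i'$) is used — and verifying cleanly that one spanning arc per annular face suffices to produce a connected graph with only disc faces, i.e. that we have genuinely reconnected $\Gamma_1$ rather than merely cut the annuli open. Degenerate configurations, such as an annular face whose two boundary walks share a vertex or a boundary walk consisting of a single isolated vertex, need a short separate remark but present no real difficulty.
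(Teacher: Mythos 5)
Your proof is correct and is essentially the paper's own argument: insert one transversal arc through each annular face, transport it by the given extension $\psi_i$ of $\phi_2\circ g\circ\phi_1^{-1}$, observe that this respects the counterclockwise rotation system at the endpoints, and then invoke Theorem~\ref{th-graphs} for the connected augmented graphs (the paper justifies connectedness by noting the number of components drops at each step, where you instead use ``all faces are discs $\Rightarrow$ connected'' --- a cosmetic difference). Two of the ``obstacles'' you flag are actually non-issues: the two boundary walks of an annular face are disjoint subsets of $S^2$, so they can never share a vertex; and cutting an annulus along a single spanning arc produces \emph{one} disc, not ``two halves'' (the paper handles the isolated-vertex degeneracy exactly as you anticipate, by allowing $V_1=V=V_2$).
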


\begin{proof}
The idea of the proof is to add edges through all annuli-shaped faces of our graph, so that the extended graph is connected, and then use Theorem \ref{th-graphs}. Formally, for each annuli-shaped face we do the following.

 Let $A_1 \subset S^2$ be an annuli-shaped open face of $\phi_1(\Gamma_1)$, and let $G$ be the homeomorphism that extends $\phi_2\circ g \circ \phi_1^{-1}$ to $A_1$. Then $A_2:=G(A_1)$ is an open face of $\phi_2(\Gamma_2)$.

 Let $\phi_1(V_1),\phi_1(V),\phi_1(V_2) \in S^2$ be three subsequent vertices on one of the two boundary components of $A_1$ (the orientation on $\partial A_1$ is induced by the orientation on $A_1\subset S^2$). Let $\phi_1(W_1),\phi_1(W),\phi_1(W_2)$ be  three subsequent vertices on another boundary component of $A_1$. If one of boundary components contains only two vertices, we put $V_1=V_2$; if it contains only one vertex, we put $V_1=V=V_2$.

 Take a continuous curve $\gamma \subset A_1$ joining $\phi_1(V)$ to $\phi_1(W)$. To the graph $\Gamma_1$, add the edge joining $V$ to $W$. Extend $\phi_1$ so that $\phi_1([VW])=\gamma$.

 Take a curve $G(\gamma) \subset A_2$ joining $G(\phi_1(V))$ to $G(\phi_1(W))$. Similarly, to the graph $\Gamma_2$, add the edge joining $\phi_2^{-1}(G(\phi_1(V))) = g(V)$ to $\phi_2^{-1}(G(\phi_1(W)))= g(W)$. Extend $\phi_2$ so that it takes this edge to the curve $G(\gamma)$.

 Finally, extend $g$ to the graph isomorphism of enlarged graphs, by  putting $g([VW]) = [g(V), g(W)])$.

 A counterclockwise order of edges at $V$ contained a part $[VV_2], [VV_1]$; now this part changed to $[VV_2], [VW],[VV_1]$. A counterclockwise order of edges at $g(V)$ contained a part $[g(V)g(V_2)], [g(V)g(V_1)]$; now this part changed to $[g(V)g(V_2)], [g(V) g(W)], [g(V)g(V_1)]$. So $g$ still preserves a counterclockwise order of edges at $V$; similarly, it preserves the order at $W$.

 We repeat this process for each annuli-shaped face. Finally, we get \emph{connected} graphs $\tilde \Gamma_1, \tilde \Gamma_2$, because the number of their connected components decreases after each step of extension. These new graphs satisfy the assumptions of Theorem \ref{th-graphs}.

 So the initial map $\phi_2\circ g \circ \phi_1^{-1}$ (as well as the extended one) can be extended to the homeomorphism of the sphere.

\end{proof}

\subsection{Idea of the proof of the Main Theorem}

We are going to prove that under assumptions of Main Theorem, for small $\eps$, two vector fields $v_{\eps}$ and $w_{h(\eps)}$ are topologically equivalent. Due to the definition of moderate topological equivalence, we are given the map $\mathbf H = (h,H_{\eps})$, $h \colon B \to B'$, such that $H_{\eps} \colon U \to H_{\eps}(U)$ conjugates $v_{\eps}$ to $w_{h(\eps)}$ in neighborhoods of large bifurcation supports. We are also given a map $\hat H\colon S^2 \to S^2$ that conjugated $v_0$ to $w_0$ on the whole sphere.

We will not directly extend $H_{\eps}$ to the whole sphere. We will rather prove that two graphs $LMF(v_{\eps})$ and $LMF(w_{h(\eps)})$ are isomorphic for small $\eps$. Then we refer to Theorem \ref{th-graphs-components} together with Theorem \ref{th-Fedorov} and conclude that for small $\eps$, there exists a homeomorphism $\hat H_{\eps} \colon S^2 \to S^2$ that conjugates $v_{\eps}$ to $w_{h(\eps)}$. The family of maps $\mathbf{\hat H} = (h,\hat H_{\eps})$ is a weak topological equivalence of the families $\{v_{\eps}\}$ and $\{w_{h(\eps)}\}$ as required.

Note that we do not guarantee that $(\hat H_{\eps})|_{U} = H_{\eps}$.

The following theorem will imply the Main Theorem:
\begin{theorem}
 Under the assumptions of  Main Theorem, for sufficiently small $\eps$, the graphs  $LMF(v_{\eps})$ and $LMF(w_{h(\eps)})$ are isomorphic as oriented graphs, and the isomorphism $G_{\eps}$ meets the conditions of Theorem \ref{th-graphs-components}.
\end{theorem}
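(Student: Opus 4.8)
The goal is to build an isomorphism $G_\eps$ between $LMF(v_\eps)$ and $LMF(w_{h(\eps)})$ that matches labels, respects orientation, preserves the cyclic order of edges at each vertex, and extends to a homeomorphism of the annular faces; Theorem~\ref{th-graphs-components} and Theorem~\ref{th-Fedorov} then finish the argument. The LMF graph naturally decomposes into a ``support part'' (living inside $U$, a neighborhood of $LBS(V)$), where $G_\eps$ will essentially be read off from $H_\eps$, and a ``complement part'', where it will be read off from the unperturbed equivalence $\hat H = H_0$ together with a persistence argument. So the first step is to establish structural properties of $LBS(V)$ and $ELBS(v_0)$ (this is the content of the section ``Properties of the LBS'' referenced but not shown): most importantly, that on $S^2 \setminus U$ the dynamics of $v_\eps$ is, for small $\eps$, a ``trivial deformation'' of that of $v_0$ --- no bifurcations occur there, all singular points in the complement are hyperbolic and persist, all cycles there are non-interesting and persist up to isotopy, and no new separatrix behavior is created.

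\textbf{Key steps in order.} (1) Shrink $U$ (using Remark~\ref{rem-shrink-U}) so that $\partial U$ is transversal to $v_\eps$ for all small $\eps$ and $\partial U \cap LMF(v_0)$ consists only of points in canonical regions and transversal loops --- this is possible because $\partial ELBS(v_0) \subset S(v_0)$ by Proposition~\ref{prop-dELBS}, so we may slide $\partial U$ into canonical regions. (2) Show that for small $\eps$, $LMF(v_\eps) \cap U$ is isomorphic (as a labeled oriented graph with an embedding in $U$) to $LMF(v_0) \cap U$ via an isomorphism induced by $H_\eps$ on the singular trajectories; here one uses moderate equivalence, which by construction (see the set \eqref{eq-set1}) forces $H_\eps$ to be continuous exactly on $S(v_0)$ plus the boundary of the accumulation locus of $\overline{\Per V}\cup\overline{\Sep V}$ --- precisely the objects that become vertices and edges of the LMF graph near the support. (3) Show that $LMF(v_\eps)\setminus U$ is isomorphic to $LMF(v_0)\setminus U$: on the complement all recurrence is hyperbolic and non-interesting, so a standard structural-stability / Poincaré--Bendixson argument (in the spirit invoked throughout the paper) gives, for small $\eps$, a correspondence of singular points, cycles, and separatrix arcs, realized by an isotopy close to the identity; the same applies to $w$. (4) Do the identical construction for $W$ in the neighborhood $H_0(U)$ and its complement. (5) Glue: on the overlap region $\partial U$ (a disjoint union of transversal loops sitting in canonical regions), the two candidate isomorphisms --- the one from $H_\eps$ on the inside and the one from $\hat H = H_0$ on the outside --- must be reconciled. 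The compatibility hypothesis $\hat H|_U = H_0$ and condition \eqref{it-nbhd-cond} of Definition~\ref{def-moderate-local} (image of a neighborhood of $\{\eps=0\}\times LBS$ contains a neighborhood of $\{\eps=0\}\times LBS$) ensure that both descriptions agree at $\eps = 0$, and a homotopy/isotopy extension across the annuli cut out by $\partial U$ reconciles them for small $\eps$. (6) Verify the hypotheses of Theorem~\ref{th-graphs-components}: the cyclic edge order at each vertex is preserved (it is preserved inside $U$ because $H_\eps$ is an orbital equivalence there, and outside because the complement isotopy is $C^0$-close to the identity), and the annular faces --- which by Lemma~\ref{lem-faces} are neighborhoods of sinks/sources/cycles/polycycles --- admit the required orientation-preserving homeomorphism, again either because they lie in $U$ or because they lie in the structurally stable complement.

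\textbf{Main obstacle.} The delicate point is step (5), the gluing across $\partial U$, and more precisely controlling the separatrices and limit cycles that emanate from $LBS(V)$ and travel out into the complement. A separatrix of $v_\eps$ can cross $\partial U$ and then wander through the complement before hitting a transversal loop; we must know that, for small $\eps$, its itinerary in $S^2\setminus U$ is ``the same'' (isotopic rel endpoints on $\partial U$) as the itinerary of the corresponding separatrix of $v_0$ --- and similarly that cycles of $v_\eps$ accumulating on $LBS(V)$ (the reason such points are in $LBS$ at all) do not create new faces or new combinatorics outside. This requires that the only ``interesting'' accumulation happens inside $U$, which is exactly what the definition of $LBS$ (via $ELBS$ and interesting $\alpha$/$\omega$-limit sets) is designed to guarantee, but turning this into a clean statement about the LMF graph --- in particular handling truncation vertices that may jump along a transversal loop, and elliptic-sector petals near non-hyperbolic points on the boundary of the support --- is where the real work lies. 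The surrounding paper's ``Boundary lemma'' and the properties-of-LBS section are presumably the tools that make this precise; I would isolate a lemma saying ``$LMF(v_\eps)$ restricted to $S^2\setminus U$, with the induced boundary data on $\partial U$, is determined up to label-preserving isomorphism by the same data for $v_0$, for all small $\eps$'' and prove it by the Poincaré--Bendixson-style analysis together with hyperbolicity of all recurrence in the complement.
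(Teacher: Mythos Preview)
Your step (2) as written is wrong: you claim that $LMF(v_\eps)\cap U$ is isomorphic to $LMF(v_0)\cap U$, but this is precisely where the bifurcation happens --- the LMF graphs of $v_\eps$ and $v_0$ differ inside $U$ (singular points disappear, cycles are born, etc.). If you meant $LMF(w_{h(\eps)})\cap H_\eps(U)$, then yes, $H_\eps$ gives that directly; but then steps (2)--(4) collapse into ``use $H_\eps$ inside, use $\hat H$ plus hyperbolic continuation outside,'' and the entire weight falls on step (5), for which your sketch is too vague.

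The paper does not attempt your cut-and-glue decomposition, because the LMF graph does not split cleanly along $\partial U$: a single truncated separatrix may start at a singular point in $U$, exit $U$, and terminate on a transversal loop outside $U$. Instead the paper defines $G_\eps$ element by element --- on singular points, cycles, germs of separatrices, elliptic-sector loops, transversal loops --- using $H_\eps$ when the element is in $U$ and $\tilde\pi_{h(\eps)}^{-1}\circ\hat H\circ\pi_\eps$ when it is outside. The hard step is then the \emph{Correspondence lemma}: if the separatrices $\gamma_1,\dots,\gamma_n$ of $v_\eps$ cross a transversal loop $l$ in cyclic order, then the corresponding separatrices $\tilde\gamma_i$ of $w_{h(\eps)}$ cross $G_\eps(l)$ in the same cyclic order, and no others do. This is exactly your ``main obstacle,'' but its proof hinges on two ingredients you have not identified: the \emph{No-entrance lemma} (separatrices of $v_\eps$ cannot re-enter a suitably chosen $U^*$ --- this is what pins down the germ-to-separatrix correspondence for mixed-location separatrices), and the \emph{Boundary lemma}, which classifies boundary components of $U$ into three types and shows that $\partial U$ need \emph{not} be transversal to $v_\eps$ (your step (1) is too optimistic --- Type 1 and Type 2 components carry quadratic tangencies). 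The Annuli-faces condition of Theorem~\ref{th-graphs-components} is handled by a separate Empty-annuli lemma, again relying on the Boundary-lemma classification.
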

We will construct the isomorphism $G_{\eps}$ on the LMF-graphs as subsets of $S^2$.
Roughly speaking, in order to define $G_{\eps}$, we use $H_{\eps}$ whenever it is defined, i.e. inside a neighborhood $U$ of the large bifurcation support of  $v_{\eps}$.
Outside  $U$, all singular points and cycles of $v_0$ are hyperbolic, thus $v_{\eps}$ has close singular points and cycles. When we define $G_{\eps}$ on singular points and cycles outside $U$, we use $\hat H$ plus  continuation of hyperbolic singular points and cycles with respect to the parameter. The edges of $LMF(V)$ that are partly inside $U$ and partly outside it will be one of our main concerns.

\section{Properties of large bifurcation supports and moderate topological equivalence}

\subsection{Large bifurcation support}
\label{sec-prop-LBS}
In this section, we list the fundamental properties of the set $LBS(V)$ described above. These are the only properties we are going to use in the proofs. 

\begin{enumerate}
\item $LBS(V)$ is a closed $v_0$-invariant set (Proposition \ref{prop-LBS-closed-vinv}).
\item Hyperbolic singular points and hyperbolic limit cycles of $v_0$ do not belong to $LBS(V)$.  All non-hyperbolic singular points and  non-hyperbolic cycles of $v_0$ belong to $LBS(V)$ (Proposition \ref{prop-nohyp-inLBS}).
\item Non-interesting non-hyperbolic cycles of $v_0$ are connected components of $LBS(V)$ (Remark  \ref{rem-nohyp-inLBS}).
\item Sep-property (Proposition \ref{prop-seps-v0}) and Separatrix lemma (Lemma \ref{lem-sep-connect}).
\item No-entrance property (Lemma \ref{lem-seps}).
\item No cycles of mixed location (Proposition \ref{prop-inU-or-hyp}).
\item Moderate topological equivalence of two families $V, W$ implies continuity of conjugacy on $\{\eps=0\}\times \partial LBS(V)$, $\{\eps=0\}\times \partial LBS(W)$ (Proposition \ref{prop-dLBS})
\end{enumerate}

Now we pass to the exact statements.
\subsubsection{$LBS(V)$ is closed and invariant}

\begin{proposition}
\label{prop-LBS-closed-vinv}
	If $V\subset Vect^*\,S^2$, then  both $ELBS(v_0)$ and $LBS(V)$ are closed  $v_0$-invariant sets.
\end{proposition}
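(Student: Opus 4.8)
The plan is to prove the two assertions separately: first that $ELBS(v_0)$ and $LBS(V)$ are $v_0$-invariant, then that they are closed.

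For invariance, recall that $ELBS(v_0)$ is by Definition~\ref{def:elbs} the union of all non-hyperbolic singular points, all non-hyperbolic limit cycles, and the closure of the set $N$ of non-singular points whose $\alpha$- and $\omega$-limit sets are both interesting. The first two pieces are obviously $v_0$-invariant (a union of whole singular trajectories and whole cycles). The set $N$ is $v_0$-invariant because the $\alpha$- and $\omega$-limit sets of a point depend only on its trajectory, not on the point chosen on it; hence its closure $\overline N$ is $v_0$-invariant as well, since the closure of an invariant set under a flow on a compact manifold is invariant (the time-$t$ map is a homeomorphism, so it commutes with closure). Thus $ELBS(v_0)$ is $v_0$-invariant. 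For $LBS(V) = ELBS(v_0)\cap\bigl(\Sing v_0\cup(\overline{\Per V}\cup\overline{\Sep V})\cap\{\eps=0\}\bigr)$, I would check that the second factor is $v_0$-invariant: $\Sing v_0$ is invariant; $\Per V\cap\{\eps=0\}=\Per v_0$ is a union of whole cycles, and $\Sep V\cap\{\eps=0\}$ contains $\Sep v_0$ which is a union of whole separatrices — but more care is needed because $\overline{\Per V}$ and $\overline{\Sep V}$ involve limits as $\eps\to 0$, so the fiber $(\overline{\Per V}\cup\overline{\Sep V})\cap\{\eps=0\}$ may be strictly larger than $\overline{\Per v_0}\cup\overline{\Sep v_0}$. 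The key point is that this fiber is still $v_0$-invariant: if $x_\eps\in\Per v_\eps\cup\Sep v_\eps$ with $(\eps,x_\eps)\to(0,x)$, then applying the time-$t$ flow $g^t_{v_\eps}$ of $v_\eps$ we get $g^t_{v_\eps}(x_\eps)\in\Per v_\eps\cup\Sep v_\eps$ (flowing a cycle point stays on the cycle, flowing a separatrix point stays on the separatrix), and by continuous dependence of the flow on the vector field and on initial data, $g^t_{v_\eps}(x_\eps)\to g^t_{v_0}(x)$; hence $g^t_{v_0}(x)$ lies in the fiber. Intersection of two $v_0$-invariant sets is $v_0$-invariant, so $LBS(V)$ is $v_0$-invariant.

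For closedness, $LBS(V)$ is an intersection of $ELBS(v_0)$ with a set that is manifestly closed (a finite union / intersection of closures), so it suffices to show $ELBS(v_0)$ is closed. But $ELBS(v_0)$ is a finite union of the following: the set of non-hyperbolic singular points (finite, hence closed, since $v_0\in Vect^*\,S^2$ has isolated singular points and there are finitely many of them on the compact sphere); the union of non-hyperbolic limit cycles (finitely many cycles, each compact, so closed); and the \emph{closure} $\overline N$, which is closed by construction. A finite union of closed sets is closed, so $ELBS(v_0)$ is closed, and therefore $LBS(V)$ is closed.

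The main obstacle I anticipate is the invariance of the fiber $(\overline{\Per V}\cup\overline{\Sep V})\cap\{\eps=0\}$ under $v_0$: one must be careful that a point of this fiber need not lie on a cycle or separatrix of $v_0$ itself (as the node example in the Remark after Definition~\ref{def-moderate-local} shows), so the argument genuinely has to pass through the approximating cycles/separatrices of $v_\eps$ and invoke continuous dependence of solutions on both the initial condition and the parameter — a compactness argument on a fixed time interval $[-t,t]$ handles this cleanly, and the conclusion follows since $t$ is arbitrary. Everything else is routine point-set topology once the pieces of the definitions are unwound.
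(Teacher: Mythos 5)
Your proof is correct and follows essentially the same route as the paper, which simply decomposes $ELBS(v_0)$ and $LBS(V)$ into pieces and observes that each is closed and $v_0$-invariant. The one place where you supply genuinely more detail than the paper is the invariance of the fiber $(\overline{\Per V}\cup\overline{\Sep V})\cap\{\eps=0\}$: the paper just asserts it is closed and $v_0$-invariant, whereas you correctly flag that points of this fiber need not lie on cycles or separatrices of $v_0$ itself, and then close the gap by flowing the approximating points of $v_{\eps}$ and using continuous dependence on parameter and initial condition over a compact time interval. That is precisely the right argument and the paper leaves it implicit.
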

\begin{proof}

The set of points with interesting $\alpha$- and $\omega$-limit sets under $v_0$ is $v_0$-invariant. Thus its closure is closed and $v_0$-invariant. So  $ELBS(v_0)$ is closed and $v_0$-invariant.
The set $(\Sing v_0\cup (\overline {\Per V} \cup \overline{\Sep\, V}) \cap \{\eps=0\})$ is closed and $v_0$-invariant. The set $LBS(V)$ is closed and $v_0$-invariant as the intersection of two closed and $v_0$-invariant sets.

\end{proof}

Though the topology of $LBS(V)$ may be complicated, it has finitely many connected components due to the following proposition. 
    \begin{proposition}
\label{lem-LBS-finite}
        If $v\in Vect^*\,S^2$, then each closed $v$-invariant set $A\subset S^2$ has finitely many connected components.

\end{proposition}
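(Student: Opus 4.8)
The plan is to exploit the fact that $v \in Vect^*\,S^2$ has only finitely many singular points and finitely many limit cycles, together with the Markus--Neumann--Peixoto picture of the phase portrait, to show that a closed invariant set $A$ cannot have infinitely many components. First I would recall that, by Proposition \ref{prop-canonreg-common-limset}, inside each canonical region the $\alpha$- and $\omega$-limit sets are constant, and by Proposition \ref{prop-parall} each canonical region is a strip flow or a spiral flow; moreover $S(v) = \Sing v \cup \Per v \cup \Sep v$ has only finitely many pieces (finitely many singular points, finitely many cycles, and finitely many separatrices — the latter because each singular point has a finite sectorial decomposition by Theorem \ref{thm:char-pt}, so finitely many hyperbolic sectors and hence finitely many separatrices). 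Consequently $S^2 \setminus S(v)$ has finitely many connected components, i.e. finitely many canonical regions.

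The key observation is that a closed $v$-invariant set $A$, when intersected with a single canonical region $R$, has a very restricted form. Since $A$ is invariant, $A \cap R$ is a union of whole orbits of $v|_R$. If $R$ is a strip flow $\dot x = 1, \dot y = 0$ on $\bbR^2$, its orbits are the horizontal lines $\{y = c\}$, and $A \cap R$ corresponds to a closed subset $C$ of the $y$-axis; but because $A$ is closed \emph{in $S^2$} and invariant, the closure in $S^2$ of any orbit $\{y=c\}$ with $c$ a non-isolated point of $C$ would force nearby orbits into $A$ as well, via the limit sets at the two ends of the strip — I would argue that the only way $A \cap R$ can fail to be a finite union of orbits is if it contains a whole sub-strip $\{y \in [c_1,c_2]\}$, in which case that contributes just one component to $A$ locally. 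The analogous dichotomy holds for a spiral flow. The upshot is: for each canonical region $R$, the set $A \cap \overline{R}$ has finitely many connected components, and likewise $A \cap S(v)$ has finitely many components since $S(v)$ is a finite union of points, cycles (each homeomorphic to a circle or a point, as a truncated separatrix) and arcs, and $A$ restricted to each such arc is a closed subset of an interval which — again by invariance and closedness of $A$ in $S^2$ — must be a finite union of points and subintervals.

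Finally I would assemble the global count: $S^2 = S(v) \cup \bigcup_i \overline{R_i}$ is a finite union of closed pieces on each of which $A$ has finitely many components, and two components of $A$ lying in different pieces either coincide (if they meet along shared boundary) or are distinct, so the total number of components of $A$ is at most the sum of the counts over the finitely many pieces. The main obstacle I anticipate is making the "closed + invariant forces whole sub-strips" step fully rigorous: one must rule out, e.g., a closed invariant set meeting a strip flow in a Cantor set of horizontal lines, and the reason this cannot happen for a set closed in $S^2$ is that the two ends of each orbit accumulate on the boundary limit sets of $R$, and controlling how a Cantor set of orbits limits onto $S(v)$ — which has only finitely many components — is what does the work. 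I would handle this by passing to a transversal segment $\tau$ to the flow in $R$ and noting $A \cap \tau$ is closed; if it has infinitely many components then it has a non-isolated point $p$, and the orbit through $p$ together with orbits through a sequence $p_n \to p$ all lie in $A$, whose closure then contains a limit set shared by all these orbits, and tracking this back shows the whole interval between the relevant orbits lies in $A$ — contradicting the assumption that these were distinct components unless finitely many occurred. This reduces everything to the finiteness of $\Sing v$, $\Per v$, and $\Sep v$, which is built into the definition of $Vect^*\,S^2$ and Theorem \ref{thm:char-pt}.
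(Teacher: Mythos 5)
Your overall strategy (decompose $S^2$ into $S(v)$ and finitely many canonical regions, bound the number of components of $A$ in each piece, sum) is workable in principle, but the central step is argued incorrectly, and the step you yourself flag as the main obstacle is the one that fails. It is simply not true that a closed $v$-invariant set $A$ must meet a strip canonical region $R$ in a finite union of orbits plus whole sub-strips. Take any strip flow and a Cantor set $C$ of values of $y$; the union of the orbits $\{y=c\}$, $c\in C$, together with the common $\alpha$- and $\omega$-limit sets of $R$, is closed, $v$-invariant, and contains no sub-strip. Your proposed transversal argument does not repair this: from a non-isolated point $p$ of $A\cap\tau$ and orbits through $p_n\to p$, you conclude ``tracking this back shows the whole interval between the relevant orbits lies in $A$'' -- but nothing forces that; closedness of $A$ only gives you the shared limit set at the ends of the strip, not the interior of the strip between orbits. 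So the dichotomy ``finite union of orbits or a whole sub-strip'' on which your count of components of $A\cap\overline R$ rests is false.

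The reason the proposition is nonetheless true is the observation you circle around but never quite land on: you should count components of $A$ itself, not of $A\cap R$. The paper's proof is a two-line version of exactly this. Each connected component of a closed set is closed, and each connected component of a $v$-invariant set is $v$-invariant (an orbit is connected, so it stays in one component); hence every component of $A$ is closed and invariant, so it contains a full orbit together with its $\alpha$- and $\omega$-limit sets. By Poincar\'e--Bendixson every such limit set contains a singular point or a cycle of $v$, and $v\in Vect^*\,S^2$ has only finitely many of each. Since the components of $A$ are pairwise disjoint, there can be at most finitely many. Notice this handles your Cantor-set example automatically: all of those orbits share the limit sets of $R$, which lie in $A$, so they are all swallowed into a single connected component of $A$ -- no need to rule the configuration out. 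Compared with your route, the paper's argument also avoids the two other soft spots in your write-up, namely the claim that $A$ meets each arc of $S(v)$ in finitely many pieces (again not obvious for an arbitrary closed subset of an arc) and the final ``sum the counts over pieces'' step, which requires care when components are split across piece boundaries.
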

In particular, this holds for $A=LBS(V)$.
\begin{proof}
If $A$ is closed and $v$-invariant, then each its connected component is closed and $v$-invariant.
Hence each connected component of $A$ contains trajectories of $v$ together with their $\omega$- and $\alpha$-limit sets.  Due to Poincare-Bendixson theorem, each $\alpha$- and $\omega$-limit set contains either a singular point of $v$, or a cycle of $v$. However, each vector field $v\in Vect^* \, S^2$ has finitely many  singular points and cycles. Each connected component of $A$ contains at least one of them. Thus the number of connected components is finite.
\end{proof}
\subsubsection{$\alpha$- and $\omega$-limit sets in $LBS(V)$}
The next proposition follows immediately from the definition of $LBS(V)$. 
\begin{proposition}
\label{prop-nohyp-inLBS}
 Large bifurcation support $LBS(V)$ does not contain hyperbolic attractors, hyperbolic repellers, or hyperbolic cycles of $v_0$. It contains all non-hyperbolic singular points, non-hyperbolic cycles, and all separatrix connections of $v_0$.
\end{proposition}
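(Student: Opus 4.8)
The statement to prove is Proposition~\ref{prop-nohyp-inLBS}, which the paper itself describes as following ``immediately from the definition of $LBS(V)$''. So the plan is simply to unwind the definitions of $LBS(V)$, $ELBS(v_0)$, and the interesting/non-interesting dichotomy, and check each clause.

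\medskip

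The plan is to argue clause by clause, using Definition~\ref{def:lbs}, namely $LBS(V) = ELBS(v_0) \cap (\Sing v_0 \cup (\overline{\Per V} \cup \overline{\Sep V}) \cap \{\eps=0\})$, together with Definition~\ref{def:elbs} for $ELBS(v_0)$.

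First I would treat the \emph{exclusion} claims. A hyperbolic attractor or repeller $P$ is a hyperbolic singular point, hence not among the non-hyperbolic singular points collected in $ELBS(v_0)$; moreover $\{P\}$ is its own canonical-region-free limit set, and as a point of $S^2$ it is not in the closure of the set of nonsingular points with interesting $\alpha$- and $\omega$-limit sets (a hyperbolic sink, say, has a whole open basin of nonsingular points whose $\omega$-limit set is that very sink, which is non-interesting by definition, so nothing interesting accumulates at $P$ from that side; and $P$ itself is singular). Hence $P\notin ELBS(v_0)$, so a fortiori $P\notin LBS(V)$. For a hyperbolic cycle $C$: by the Remark after Definition~\ref{def-nic} hyperbolic cycles are non-interesting, so $C$ is not a non-hyperbolic cycle, and $C$ is not in the closure of nonsingular points with both limit sets interesting — the trajectories spiraling onto $C$ have $C$ (a non-interesting cycle) as one of their limit sets. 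Thus $C\cap ELBS(v_0)=\emptyset$ and $C\not\subset LBS(V)$. Here one should be slightly careful: I want to invoke Proposition~\ref{prop-dELBS}, which says each canonical region is either entirely inside $ELBS(v_0)$ or disjoint from it, to conclude the exclusion holds for whole cycles/points and not just generic points of them.

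\medskip

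Next the \emph{inclusion} claims. Let $P$ be a non-hyperbolic singular point of $v_0$. By Definition~\ref{def:elbs}, $P\in ELBS(v_0)$ directly, since $ELBS(v_0)$ contains the union of all non-hyperbolic singular points. Also $P\in \Sing v_0$, so $P$ lies in the second factor of the intersection in Definition~\ref{def:lbs}; hence $P\in LBS(V)$. Similarly, a non-hyperbolic limit cycle $C$ of $v_0$ lies in $ELBS(v_0)$ (which contains all non-hyperbolic limit cycles by Definition~\ref{def:elbs}), and $C\subset \overline{\Per V}\cap\{\eps=0\}$ since $C$ consists of limit cycles of $v_0=v_\eps|_{\eps=0}$; so $C\subset LBS(V)$. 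Finally, let $\gamma$ be a separatrix connection of $v_0$, say simultaneously a stable separatrix of $P_1$ and an unstable separatrix of $P_2$. Then $\gamma \subset \Sep v_0 \subset \overline{\Sep V}\cap\{\eps=0\}$, so $\gamma$ lies in the second factor; it remains to show $\gamma \subset ELBS(v_0)$. Points of $\gamma$ are nonsingular (a separatrix is a phase curve, not a point), and $\omega(\gamma)=P_1$, $\alpha(\gamma)=P_2$ are singular points that have a hyperbolic sector (being endpoints of a separatrix), hence are characteristic points that are neither hyperbolic repellers/attractors nor limit cycles, hence are \emph{interesting} in the sense of the definition preceding Section~\ref{subsub:def-LBS}. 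Wait — I must double-check: ``interesting'' means \emph{not} a hyperbolic repeller/attractor and \emph{not} a non-interesting cycle. A singular point with a hyperbolic sector is not a hyperbolic attractor or repeller (those have only a parabolic-type structure, no hyperbolic sector), and it is not a cycle at all, so it is interesting. Therefore every point of $\gamma$ is a nonsingular point with both $\alpha$- and $\omega$-limit sets interesting, so $\gamma$ lies in the set whose closure is part of $ELBS(v_0)$; hence $\gamma\subset ELBS(v_0)$, and combining, $\gamma\subset LBS(V)$.

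\medskip

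The only ``obstacle'' is purely bookkeeping: making sure the interesting/non-interesting classification of the relevant limit sets is applied correctly at the boundary cases — in particular that the endpoints of a separatrix connection genuinely fall under ``interesting'' (they do, because a point possessing a hyperbolic sector can be neither a hyperbolic node/focus nor a cycle), and that for the exclusion of hyperbolic cycles and hyperbolic attractors/repellers one genuinely needs Proposition~\ref{prop-dELBS} to upgrade ``a generic point is excluded'' to ``the whole object is excluded''. Beyond that the proof is a direct set-theoretic unwinding of Definitions~\ref{def:elbs} and~\ref{def:lbs}, with no analysis required. I would write it as four short paragraphs, one per clause: (i) hyperbolic sinks/sources excluded; (ii) hyperbolic cycles excluded; (iii) non-hyperbolic singular points and cycles included; (iv) separatrix connections included.
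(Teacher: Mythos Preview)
Your proposal is correct and is precisely the approach the paper has in mind: the paper gives no proof beyond the remark that the proposition ``follows immediately from the definition of $LBS(V)$'', and your clause-by-clause unwinding of Definitions~\ref{def:elbs} and~\ref{def:lbs} is exactly that immediate verification. The invocation of Proposition~\ref{prop-dELBS} is not strictly necessary (a full neighborhood of a hyperbolic attractor/repeller/cycle consists of points with that object as a non-interesting limit set, so nothing in the interesting set can accumulate there), but it does no harm.
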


Note that due to Poincare-Bendixson theorem, $\alpha$- and $\omega$-limit sets are singular points, limit cycles, and monodromic polycycles. Since monodromic polycycles are formed by separatrix connections, they belong to $LBS(V)$. This implies the following remark.

\begin{remark}
\label{rem-nohyp-inLBS}
     All interesting $\alpha$-, $\omega$-limit sets of $v_0$ except some saddles belong to $LBS(V)$. All non-interesting $\alpha$-, $\omega$-limit sets of $v_0$ except non-hyperbolic non-interesting cycles belong to its complement.
\end{remark}
 We will also need the following proposition.

  \begin{proposition}
  \label{prop-nonint-compon}
   For families $V$ with  $v_0\in Vect^* \, S^2$, non-hyperbolic non-interesting cycles are connected components of $LBS(V)$.
\end{proposition}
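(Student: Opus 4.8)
The plan is to show that a non-hyperbolic non-interesting cycle $C$ of $v_0$ is both open and closed as a subset of $LBS(V)$, i.e. that a small annular neighborhood of $C$ meets $LBS(V)$ only in $C$ itself. Since $LBS(V)$ is closed (Proposition \ref{prop-LBS-closed-vinv}), and $C\subset LBS(V)$ by Proposition \ref{prop-nohyp-inLBS}, it suffices to produce a neighborhood $A$ of $C$ in $S^2$ with $A\cap LBS(V) = C$. I would split the argument according to the two clauses of Definition \ref{def-nic}: either the nest $N$ of $C$ contains an attracting or repelling cycle, or every cycle of $N$ is semi-stable and the inner (resp. outer) region bounded by the innermost (resp. outermost) cycle of $N$ contains exactly one hyperbolic singular point, an attractor or a repeller by the index theorem.

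First I would describe the dynamics near $N$. By the definition of a nest and the remark following it, the closure $\overline N$ is a closed annulus (or disc, in the degenerate cases where a bounding cycle collapses) whose interior annuli between consecutive cycles are canonical regions, filled with trajectories winding from one cycle to the next. On the two sides of $\overline N$ lie two more canonical regions: the points just outside the outer cycle have that cycle as $\alpha$- or $\omega$-limit set, and symmetrically for the inner cycle. In case \eqref{it-nonsemist}, the attracting (or repelling) cycle in $N$ forces all of $N$ to wind towards it, so the two boundary canonical regions of $N$ have that attracting/repelling cycle — a non-interesting $\alpha$/$\omega$-limit set — as one of their limit sets; hence every point in a suitable annular neighborhood $A\supset \overline N$ has at least one non-interesting limit set, so by Definition \ref{def:elbs} none of those points except the ones on the cycles of $N$ lie in $ELBS(v_0)\supset LBS(V)$. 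But the cycles of $N$ other than $C$ are hyperbolic (attracting/repelling), hence non-interesting, hence not in $LBS(V)$ by Proposition \ref{prop-nohyp-inLBS}. Wait — this last point needs care: a nest may contain several non-hyperbolic cycles. So I would instead argue that in case \eqref{it-nonsemist} all cycles in $N$ are in fact hyperbolic: a $Vect^*\,S^2$ nest containing an attracting cycle and a distinct non-hyperbolic (semi-stable) cycle is impossible, because between them the canonical region must wind towards both, which is incompatible unless they coincide; thus in case \eqref{it-nonsemist} the only non-hyperbolic cycle is absent and $C$ does not fall in this case — so case \eqref{it-nonsemist} is vacuous for our $C$, and we are in case \eqref{it-semist}.

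In case \eqref{it-semist}, $N$ consists entirely of semi-stable cycles. I claim $N$ then has exactly one cycle, namely $C$: two distinct semi-stable cycles in one nest would bound a canonical region winding towards each, forcing them both attracting-from-inside and repelling-from-inside, a contradiction with semi-stability in $Vect^*\,S^2$. So $N=\{C\}$, the inner region $D_{\mathrm{in}}$ bounded by $C$ contains exactly one singular point $p$, a hyperbolic attractor or repeller, and symmetrically the outer region $D_{\mathrm{out}}$ contains exactly one hyperbolic source or sink $q$; moreover $D_{\mathrm{in}}\setminus\{p\}$ and $D_{\mathrm{out}}\setminus\{q\}$ are canonical regions (no separatrices, since the only singular points are attractors/repellers) whose points have limit sets $\{p, C\}$ and $\{q, C\}$ respectively — one interesting candidate $C$ and one non-interesting one. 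By Definition \ref{def:elbs}, such a point is in $ELBS(v_0)$ only if \emph{both} limit sets are interesting, which fails; so $D_{\mathrm{in}}\setminus\{p\}$ and $D_{\mathrm{out}}\setminus\{q\}$ are disjoint from $ELBS(v_0)\supset LBS(V)$, and $p,q$, being hyperbolic attractors/repellers, are not in $LBS(V)$ by Proposition \ref{prop-nohyp-inLBS} either. Hence $A := D_{\mathrm{in}}\cup C\cup D_{\mathrm{out}}$ is an open neighborhood of $C$ with $A\cap LBS(V)=C$, so $C$ is a connected component of $LBS(V)$.

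The main obstacle I anticipate is the bookkeeping around which cycles of a nest can be non-hyperbolic: the clean statement ``a nest in $Vect^*\,S^2$ contains at most one non-hyperbolic cycle, and if it contains a hyperbolic attractor/repeller it contains no non-hyperbolic cycle'' is intuitively clear from the structure of canonical regions between consecutive cycles (each such annulus winds monotonically from one boundary cycle to the other, which pins down the stability type each cycle presents to each side), but writing it carefully — distinguishing the stability type a cycle shows to its inner side versus its outer side, and ruling out the configurations by the fact that an annular canonical region cannot wind towards the same cycle from both of its boundary components — is where the care is needed. Everything else is a direct application of Proposition \ref{prop-canonreg-common-limset} and Definition \ref{def:elbs}.
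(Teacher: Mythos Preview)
Your argument contains several genuine errors, all stemming from trying to analyze the whole nest rather than a small neighborhood of $C$ alone.

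The claim that case \eqref{it-nonsemist} is vacuous for a non-hyperbolic $C$ is false: a nest can perfectly well contain a hyperbolic attracting cycle $c_1$ together with a semi-stable cycle $c_2$. The annulus between them winds toward $c_1$ in forward time and toward $c_2$ in backward time; there is no ``incompatibility'', because ``winding toward both'' happens in opposite time directions. The same confusion invalidates your claim that in case \eqref{it-semist} the nest reduces to a single cycle: two semi-stable cycles can coexist in one nest, each attracting on one side and repelling on the other, with the intermediate annulus flowing monotonically from one to the other. Finally, Definition \ref{def-nic}\eqref{it-semist} only asserts that \emph{one} of the two complementary discs contains a single hyperbolic point (``inside the inner cycle \emph{or} outside the outer one''), not both; your description of $D_{\mathrm{out}}$ is unwarranted.

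The paper's proof sidesteps all of this by staying local. Take a small annular neighborhood $A$ of $C$ containing no other cycles or singular points of $v_0$. Every $x\in A\setminus C$ has $C$ itself as its $\alpha$- or $\omega$-limit set (on each side of $C$, nearby trajectories wind toward $C$ in one time direction). Since $C$ is a non-interesting limit set, $x$ lies in an open canonical region whose points fail the ``both limit sets interesting'' test, hence $x\notin ELBS(v_0)\supset LBS(V)$; therefore $A\cap LBS(V)=C$. You were essentially there in your first paragraph before the ``Wait''; the misstep was enlarging $A$ to contain the whole nest $\overline N$, which introduced the extra cycles you then tried, unsuccessfully, to exclude.
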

\begin{proof}
A neighborhood of a cycle is filled by points whose semi-trajectories (in positive or negative time) wind around this cycle. However a point whose semi-trajectory winds around a non-interesting cycle does not belong to $LBS(V)$, due to the definition of $ELBS(v_0)$. So the intersection of $LBS(V)$ with  a neighborhood of a non-interesting cycle is this cycle only.
\end{proof}
This motivates the following definition.
\begin{definition}
Denote $LBS^*(V) = LBS(V) \setminus \{\text{non-interesting cycles of }v_0\}$.
\end{definition}
The set $LBS^*(V)$ is closed and $v_0$-invariant, due to the previous proposition.

\subsubsection{Sep-property}
\begin{proposition}
\label{prop-seps-v0}
    Suppose that for an unstable  separatrix $\gamma$ of $v_0$, $\omega_{v_0}(\gamma)$ intersects $LBS^*(V)$; equivalently, $\gamma$ hits arbitrarily small neighborhood of $LBS^*(V)$.  Then $\gamma\subset LBS(V)$.

     The same statement holds for stable separatrices and $\alpha$-limit sets.
\end{proposition}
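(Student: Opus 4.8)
The plan is to show that if $\omega_{v_0}(\gamma)$ meets $LBS^*(V)$, then the whole separatrix $\gamma$ already lies in $LBS(V)$. First I would establish the stated equivalence: since $\omega_{v_0}(\gamma)$ is the $\omega$-limit set of a separatrix and $LBS^*(V)$ is closed (hence compact on $S^2$) and $v_0$-invariant, the condition ``$\omega_{v_0}(\gamma) \cap LBS^*(V) \neq \emptyset$'' is equivalent to ``$\gamma$ enters every neighborhood of $LBS^*(V)$'' — in one direction because points near $\omega_{v_0}(\gamma)$ are accumulated by the forward orbit of $\gamma$, in the other because $\overline{\gamma} \setminus \gamma = \omega_{v_0}(\gamma) \cup \{P\}$ where $P$ is the singular point emitting $\gamma$, and $P \in LBS(V)$ is automatic if $\gamma$ is a genuine separatrix (Proposition~\ref{prop-nohyp-inLBS} covers the non-hyperbolic case; for a hyperbolic saddle, $P \in LBS(V)$ precisely because one of its unstable separatrices has interesting $\omega$-limit set, which is what we are about to verify).

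The core of the argument is to identify $\omega_{v_0}(\gamma)$. By the Poincar\'e--Bendixson theorem it is a singular point, a limit cycle, or a monodromic polycycle. I would argue that in each case it is an \emph{interesting} $\omega$-limit set. If $\omega_{v_0}(\gamma)$ is a monodromic polycycle, it is made of separatrix connections, hence lies in $LBS(V)$; and a polycycle is never on the ``non-interesting'' list, so it is interesting. If $\omega_{v_0}(\gamma)$ is a non-hyperbolic cycle, it lies in $LBS(V)$ by Proposition~\ref{prop-nohyp-inLBS}; moreover it cannot be a non-interesting cycle, because those are connected components of $LBS(V)$ (Proposition~\ref{prop-nonint-compon}), whereas $\omega_{v_0}(\gamma)$ here is required to meet $LBS^*(V) = LBS(V) \setminus \{\text{non-interesting cycles}\}$ — if $\omega_{v_0}(\gamma)$ were a non-interesting cycle it would be disjoint from $LBS^*(V)$, contradicting the hypothesis. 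A hyperbolic cycle or a hyperbolic sink is likewise disjoint from $LBS(V)$ and hence from $LBS^*(V)$, again contradicting the hypothesis; the same reasoning rules out $\omega_{v_0}(\gamma)$ being a hyperbolic repeller or a non-interesting limit set of any kind. So the hypothesis forces $\omega_{v_0}(\gamma)$ to be an interesting $\omega$-limit set, and in fact one contained in $LBS(V)$.

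It remains to push $\gamma$ itself into $LBS(V)$. Consider a non-singular point $x \in \gamma$. Its $\omega$-limit set equals $\omega_{v_0}(\gamma)$, which we have just shown is interesting. Its $\alpha$-limit set is $\{P\}$, the singular point emitting the unstable separatrix $\gamma$. If $P$ is non-hyperbolic it is interesting and lies in $ELBS(v_0)$; if $P$ is a hyperbolic saddle, then $\{P\}$ as an $\alpha$-limit set of $x$ is — hmm, a single hyperbolic saddle is not on the ``non-interesting'' list (which contains only hyperbolic repellers/attractors and non-interesting cycles), hence it counts as interesting. Thus $x$ has both $\alpha$- and $\omega$-limit sets interesting, so $x \in ELBS(v_0)$ by definition of the extra large bifurcation support, and therefore $\overline{\gamma} \setminus \{\text{endpoints}\} \subset ELBS(v_0)$; taking closure, $\overline{\gamma} \subset ELBS(v_0)$ since $ELBS(v_0)$ is closed. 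Finally, $\gamma$ is a separatrix of $v_0$, so $\gamma \subset \Sep v_0 \subset \Sep V \cap \{\eps = 0\} \subset \overline{\Sep V} \cap \{\eps=0\}$, and its endpoint singular points lie in $\Sing v_0$; hence $\overline{\gamma} \subset \Sing v_0 \cup (\overline{\Per V} \cup \overline{\Sep V}) \cap \{\eps=0\}$. Intersecting the two containments gives $\overline{\gamma} \subset ELBS(v_0) \cap \bigl(\Sing v_0 \cup (\overline{\Per V} \cup \overline{\Sep V}) \cap \{\eps=0\}\bigr) = LBS(V)$, as desired. The statement for stable separatrices and $\alpha$-limit sets follows by reversing time.

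I expect the main obstacle to be the careful case analysis identifying $\omega_{v_0}(\gamma)$ as interesting — in particular the interplay between ``meets $LBS^*(V)$'' and the fact that non-interesting cycles have been surgically removed from $LBS(V)$ to form $LBS^*(V)$; one must be sure that a non-interesting cycle genuinely cannot appear as $\omega_{v_0}(\gamma)$ under the hypothesis, and that the equivalence with ``$\gamma$ hits every neighborhood of $LBS^*(V)$'' is not spoiled by the singular endpoint $P$ when $P$ happens to be, say, a hyperbolic saddle that is itself not forced into $LBS(V)$ a priori.
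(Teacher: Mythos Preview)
Your argument is correct and follows essentially the same route as the paper's proof: use Poincar\'e--Bendixson to classify $\omega_{v_0}(\gamma)$, verify it is an interesting limit set (polycycles always are; singular points and cycles meeting $LBS^*(V)$ are interesting since non-interesting ones have been excluded), note that $\alpha_{v_0}(\gamma)$ is the singular point emitting $\gamma$ and hence interesting, conclude $\gamma\subset ELBS(v_0)$, and finish via $\gamma\subset\Sep v_0$. The paper is considerably terser, invoking Remark~\ref{rem-nohyp-inLBS} in one line rather than your explicit case split, and it does not pause to justify the parenthetical equivalence with ``$\gamma$ hits arbitrarily small neighborhoods of $LBS^*(V)$''; your extra care there (including the worry about the endpoint $P$) is a genuine clarification, though tangential to the main implication being proved.
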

\begin{proof}
Let $\gamma$ be the separatrix mentioned in the lemma. Suppose that it is unstable; the case of stable separatrices is treated in the same way. By the Poincare-Bendixson theorem, the set $\omega_{v_0}(\gamma)$ may be a singular point, a limit cycle or a polycycle. 

Prove that $\omega_{v_0}(\gamma)$ is interesting. Indeed, all polycycles are interesting limit sets, and all singular points and limit cycles in $LBS^*(V)$  are also interesting due to Remark \ref{rem-nohyp-inLBS}. 

Since $\gamma$ is an unstable separatrix, $\alpha_{v_0}(\gamma)$  is a saddle; so it is  interesting by definition.
We conclude that both $\alpha_{v_0}(\gamma)$ and $\omega_{v_0}(\gamma)$  are interesting. Hence $\gamma \subset ELBS(v_0) $. Since $\gamma \subset \Sep (v_0)$, it belongs to $LBS(V)$. \end{proof}

\subsubsection{Separatrix lemma}
Recall that the upper topological limit $\overline \lim A_k$ of a sequence of sets $A_k$ in a topological space is a set of points $x$ such that any neighborhood of $x$ intersects infinitely many of $A_k$; in other words, this is the set of all limit points of the sequence $A_k$.
\begin{lemma}[Separatrix lemma]
\label{lem-sep-connect}
 Let $\gamma_k$  be separatrices of vector fields $v_{\eps_k}$ that connect two interesting singular points, and $\eps_k\to 0$. Then
 $\overline \lim \gamma_k \subset LBS^*(V)$. The same holds for stable separatrices.

 In particular, all separatrix connections of $v_{\eps}$ for small $\eps$ are close to $LBS^*(V)$.
\end{lemma}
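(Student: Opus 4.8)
The plan is to show directly that every point $x$ of $L:=\overline{\lim}\,\gamma_k$ lies in $LBS^*(V)$. Fix such an $x$ and pass to a subsequence so that $x_k\in\gamma_k$, $x_k\to x$, $\eps_k\to 0$; write $P_k=\alpha_{v_{\eps_k}}(\gamma_k)$, $Q_k=\omega_{v_{\eps_k}}(\gamma_k)$, both interesting singular points by hypothesis. Two facts are immediate. First, $(\eps_k,x_k)\in\Sep V$, so $(0,x)\in\overline{\Sep V}$, i.e.\ $x$ already lies in the "skeleton factor" $\Sing v_0\cup(\overline{\Per V}\cup\overline{\Sep V})\cap\{\eps=0\}$ of $LBS(V)$. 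Second, by continuous dependence of solutions of ODEs on initial conditions and parameters, together with $v_{\eps_k}$\nobreakdash-invariance of each $\gamma_k$, the set $L$ is closed and $v_0$\nobreakdash-invariant. Hence it remains to prove that $x\in ELBS(v_0)$ and that $x$ does not lie on a non-interesting cycle of $v_0$; in view of Proposition~\ref{prop-dELBS} ($\partial ELBS(v_0)\subset S(v_0)$, and each canonical region of $v_0$ is either contained in $ELBS(v_0)$ or disjoint from it) this splits into a statement about points of $L\setminus S(v_0)$ and a statement about points of $L\cap S(v_0)$.

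The key step, which I will isolate as a \emph{trapping claim}, is: $L$ meets the open basin of attraction (resp.\ repulsion) of no non-interesting $\omega$- (resp.\ $\alpha$-) limit set of $v_0$; in particular $L$ contains no hyperbolic sink, no hyperbolic source, no point of a hyperbolic cycle, and no point of a non-interesting cycle of $v_0$. I would prove this by contradiction. If $y\in L$ and, say, $\omega_{v_0}(y)$ is non-interesting (a hyperbolic sink, or a non-interesting cycle/nest in the sense of Definition~\ref{def-nic}), I construct a neighbourhood $W$ of $\omega_{v_0}(y)$ which for all small $\eps$ is forward-invariant for $v_\eps$ and in which every $v_\eps$\nobreakdash-trajectory has non-interesting $\omega$\nobreakdash-limit set --- the persisting hyperbolic sink, a nearby cycle, or the unique hyperbolic sink inside the inner (resp.\ outside the outer) cycle in the all-semistable case. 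Since the forward $v_0$\nobreakdash-orbit of $y$ enters $W$ in finite time, so does the forward $v_{\eps_k}$\nobreakdash-orbit of $y_k$ for large $k$, whence $Q_k=\omega_{v_{\eps_k}}(\gamma_k)=\omega_{v_{\eps_k}}(y_k)$ is non-interesting, contradicting the hypothesis on $\gamma_k$. Constructing $W$ is routine when $\omega_{v_0}(y)$ is a hyperbolic sink; when it is a non-interesting nest the nest may partly disappear under perturbation, and checking that every orbit that enters still ends up on a nearby cycle or at the persisting sink is exactly the "nothing interesting happens" heuristic behind Definition~\ref{def-nic}. I expect this non-interesting-nest analysis to be the main obstacle of the proof.

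Granting the trapping claim, I conclude as follows. Inside the closed $v_0$\nobreakdash-invariant set $L$, the Poincar\'e--Bendixson theorem gives, for each point, its orbit and both limit sets inside $L$. If $x\notin S(v_0)$, then $x$ lies in a canonical region, and by the trapping claim $\alpha_{v_0}(x)$ and $\omega_{v_0}(x)$ are both interesting (a polycycle is interesting by definition; a non-interesting singular point is a hyperbolic sink or source whose basin is excluded; a non-interesting cycle is excluded with its basin), so $x\in ELBS(v_0)$ by Definition~\ref{def:elbs}, and $x$ lies on no cycle. If $x\in S(v_0)$: a point on a hyperbolic or non-interesting cycle is excluded by the trapping claim, while a point on an interesting non-hyperbolic cycle lies in $ELBS(v_0)$ and is not removed in $LBS^*(V)$; a point on a separatrix $\sigma$ of $v_0$ is a nonsingular point of $L$, hence has both limit sets interesting by the trapping claim (this is essentially Proposition~\ref{prop-seps-v0}), so $\sigma\subset ELBS(v_0)$; a non-hyperbolic singular point is in $ELBS(v_0)$ by definition; a hyperbolic sink or source is excluded.

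The last case is $x$ a hyperbolic saddle of $v_0$. Here I would use the local normal form near a hyperbolic saddle and its uniformity under $C^1$\nobreakdash-small perturbations: since $x_k\to x$ with $x_k\in\gamma_k$ nonsingular, for large $k$ the orbit $\gamma_k$ either coincides near the persisting saddle $p_{\eps_k}$ with a stable separatrix arc of $p_{\eps_k}$, or its forward orbit leaves a small neighbourhood of $p_{\eps_k}$ shadowing one of the unstable separatrices of $p_{\eps_k}$ (pass to a subsequence to fix it). Letting $k\to\infty$ and taking longer and longer shadowed arcs, in the first case a full stable separatrix $\sigma^s$ of $x$ lies in $L$, so $\alpha_{v_0}(\sigma^s)$ is interesting by the trapping claim; in the second case a full unstable separatrix $\sigma^u$ of $x$ lies in $L$, so $\omega_{v_0}(\sigma^u)$ is interesting. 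By the explicit description of $ELBS(v_0)$ --- a hyperbolic saddle belongs to it precisely when one of its unstable (resp.\ stable) separatrices has an interesting $\omega$- (resp.\ $\alpha$-) limit set --- we get $x\in ELBS(v_0)$ in either case. This proves $L\subset LBS^*(V)$; the statement for stable separatrices follows by reversing time, and the final "in particular" is immediate from compactness of $S^2$, since a separatrix connection always joins two interesting singular points.
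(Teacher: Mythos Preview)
Your proof is correct and follows essentially the same strategy as the paper: the ``trapping claim'' is exactly the content of the paper's Cases~1 and~2, and your casework on $x\in S(v_0)$ versus $x\notin S(v_0)$ matches the paper's split into $x\in\Per v_0$, $x\notin\Sing v_0\cup\Per v_0$, $x\in\Sing v_0$.

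The one genuine difference is how you handle a hyperbolic saddle $x$. You run a local shadowing argument (normal form, subsequence, $\lambda$-lemma style ``longer and longer shadowed arcs'') to produce a full separatrix of $x$ inside $L$, then invoke the explicit description of which saddles lie in $ELBS(v_0)$. The paper avoids this entirely: since $L=\overline{\lim}\gamma_k$ is connected (as an upper limit of connected sets), either $L=\{x\}$, in which case the $\gamma_k$ collapse to $x$ and $x$ must be non-hyperbolic, or there are non-singular points of $L$ arbitrarily close to $x$; those lie in $LBS^*(V)$ by the already-established non-singular case, and then $x\in LBS^*(V)$ by closedness. This two-line reduction replaces your entire last paragraph. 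Your route works, but it does more than necessary; the paper's connectedness-plus-closedness trick is worth internalising, since it lets you treat \emph{all} singular points of $v_0$ uniformly without ever distinguishing saddles from non-hyperbolic points.
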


\begin{proof}
  Let $x$ belong to $\overline \lim \gamma_k$. Prove that $x\in LBS^*(V)$. Passing to a subsequence, we may and will assume that  $x=\lim_{k\to \infty} x_k$ where $x_k \in \gamma_{{k}}$.

 Consider three cases:

 \textbf{Case 1.} $x\in \Per(v_0)$.

 Assume that $x$ is not in $LBS^*(V)$; then it belongs to a non-interesting (parabolic or hyperbolic) cycle. In both cases, it is easy to see that either $\alpha$- or $\omega$-limit set of a close point $x_k$ under a close vector field $v_{\eps_k}$ is either a cycle that bifurcates from a non-interesting nest, or a non-interesting sink/source inside the nest. Both cases are impossible for separatrices $\gamma_{k}\ni x_k$, and the contradiction shows that $x\in LBS^*(V)$.

 \textbf{Case 2.} $x \notin (\Sing(v_0) \cup \Per(v_0)$).

 Suppose that $x$ has a non-interesting $\alpha$-limit set under $v_0$. Then a close point $x_k$ has a non-interesting $\alpha$-limit set under a close vector field $v_{\eps_k}$, which is impossible for separatrices $\gamma_k\ni x_k$. The contradiction shows that  $\alpha_{v_0}(x)$ is interesting; similarly, $\omega_{v_0}(x)$ is interesting. Since $x\notin \Sing (v_0)$, we conclude that $x\in ELBS(v_0)$. Since $x$ is a limit point of separatrices, $x\in \overline {\Sep V}$, so we have $x\in  LBS(V)$. Since $x$ does not belong to a limit cycle, $x\in LBS^*(V)$.

\textbf{ Case 3.} $x \in \Sing(v_0)$.

 The set $\overline \lim \gamma_k$ is connected as a limit of connected sets. If it coincides with $x$, then separatrices $\gamma_k$ of $v_{\eps_k}$ collapse to $x$, thus $x$ is non-hyperbolic; hence $x\in LBS^*(V)$. If $\overline \lim \gamma_k$ does not coincide with $x$, then arbitrarily close to $x$, there are non-singular limit points of $\gamma_k$. They all belong to $LBS^*(V)$ due to the previous case. Hence $x$ belongs to $LBS^*(V)$, because $LBS^*(V)$ is closed.
  \end{proof}

\subsubsection{No-entrance lemma}
 \begin{definition}
\label{def-entering}
For a vector field $v$, a separatrix $\gamma$ of a singular point $P$ \emph{does not enter} an open set $\Omega\subset S^2$ if one of the following holds:
 \begin{itemize}
  \item $\gamma$ does not intersect $\partial \Omega$;
  \item $P \in \Omega$ and the cross-point $\gamma\cap \partial \Omega$ is unique.
 \end{itemize}

\end{definition}

\begin{lemma}[No-entrance lemma]
        \label{lem-seps}
        In assumptions of the Main Theorem, there exists an arbitrarily small neighborhood $U^*$ of $LBS^*(V)$ such that for sufficiently small $\eps$, no separatrices of $v_{\eps}$ enter $U^*$.
    \end{lemma}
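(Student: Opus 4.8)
The plan is to build $U^*$ as a union of small neighborhoods of the connected components of $LBS^*(V)$, together with a controlled collection of neighborhoods of the $\alpha$- and $\omega$-limit sets that a separatrix of $v_0$ might have, and then argue by contradiction using a compactness argument in the spirit of the Separatrix lemma. First I would invoke Proposition~\ref{lem-LBS-finite} to fix that $LBS^*(V)$ has finitely many connected components, and recall from Proposition~\ref{prop-seps-v0} (Sep-property) the key fact: if an unstable separatrix $\gamma$ of $v_0$ has $\omega_{v_0}(\gamma)\cap LBS^*(V)\ne\varnothing$, then $\gamma\subset LBS(V)$, and since it is a separatrix hitting $LBS^*(V)$ it in fact lies in $LBS^*(V)$. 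The dual statement holds for stable separatrices and $\alpha$-limit sets. So for $v_0$ itself, the separatrices of $v_0$ split cleanly into those contained in $LBS^*(V)$ and those whose relevant limit set is disjoint from $LBS^*(V)$; the latter come arbitrarily close to $LBS^*(V)$ at most through one endpoint (a saddle that lies in $LBS^*(V)$), which is exactly the ``does not enter'' exception in Definition~\ref{def-entering}.

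The construction of $U^*$ would proceed as follows. For each connected component $K$ of $LBS^*(V)$, and for each $\delta>0$, consider the $\delta$-neighborhood $U_\delta$ of $LBS^*(V)$. I want to choose $\delta$ small enough that: (i) distinct components of $LBS^*(V)$ have disjoint $\delta$-neighborhoods; (ii) each non-interesting cycle of $v_0$ (which by Proposition~\ref{prop-nonint-compon} is a separate component of $LBS(V)$, hence not in $LBS^*(V)$) stays out of $U_\delta$; (iii) the only separatrices of $v_0$ meeting $\partial U_\delta$ and coming from outside are those whose saddle lies in $LBS^*(V)$ and which cross $\partial U_\delta$ exactly once — this uses that separatrices of $v_0$ not contained in $LBS^*(V)$ have both $\alpha$- and $\omega$-limit sets either disjoint from $LBS^*(V)$ or equal to a saddle in it, and a saddle's separatrix leaving its own neighborhood crosses a small enough loop once. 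One then needs to pass from $v_0$ to $v_\eps$: for small $\eps$, separatrices of $v_\eps$ that come near $LBS^*(V)$ must, by a compactness/upper-limit argument, accumulate onto separatrices (or separatrix connections) of $v_0$ lying in $LBS^*(V)$, or else produce a limit configuration contradicting the Separatrix lemma (Lemma~\ref{lem-sep-connect}) or the Sep-property. Concretely: if for a sequence $\eps_k\to 0$ there were separatrices $\gamma_k$ of $v_{\eps_k}$ entering $U_\delta$ in the prohibited way (two or more crossings of $\partial U_\delta$, or a crossing with the saddle outside $U_\delta$), then a subsequence of arcs of $\gamma_k$ would converge to a compact connected $v_0$-invariant piece touching $LBS^*(V)$ from both ``inside'' and ``outside,'' and unwinding what this limit can be — using Poincaré–Bendixson and the structure of $ELBS(v_0)$ via Proposition~\ref{prop-dELBS} ($\partial ELBS(v_0)\subset S(v_0)$) — forces a contradiction with the no-escape behavior encoded in the definition of non-interesting limit sets.

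The main obstacle, and the step I would spend the most care on, is (iii): controlling the separatrices of $v_\eps$ near the boundary $\partial U^*$ uniformly in $\eps$. Near a hyperbolic saddle of $v_0$ lying in $LBS^*(V)$ this is routine (hyperbolic saddles persist and their separatrices depend continuously), but the components of $LBS^*(V)$ may also contain non-hyperbolic singular points and monodromic polycycles, and near these the separatrices of $v_\eps$ can behave wildly — in particular they can spiral, and arbitrarily many separatrix connections of $v_\eps$ can accumulate on the boundary (as in the ``lips'' example). The resolution is to choose $U^*$ not as a metric neighborhood but adapted to the dynamics: around each interesting limit set on $\partial LBS^*(V)$ take the interior of a transversal loop (as in the LMF-graph construction), so that a separatrix can only cross $\partial U^*$ transversally and, once inside the annulus trapped by such a loop, it winds toward the limit set and cannot come back out. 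Then ``entering $U^*$'' in the forbidden sense would force a trajectory of $v_\eps$ to cross a transversal loop against the flow, which is impossible. Making this simultaneously work for all the (finitely many) boundary limit sets of $LBS^*(V)$, and checking that the resulting $U^*$ can still be taken arbitrarily small, is the crux; once it is set up, the no-entrance conclusion for small $\eps$ follows from transversality plus the finiteness of the components of $LBS^*(V)$.
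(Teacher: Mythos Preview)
Your strategy---construct $U^*$ with a controlled boundary, assume separatrices of $v_{\eps_k}$ enter it, pass to a limit point $p\in\partial U^*$, and derive a contradiction with $p\in LBS(V)$---matches the paper's. The gap is in the construction of $U^*$ and in how the contradiction is actually closed.

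The paper does not use a metric $\delta$-neighborhood nor merely transversal loops around interesting limit sets on $\partial LBS^*(V)$. It invokes the Boundary lemma (Lemma~\ref{lem-U-arcs-top}), a separate substantial result, which produces a neighborhood whose boundary components are classified into three types: Type~1 (a small disc outside $U^*$ through which the flow passes trivially), Type~2 (only outer tangencies; each maximal transversal subarc is crossed by a separatrix of $v_0|_\Omega$ and its points have a common $\alpha$- or $\omega$-limit set \emph{outside} $U^*$), and Type~3 (a transversal loop around a limit set contained in $LBS^*(V)$). Your suggestion of putting transversal loops around ``each interesting limit set on $\partial LBS^*(V)$'' captures only Type~3. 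It does not cover, for instance, the neighborhood of a chain of separatrix connections inside $LBS^*(V)$ bounding a hyperbolic sector: there the boundary of any small neighborhood must have tangencies with $v_0$, and separatrices can cross it transversally (Type~2 behavior). Without this classification your step~(iii) is only a heuristic.

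Once the Boundary lemma is available, the paper's contradiction is short and cleaner than the one you sketch. Take $p=\lim p_k$ on $\partial U^*$ where unstable separatrices of $v_{\eps_k}$ enter. Then $p\in\overline{\Sep V}\cap\{\eps=0\}$, so it suffices to show $p\in ELBS(v_0)$. That $\alpha_{v_0}(p)$ is interesting follows because otherwise nearby $p_k$ would have non-interesting $\alpha$-limit under $v_{\eps_k}$, impossible for unstable separatrices. For $\omega_{v_0}(p)$: since $\alpha_{v_0}(p)$ is interesting, Proposition~\ref{prop-Case2-nonint-ls} forces $p$ off Type~2 components, so $p$ lies on a Type~1 or Type~3 component; in both cases the forward $v_0$-orbit of $p$ stays in $\overline{U^*}$, hence $\omega_{v_0}(p)\subset\overline{U^*}$, which (by Remark~\ref{rem-nohyp-inLBS}) is interesting. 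Thus $p\in LBS(V)$, contradicting $p\in\partial U^*$. Your limiting-arc argument (``a compact connected $v_0$-invariant piece touching $LBS^*(V)$ from both inside and outside'') is trying to reach the same conclusion, but without the boundary classification you cannot rule out that the limit point sits on a Type~2 arc with a non-interesting $\omega$-limit outside $U^*$.
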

 The proof is postponed till Sec. \ref{ssec-lem-seps}. The statement does \emph{not} hold true for any sufficiently small neighborhood of $LBS^*(V)$. To use this statement, in Sec. \ref{sec-choice-U} we will have to restrict ourselves to special neighborhoods $U$ of $LBS(V)$ instead of all sufficiently small neighborhoods, even though we have moderate equivalence for all small neighborhoods of $LBS(V)$ (see Remark \ref{rem-shrink-U}).  

\subsubsection{No cycles of mixed location}
Each limit cycle of $v_\eps$ either lies in a neighborhood of  $LBS(V)$ or completely outside it. In more detail, we have the following proposition (it also treats singular points, which is analogous but simpler). 

\begin{proposition}
\label{prop-inU-or-hyp}
For any smooth local family $V \subset Vect^*\, S^2$ of vector fields and any small neighborhood $U$ of $LBS(V)$, for sufficiently small $\eps$, each singular point of $v_{\eps}$ is either inside $U$, or belongs to a continuous family $P_{\eps}, \eps \in (B, 0)$, of hyperbolic singular points of $v_{\eps}$ such that $P_0 \notin LBS(V)$.

Each limit cycle of $v_{\eps}$ is either inside $U$, or belongs to a continuous family $c_{\eps}, \eps \in (B,0)$, of hyperbolic limit cycles of $v_{\eps}$ such that $c_0$ does not belong to $LBS(V)$.
\end{proposition}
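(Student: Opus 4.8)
The plan is to prove the two assertions separately; the one about singular points is short, and the one about limit cycles carries the real content. Throughout I write by contradiction and pass to subsequences freely.

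\emph{Singular points.} Suppose the statement fails for some open $U\supset LBS(V)$: then there are $\eps_k\to 0$ and singular points $P_k$ of $v_{\eps_k}$ with $P_k\notin U$ that do \emph{not} belong to a continuous family of hyperbolic singular points with $\eps{=}0$-member outside $LBS(V)$. Since $S^2\setminus U$ is compact, after passing to a subsequence $P_k\to P_*\in S^2\setminus U$, and $v_0(P_*)=0$ by continuity of $V$. As $P_*\notin U\supset LBS(V)$, Proposition~\ref{prop-nohyp-inLBS} forces $P_*$ to be a \emph{hyperbolic} singular point of $v_0$. By the implicit function theorem there is a continuous family $P_\eps$, $\eps\in(B,0)$, of singular points of $v_\eps$ with $P_0=P_*$, hyperbolic and unique in a fixed neighborhood of $P_*$ for small $\eps$; for large $k$ then $P_k=P_{\eps_k}$, and $P_0=P_*\notin LBS(V)$ — a contradiction.

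\emph{Limit cycles: reduction to a Hausdorff limit.} Assume again the statement fails: there are $\eps_k\to 0$ and limit cycles $C_k$ of $v_{\eps_k}$ with $C_k\not\subset U$ that do not belong to a continuous family of hyperbolic limit cycles whose $\eps{=}0$-member lies outside $LBS(V)$. Let $\gamma_1,\dots,\gamma_m$ be the finitely many hyperbolic limit cycles of $v_0$; by the standard persistence of hyperbolic cycles, fix pairwise disjoint open annular neighborhoods $A_i\supset\gamma_i$, chosen positively (resp.\ negatively) invariant for $v_\eps$ when $\gamma_i$ is attracting (resp.\ repelling), so that for small $\eps$ the field $v_\eps$ has a unique limit cycle $\gamma_i(\eps)$ in $A_i$, hyperbolic, forming a continuous family with $\gamma_i(0)=\gamma_i\not\subset LBS(V)$ (Proposition~\ref{prop-nohyp-inLBS}). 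If $C_k\subset A_i$ for some $i$, then $C_k=\gamma_i(\eps_k)$ — contrary to the choice of $C_k$; so, discarding finitely many $k$, we may assume $C_k\not\subset A:=\bigcup_i A_i$. Passing to a subsequence, let $K:=\lim C_k$ in the Hausdorff metric. Then $K$ is a nonempty compact connected $v_0$-invariant subset of $S^2$, and, choosing $x_k\in C_k\setminus U$ and a subsequential limit $x_*\in K\setminus U$, we get $K\not\subset U$, hence $K\not\subset LBS(V)$. The whole proof now reduces to the claim $K\subset LBS(V)$, which gives the contradiction.

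\emph{Proof that $K\subset LBS(V)$.} If $K=\{P\}$ is a single point, the $C_k$ shrink onto $P$; the Poincar\'e index theorem gives singular points of $v_{\eps_k}$ in the small disc bounded by $C_k$, converging to $P$, so $v_0(P)=0$; since a hyperbolic singular point of $v_0$ has a neighborhood free of periodic orbits of $v_\eps$ for small $\eps$, $P$ is non-hyperbolic and hence in $LBS(V)$ by Proposition~\ref{prop-nohyp-inLBS}. If $K$ is not a point, then every point of $K$ is a limit of non-singular points of $K$ (a singular point of $K$ is not isolated in the connected set $K$, and $v_0$ has only finitely many singular points), so since $LBS(V)$ is closed it suffices to show each non-singular $x\in K$ lies in $LBS(V)$. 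Fix such $x$; as $K$ is closed and invariant, $\alpha_{v_0}(x)\cup\omega_{v_0}(x)\subset K$. I claim $\omega_{v_0}(x)$ (and symmetrically $\alpha_{v_0}(x)$) is \emph{interesting}. If not, then by Poincar\'e--Bendixson and the definition of ``interesting'' it is a hyperbolic attractor, a hyperbolic attracting cycle, or a semi-stable non-interesting cycle $Z\subset K$; pick $z_k\in C_k$ with $z_k\to z\in Z$. If $Z$ is a hyperbolic attractor, a small positively invariant disc neighborhood of $Z$ contains no periodic orbit but must contain $C_k$ — impossible. If $Z=\gamma_j$ is a hyperbolic attracting cycle, then $z_k\in A_j$ forces $C_k\subset A_j\subset A$, contradicting $C_k\not\subset A$. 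If $Z$ is a semi-stable cycle, take a thin closed annular neighborhood $N$ of $Z$ with $\overline N\subset U$ (possible since $Z\subset LBS(V)\subset U$) whose two boundary circles are transversal to $v_0$; for small $\eps$ they stay transversal to $v_\eps$ with the same coherent crossing directions, so one of the two discs complementary to $\partial N$ is positively and the other negatively invariant for $v_{\eps_k}$, which confines any full orbit meeting $N$ to $\overline N$, giving $C_k\subset\overline N\subset U$, contradicting $C_k\not\subset U$. Hence both limit sets of $x$ are interesting, so $x\in ELBS(v_0)$ by Definition~\ref{def:elbs}; moreover $x=\lim x_k$ with $x_k$ lying on limit cycles of $v_{\eps_k}$, so $x$ belongs to the $S^2$-slice of $\overline{\Per V}\cap\{\eps=0\}$, and Definition~\ref{def:lbs} gives $x\in LBS(V)$. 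This proves $K\subset LBS(V)$, the contradiction we wanted.

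\emph{Where the difficulty sits.} The routine points (compactness and IFT for singular points; that $K$ is compact, connected, $v_0$-invariant and meets $S^2\setminus U$; the index-theorem argument for $K=\{P\}$) should be spelled out but present no obstacle. The main obstacle is ruling out that a periodic orbit $C_k$ of $v_{\eps_k}$ ``touches'' the non-interesting part of the limit dynamics of $v_0$: concretely, the confinement of $C_k$ to $\overline N$ near a semi-stable non-interesting cycle via coherent transversality of $v_\eps$ on $\partial N$, with a single $\eps_0$ working along the whole (compact) boundary, and the analogous trapping statements near hyperbolic attractors/repellers and near hyperbolic cycles. These are exactly the places where the definition of ``non-interesting'' (Definition~\ref{def-nic}) is used, and they are where one must be careful that perturbation does not create new recurrence in the region being trapped.
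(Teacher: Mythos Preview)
Your argument for singular points matches the paper's almost verbatim. For limit cycles you take a genuinely different route: the paper invokes Roussarie's structure theorem on limit periodic sets (the Hausdorff accumulation of periodic orbits of $v_\eps$ as $\eps\to 0$ is either a cycle of $v_0$, a non-hyperbolic singular point, or a \emph{polycycle} of $v_0$), and then checks each case against the definition of $LBS(V)$. You instead work directly with the Hausdorff limit $K$ and show every non-singular point of $K$ has interesting $\alpha$- and $\omega$-limit sets, bypassing the classification entirely. This is a more self-contained argument---it extracts exactly the invariance and trapping properties needed and nothing more---at the cost of a longer case analysis; conversely, the paper's proof is shorter but leans on an external black box.

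One small gap in your writeup: when you rule out a non-interesting $\omega_{v_0}(x)=Z$, your trichotomy ``hyperbolic attractor, hyperbolic attracting cycle, or semi-stable non-interesting cycle'' is incomplete. A non-interesting cycle $Z$ could be hyperbolic \emph{repelling} (this occurs when $x\in Z$ itself), or non-hyperbolic yet topologically attracting or repelling rather than semi-stable. The fix is immediate: for any hyperbolic $Z=\gamma_j$ your invariance argument for $A_j$ already works whether $A_j$ is positively or negatively invariant; for any non-hyperbolic $Z$ (which lies in $LBS(V)\subset U$ by Proposition~\ref{prop-nohyp-inLBS}) your annular-trap argument with $\overline N\subset U$ works regardless of whether the two boundary loops of $N$ are both ingoing, both outgoing, or one of each. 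So the correct split is ``hyperbolic attractor, hyperbolic cycle, or non-hyperbolic non-interesting cycle,'' and all three go through by the mechanisms you already wrote down.
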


\begin{proof}
Any singular point $P$ of $v_{\eps}$, $\eps$ small, is close to some singular point $P_0$ of $v_0$. If $P_0\in LBS(V)$, then $P \in U$. If $P_0\notin LBS(V)$, then $P_0$ is hyperbolic (see Proposition  \ref{prop-nohyp-inLBS}), so locally structurally stable. Thus $P$ belongs to a continuous family of singular points of $v_{\eps}$ as required.

The proof for limit cycles is a bit more complicated. The set $\overline {\Per V} \cap \{\eps=0\}$ (a \emph{limit periodic set}) is described by \cite[Theorem 5, Section 2.1.2]{Rous}. This theorem claims that the limit cycles of $v_{\eps}$ as $\eps\to 0$ may accumulate to:
\begin{itemize}
\item a hyperbolic limit cycle of $v_0$;
\item a non-hyperbolic limit cycle of $v_0$;
 \item a non-hyperbolic singular point of $v_0$;
 \item a \emph{polycycle} of $v_0$, namely a finite union of trajectories $\varphi_i$ and singular points $P_i$ of $v_0$, $i=1,\dots,n$ (some of these points may coincide), such that $\alpha(\varphi_i)=P_i, \omega(\varphi_i)=P_{i+1}$, and $\omega(\varphi_n) = P_1$.
\end{itemize}
Note that the polycycle in the last case may be non-monodromic. 
Clearly, the proposition holds in the first case. In the second  and the third cases, it holds true as well: non-hyperbolic singular points and cycles of $v_0$ belong to $LBS(V)$, so the corresponding limit cycles of $v_{\eps}$ belong to $U$ for sufficiently small $\eps$.

Prove that any polycycle of $v_0$ belongs to $LBS(V)$. Indeed, the points $P_i$  are neither sinks nor sources, because some orbits enter $P_i$ and some quit. Hence $P_i$ are interesting limit sets. Thus $\varphi_i$ has interesting $\alpha$-, $\omega$-limit sets, so $\overline \varphi_i \subset ELBS(v_0)$. But  $\overline \varphi_i\subset \overline {\Per V}$, so $\overline \varphi_i \subset LBS(V)$. This completes the proof.

\end{proof}

\subsubsection{Relation to moderate equivalence}
\begin{proposition}
\label{prop-dLBS}
Let two local families $V$ and $W$ be moderately equivalent in some neighborhood of $LBS(V)$, $LBS(W)$ in the sense of Definition \ref{def-moderate-local}. Then the corresponding maps $\mathbf H$ and $\mathbf H^{-1}$ are continuous in $\eps,x$ on the sets $\{\eps=0\}\times  \partial LBS(V)$ and $\{\eps=0\}\times  \partial LBS(W)$ respectively.

 \end{proposition}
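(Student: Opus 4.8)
The plan is to deduce Proposition~\ref{prop-dLBS} from the continuity clause (item~\ref{it-H-contin}) in the definition of moderate equivalence in neighborhoods of closed sets, by showing that $\partial LBS(V)$ is contained in the set \eqref{eq-set1} on which $\mathbf H$ is required to be continuous, and symmetrically that $\partial LBS(W)$ is contained in \eqref{eq-set2}. Concretely, item~\ref{it-H-contin} of Definition~\ref{def-moderate-local} says that $\mathbf H$ is continuous on the intersection of its domain with
$$
S(v_0) \cup \partial\big((\overline{\Per V}\cup\overline{\Sep V})\cap\{\eps=0\}\big),
$$
and $\{\eps=0\}\times\partial LBS(V)$ certainly lies in the domain of $\mathbf H$ (it is contained in $\{\eps=0\}\times U$, since $\partial LBS(V)\subset LBS(V)\subset U$), so it suffices to prove the set-theoretic inclusion $\partial LBS(V)\subset S(v_0) \cup \partial\big((\overline{\Per V}\cup\overline{\Sep V})\cap\{\eps=0\}\big)$.

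First I would recall that by Definition~\ref{def:lbs}, $LBS(V)=ELBS(v_0)\cap K$ where $K:=\Sing v_0\cup\big((\overline{\Per V}\cup\overline{\Sep V})\cap\{\eps=0\}\big)$. A standard fact about boundaries of intersections gives
$$
\partial(A\cap B)\subset (\partial A\cap \overline{B})\cup(\overline{A}\cap\partial B)\subset \partial A\cup\partial B,
$$
so $\partial LBS(V)\subset \partial ELBS(v_0)\cup\partial K$. By Proposition~\ref{prop-dELBS}, $\partial ELBS(v_0)\subset S(v_0)$, so the first piece is fine. For the second piece, I would bound $\partial K$: since $\Sing v_0$ is finite (hence contained in $S(v_0)$ and has empty boundary relative to the ambient sphere only in the trivial sense — but more usefully $\partial(\Sing v_0)=\Sing v_0\subset S(v_0)$), and since $\partial(A\cup B)\subset\partial A\cup\partial B$, we get
$$
\partial K\subset \Sing v_0\ \cup\ \partial\big((\overline{\Per V}\cup\overline{\Sep V})\cap\{\eps=0\}\big)\ \subset\ S(v_0)\cup\partial\big((\overline{\Per V}\cup\overline{\Sep V})\cap\{\eps=0\}\big).
$$
Combining, $\partial LBS(V)\subset S(v_0)\cup\partial\big((\overline{\Per V}\cup\overline{\Sep V})\cap\{\eps=0\}\big)$, which is exactly \eqref{eq-set1}. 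Hence $\mathbf H$ is continuous in $(\eps,x)$ at every point of $\{\eps=0\}\times\partial LBS(V)$. The argument for $\mathbf H^{-1}$ and $\partial LBS(W)$ is verbatim the same with $v_0,V$ replaced by $w_0,W$ and \eqref{eq-set1} replaced by \eqref{eq-set2}.

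I expect the main (minor) obstacle to be the bookkeeping around whether $\partial$ means boundary in $S^2$ or relative boundary, and making sure the inclusion $\partial(\Sing v_0)\subset S(v_0)$ is stated cleanly — this is trivial since $\Sing v_0$ is a finite set and $\Sing v_0\subset S(v_0)$, so its closure (itself) lies in $S(v_0)$, and the elementary inclusions $\partial(A\cap B)\subset\partial A\cup\partial B$ and $\partial(A\cup B)\subset\partial A\cup\partial B$ do the rest. One should also note that the conclusion uses only item~\ref{it-H-contin} and not items~\ref{it-nbhd-cond}, \ref{it-nbhd-cond}'s shrinking properties, so the statement is really just an unwinding of definitions combined with Proposition~\ref{prop-dELBS}. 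No genuinely hard step arises; the content is entirely in having already established $\partial ELBS(v_0)\subset S(v_0)$.
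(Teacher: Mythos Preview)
Your proof is correct and follows essentially the same route as the paper's own argument: both use the elementary inclusions $\partial(A\cap B)\subset\partial A\cup\partial B$ and $\partial(A\cup B)\subset\partial A\cup\partial B$, invoke Proposition~\ref{prop-dELBS} for $\partial ELBS(v_0)\subset S(v_0)$, note $\partial\Sing v_0=\Sing v_0\subset S(v_0)$, and conclude $\partial LBS(V)$ lies in the set~\eqref{eq-set1}. Your write-up is slightly more explicit about why $\{\eps=0\}\times\partial LBS(V)$ lies in the domain of $\mathbf H$, but otherwise the two proofs are the same.
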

\begin{remark}
 In the proof of Main Theorem, we will only use the continuity of $\mathbf H$, $\mathbf H^{-1}$ on the above sets  and on $ \{\eps=0\}\times \Sep (v_0|_{U})$, $ \{\eps=0\}\times \Sep (w_0|_{\hat H(U)})$.
 \end{remark}

\begin{proof}

 Since the boundary of intersection and the boundary of union belong to the union of boundaries,
 $$\partial LBS(V) \, \subset \, \partial ELBS(v_0)\, \bigcup \, \partial \Sing v_0 \cup \partial ((\overline{\Per\, V} \cup \overline{\Sep\, V}) \cap \{\eps=0\}).$$

   Note that $\partial \Sing v_0 = \Sing v_0$. Due to Proposition \ref{prop-dELBS}, $\partial ELBS(v_0)\subset S(v_0)$. Thus $$\partial LBS(V)\, \subset\, S(v_0) \cup \partial ((\overline {\Per V} \cup \overline {\Sep V}) \cap \{\eps=0\})$$
   which is the set \eqref{eq-set1} from the definition of moderate equivalence. The same arguments apply to $W$.
  This completes the proof.
\end{proof}

\subsection{Moderate topological equivalence}
This section contains simple topological statements that follow from the definition of moderate equivalence. 

The following proposition enables us to work in small neighborhoods of $LBS(V)$ and  $LBS(W)$.

\begin{proposition}
\label{prop-heps-detached}
 Under assumptions of the Main Theorem, for each neighborhood $\tilde U^+$ of  $LBS(W)$, there exists a small open neighborhood $U$ of $LBS(V)$, such that $H_{\eps}(U) \subset \tilde U^+$ for all small $\eps$.
\end{proposition}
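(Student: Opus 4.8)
The plan is to exploit the fact that $LBS(W)$ is compact (closed subset of $S^2$), together with the moderate-equivalence data supplied by the Main Theorem's hypothesis, in particular condition \eqref{it-nbhd-cond} of Definition~\ref{def-moderate-local}. Recall we are given a map $\mathbf H = (h, H_{\eps})$ defined on $(B,0)\times U_0$ for some fixed neighborhood $U_0$ of $LBS(V)$, which is a moderate equivalence of $V,W$ in neighborhoods of $LBS(V), LBS(W)$, with $H_0(LBS(V)) = LBS(W)$; moreover $\hat H|_{U_0} = H_0$ conjugates $v_0$ to $w_0$ on the sphere.

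First I would fix the target neighborhood $\tilde U^+ \supset LBS(W)$. By condition \eqref{it-nbhd-cond} applied to the \emph{inverse} map $\mathbf H^{-1}$: the set $\mathbf H((B,0)\times U_0)$ is some neighborhood of $\{\eps=0\}\times LBS(W)$ in $(B',0)\times S^2$, and $\mathbf H^{-1}$ carries neighborhoods of $\{\eps=0\}\times LBS(W)$ to sets containing neighborhoods of $\{\eps=0\}\times LBS(V)$. Concretely, consider the open set $\mathbf H^{-1}\big((B',0)\times \tilde U^+\big) \subset (B,0)\times U_0$. Apply condition \eqref{it-nbhd-cond} for $\mathbf H$ (not its inverse): shrinking if necessary, I claim this open set contains a product neighborhood $(B'',0)\times U$ of $\{\eps=0\}\times LBS(V)$ for some smaller base $B''$ and some open $U \supset LBS(V)$. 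The subtlety is that \eqref{it-nbhd-cond} guarantees the image of a neighborhood of $\{\eps=0\}\times Z_1$ contains a neighborhood of $\{\eps=0\}\times Z_2$; here I want the preimage statement, which follows by applying \eqref{it-nbhd-cond} to $\mathbf H^{-1}$ — its image (which is a subset of the domain of $\mathbf H$) of the neighborhood $(B',0)\times \tilde U^+$ of $\{\eps=0\}\times LBS(W)$ contains a neighborhood of $\{\eps=0\}\times LBS(V)$, which I then shrink to a product $(B'',0)\times U$ by compactness of $LBS(V)$ (a tube lemma argument: cover $\{0\}\times LBS(V)$ by finitely many product boxes contained in the open preimage, intersect their base-factors).

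Then for every small $\eps$ (i.e. $\eps \in B''$) and every $x \in U$ we have $(\eps, x) \in \mathbf H^{-1}((B',0)\times \tilde U^+)$, which is exactly the assertion $\mathbf H(\eps, x) = (h(\eps), H_{\eps}(x)) \in (B',0)\times \tilde U^+$, i.e. $H_{\eps}(x) \in \tilde U^+$. Restricting attention to $\eps$ small enough that $h(\eps) \in B'$ (automatic since $h(0)=0$ and $h$ is continuous), this gives $H_{\eps}(U) \subset \tilde U^+$ for all small $\eps$, as required. Finally, by Remark~\ref{rem-shrink-U} we are free to take $U$ as small as we like, so the neighborhood $U$ produced above can be assumed contained in any prescribed neighborhood of $LBS(V)$.

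The main obstacle is purely point-set-topological: extracting a \emph{product} neighborhood $(B'',0)\times U$ from the abstract "contains a neighborhood" clause of \eqref{it-nbhd-cond}. This is a standard tube-lemma / compactness argument using that $LBS(V)$ is compact (Proposition~\ref{prop-LBS-closed-vinv} gives closedness, and closed subsets of $S^2$ are compact), so it presents no real difficulty; the only care needed is to keep the base-interval $B''$ uniform over the finitely many boxes in the cover. No dynamics or smoothness of the vector fields enters the argument — it is entirely a consequence of the axioms of moderate equivalence in neighborhoods of closed sets.
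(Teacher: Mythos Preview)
Your argument is correct and takes a genuinely different route from the paper. The paper argues by contradiction: assuming the conclusion fails, it produces sequences $\eps_n\to 0$ and $x_n\in U_n$ (with $U_n$ shrinking to $LBS(V)$) such that $H_{\eps_n}(x_n)\notin\tilde U^+$; then, using condition~\eqref{it-nbhd-cond} only to conclude $x_n\notin LBS(V)$, it extracts a subsequence converging to some $x\in\partial LBS(V)$ and derives a contradiction with the continuity of $\mathbf H$ on $\{0\}\times\partial LBS(V)$ furnished by Proposition~\ref{prop-dLBS}. Your direct approach instead applies condition~\eqref{it-nbhd-cond} for $\mathbf H^{-1}$ to the product neighborhood $(B',0)\times\tilde U^+$ and then extracts a product box by the tube lemma. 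This is strictly more elementary: it avoids Proposition~\ref{prop-dLBS} altogether, and therefore never touches the continuity requirement~\eqref{it-H-contin} of moderate equivalence or the specific structure of $\partial LBS(V)$. In other words, your proof shows the proposition holds for \emph{any} closed sets $Z_1,Z_2$ related by the axioms of Definition~\ref{def-moderate-local}, not just for $LBS(V),LBS(W)$.

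One small inaccuracy worth flagging: you refer to $\mathbf H^{-1}\big((B',0)\times\tilde U^+\big)$ as an ``open set'', but $\mathbf H$ is not assumed globally continuous, so this preimage need not be open. Fortunately you never actually use its openness --- you only use that it contains an open neighborhood of $\{0\}\times LBS(V)$, which is precisely what condition~\eqref{it-nbhd-cond} for $\mathbf H^{-1}$ guarantees, and the tube-lemma extraction then applies to that open subset. So the slip is cosmetic and the argument stands.
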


Since hyperbolic sinks, sources and cycles of $w_0$ are outside $LBS(W)$ (Proposition \ref{prop-nohyp-inLBS}), this Proposition immediately implies the corollary:
\begin{corollary}
\label{cor-heps-detached}
 For small $U\supset LBS(V) $ and any small $\eps$, the set $H_{\eps}(U)$ is detached from all hyperbolic attracting and repelling singular points and cycles  of $w_0$.
\end{corollary}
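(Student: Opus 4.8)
The plan is to reduce the statement to a continuity-of-images argument using condition \ref{it-nbhd-cond} of Definition \ref{def-moderate-local} together with the compactness of $LBS(V)$. First I would recall that, by Proposition \ref{prop-heps-detached} applied to the whole family, the map $\mathbf H=(h,H_\eps)$ is defined on $(B,0)\times U$ for some fixed neighborhood $U$ of $LBS(V)$, and it maps the ``fiber slab'' $\{0\}\times LBS(V)$ onto $\{0\}\times LBS(W)$ (item \ref{it-nbhd-cond} and the equality $H_0(LBS(V))=LBS(W)$ from item (3) of that definition). By Proposition \ref{prop-nohyp-inLBS}, every hyperbolic attracting or repelling singular point and every hyperbolic attracting or repelling cycle $q$ of $w_0$ lies in $S^2\setminus LBS(W)$, which is open, so we may choose a small open neighborhood $\tilde U^+$ of $LBS(W)$ whose closure is disjoint from all these finitely many objects (there are finitely many singular points and cycles since $w_0\in Vect^*\,S^2$).

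Next I would feed this $\tilde U^+$ into Proposition \ref{prop-heps-detached}: it yields a small open neighborhood $U'\subset U$ of $LBS(V)$ with $H_\eps(U')\subset \tilde U^+$ for all small $\eps$. Since $\tilde U^+$ is disjoint from every hyperbolic attracting/repelling singular point and cycle of $w_0$, the set $H_\eps(U')$ is detached from all of these, which is exactly the assertion (with the role of ``small $U\supset LBS(V)$'' played by $U'$, and ``detached'' meaning ``contained in a set whose closure misses them''). This is essentially a formal consequence of the preceding proposition, so the corollary is just the specialization ``take the target neighborhood to avoid the hyperbolic sinks/sources and cycles.''

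The only genuine content is therefore already packaged in Proposition \ref{prop-heps-detached}, so in the write-up I would simply state: since by Proposition \ref{prop-nohyp-inLBS} no hyperbolic attracting or repelling singular point or cycle of $w_0$ belongs to $LBS(W)$, and there are only finitely many of them, there is a neighborhood $\tilde U^+$ of $LBS(W)$ detached from all of them; applying Proposition \ref{prop-heps-detached} to this $\tilde U^+$ produces the required $U$. The main (and only) obstacle to watch for is making sure the quantifiers line up: we need one neighborhood $U$ that works uniformly for all small $\eps$, which is precisely what Proposition \ref{prop-heps-detached} supplies, so no extra compactness argument in $\eps$ is needed here.
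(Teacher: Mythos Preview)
Your argument is correct and follows the same route as the paper: the paper simply observes that hyperbolic sinks, sources and cycles of $w_0$ lie outside $LBS(W)$ (Proposition \ref{prop-nohyp-inLBS}), so one may take $\tilde U^+$ avoiding them and apply Proposition \ref{prop-heps-detached}. Your write-up just spells this out in more detail; the extra preliminary remarks about items (3) and \ref{it-nbhd-cond} of Definition \ref{def-moderate-local} in your first paragraph are not needed here, since they are already absorbed into Proposition \ref{prop-heps-detached}.
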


\begin{proof}[Proof of Proposition \ref{prop-heps-detached}]
Suppose that for some neighborhood $\tilde U^+ \supset LBS(W)$, the statement does not hold true. So there exists a sequence of shrinking open neighborhoods $U_n$: $\cap U_n =LBS(V)$, and a sequence $\eps_n \to 0$, such that none of the domains $H_{\eps_n}(U_n)$ are contained in $\tilde U^+$. Then there exists a sequence $x_n \in U_n$ such that $H_{\eps_n}(x_n)\notin \tilde U^+$.

Due to Requirement \ref{it-nbhd-cond}  of Definition \ref{def-moderate-local}  of moderate equivalence (applied to $\mathbf H^{-1}$), the set $H_{\eps_n}^{-1}(\tilde U^+)$ for small $\eps_n$ is a neighborhood of $LBS(V)$, in particular, $LBS(V) \subset H_{\eps_n}^{-1}(\tilde U^+)$. So $H_{\eps_n}(LBS(V))\subset \tilde U^+ $,   hence $x_n \notin LBS(V)$.

Since sphere is compact, we can choose a convergent subsequence $x_n'$ of $x_n$. 
Since $U_n$ shrink to the closed set $LBS(V)$, this subsequence converges to a point of $\partial LBS(V)$. This contradicts the continuity of the map  $\mathbf H$ on $\{\eps=0\}\times \partial LBS(V)$ (see Proposition \ref{prop-dLBS}).
\end{proof}

The next proposition shows that under assumptions of the Main theorem, the conjugacy of families $V$ and $W$ respects connected components of $LBS(V)$, $LBS(W)$ and their neighborhoods. 

\begin{proposition}
\label{prop-U12-intersect}
In assumptions of Main Theorem, let $U, \tilde U^+$ be the same as in  Proposition \ref{prop-heps-detached}. Suppose that each connected component of $\tilde U^+$ contains one, and only one, connected component of $LBS(W)$. Suppose that each connected component of $U$ contains a connected component of $LBS(V)$. Then for sufficiently small $U$,  for small $\eps_1,\eps_2$ and for two different connected components $U_1, U_2$ of $U$, their images $H_{\eps_1}(U_1)$ and $H_{\eps_2}(U_2)$ do not intersect and belong to different connected components of $\tilde U^+$.
\end{proposition}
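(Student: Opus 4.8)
The plan is to argue by contradiction, in two stages: first show that the image of a connected component $U_i$ of $U$ under $H_{\eps}$ is connected and lands inside $\tilde U^+$ for small $\eps$, then show that it in fact lies inside one specific connected component of $\tilde U^+$ (the one corresponding to the component of $LBS(W)$ that is the $H_0$-image of the component of $LBS(V)$ sitting inside $U_i$), and finally derive a contradiction if two such images were to meet. The key inputs are Proposition \ref{prop-heps-detached} (so that $H_{\eps}(U)\subset\tilde U^+$ after shrinking $U$), the continuity of $\mathbf H$ on $\{\eps=0\}\times\partial LBS(V)$ from Proposition \ref{prop-dLBS}, and the fact that $H_0=\hat H|_U$ maps each component of $LBS(V)$ homeomorphically onto a component of $LBS(W)$ and maps $U$-components into $\tilde U^+$-components compatibly.

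First I would fix, for each connected component $U_i$ of $U$, the unique connected component $\Lambda_i\subset LBS(V)$ contained in it (which exists by hypothesis; if several components of $LBS(V)$ lie in $U_i$ one shrinks $U$ further — by Proposition \ref{lem-LBS-finite} there are finitely many components, so this is possible, and after shrinking each $U_i$ contains exactly one $\Lambda_i$). Since $H_0=\hat H|_U$ is a homeomorphism onto its image conjugating $v_0$ to $w_0$, it carries $\Lambda_i$ onto a connected $v_0$-invariant set $\hat H(\Lambda_i)$, which is a component of $LBS(W)$; call $\tilde U^+_i$ the unique component of $\tilde U^+$ containing it. For each $\eps$, the set $H_{\eps}(U_i)$ need not a priori be connected as a subset of $S^2$ — but $H_{\eps}$ is a homeomorphism of $U_i$ onto its image, hence $H_{\eps}(U_i)$ is connected; the real issue is whether it stays inside $\tilde U^+_i$ rather than wandering into another component of $\tilde U^+$.

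The main step, and the main obstacle, is ruling out this wandering. Suppose along a sequence $\eps_n\to 0$ there are points $x_n\in U_i$ with $H_{\eps_n}(x_n)$ in a component of $\tilde U^+$ other than $\tilde U^+_i$. By shrinking $U$ to a nested sequence $U^{(n)}$ with $\bigcap_n U^{(n)}=LBS(V)$ and using Proposition \ref{prop-heps-detached} at each level, we may assume $x_n\in U_i^{(n)}$, so after passing to a convergent subsequence $x_n\to x_\infty\in\Lambda_i\subset LBS(V)$. If $x_\infty$ is an interior point of $LBS(V)$ we argue via Requirement \ref{it-nbhd-cond} of Definition \ref{def-moderate-local} applied to $\mathbf H$: $\mathbf H$ of a neighborhood of $\{\eps=0\}\times\Lambda_i$ contains a neighborhood of $\{\eps=0\}\times\hat H(\Lambda_i)$, so $H_{\eps_n}(x_n)$ is eventually forced near $\hat H(\Lambda_i)\subset\tilde U^+_i$, a contradiction. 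If $x_\infty\in\partial LBS(V)$, then by Proposition \ref{prop-dLBS} the map $\mathbf H$ is continuous at $(0,x_\infty)$, so $H_{\eps_n}(x_n)\to H_0(x_\infty)=\hat H(x_\infty)\in\hat H(\Lambda_i)\subset\tilde U^+_i$; since $\tilde U^+_i$ is open and the other components of $\tilde U^+$ are at positive distance from it (finitely many components, all disjoint and relatively open in $\tilde U^+$, and $\hat H(x_\infty)$ is at positive distance from their closures because $\hat H(\Lambda_i)$ is a component of $LBS(W)$ strictly inside $\tilde U^+_i$), we again get a contradiction for large $n$. Hence $H_{\eps}(U_i)\subset\tilde U^+_i$ for all small $\eps$, the assignment $i\mapsto\tilde U^+_i$ is injective because $\hat H$ is a bijection on components of $LBS(V)$, and therefore for $i\neq j$ the images $H_{\eps_1}(U_i)\subset\tilde U^+_i$ and $H_{\eps_2}(U_j)\subset\tilde U^+_j$ lie in distinct components of $\tilde U^+$ and are disjoint. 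The delicate point to handle carefully is the uniformity over $\eps$ in the shrinking argument — one must choose a single small $U$ that works for all small $\eps$ simultaneously, which is exactly the diagonal-sequence contradiction set up above.
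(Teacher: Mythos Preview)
Your overall strategy matches the paper's: show each $H_\eps(U_i)$ lands in the component $\tilde U^+_i$ of $\tilde U^+$ containing $\hat H(\Lambda_i)$, using connectedness of $H_\eps(U_i)$ and the inclusion $H_\eps(U)\subset\tilde U^+$ from Proposition~\ref{prop-heps-detached}. The paper does this in one line by picking the anchor set $\partial C_i=\partial\Lambda_i$ (nonempty, since $\Lambda_i$ is a closed proper subset of the connected sphere), where continuity of $\mathbf H$ is guaranteed by Proposition~\ref{prop-dLBS}; then $H_\eps(\partial\Lambda_i)$ is close to $\hat H(\partial\Lambda_i)\subset\tilde U^+_i$, so the connected set $H_\eps(U_i)\subset\tilde U^+$ meets $\tilde U^+_i$ and is therefore contained in it. Your Case~2 is exactly this argument.

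The gap is Case~1. Requirement~\ref{it-nbhd-cond} for $\mathbf H$ says that the image of a neighborhood of $\{0\}\times LBS(V)$ \emph{contains} a neighborhood of $\{0\}\times LBS(W)$; this is a lower bound on the image, not an upper bound, and it is stated for the whole $LBS$, not a single component $\Lambda_i$. From it you cannot conclude that $H_{\eps_n}(x_n)$ is forced near $\hat H(\Lambda_i)$: the direction of inclusion is wrong for that, and the component information is absent. Using Requirement~\ref{it-nbhd-cond} for $\mathbf H^{-1}$ instead would only yield that $H_{\eps_n}(x_n)$ is eventually near $LBS(W)$ as a whole, again without identifying the component. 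So Case~1 as written does not go through.

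The repair is to drop the case split and the nested shrinking altogether and argue as the paper does: since $\Lambda_i$ is a closed proper subset of $S^2$, $\partial\Lambda_i\neq\varnothing$; pick any $p_i\in\partial\Lambda_i\subset U_i$, note that $H_\eps(p_i)\to\hat H(p_i)\in\tilde U^+_i$ by Proposition~\ref{prop-dLBS}, and conclude $H_\eps(U_i)\subset\tilde U^+_i$ by connectedness. No diagonal argument is needed, and there is no interior-point case to worry about.
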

\begin{proof}
Recall that  $H_{\eps}(U)\subset \tilde U^+$ for all small $\eps$ as in Proposition \ref{prop-heps-detached}.
 Let $C_i$ be a connected component of $LBS(V)$ that belongs to $U_i$. By assumption, connected components $\hat H(C_1)$, $\hat H(C_2)$ of $LBS(W)$ are in different connected components of $\tilde U^+$; let $\tilde U_1, \tilde U_2$ be these connected components of $\tilde U^+$.

 Since $\hat H(C_1)\subset \tilde U_1$, the whole component $\hat H(U_1)$ belongs to  $\tilde U_1$. Due to Proposition \ref{prop-dLBS}, $\mathbf H$ is continuous on $\partial C_1$. So   for small $\eps_1$, $H_{\eps_1}(\partial C)$ is close to $H_0(\partial C) = \hat H(\partial C)\subset \tilde U_1$. Hence $H_{\eps_1}(U_1)\supset H_{\eps_1}(\partial C_1)$ intersects $\hat H(U_1)$; we conclude that  $H_\eps(U_1)\subset \tilde U_1$ for all small $\eps$. Similarly, $H_{\eps}(U_2) \subset \tilde U_2$ for all small $\eps$, which finishes the proof.
 
 \end{proof}

\section{Combinatorial equivalence of the LMF graphs: first steps of the construction}

\subsection{General approach}
Recall that  a neighborhood $U$ of $LBS(V)$ is fixed in the statement of the Main Theorem and does not depend on $\eps$. However we will have to shrink $U$ further in the proof; this is possible due to Remark \ref{rem-shrink-U}.

Our goal is to define an isomorphism
$$G_{\eps} \colon LMF(v_{\eps})\to  LMF(w_{h(\eps)})$$
of the $LMF$ graphs   of $v_\eps $  and $w_{h(\eps )}$, and to prove that it meets the conditions of Theorem \ref{th-graphs-components}. For each element $c$ (i.e. vertex or edge) of $LMF(v_\eps)$ that belongs to $U$ we will define
$$
G_\eps |_c := H_\eps |_c.
$$
It turns out that the elements  of $LMF(v_\eps)$ that lie outside $U$ depend continuously on $\eps$ (say, all singular points outside $U$ are hyperbolic, thus structurally stable).
When we define $G_{\eps}$ on these parts of $LMF(v_{\eps})$, we use $\hat H$ plus continuous continuation in $\eps$ of hyperbolic singular points and cycles.

For a smooth local family $V\subset Vect^*\,S^2$ of vector fields, each hyperbolic singular point $P_0$, each hyperbolic limit cycle $c_0$ and each germ of a separatrix of a hyperbolic saddle $(\gamma_0, P_0)$ of a vector field $v_0$ generates a continuous family of hyperbolic points, cycles and germs of separatrices $P_{\eps}, c_{\eps}, (\gamma_{\eps}, P_{\eps})$ respectively of $v_{\eps}$. This enables us to give the following definition.

\begin{definition}[Notation]
\label{def-pi}
In the above assumptions, let $\pi_{\eps}$ be a continuous map that depends continuously on $\eps$, $\eps$ small,  and takes homeomorphically $P_{\eps}$ to $P_0$, $c_{\eps}$ to $c_0$ and $(\gamma_{\eps}, P_{\eps})$ to $(\gamma_{0}, P_0)$, preserving time orientation.

In assumptions of Main Theorem, let $\tilde \pi_{\eps}$ be the analogous map for the family $W$ playing the role of $V$.
\end{definition}

\subsection{Partial definition of $G_{\eps}$}
\label{sec-partial-Ge}
\subsubsection{Singular points, limit cycles, and germs of separatrices}
\label{ssec-Ge-skeleton}
Denote by $\Sep^*v_{\eps}$ the set of germs of separatrices of $v_\eps $ at singular points of $v_{\eps}$. Note that a separatrix connection of $v_{\eps}$ corresponds to two germs in $\Sep^*  v_{\eps}$. Let us define $G_\eps $ on $\Sing  v_{\eps} \cup \Per  v_{\eps} \cup \Sep^*v_{\eps} =: S^*(v_{\eps})$.

Let $p\in S^*(v_{\eps})$. If $p \notin U$, then $p$ is a hyperbolic singular point, or belongs to a cycle, or belongs to a germ of a separatrix of a hyperbolic saddle. We define
$$
G_\eps (p) := {\tilde \pi_{h(\eps)} }^{-1} \circ \hat H \circ \pi_\eps (p).
$$
(see Definition \ref{def-pi} for the definition of $\pi_{\eps}, \tilde \pi_{\eps}$). If $p \in U$, we define
$$
G_\eps (p) := H_\eps (p).
$$
This completes the definition of $G_\eps $ on $S^*(v_{\eps})$. Note that we have constructed $G_\eps $ on vertices of type $1,3$ (singular points and points on limit cycles) and edges of type $3$ (limit cycles), because these vertices and edges belong to $S^*(v_{\eps})$.

\begin{remark}
\label{rem-Geps-toptype}
$G_{\eps}$ preserves topological types of singular points and limit cycles, thus preserves labels on the vertices of type $1,3$ and edges of type $3$. $G_{\eps}$ also preserves the time orientation on cycles and germs of separatrices of $v_{\eps}$.
\end{remark}

\subsubsection{Elliptic sectors}
\label{ssec-Ge-elliptic}
Each elliptic sector $E$  of $v_\eps $ corresponds to a non-hyperbolic singular point $P$ of $v_\eps$; we have $P\in U$ by Proposition \ref{prop-inU-or-hyp}. Consider a germ of the elliptic sector $(E,P)$; its image under $H_{\eps}$ is a  germ of an elliptic sector of $w_{h(\eps)}$. Denote it by  $\tilde E:=H_\eps ((E,P))$. 
In the construction of $LMF(v_{\eps})$, we need to choose a trajectory $l$ of $v_{\eps}$ in $E$. We may and will assume that $l$ is close to $P$ so that $l \subset U$. In the elliptic sector $\tilde E$ of $w_{h(\eps)}$, we need to choose a homoclinic trajectory of $w_{h(\eps)}$; let us choose $H_{\eps}(l)$. Then we define
$$
G_\eps |_l := H_\eps |_l.
$$
We have constructed $G_\eps $ on  edges of type $5$ (homoclinic trajectories in elliptic sectors).
\begin{remark}
\label{rem-Ge-elliptic}
 $G_{\eps}$ preserves time orientation on edges of type 5 (homoclinic trajectories in elliptic sectors), and preserves incidence of edges of type 5 and vertices of type 1.
\end{remark}

\subsubsection{Non-truncated separatrices}
\label{ssec-Ge-nontrunc}
Let $\gamma $ be a non-truncated separatrix of $v_{\eps}$. Then for small $\eps $, it belongs to $U$, due to Separatrix Lemma \ref{lem-sep-connect} above. Define
$$
G_\eps |_\gamma := H_\eps |_\gamma.
$$
This agrees with the definition of $G_{\eps}$ on the germs of separatrices. We have constructed $G_\eps $ on  edges of type $1$ (non-truncated separatrices).

 \begin{remark}
 \label{rem-nontrunc-label}
$G_{\eps}$ preserves incidence of vertices of type 1 and edges of type 1.   For non-truncated separatrices (edges of type 1), $G_{\eps}$ preserves labels. Indeed, the labels say that the separatrix is stable, unstable or a separatrix connection. Since $G_{\eps}$ is induced by a homeomorphism $H_{\eps}$ in a neighborhood of such edge, it preserves such labels.
 \end{remark}

\begin{remark}
Now $G_{\eps}$ is defined on all  monodromic polycycles of $v_{\eps}$, because they  are formed by non-truncated separatrices. Hence $G_{\eps}$ is defined on all possible $\alpha$- and $\omega$-limit sets of $v_{\eps}$ (singular points, limit cycles and monodromic polycycles) of $v_{\eps}$.
 \end{remark}

\subsubsection{The graph correspondence $G_{\eps}$ is a bijection}
\label{ssec-Ge-bij}
 By now, we have constructed $G_{\eps}$ for small $\eps$ on all vertices and edges of the $LMF(V)$ disjoint from the transversal loops. Let us prove that this map is one-to-one.
\begin{proposition}
 \label{prop-G-bij}
 For small $U\supset LBS(V)$, for sufficiently small $\eps$, the map  $G_{\eps}$ defined in Sec. \ref{ssec-Ge-skeleton}, Sec. \ref{ssec-Ge-elliptic}, and Sec. \ref{ssec-Ge-nontrunc} is one-to-one on singular points, limit cycles, non-truncated separatrices,  and trajectories in elliptic sectors (vertices of types $1,3$, edges of type $1,3,5$) of $v_{\eps}$, $w_{h(\eps)}$. It preserves incidence of these vertices and edges, labels and time orientation.
 \end{proposition}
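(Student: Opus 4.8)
The plan is to read $G_{\eps}$ as a map built from two clauses with (for $U$ and $\eps$ chosen small enough) disjoint domains: the conjugacy $H_{\eps}$ on the part of $LMF(v_{\eps})$ lying in $U$, and $p\mapsto \tilde\pi_{h(\eps)}^{-1}\circ\hat H\circ\pi_{\eps}(p)$ on the part lying outside $U$; then to verify bijectivity and preservation of the combinatorial data for each clause and to check that the two clauses fit together. Note first that $G_{\eps}$ is indeed defined on all of the relevant objects: by Proposition~\ref{prop-inU-or-hyp} every singular point and limit cycle of $v_{\eps}$ is either in $U$ or lies in a continuous family of hyperbolic objects whose value at $\eps=0$ avoids $LBS(V)$; by the Separatrix Lemma~\ref{lem-sep-connect} every non-truncated separatrix of $v_{\eps}$ together with its endpoint singular points lies in $U$ for small $\eps$; and every elliptic-sector trajectory was chosen so near its (non-hyperbolic, hence in $LBS(V)\subset U$) singular point that it too lies in $U$.

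I would then pin down the neighborhoods. Only finitely many hyperbolic singular points and hyperbolic limit cycles of $v_{0}$ lie outside $LBS(V)$, and by Proposition~\ref{prop-nohyp-inLBS} each is disjoint from the compact set $LBS(V)$; similarly for $w_{0}$, $LBS(W)$. So I would fix an open $\tilde U^{+}\supset LBS(W)$ with $\overline{\tilde U^{+}}$ missing all such objects of $w_{0}$, then (Remark~\ref{rem-shrink-U}) shrink $U$ so that also $\overline U$ misses all such objects of $v_{0}$ and $H_{\eps}(U)\subset\tilde U^{+}$ for small $\eps$ (Proposition~\ref{prop-heps-detached}); condition~\ref{it-nbhd-cond} of Definition~\ref{def-moderate-local} applied to $\mathbf H$ then gives $\tilde U^{-}\supset LBS(W)$ with $\tilde U^{-}\subset H_{\eps}(U)$ for small $\eps$. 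With such $U$ and small $\eps$ the two cases of Proposition~\ref{prop-inU-or-hyp} become mutually exclusive --- an object of $v_{\eps}$ in $U$ cannot come from a continuous family whose $\eps=0$ term is outside $\overline U$ --- and likewise for $w_{h(\eps)}$ with $\tilde U^{-}$ in place of $U$; this is what makes the two clauses of $G_{\eps}$ disjoint in domain.

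Granting this, injectivity and surjectivity are short. On the first clause $G_{\eps}=H_{\eps}$ is injective as a restriction of a homeomorphism; on the second it is a composition of injections ($\pi_{\eps}$ separates distinct hyperbolic objects for small $\eps$, $\hat H$ is a homeomorphism, $\tilde\pi_{h(\eps)}^{-1}$ is injective), and the two images are disjoint because the first lands in $H_{\eps}(U)\subset\tilde U^{+}$ while the second sends an outside object to $\tilde\pi_{h(\eps)}^{-1}(\hat H(p_{0}))$ with $p_{0}$ a hyperbolic object of $v_{0}$ off $\overline U$, so $\hat H(p_{0})$ is a hyperbolic object of $w_{0}$ off $LBS(W)$ --- using $\hat H(LBS(V))=LBS(W)$ (from $\hat H|_{U}=H_{0}$ and Definition~\ref{def-moderate-local}) together with Proposition~\ref{prop-nohyp-inLBS} --- hence off $\overline{\tilde U^{+}}$, so its continuation at parameter $h(\eps)$ stays off $\tilde U^{+}$ for small $\eps$. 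For surjectivity I run the dichotomy on $LMF(w_{h(\eps)})$: an object $r$ in $\tilde U^{-}\subset H_{\eps}(U)$ is $G_{\eps}(H_{\eps}^{-1}(r))$ (and $H_{\eps}^{-1}(r)$ is a genuine object of $LMF(v_{\eps})$, the LMF graphs being built with $H_{\eps}$-compatible choices on $U$); an object $r$ in a continuous family with $r_{0}:=\tilde\pi_{h(\eps)}(r)\notin LBS(W)$ is $G_{\eps}(p_{\eps})$, where $p_{\eps}$ is the continuous family generated by $p_{0}:=\hat H^{-1}(r_{0})$, a hyperbolic object of $v_{0}$ off $\overline U$ so that $p_{\eps}\notin U$, and then $G_{\eps}(p_{\eps})=\tilde\pi_{h(\eps)}^{-1}(\hat H(\pi_{\eps}(p_{\eps})))=\tilde\pi_{h(\eps)}^{-1}(\hat H(p_{0}))=\tilde\pi_{h(\eps)}^{-1}(r_{0})=r$. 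Preservation of incidence, labels and time orientation is then recorded clause by clause: for $H_{\eps}$ in Remarks~\ref{rem-Geps-toptype}, \ref{rem-Ge-elliptic}, \ref{rem-nontrunc-label}; for the other clause because $\hat H$ is an orbital topological equivalence and $\pi_{\eps},\tilde\pi_{\eps}$ preserve topological type and time direction; and no incidence among vertices of types $1,3$ and edges of types $1,3,5$ joins an object of the first clause to one of the second, since the endpoints of a non-truncated separatrix and the singular point of an elliptic sector are themselves in $U$.

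The step I expect to be the genuine obstacle is precisely the bookkeeping above: choosing $U$, $\tilde U^{\pm}$ and the threshold on $\eps$ so that the two defining clauses of $G_{\eps}$ have disjoint domains and disjoint ranges. This is where one must use, beyond soft topology, both the finiteness built into $Vect^*\,S^2$ (only finitely many hyperbolic objects lie off the bifurcation support, each detached from it) and the fact that $\hat H$ identifies $LBS(V)$ with $LBS(W)$, so that ``being off the support'' transfers as a topological and not merely a metric property; once the neighborhoods are fixed, the rest is formal.
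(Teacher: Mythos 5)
Your proposal is correct and follows essentially the same route as the paper: split $G_{\eps}$ into the clause $H_{\eps}$ on the part of the graph in $U$ and the clause $\tilde\pi_{h(\eps)}^{-1}\circ\hat H\circ\pi_{\eps}$ outside, use Proposition~\ref{prop-heps-detached} (the paper packages it as Corollary~\ref{cor-heps-detached}) to keep the two ranges disjoint, invoke Proposition~\ref{prop-inU-or-hyp} and the Separatrix Lemma for the domain dichotomy and surjectivity, and cite Remarks~\ref{rem-Geps-toptype}, \ref{rem-Ge-elliptic}, \ref{rem-nontrunc-label} for the combinatorial data. The only stylistic difference is that you make the choice of $U$, $\tilde U^{\pm}$ explicit up front rather than delegating it to the already-stated Corollary~\ref{cor-heps-detached}; the substance is the same.
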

\begin{proof}
Fix $U\supset LBS(V)$ such that for all small $\eps$, $H_{\eps}(U)$ is detached from hyperbolic cycles and singular points of $w_0$. This is possible due to Corollary \ref{cor-heps-detached} above.

Note that $G_{\eps}$ is defined on all edges and vertices of $LMF(v_{\eps})$ listed in the proposition. Each of the listed edges is either completely inside $U$, or completely outside it. In $U$,  $G_{\eps}$ is induced by $H_{\eps}$,  so is injective.
Outside $U$, the map $G_{\eps} = \tilde \pi_{h(\eps)}^{-1} \circ \hat H \circ \pi_{\eps}$ is a composition of three injective maps, so is injective as well.

For all listed vertices and edges that are inside $U$, their images under $G_{\eps}$ are located inside $H_{\eps}(U)$; for the edges and vertices outside $U$, their images are close to hyperbolic singular points and cycles of $w_0$. Due to Corollary \ref{cor-heps-detached}, $H_{\eps}(U)$ is detached from these hyperbolic singular points and cycles of $w_0$, thus $G_{\eps}$ is injective for small $\eps$.

Prove that $G_{\eps}$ is surjective. Recall that $\tilde U := \cap_{|\eps| \le \eps_0} H_{\eps}(U)$ is a neighborhood of  $LBS(W)$, due to Requirement \ref{it-nbhd-cond} of Definition \ref{def-moderate-local} of moderate equivalence.

For small $\eps$, each non-truncated separatrix of $w_{h(\eps)}$ belongs to $\tilde U$ (Separatrix lemma \ref{lem-sep-connect}). Thus it belongs to the range of $H_{\eps}$. So $G_{\eps}$ is surjective on truncated separatrices. Each elliptic sector of $w_{h(\eps)}$ is an elliptic sector of a non-hyperbolic singular  point, and all such points belong to $\tilde U \subset H_{\eps}(U)$. So the edge of type 5 in  this sector is the image under $G_{\eps}=H_{\eps}$ of the edge of type 5 of $LMF(v_{\eps})$. Thus $G_{\eps}$ is surjective on edges of type 5.

Each singular point and each limit cycle of $w_{h(\eps)}$ is  either completely inside $\tilde U$, or completely outside it (Proposition \ref{prop-inU-or-hyp} applied to $W$).  In the first case, this singular point (cycle) is  the image of some cycle or singular point of $(v_\eps)|_{U}$ under $H_{\eps}=G_{\eps}$. In the second case, it belongs to a continuous family of singular points (cycles) of $(w_\eps)|_{S^2\setminus U}$, thus belongs to the range of $G_{\eps} = \tilde \pi_{h(\eps)}^{-1} \circ \hat H \circ \pi_{\eps}$.

So $G_{\eps}$ is surjective on the union of verteces and edges of $v_\eps$ disjoint from the transversal loops.

This map preserves incidence of vertices and edges, labels and time orientation due to Remarks \ref{rem-Geps-toptype}, \ref{rem-Ge-elliptic}, \ref{rem-nontrunc-label} above.
\end{proof}

\subsubsection{Transversal loops}
\label{ssec-tr}
Consider a hyperbolic sink, a source  or a limit cycle of $v_0$; in all the  three cases, we denote this object by $P$. Let $l$ be a transversal loop around $P$. We assume that $U$ is sufficiently small so that it does not intersect $l$. Since $P$ is hyperbolic, it
persists under small perturbations, so $l$ is a transversal loop for the corresponding object $P_{\eps}:=\pi^{-1}_{\eps} P$ of $v_{\eps}$, $\eps$ small. We may and will assume that $l$ belongs to the graph  $LMF(v_{\eps})$ as a transversal loop for $P_{\eps}$. Now,  $\tilde l = \hat H(l)$ is a transversal loop of  $\hat H(P)$ for $w_0$. Moreover, $\tilde l$ is a transversal loop of the corresponding object  $\tilde \pi^{-1}_{\eps} (\hat H(P)) =  G_\eps (P_{\eps})$ of $w_{h(\eps)}$. We may and will assume that $\tilde l$  belongs to the  graph $LMF(w_{h(\eps )})$ as a transversal loop of $G_{\eps}(P_{\eps})$.
 Define
$$
G_\eps |_l := \hat H|_l.
$$
If a cycle, polycycle, a sink or a source $P$ of $v_{\eps}$ belongs to $U$, we choose its transversal loop $l$ so that $l \subset U$ and the annulus between $P$ and $l$ belongs to $U$. Then $H_{\eps}(l)$ is a transversal loop of $H_{\eps}(P)$ for $w_{h(\eps)}$, so we may assume that $H_{\eps}(l)$ belongs to $LMF(w_{h(\eps)})$.
We define
$$
G_\eps|_{l} := H_\eps|_{l}.
$$
Note that all limit cycles and singular points are either inside or outside $U$ by Proposition \ref{prop-inU-or-hyp}. All monodromic polycycles are inside $U$ due to Separatrix lemma \ref{lem-sep-connect}. So we have already constructed $G_\eps$ on all transversal loops that belong to $LMF(v_{\eps})$.

Note that we did not yet define $G_\eps$ on the truncation verteces of  $LMF(v_\eps)$.
In Sec. \ref{ssec-Ge-final}, we will have to modify $G_{\eps}$ on transversal loops so that it provides a correct identification of truncation vertices. However we will not change the set $G_{\eps}(l)$ for a transversal loop $l$.

\begin{remark}
\label{rem-Ge-trans}
$G_{\eps}$ preserves the correspondence of $\alpha$-, $\omega$-limit sets and their transversal loops: if $l$ is a transversal loop of a cycle, polycycle or  singular point $P$ of $v_{\eps}$, then $G_{\eps}(l)$ is a transversal loop of $G_{\eps}(P)$.

This implies that $G_{\eps}$ is one-to-one on transversal loops of $LMF(v_{\eps})$, $LMF(w_{h_{\eps}})$, because it is one-to-one on limit cycles, polycycles and singular points of $v_{\eps}, w_{h(\eps)}$.
\end{remark}

\section{Main lemmas and the proof of Main Theorem}
\label{sec-proof}

In this section, we formulate two main lemmas and prove the Main Theorem modulo these lemmas.
\subsection{The plan of the proof}

In the previous section, we have partially constructed the required isomorphism $G_\eps\colon LMF(v_{\eps}) \to LMF(w_{h(\eps)}) $. However $G_{\eps}$ is not yet defined on truncated separatrices; it is only defined on their germs at singular points. To complete the construction, we will need the following Correspondence lemma: if separatrices of $v_{\eps}$ cross a transversal loop of $v_{\eps}$, then the corresponding separatrices of $w_{h(\eps)}$ cross the corresponding transversal loop of $w_{h(\eps)}$; the formal statement appears below. This lemma will enable us to extend $G_{\eps}$ to truncated separatrices and truncation vertices, and we will be forced to modify restrictions of $G_{\eps}$ to transversal loops so that it provides a correct identification of truncation vertices. However, for a transversal loop $l$, we will not change its image  $G_{\eps}(l)$.

After $G_{\eps}$ is constructed, we have to verify the assumptions of Theorem \ref{th-graphs-components} for it. The most non-trivial assumption concerns annular faces of LMF graphs. We will state and prove the Annuli faces lemma to handle this problem.

\subsection{Main lemmas}

   \begin{lemma}[Annuli faces lemma]
   \label{lem-an-faces}
    In assumptions of Main theorem, let $\eps$ be sufficiently small. Let $A$ be an annular face of $LMF(v_{\eps})$. Then the map $G_{\eps}$ (see Sec. \ref{sec-partial-Ge}) takes $\partial A$ homeomorphically to the boundary of an annular face $\tilde A$ of $LMF(w_{h(\eps)})$. Moreover, $G_{\eps}$ extends to an orientation-preserving homeomorphism that takes $A$ to $\tilde A$.
   \end{lemma}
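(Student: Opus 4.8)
The strategy is to use the structural classification of annular faces provided by Lemma~\ref{lem-faces}: an annular face $A$ of $LMF(v_\eps)$ sits either (a) between an $\alpha$- or $\omega$-limit set $P$ of $v_\eps$ (a sink, source, cycle, or polycycle) and its transversal loop $l$, or (b) between two transversal loops $l_1, l_2$ of such limit sets. I would first dispose of case (b): here $A$ lies inside a single canonical region carrying a spiral flow (see the proof of Lemma~\ref{lem-faces}), its two boundary components are transversal loops $l_1, l_2$ on which $G_\eps$ is already defined (Sec.~\ref{ssec-tr}), and $G_\eps(l_1), G_\eps(l_2)$ are transversal loops of the corresponding limit sets $G_\eps(P_1), G_\eps(P_2)$ of $w_{h(\eps)}$ by Remark~\ref{rem-Ge-trans}. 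The key point is that $P_1, P_2$ are the $\alpha$- and $\omega$-limit sets of \emph{one common canonical region} of $v_\eps$, so after applying $G_\eps$ the images $G_\eps(P_1), G_\eps(P_2)$ must be the $\alpha$- and $\omega$-limit sets of one common canonical region of $w_{h(\eps)}$ — this is where the Correspondence lemma (matching of separatrix crossings, and hence of the combinatorial picture around each limit set) and the no-entrance/Separatrix machinery enter. Granting that, the annular face $\tilde A$ of $LMF(w_{h(\eps)})$ between $G_\eps(l_1)$ and $G_\eps(l_2)$ is well-defined, and $G_\eps|_{\partial A}$ is an orientation-preserving homeomorphism $\partial A \to \partial \tilde A$ sending each boundary circle to a boundary circle; any orientation-preserving homeomorphism between the boundaries of two annuli extends to the annuli (the extension is elementary once the rotation numbers / combinatorial positions of marked points agree, which they do because $G_\eps$ is a graph isomorphism preserving edge orders and orientations).

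For case (a), one boundary component of $A$ is a transversal loop $l$ of a limit set $P$, and the other is $P$ itself (when $P$ is a sink/source or a smooth cycle), or a subcomplex of $LMF(v_\eps)$ traversing $P$ (when $P$ is a polycycle, its boundary is the union of the separatrix connections forming the polycycle together with the relevant truncation/singular vertices). I would split into the subcases $P$ = hyperbolic sink/source, $P$ = limit cycle, $P$ = monodromic polycycle. If $P$ is hyperbolic it is outside $U$, and $G_\eps$ on both $l$ and $P$ is built from $\hat H$ via the continuations $\pi_\eps, \tilde\pi_{h(\eps)}$ (Sec.~\ref{ssec-tr}, Sec.~\ref{ssec-Ge-skeleton}); since $\hat H$ conjugates $v_0$ to $w_0$ and the standard annular model $\dot r = \pm(1-r), \dot\phi = 1$ is imposed in every such annulus by construction, the conjugacy on the two boundary circles extends canonically to the annulus. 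If $P$ is a cycle or a polycycle, then $P \subset U$ (cycles by Proposition~\ref{prop-inU-or-hyp} when $P$ is nonhyperbolic, or again by the hyperbolic continuation argument otherwise; polycycles by Separatrix Lemma~\ref{lem-sep-connect}), and so is the whole annulus between $P$ and $l$ — indeed, we chose such transversal loops inside $U$ in Sec.~\ref{ssec-tr}. Then $G_\eps = H_\eps$ on this entire annulus, and $H_\eps$ is already a homeomorphism conjugating $v_\eps$ to $w_{h(\eps)}$ there; so the extension is simply $H_\eps$ itself, and it is orientation-preserving because $H_\eps$ is (or because $G_\eps$ preserves counterclockwise edge orders).

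The remaining delicate point is the matching of boundary data on transversal loops: $G_\eps$ as defined in Sec.~\ref{ssec-tr} sends the set $l$ to the correct set $G_\eps(l)$, but the truncation vertices on $l$ (the crossing points of separatrices with $l$) have not yet been matched — that is deferred to Sec.~\ref{ssec-Ge-final} using the Correspondence lemma. For the present lemma I would phrase things so that $G_\eps|_{\partial A}$ refers to the \emph{already modified} $G_\eps$ (post-Correspondence-lemma), whose restriction to each transversal loop is a homeomorphism carrying truncation vertices of $v_\eps$ on $l$ to truncation vertices of $w_{h(\eps)}$ on $G_\eps(l)$ in the correct cyclic order, and carrying the induced (counterclockwise) orientation to the induced orientation. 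Once that is in hand, the extension inside $A$ is routine: an orientation-preserving boundary homeomorphism of an annulus that respects the marked points extends, and one can take the extension to additionally respect the standard flow model in case~(a) and to coincide with $H_\eps$ wherever $A \subset U$.

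\textbf{Main obstacle.} The hard part is case~(a) with $P$ a \emph{polycycle}, together with the bookkeeping that $\partial A$ then includes separatrix connections of $v_\eps$ that may be only partly inside $U$: one must know that $G_\eps$ is already coherently defined on all of $\partial A$ (non-truncated separatrices are inside $U$ by Separatrix Lemma~\ref{lem-sep-connect}, so this holds, but it must be checked that the polycycle's transversal loop was placed inside $U$ and that no other separatrix enters that annulus — this is exactly the content of the No-entrance Lemma~\ref{lem-seps} and the careful choice of $U$), and that the two boundary circles of $\tilde A$ bound a genuine annular face of $LMF(w_{h(\eps)})$ rather than something with extra edges inside. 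Establishing that last point — that the combinatorial type of the face is preserved — is where one genuinely needs the Correspondence lemma and the fact that $G_\eps$ is a bijection on all edges and vertices disjoint from transversal loops (Proposition~\ref{prop-G-bij}) plus a bijection on transversal loops (Remark~\ref{rem-Ge-trans}); I expect the bulk of the work to be in assembling these into the statement that $G_\eps$ maps the closure $\overline A$ onto a set whose complement in $S^2$, near $\tilde A$, is exactly what it should be.
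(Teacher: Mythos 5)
Your treatment of case (a) — the annulus between a transversal loop and its $\alpha$-/$\omega$-limit set — essentially matches the paper's: split by whether the limit set is inside $U$ (then use $H_\eps$ directly) or is hyperbolic and outside $U$ (then use $\hat H$ with the continuations $\pi_\eps, \tilde\pi_{h(\eps)}$ and an orientation check). That part is fine.

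The gap is in case (b), the annulus between two transversal loops, and it is not a small one. You propose to deduce that the corresponding limit sets for $w_{h(\eps)}$ bound a common canonical region by invoking the Correspondence lemma and the ``no-entrance/Separatrix machinery.'' But in the paper's logical architecture this would be circular: the chain is Boundary lemma $\Rightarrow$ lemmas on images of Type $1$/$2$/$3$ boundary components $\Rightarrow$ Empty annuli lemma $\Rightarrow$ Annuli faces lemma $\Rightarrow$ Correspondence lemma (see Sec.~\ref{sec-proof} and the explicit dependency statement before Sec.~\ref{sec-annuli}). The proof of the Correspondence lemma itself \emph{uses} the Empty annuli lemma (Case 1 of Sec.~\ref{sec-Corr} appeals to it directly), which is precisely the statement that needs to be established here. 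You also explicitly say you would use the ``already modified $G_\eps$ (post-Correspondence-lemma)'' — whereas the paper proves the lemma for the pre-modification $G_\eps$ and notes afterwards that the modification on transversal loops cannot affect it. So the order you propose cannot be made to work without re-founding the proof of the Correspondence lemma.

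What is genuinely missing is the content of the Empty annuli lemma: the assertion that if $l_1, l_2$ bound an empty annulus for $v_\eps$ then $\tilde l_1, \tilde l_2$ bound an empty annulus for $w_{h(\eps)}$. This is where the real work lies, and your proposal does not contain a mechanism to establish it. The paper's argument looks at all boundary components of $U$ sitting inside $A$, shows (by an index computation) which ones can be non-contractible, reduces to the cases where $l_1, l_2$ lie in the same component of $\overline U$ or of $S^2\setminus U$, and then uses the Boundary lemma together with the lemmas on images of Type $1$, $2$, $3$ boundary components (which in turn control where singular points and cycles of $w_{h(\eps)}$ can lie). None of that structure appears in your sketch; acknowledging it as the ``main obstacle'' and pointing at Proposition~\ref{prop-G-bij} and Remark~\ref{rem-Ge-trans} does not supply an argument, since those facts say nothing about the absence of singular points, cycles, or separatrices of $w_{h(\eps)}$ inside $\tilde A$. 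You would need to prove the Empty annuli lemma (or an equivalent statement) on its own, before the Correspondence lemma is available.
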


The proof constitutes Section \ref{sec-annuli}. Clearly, the modification of $G_{\eps}$ on transversal loops will not affect this lemma.
\vskip 0.5 cm

Now we introduce notation for Correspondence lemma.
Recall that we always choose counterclockwise orientation on transversal loops with respect to their $\alpha$-($\omega$-)limit sets, see Sec. \ref{sec-LMF}. We will call this ''proper orientation``.

Take a transversal loop $l$ that belongs to $LMF(v_{\eps})$.  Denote by $\{\gamma_i\}, i=1,\dots, n$, all separatrices of singular points $P_i$ of $v_{\eps}$ that cross $l$ (the case $n=0$ is not excluded). We suppose that the cross-points $p_i := \gamma_i \cap l$, i.e. truncation vertices on $l$,  are ordered cyclically along $l$. Note that if a singular point $P$ has several separatrices that intersect $l$, then it appears several times in the list  $\{P_i\}$.

Let $\tilde \gamma_i$ be the separatrix of $w_{h(\eps)}$ that corresponds to $\gamma_i$, i.e. contains the germ $G_{\eps}((\gamma_i, P_i))$.

 \begin{lemma}[Correspondence lemma]
    \label{lem-seps-behave}
     In assumptions of Main Theorem,  let $l$ be a  properly oriented transversal loop of $v_{\eps}$ that belongs to $LMF(v_{\eps})$. Let $\{\gamma_i\}$ be all separatrices of $v_{\eps} $ that intersect $l$, so that the corresponding truncation vertices $p_i$ are ordered cyclically along $l$.
    Then for sufficiently small $\eps$,

    1) The corresponding separatrices  $\{\tilde \gamma_i\}, i=1, \dots, n$ of $w_{h(\eps)}$  cross  the properly oriented transversal  loop $\tilde l:=G_{\eps}(l)$, and the truncation vertices $\tilde p_i := \tilde \gamma_i \cap \tilde l$ are ordered cyclically along $\tilde l$.

    2) There are no more truncation vertices on $\tilde l$.
   \end{lemma}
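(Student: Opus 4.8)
The plan is to reduce both assertions to a statement about the dynamics of $v_\eps$ and $w_{h(\eps)}$ in the annulus bounded by $l$ (resp. $\tilde l$) and the $\alpha$- or $\omega$-limit set $P$ it surrounds, using the No-entrance lemma \ref{lem-seps} to control which separatrices can possibly cross $l$. First I would fix the neighborhood $U$ of $LBS(V)$ small enough that Proposition \ref{prop-inU-or-hyp}, Corollary \ref{cor-heps-detached}, the No-entrance lemma and the surjectivity argument of Proposition \ref{prop-G-bij} all apply; shrinking $U$ is permitted by Remark \ref{rem-shrink-U}. Recall that a transversal loop $l$ of $LMF(v_\eps)$ surrounds one of: a hyperbolic sink/source of $v_0$, a hyperbolic cycle of $v_0$, a non-hyperbolic cycle, a monodromic polycycle, or a non-hyperbolic characteristic point --- in the last three cases $P\subset U$, in the first two $l$ and its annulus lie outside $U$ and $P$ depends continuously on $\eps$.

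\emph{Case A: $l$ lies outside $U$.} Then $P$ is a hyperbolic sink, source, or cycle of $v_0$, persisting as $P_\eps=\pi_\eps^{-1}P$, and by definition $G_\eps|_l=\hat H|_l$, while $\tilde l=\hat H(l)$ is the transversal loop of $\hat H(P)=\tilde\pi_{h(\eps)}(G_\eps(P_\eps))$. A separatrix $\gamma$ of $v_\eps$ crossing $l$ must, by Poincar\'e--Bendixson and the choice of $l$, have $P$ as its $\omega$- or $\alpha$-limit set; since $P$ is a hyperbolic attractor/repeller/cycle this forces $\gamma$ to be a separatrix of a hyperbolic saddle outside $U$ (a non-truncated separatrix would, by Separatrix lemma \ref{lem-sep-connect}, lie in $U$), hence $\gamma=\gamma_\eps$ continues the corresponding separatrix $\gamma_0$ of $v_0$ that crosses $l$, and $\pi_\eps$ takes $\gamma_\eps\cap l$ to $\gamma_0\cap l$. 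Applying $\hat H$ (which conjugates $v_0,w_0$ and hence matches separatrices crossing $l$ with separatrices crossing $\tilde l=\hat H(l)$) and then $\tilde\pi_{h(\eps)}^{-1}$, we get that $\tilde\gamma_i$ crosses $\tilde l$ in a single point $\tilde p_i=\tilde\pi_{h(\eps)}^{-1}(\hat H(\pi_\eps(p_i)))$. Since $\hat H|_l$ preserves orientation and $\pi_\eps,\tilde\pi_{h(\eps)}$ preserve the cyclic order on $l,\tilde l$ (they are homeomorphisms of loops depending continuously on $\eps$, equal to the identity at $\eps=0$), the cyclic order of the $\tilde p_i$ matches that of the $p_i$; and there are no other truncation vertices on $\tilde l$ because any other saddle separatrix of $w_{h(\eps)}$ crossing $\tilde l$ would, by the symmetric argument applied to $\hat H^{-1}$ and $\tilde\pi$, come from a saddle separatrix of $v_\eps$ crossing $l$, i.e.\ from one of the $\gamma_i$.

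\emph{Case B: $l\subset U$, with annulus $K$ between $P$ and $l$ inside $U$.} Here $G_\eps|_l=H_\eps|_l$ and $\tilde l=H_\eps(l)$. A separatrix $\gamma_i$ of $v_\eps$ crossing $l$ enters the annulus $K\subset U$; I would use the No-entrance lemma to arrange that near $LBS^*(V)$ no separatrix \emph{enters}, so that $\gamma_i$ crosses $l$ exactly once and then spirals onto $P$ inside $U$ --- more precisely, after crossing $l$ the arc of $\gamma_i$ in $K$ stays in the domain where $H_\eps$ is defined and conjugates $v_\eps$ to $w_{h(\eps)}$. Consequently $H_\eps(\gamma_i\cap K)$ is an arc of $\tilde\gamma_i$ spiralling onto $H_\eps(P)$ and crossing $\tilde l=H_\eps(l)$ exactly at $\tilde p_i:=H_\eps(p_i)$; since $H_\eps$ is a homeomorphism on the loop $l$ preserving the (proper) orientation, the cyclic order of the $\tilde p_i$ is the same as that of the $p_i$, proving (1). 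For (2), any truncation vertex on $\tilde l$ is the crossing point of some separatrix $\tilde\gamma$ of $w_{h(\eps)}$ with $\tilde l$; that separatrix enters $H_\eps(K)\subset H_\eps(U)$, and pulling back by $H_\eps^{-1}$ (defined there by the moderate equivalence) gives a separatrix of $v_\eps$ crossing $l$, hence one of the $\gamma_i$, so $\tilde\gamma$ is one of the $\tilde\gamma_i$ and $\tilde p$ is one of the $\tilde p_i$.

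\textbf{The main obstacle} I anticipate is Case B, and specifically proving that a separatrix $\gamma_i$ crossing $l$ cannot later leave $U$ (or cross $l$ again) before its limit set is reached: the No-entrance lemma only guarantees a \emph{special} small neighborhood $U^*\supset LBS^*(V)$ with the no-entrance property, not every neighborhood, so I will need to interpose $U^*$ between $P$ and $l$ (choosing the transversal loops of $LMF(v_\eps)$ close enough to their limit sets, as allowed in Sec. \ref{ssec-tr}) and argue that once $\gamma_i$ has crossed $l$ it is trapped in the canonical annular region between $l$ and $P$ --- this uses that this region is a genuine attracting/repelling annulus (Lemma \ref{lem-faces}, the standard form $\dot r=\pm(1-r),\dot\phi=1$ near $P$) so no trajectory leaves it. A subsidiary technical point is matching the \emph{proper} orientations: $H_\eps$ and $\hat H$ are a priori only orientation-preserving homeomorphisms of $S^2$, so I must check they send the counterclockwise orientation of $l$ with respect to $P$ to the counterclockwise orientation of $\tilde l$ with respect to $G_\eps(P)$ --- this follows because an orientation-preserving homeomorphism of $S^2$ preserves, for a Jordan curve $l$ and a connected set $P$ in one complementary disc, the ``$P$ is on the left'' relation, hence the proper orientation.
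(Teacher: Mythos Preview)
Your Case B (loop inside $U$) is essentially the paper's Case~3, and the argument via $H_\eps$ is correct there; you also correctly identify the obstacle with the No-entrance lemma.  But you miss the subcase $l\subset U\setminus U^*$ (a transversal loop of a cycle born from a non-interesting nest), which needs a separate reduction.

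The serious gap is in Case A.  Your key claim --- that a separatrix $\gamma$ of $v_\eps$ crossing $l$ ``is a separatrix of a hyperbolic saddle outside $U$'' and ``continues the corresponding separatrix $\gamma_0$ of $v_0$ that crosses $l$'' --- is false in general.  Nothing prevents $\gamma$ from emanating from a singular point \emph{inside} $U$ (e.g.\ a saddlenode, or a hyperbolic saddle that lies in $LBS(V)$ because one of its separatrices has an interesting limit set for $v_0$) and then exiting $U$ to reach the hyperbolic sink $P$.  Even when the saddle $Q$ is outside $U$, its $v_0$-separatrix $\gamma_0$ need not cross $l$: it may wind onto a non-interesting semi-stable cycle $c_0\in LBS(V)$ that separates $Q$ from $P$, and only after $c_0$ disappears does $\gamma_\eps$ reach $l$.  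In both scenarios the separatrix has \emph{mixed location} (partly in $U$, partly outside), and neither $\hat H$ nor $H_\eps$ alone tracks it from its germ to its crossing with $l$.

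This mixed-location case is exactly where the paper's machinery enters.  The proof in Section~\ref{sec-Corr} splits Case~A according to how backward orbits of $l$ under $v_0$ meet $\partial U^*$: if they hit a \emph{transversal} boundary component $\varphi$ (Type~2 or~3), one uses the Empty annuli lemma to show $H_\eps(\varphi)$ and $\tilde l$ bound an empty annulus for $w_{h(\eps)}$, so the correspondence of separatrices on $\varphi$ (where $H_\eps$ applies) transfers to $\tilde l$; if they hit only via outer-tangency arcs of Type~2 components, one uses Lemma~\ref{lem-image-Case2} on images of Type~2 components to control the Poincar\'e map from $H_\eps(\beta_\eps)$ to $\tilde l$.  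Both steps rest on the Boundary lemma's classification of $\partial U^*$.  Your argument, by contrast, attempts to push everything through $\pi_\eps$ and $\hat H$, which works only for separatrices lying entirely outside $U$ --- the easy subcase.
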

  The proof constitutes Section \ref{sec-Corr}. The proof is simple if we only consider separatrices that are completely inside $U$ or comlpetely inside  its complement.  The problem occurs if the separatix has \emph{mixed location}: belongs partly to  $U$ and partly to its complement. We will classify such separatrices as well as the boundary components of $U$ that may intersect them.

\subsection{Isomorphism of LMF graphs}
\label{ssec-Ge-final}
This section completes the construction of the graph isomorphism $G_{\eps} \colon LMF(v_{\eps}) \to LMF(w_{h(\eps)}) $.

 Let $l$ be a transversal loop in $LMF(v_{\eps})$. We will introduce $\gamma_i, p_i, \tilde l,$ $\tilde \gamma_i$, and $\tilde p_i$ as in Correspondence lemma \ref{lem-seps-behave}. Note that $p_i$ are all truncation vertices on $l$ and $\tilde p_i$ are all truncation vertices on $\tilde l$.

Now we modify $G_{\eps}|_{l}$ so that $G_{\eps}(l) = \tilde l$, the map takes $p_i$ to $\tilde p_i$ and preserves counterclockwise orienation on $l, \tilde l$.
If $l$ is empty, i.e. does not intersect separatrices of $v_{\eps}$, it contains one vertex of type 4. Correspondence lemma implies that $\tilde l$ does not intersect separatrices of $w_{h(\eps)}$, so it also contains one vertex of type 4. In this case, we modify $G_{\eps}|_l$ so that it matches these vertices of type 4 and preserves counterclockwise orienation on $l, \tilde l$.

$G_{\eps}$ takes the germs $(\gamma_i, P_i)$ of truncated separatrices to $(\tilde \gamma_i, G_{\eps}( P_i))$, due to the definition of $\tilde \gamma_i$. We extend $G_{\eps}$ to the whole truncated separatrices so that it identifies truncation vertices: $G_{\eps} (p_i) =\tilde p_i$.
This completes the construction of $G_{\eps}$ on vertices of type 2, 4 and edges of type 2, 4.

\begin{remark}
\label{rem-G-24}
 $G_{\eps}$ is one-to-one on vertices and edges of type $2$, $4$, due to Remark \ref{rem-Ge-trans} and Correspondence lemma. It preserves incidence of vertices and edges, labels, and orientation on transversal loops.

Each truncated separatrix of $w_{h(\eps)}$ terminates on some transversal loop, so all of them are in the range of $G_{\eps}$. This shows that $G_{\eps} $ is surjective on truncated separatrices. Injectivity is clear because $G_{\eps}$ is injective on germs of truncated separatrices. We conclude that $G_{\eps}$ is one-to-one on truncated separatrices.
\end{remark}

\vskip 0.2 cm

Proposition \ref{prop-G-bij} and Remark \ref{rem-G-24} show that the map  $G_{\eps}$ is one-to-one on vertices and edges of $LMF(v_{\eps})$ and $LMF(w_{h(\eps)})$, preserves incidence of vertices and edges, labels and orientation. So $G_{\eps}$ is a graph isomorphism.

\subsection{Isotopy of the LMF graphs}

We are going to check the assumptions of Theorem \ref{th-graphs-components}: $G_{\eps}$ preserves orders of edges in all vertices of $LMF(v_\eps)$ and extends to annuli-shaped faces of $LMF(v_{\eps})$, $LMF(w_{h(\eps)})$. The second statement follows directly from Annuli faces Lemma \ref{lem-an-faces}. Now we check the first statement in all vertices of $LMF(v_{\eps})$.

\textbf{Vertices of type 1 (singular points) inside $U$.}

The edges that start at such vertex $P$ are
\begin{itemize}
 \item edges of type 1, i.e. non-truncated separatrices of $v_{\eps}$. They belong to $U$ due to Separatrix lemma \ref{lem-sep-connect}.
\item edges of type 5, i.e. homoclinic trajectories in elliptic sectors of $P$.
\item edges of type 2, i.e. truncated separatrices of that singular point $P$.
 \end{itemize}
On the edges of types 1,5, and on the germs of edges of type 2,  $G_{\eps}$ coincides with $H_{\eps}$.
    Hence it preserves cyclical  orders of edges at all vertices of type 1 inside $U$, because so does $H_{\eps}$.

\textbf{Vertices of type 1 outside $U$}

Note that vertices of type 1 outside $U$ are either hyperbolic saddles, or hyperbolic sinks, or sources.
Hyperbolic sinks and sources are isolated vertices of $LMF(v_{\eps})$, and there is nothing to prove for them.  Let $P$ be a hyperbolic saddle outside $U$; on the (germs of) edges adjacent to $P$, we have  $G_{\eps}((\gamma, P)) = \tilde \pi_{h(\eps)}^{-1} \circ \hat H \circ \pi_{\eps} ((\gamma, P))$. All maps in this composition preserve orders of separatrices at hyperbolic saddles, so $G_{\eps}$ preserves order of edges at $P$.

\textbf{Vertices of type 2: truncation vertices}

  Such vertex has degree 3: one of the corresponding edges is of type 2 (a truncated separatrix), and two other edges are of type 4 (two arcs of a transversal loop, or possibly one arc coinciding with the whole loop).
    The order of edges in such vertex is always such that the truncated separatrix is ''from the right-hand side`` with respect to the orientation along the transversal loop. Indeed, the transversal loop is oriented counterclockwise with respect to its $\alpha$- or $\omega$-limit set (see the definition of LMF graphs), and the separatrix crosses it from the other side. So the order of edges in this vertex is determined by the orientation of the edges of the graph, and $G_{\eps}$ preserves this orientation.

     \textbf{Vertices of type $3$ and type $4$ (points on limit cycles and on empty transversal loops)}

Such vertices are only joined to themselves by edges of types 3 and 4 respectively. So the cyclical order in such vertices is trivial, and there is nothing to prove.

       Main Theorem is now proved modulo main lemmas.

   \section{Auxiliary lemmas}

The proofs of both main lemmas, as well as the proof of No-entrance  lemma (see Lemma \ref{lem-seps}), are heavily based on the following Boundary lemma.
 
 \subsection{Boundary lemma}
\label{ssec-Blemma}  
In this section, we formulate the  Boundary lemma. Its proof is postponed till  Section \ref{sec-U}.

\begin{definition}
Let $U$ be an open domain. A  point $p\in \partial U$ is called an \emph{inner} topological tangency point of $\partial U$ with $v$ if a germ of the trajectory of $p$ under $v$ is inside $\overline U$ and only crosses $\partial U $ at $p$. It is called an \emph{outer} topological tangency point of $\partial U$ with $v$ if this germ is outside $U$ and only crosses $\partial U $ at $p$.
\end{definition}

 Note that if $\partial U$ is smooth and only has isolated quadratic tangencies with $v$, then all these  tangencies are either inner or outer topological  tangency points.

\begin{definition}
\label{def-Sep-pr}
Let $v$ be a smooth vector field. A closed $v$-invariant set $Z\subset S^2$ is said to have a Sep-property if:
\begin{itemize}
\item \emph{For any unstable separatrix $\gamma \not\subset Z$, the set $\omega(\gamma)$ is detached from $Z$;}

\item \emph{For any stable separatrix $\gamma \not\subset Z$, the set $\alpha(\gamma)$ is detached from $Z$.}
\end{itemize}
 \end{definition}

   \begin{figure}[h]
   \begin{center}
  \includegraphics[width=0.3\textwidth]{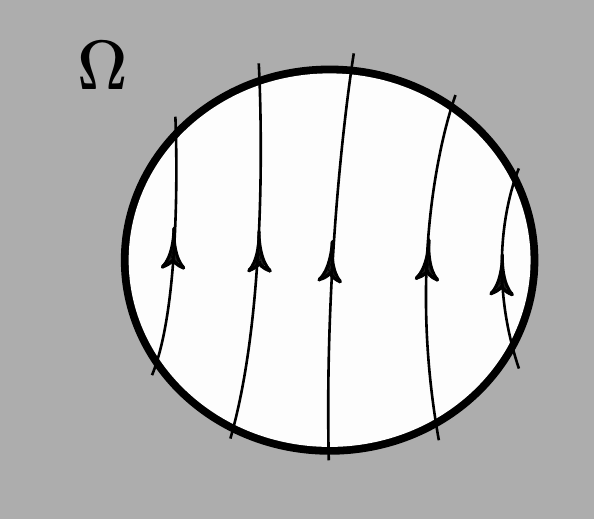}
\hfil
  \includegraphics[width=0.3\textwidth]{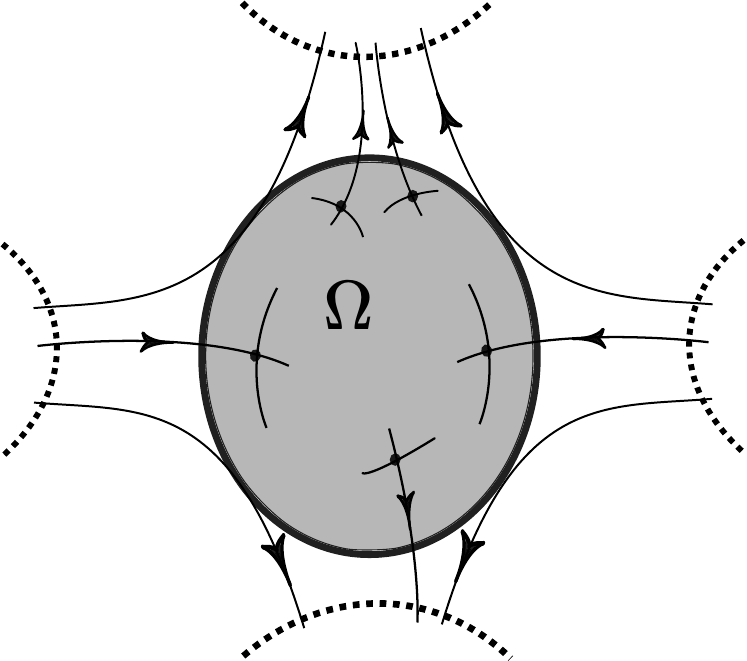}
\hfil
  \includegraphics[width=0.25\textwidth]{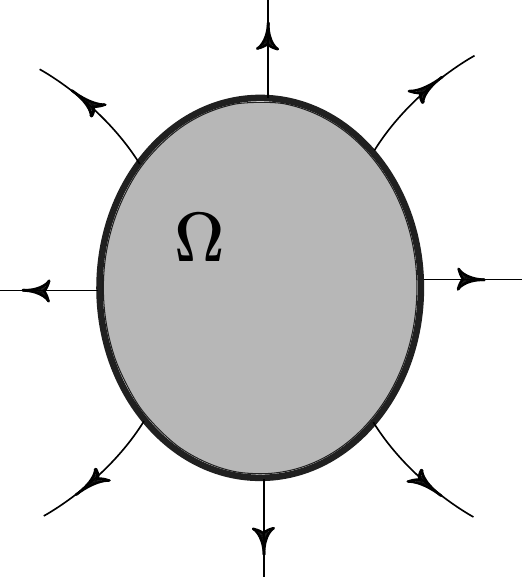}
  \end{center}
 \caption{Type $1$, Type $2$, and Type $3$ boundary components of $\Omega$. For Type $2$, inside the dotted transversal loops there are $\alpha$- and $\omega$-limit sets of maximal transversal arcs of this boundary component. }\label{fig-Case123-def}
\end{figure}

\begin{lemma}[Boundary lemma (see Fig. \ref{fig-Case123-def})]
\label{lem-U-arcs-top}
Let  $v \in Vect^*\,S^2$  be a vector field; let $Z\subset S^2$ be a closed non-empty  $v$-invariant set with Sep-property.

Then there exists an arbitrarily small neighborhood $\Omega$ of $Z$ with smooth boundary and finitely many boundary components with isolated quadratic tangencies of $\partial \Omega$ with $v$, such that any connected component  $\varphi$ of its boundary  $\partial \Omega$ is of one of the following types:
    \begin{itemize}
        \item Type $1$: $\varphi$ contains two inner tangency points of $\partial \Omega$ with $v$ and bounds a disc  $D\subset (S^2\setminus \overline \Omega)$; $v|_{D}$ is orbitally topologically equivalent to the vector field $\partial / \partial x$ in the unit disc.  Trajectories of points of $\varphi$ under $v|_{\overline \Omega}$ belong to $\Omega$.

        \item Type $2$: $\varphi$ contains only outer tangency points of $\partial \Omega$ with $v$ (probably no tangency points). The trajectories of points of $\varphi$ under $v|_{S^2\setminus \Omega}$ belong to $S^2\setminus \overline\Omega$.

         Each maximal transversal arc $\beta \subset  \varphi$ intersects a separatrix of $v|_{\Omega}$.
         If $\beta$ is outgoing, then all its points have a common $\omega$-limit set under $v$.  If $\beta$ is ingoing, then all its points have a common $\alpha$-limit set under $v$. This $\omega$- (resp. $\alpha$-) limit set lies outside $\Omega$.

        \item Type $3$: $\varphi$ is a transversal loop of some $\alpha$- or $\omega$-limit set (an attracting or repelling singular point, a cycle or a polycycle) of $v$, and
        this object belongs to $Z$.

    \end{itemize}
 Moreover, separatrices of $v|_{S^2\setminus \Omega}$ do not intersect $\Omega$ in all the three cases above.
    \end{lemma}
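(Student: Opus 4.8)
The plan is to build $\partial\Omega$ piece by piece, one connected component at a time, by dealing separately with the different kinds of limit behaviour that points near $Z$ can exhibit. Since $Z$ is closed and $v$-invariant, each trajectory in $Z$ carries its $\alpha$- and $\omega$-limit sets inside $Z$, and by Poincaré--Bendixson these are singular points, cycles, or monodromic polycycles; call these the \emph{limit sets attached to $Z$}. The first step is to handle the \textbf{Type 3} components: around every attracting or repelling singular point, cycle, or polycycle that lies in $Z$ and is an $\alpha$- or $\omega$-limit set of some trajectory \emph{leaving} a neighbourhood of $Z$, we insert a genuine transversal loop exactly as in the construction of LMF graphs (Section \ref{sec-LMF}), with the standard flow $\dot r=\pm(1-r),\dot\phi=1$ in the collar between the loop and the object. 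These loops, oriented counterclockwise, will be the Type 3 boundary components, and by construction no separatrix of $v|_{S^2\setminus\Omega}$ can cross back in through them. What remains after removing small collars of all Type 3 objects is a ``core'' part of the boundary to be constructed on the complement.

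Next I would take a very small smooth neighbourhood $\Omega_0\supset Z$ (e.g. the $\delta$-sublevel set of the distance to $Z$, smoothed, or better a neighbourhood adapted so that $\partial\Omega_0$ meets $v$ only in isolated quadratic tangencies — possible since $v\in Vect^*S^2$ has isolated singular points and a Łojasiewicz inequality, so generic level sets are transverse away from finitely many points) and analyse its boundary curves. For each boundary component $\varphi$ of $\Omega_0$ not yet of Type 3, the finitely many quadratic tangencies of $v$ with $\varphi$ split $\varphi$ into maximal transversal arcs. A tangency point is inner or outer; I want to push $\varphi$ so that every component is ``pure'' — all tangencies outer (Type 2), or exactly the Type 1 configuration. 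The key local surgeries are: (i) near an \emph{inner} tangency, the trajectory through the tangency point stays in $\overline\Omega$, and one can shrink $\Omega$ along that trajectory's flowbox to replace a pair of inner tangencies by a boundary arc bounding a small disc $D$ in $S^2\setminus\overline\Omega$ on which $v$ is a trivial strip flow $\partial/\partial x$ — that is the Type 1 picture; (ii) near an \emph{outer} tangency one does nothing, since the trajectory immediately leaves $\Omega$ and the arc is already ``correct''; one only checks the arc's endpoints match up with adjacent arcs. Iterating these finitely many local modifications (there are finitely many tangencies and finitely many components) terminates and produces $\Omega$ with the asserted structure.

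The crucial input of the hypothesis — and the main obstacle — is showing that on a Type 2 component every maximal transversal arc $\beta$ meets a separatrix of $v|_\Omega$, and that the common $\alpha$- or $\omega$-limit set of $\beta$'s points lies \emph{outside} $\Omega$; this is where the Sep-property is indispensable. For a point $p\in\beta$ (say $\beta$ outgoing) the backward trajectory of $p$ enters $\Omega$; by Proposition \ref{prop-canonreg-common-limset} applied inside $\Omega$, points of a small transversal subarc have a common $\alpha$-limit set under $v$, and the only way this limit set can be attached to $Z$ without $p$ itself being trapped — forcing $\beta$ not to be a maximal transversal arc of a Type 2 component at all — is if a separatrix of a saddle in $\Omega$ separates the behaviours, i.e. $\beta$ must cross such a separatrix; then the Sep-property tells us the $\omega$-limit set of that separatrix (hence, after tracking, the $\alpha$-limit set governing $\beta$) is \emph{detached} from $Z$, so it cannot be captured inside a small enough $\Omega$ — giving the ``lies outside $\Omega$'' clause, and simultaneously the fact that separatrices of $v|_{S^2\setminus\Omega}$ never re-enter $\Omega$. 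I expect the bookkeeping of arcs and the verification that the finitely many local surgeries can be performed compatibly (without creating new tangencies or destroying invariance of $Z$) to be the most delicate part, but the conceptual heart is the Sep-property argument just sketched.
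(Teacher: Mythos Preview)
Your approach is genuinely different from the paper's and, as stated, has a real gap. The paper does \emph{not} start from a generic smooth neighbourhood $\Omega_0$ and then perform local surgeries near tangencies. Instead it builds $\Omega$ canonically, region by region: it decomposes $S^2$ into the (finitely many) canonical regions of $v$, proves a structural fact (their Proposition~\ref{prop-Z-inters-R}) that the Sep-property forces $Z$ to be ``all or nothing'' on each separatrix chain and on each $\alpha$-/$\omega$-limit set, and then classifies each canonical region $R$ into five cases according to whether $R$ is strip or spiral and whether $Z$ contains $\alpha(R)$ and/or $\omega(R)$. In each case the intersection $\Omega\cap\overline R$ is written down explicitly, using the rectifying chart on the parallel region $R$; Type~3 loops come from spiral regions with $Z\cap R=\varnothing$, Type~1 loops from regions with $\alpha(R),\omega(R)\subset Z$, and Type~2 arcs from strip regions where only one or neither limit set meets $Z$. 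Adjacent pieces are glued along pre-chosen \emph{marked points} on those separatrices that leave $Z$. Because every piece is built in a rectifying chart, the tangency pattern, the ``arc meets a separatrix'' clause, and the ``common limit set outside $\Omega$'' clause are all read off directly.

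The gap in your plan is the surgery step. Your operation~(i) presumes that inner tangencies of a generic $\partial\Omega_0$ come in pairs bounding a flowbox you can excise, producing a Type~1 disc; but for a generic small neighbourhood of a complicated $Z$ (say a saddle with two of its four separatrices in $Z$) the tangency pattern on a single boundary component can interleave inner and outer tangencies in ways that no local flowbox move untangles, and you give no argument that the process terminates or that it cannot create new tangencies elsewhere. More seriously, the Type~2 requirement that every maximal transversal arc $\beta$ meet a separatrix of $v|_\Omega$ is a \emph{global} positioning condition on $\partial\Omega$, not something a tangency-local surgery can enforce; your sketch (``the only way this limit set can be attached to $Z$ without $p$ itself being trapped \dots'') is not an argument, and in fact uses the Sep-property in the wrong direction (you want to conclude things about the $\omega$-limit of points of an outgoing $\beta$, but invoke the $\omega$-limit of the separatrix you hope crosses $\beta$). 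The paper's region-by-region construction sidesteps all of this: the transversal arc $\varphi(R)$ in a case-4 or case-5 region is \emph{defined} to have its endpoints at marked points on the first/last separatrices of the boundary chains of $R$, so the separatrix-crossing and common-limit-set properties hold by construction.

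If you want to salvage your outline, the missing ingredient is precisely the paper's Proposition~\ref{prop-Z-inters-R} together with Lemma~\ref{lem-sides} on side boundaries of strip regions: these give the combinatorial control over how $Z$ can sit inside $\overline R$ that makes an explicit, non-iterative construction possible.
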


\begin{figure}[h]
 \begin{center}
  \includegraphics[width=0.9\textwidth]{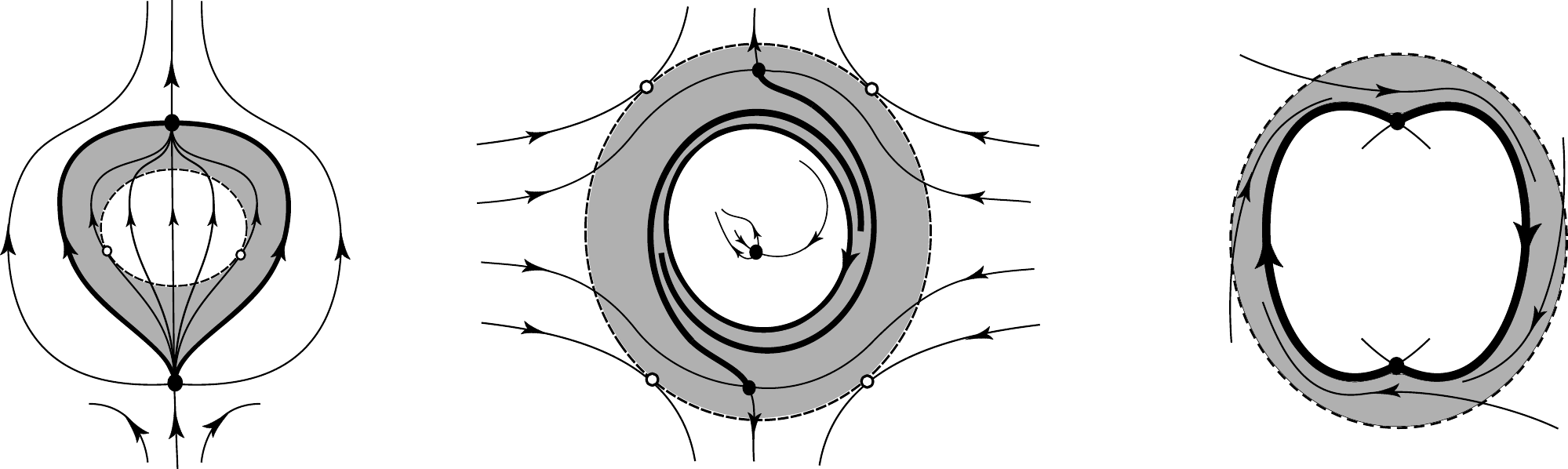}
 \end{center}
\caption{Examples of Type $1$, Type $2$, and Type $3$ boundary components of $\Omega \supset Z$; the set $Z$ is shown in thick, and boundary components are dashed. On all of these pictures, $Z$ is a part of the large bifurcation support of a generic unfolding of $v$}\label{fig-ex-123}
\end{figure}

Note that the characteristic feature of the Type $2$ boundary components is not the presence of outer tangencies (they may be absent), but rather the presence of separatrices that cross these components.

\begin{remark}[Indices of boundary components of $\Omega$]
\label{rem-index}
Recall that we orient $\partial \Omega$ counterclockwise with respect to $\Omega$. For a boundary component $\varphi$  of $\Omega$, suppose that the point $\infty$ of the sphere is located to the left of $\varphi$ (i.e. on the same side as $\Omega$). Then the indices of boundary components of $\Omega$ with respect to $v$ are the following:
\begin{itemize}
\item Type $1$: index $0$;
\item Type $2$: indices $1, 2,\dots$;
\item Type $3$: index $1$. 
\end{itemize}
We will use this remark later to distinguish between boundary components of different types. 
\end{remark}

\begin{proposition}
\label{prop-Case2-nonint-ls}
There exists an arbitrarily small open neighborhood $U^*\supset LBS^*(V)$ that satisfies assumptions of Boundary lemma for $Z=LBS^*(V)$ and $v=v_0$. Moreover, for boundary components of Type $2$, the common $\alpha$- (resp. $\omega$-) limit set under $v_0$ of each ingoing (resp. outgoing) transversal subarc $\beta \subset \partial U^*$ is non-interesting.
\end{proposition}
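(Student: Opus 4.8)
The plan is to read off $U^*$ directly from the Boundary lemma (Lemma~\ref{lem-U-arcs-top}) applied with $Z=LBS^*(V)$ and $v=v_0$, and then to check the ``moreover'' clause for \emph{any} neighborhood that lemma produces, using only that $U^*\supset LBS^*(V)$ together with the description of Type~$2$ boundary components. For the hypotheses of Lemma~\ref{lem-U-arcs-top}: the set $LBS^*(V)$ is closed and $v_0$-invariant (stated after its definition, cf.\ Proposition~\ref{prop-nonint-compon}); if it is empty the proposition is vacuous, so assume it is not. The only nontrivial hypothesis is the Sep-property of Definition~\ref{def-Sep-pr}. Let $\gamma$ be an unstable separatrix of $v_0$ with $\gamma\not\subset LBS^*(V)$. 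Since $\gamma$ is a separatrix (not a cycle) and $LBS(V)\setminus LBS^*(V)$ consists only of non-interesting cycles, also $\gamma\not\subset LBS(V)$; by the contrapositive of Proposition~\ref{prop-seps-v0}, $\omega_{v_0}(\gamma)$ does not meet $LBS^*(V)$, and being compact while $LBS^*(V)$ is closed, the two are detached. The stable case is symmetric. So $Z=LBS^*(V)$ satisfies the Boundary lemma, which furnishes a neighborhood $\Omega$; we set $U^*:=\Omega$.

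Next I would attach a separatrix to each Type~$2$ arc. Fix a Type~$2$ component $\varphi$ of $\partial U^*$ and an outgoing maximal transversal subarc $\beta\subset\varphi$ (the ingoing case will follow by reversing time). By Lemma~\ref{lem-U-arcs-top}, all points of $\beta$ share an $\omega$-limit set $L$ under $v_0$, and $L$ lies outside $U^*$; since $LBS^*(V)\subset U^*$ and $L\neq\varnothing$, in particular $L\not\subset LBS^*(V)$. The same lemma also gives a separatrix $\gamma$ of $v_0$ attached to a singular point $Q\in U^*$ (so $Q\in\{\alpha_{v_0}(\gamma),\omega_{v_0}(\gamma)\}$) with $\gamma\cap\beta\neq\varnothing$. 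As $\gamma\cap\beta\in\beta$ we get $\omega_{v_0}(\gamma)=L$, which lies outside $U^*$; hence $\omega_{v_0}(\gamma)\neq Q$, so $\alpha_{v_0}(\gamma)=Q$, i.e.\ $\gamma$ is an unstable separatrix of $Q$. Finally $\gamma$ crosses $\partial U^*$, so $\gamma\not\subset U^*\supset LBS^*(V)$, whence (as above) $\gamma\not\subset LBS(V)$.

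The core of the argument is then: assuming $L$ is interesting, I derive $L\subset LBS^*(V)\subset U^*$, contradicting that $L$ lies outside $U^*$. By Poincar\'e--Bendixson $L$ is a singular point, a limit cycle, or a monodromic polycycle, and $L$ cannot be a hyperbolic saddle: otherwise $\gamma$, converging to $L$ as $t\to+\infty$, would be a stable separatrix of $L$ as well as an unstable separatrix of $Q$, hence a separatrix connection, so $\gamma\subset LBS(V)$ by Proposition~\ref{prop-nohyp-inLBS} --- contradiction. With this excluded, each case follows from the explicit description of the LBS: a monodromic polycycle is formed by separatrix connections, hence lies in $LBS(V)$ and, being disjoint from cycles, in $LBS^*(V)$; an interesting limit cycle is non-hyperbolic, hence lies in $ELBS(v_0)\cap\big(\overline{\Per V}\cap\{\eps=0\}\big)\subset LBS(V)$ and, being interesting, in $LBS^*(V)$; an interesting singular point that is an $\omega$-limit set is neither a hyperbolic attractor nor a hyperbolic repeller, and not a hyperbolic saddle by the above, hence non-hyperbolic, hence in $LBS(V)$ and in $LBS^*(V)$ by Proposition~\ref{prop-nohyp-inLBS}. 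In every case $L\subset LBS^*(V)$, the desired contradiction, so $L$ is non-interesting. Ingoing subarcs are handled by applying the same reasoning to $-v_0$.

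The main obstacle is precisely ruling out that the common limit set is a \emph{hyperbolic saddle}: among the $\alpha$-/$\omega$-limit sets of $v_0$, a hyperbolic saddle is the one that is interesting yet may fail to lie in $LBS(V)$, so it cannot be excluded just from ``$L$ lies outside $U^*\supset LBS^*(V)$''. The separatrix-connection argument above settles this; alternatively one may observe that the stable set of a hyperbolic saddle is one-dimensional and so cannot contain the one-parameter family of trajectories issuing from the whole arc $\beta$. The only other care needed is the routine bookkeeping that $LBS(V)$ and $LBS^*(V)$ differ only by non-interesting cycles, which is what lets me pass freely between ``$\gamma\not\subset LBS^*(V)$'' and ``$\gamma\not\subset LBS(V)$'' above.
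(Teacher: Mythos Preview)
Your proof is correct and follows essentially the same approach as the paper's: verify the hypotheses of the Boundary lemma for $Z=LBS^*(V)$ (closedness, $v_0$-invariance, and the Sep-property via Proposition~\ref{prop-seps-v0}), take $U^*$ from that lemma, and then argue that the common limit set $L$ of a Type~$2$ transversal arc cannot be interesting because every interesting $\alpha$-/$\omega$-limit set other than a hyperbolic saddle lies in $LBS^*(V)\subset U^*$, while the saddle case is ruled out separately.

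The only stylistic difference is in how the saddle case is excluded. The paper appeals to Remark~\ref{rem-nohyp-inLBS} to compress your case analysis and then disposes of saddles by the dimension observation (only the stable separatrices of a hyperbolic saddle can have it as an $\omega$-limit, so a whole transversal arc cannot). You instead use the separatrix crossing $\beta$ furnished by the Boundary lemma to produce a separatrix connection and invoke Proposition~\ref{prop-nohyp-inLBS}; this is a perfectly good alternative, and you correctly note the dimension argument as a second route.
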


\begin{proof}
Let us check that $LBS^*(V)$ satisfies assumptions of Boundary lemma. Clearly, it is closed and $v_0$-invariant (Proposition \ref{prop-LBS-closed-vinv}). It  has a Sep-property due to Proposition \ref{prop-seps-v0}. The application of Boundary lemma provides us with a neighborhood $U^*$. This implies the first statement of the proposition. 

Let $\varphi \subset U^*$ be a boundary component of $U^*$ of Type $2$, and let $\beta \subset \varphi$ be its outgoing transversal subarc. Boundary lemma implies that  the common  $\omega$-limit set of $\beta $ under $v_0$ is outside $U^*$. The only interesting  $\omega$-limit sets of $v_0$ outside $LBS^*(V)$ are saddles (see Remark \ref{rem-nohyp-inLBS}). But saddle separatrices of $v|_{S^2\setminus U^*}$ do not intersect $U^*$ due to Sep-property of $LBS(V)$. So $\omega(\beta)$ is non-interesting. The same arguments apply to ingoing transversal arcs and their $\alpha$-limit sets.
\end{proof}

\subsection{Proof of No-entrance  lemma \ref{lem-seps} modulo Boundary lemma}
\label{ssec-lem-seps}
\begin{remark}  \label{rem-Bound-imply-Sep}
In fact, we will prove that any neighborhood $U^*$ that satisfies Boundary lemma (for $v_0$ and $LBS^*(V)$) also satisfies No-entrance lemma.
\end{remark}

\begin{proof}
Choose a neighborhood $U^*$ that satisfies Boundary lemma for $v_0$ and $LBS^*(V)$. It exists due to Proposition \ref{prop-Case2-nonint-ls}. Suppose that unstable separatrices of $v_{\eps}$ enter  $U^*$ for arbitrarily small $\eps$. Then there exists a sequence $\eps_k\to 0$ and points  $p_k \in \partial U^*$ where unstable separatrices of $v_{\eps_k}$ enter  $U^*$. Let $p$ be a limit point of the sequence $p_k \in \partial U^*$. Then  $p \in \partial U^*$; clearly, $p$ belongs to the closure of an ingoing transversal arc of $\partial U^*$.

On the other hand, $p\in (\overline {\Sep V}) \cap \{\eps=0\}$. We claim that $p\in ELBS(v_0)$; this will imply $p\in LBS(V)$ and contradict $p \in \partial U^*$.

Since $p\in  \partial U^*$, it is not singular. Prove that its $\alpha$-limit set under $v_0$ is interesting. Indeed, otherwise the negative semi-trajectory of $p$ under $v_0$ crosses a transversal arc of a non-interesting set $\alpha_{v_0}(p)$. Thus for close points $p_k$, their negative semi-trajectories under close vector fields $v_{\eps_k}$ cross this arc as well. Hence these semi-trajectories cannot be unstable saddle separatrices, and we get a contradiction.

So $\alpha_{v_0}(p)$ is interesting. Let us prove that $\omega_{v_0}(p)$ is interesting.

 Since  $\alpha_{v_0}(p)$ is interesting, Proposition \ref{prop-Case2-nonint-ls} implies that $p$ cannot belong to the boundary component of Type $2$.
      So it belongs to the boundary component of Type $1$ or Type $3$. In both cases, the future semi-trajectory of $p$ under $v_0$ belongs to $U^*$  due to Boundary Lemma \ref{lem-U-arcs-top}.
        The $\omega$-limit set of $p$ under $v_0$ is thus inside $\overline U^*$; due to Remark \ref{rem-nohyp-inLBS}, $\omega_{v_0}(p)$ is interesting.

Hence $\alpha$- and $\omega$-limit sets of $p$ are both interesting. We conclude that $p\in ELBS(v_0) \cap (\overline \Sep V) \cap \{\eps=0\} \subset LBS(V)$, which contradicts $p\in \partial U^*$.
So separatrices of $v_{\eps}$ cannot enter $U^*$.
\end{proof}

\subsection{Choice of $U$}
\label{sec-choice-U}

If $U$ is a neighborhood of $LBS(V)$, we denote by  $U^{*} $ the union of its connected components that do not contain non-interesting cycles of $v_0$.
If $\tilde U^{\pm}$ is a neighborhood of $LBS(W)$, we denote by  $\tilde U^{\pm *} $ the union of its connected components that do not contain non-interesting cycles of $w_0$.

\begin{proposition}
\label{prop-choice}
Under the assumptions of the Main Theorem, there exists an arbitrarily small open  neighborhood $U$ of $LBS(V)$ and arbitrarily small open neighborhoods $\tilde U^{\pm}$ of $LBS(W)$ such that
\begin{itemize}
 \item [-] $U^*$ satisfies Boundary lemma for $v_0$ and  $LBS^*(V)$;
 \item [-] $\tilde U^{\pm *} $ satisfy Boundary lemma for $w_0$ and $LBS^*(W)$;
 \item [-] for all small $\eps$, $\tilde U^- \subset H_{\eps}(U) \subset \tilde U^{+}$;
 \item [-] the sets $U\setminus U^*$, $\tilde U^{\pm} \setminus \tilde U^{\pm*}$ are unions of neighborhoods of non-interesting cycles bounded by their transversal loops.
\item [-] each connected component of $U, \tilde U^\pm$ contains one connected component of $LBS(V), LBS(W)$ respectively.
\end{itemize}

\end{proposition}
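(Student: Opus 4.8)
The plan is to assemble each of $U$ and $\tilde U^\pm$ out of two independent pieces: a neighborhood of the ``starred'' part (where the Boundary lemma is the binding requirement) and, for each non-interesting cycle of $v_0$ (resp. $w_0$), a thin annulus bounded by two transversal loops. The key preliminary observation is that by Proposition~\ref{prop-nonint-compon} the non-hyperbolic non-interesting cycles of $v_0$ are isolated connected components of $LBS(V)$ (and likewise for $w_0$, $LBS(W)$); since $LBS(V)$, $LBS(W)$ also have finitely many components (Proposition~\ref{lem-LBS-finite}), the two pieces can always be taken disjoint, and the component-correspondence in the last bullet is automatic once every neighborhood involved is chosen small enough. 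I will also use the ``arbitrarily small'' form of Proposition~\ref{prop-Case2-nonint-ls}: for \emph{any} prescribed neighborhood of $LBS^*(V)$ one can find a smaller one satisfying the Boundary lemma for $v_0$, and similarly for $w_0$ and $LBS^*(W)$.

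\textbf{Choosing $\tilde U^+$ and then $U$.} First I would pick $\tilde U^+$: take an arbitrarily small neighborhood of $LBS^*(W)$ satisfying the Boundary lemma for $w_0$ and disjoint from the non-interesting cycles of $w_0$ (Proposition~\ref{prop-Case2-nonint-ls}), and around each non-interesting cycle of $w_0$ take a thin annulus bounded by two transversal loops; let $\tilde U^+$ be their disjoint union. Then $\tilde U^{+*}$ is exactly the first piece, which satisfies the Boundary lemma. Next, apply Proposition~\ref{prop-heps-detached} to this $\tilde U^+$ to obtain a neighborhood $U_1\supseteq LBS(V)$ with $H_\eps(U_1)\subseteq\tilde U^+$ for all small $\eps$. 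Now repeat the same construction for $v_0$ \emph{inside} $U_1$: an arbitrarily small neighborhood of $LBS^*(V)$ contained in $U_1$ and satisfying the Boundary lemma for $v_0$, together with thin annuli (inside $U_1$) bounded by transversal loops around the non-interesting cycles of $v_0$; let $U$ be their union. Then $U\subseteq U_1$, so $H_\eps(U)\subseteq\tilde U^+$ for all small $\eps$; $U^*$ is the first piece and satisfies the Boundary lemma; $U\setminus U^*$ is the required union of transversal-loop-bounded neighborhoods of non-interesting cycles; and, taking the pieces small enough, each component of $U$ carries one component of $LBS(V)$.

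\textbf{Choosing $\tilde U^-$.} Since moderate equivalence in neighborhoods is symmetric in $(V,W)$ (the inverse map $\mathbf H^{-1}$, defined by Requirement~\ref{it-nbhd-cond} of Definition~\ref{def-moderate-local} on a neighborhood of $\{\eps=0\}\times LBS(W)$, witnesses moderate equivalence of $W,V$ in neighborhoods of $LBS(W),LBS(V)$), Proposition~\ref{prop-heps-detached} applies with the roles of $V$ and $W$ exchanged. Applying it to the neighborhood $U$ just fixed produces a neighborhood $\tilde U^-_1\supseteq LBS(W)$ with $H_\eps^{-1}(\tilde U^-_1)\subseteq U$, i.e. $\tilde U^-_1\subseteq H_\eps(U)$, for all small $\eps$. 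Repeating once more the two-piece construction for $w_0$ \emph{inside} $\tilde U^-_1$ yields $\tilde U^-$ with $\tilde U^-\subseteq\tilde U^-_1\subseteq H_\eps(U)$ for all small $\eps$, with $\tilde U^{-*}$ satisfying the Boundary lemma, with $\tilde U^-\setminus\tilde U^{-*}$ of the required form, and with each component of $\tilde U^-$ carrying one component of $LBS(W)$. Combining, $\tilde U^-\subseteq H_\eps(U)\subseteq\tilde U^+$ for all small $\eps$, which is the remaining bullet.

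\textbf{Main obstacle.} The only genuinely delicate point is the bookkeeping of the three nested choices $\tilde U^+\rightsquigarrow U\rightsquigarrow\tilde U^-$: each later object must be built inside a neighborhood produced from the earlier ones, and one must check that the neighborhoods furnished by Proposition~\ref{prop-Case2-nonint-ls} (and the thin annuli) can genuinely be taken inside those prescribed open sets while still being ``arbitrarily small''. One must also verify that Proposition~\ref{prop-heps-detached} is legitimately applicable ``in the other direction'' to $\mathbf H^{-1}$, which rests on the symmetry of Definition~\ref{def-moderate-local} and on $H_0$ being injective on $U$ (so that $H_0^{-1}(LBS(W))\cap U=LBS(V)$). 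Everything else --- disjointness of the two pieces, the description of $U\setminus U^*$ and $\tilde U^\pm\setminus\tilde U^{\pm*}$, and the component correspondence --- is routine given that the non-interesting cycles are isolated components of $LBS$.
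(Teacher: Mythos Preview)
Your proposal is correct and follows essentially the same three-step nesting $\tilde U^+\rightsquigarrow U\rightsquigarrow\tilde U^-$ as the paper. The only stylistic difference is in the last step: the paper invokes Requirement~\ref{it-nbhd-cond} of Definition~\ref{def-moderate-local} directly (observing that $\bigcap_{|\eps|<\eps_0}H_\eps(U)$ is a neighborhood of $LBS(W)$, and then choosing $\tilde U^-$ inside it), whereas you argue symmetry of moderate equivalence and re-apply Proposition~\ref{prop-heps-detached} to $\mathbf H^{-1}$; these amount to the same thing. One small point the paper makes explicit and you leave implicit: after applying the Boundary lemma one may need to discard connected components of the resulting neighborhood that do not meet $LBS^*$, to guarantee the final bullet.
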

Note that this proposition implies that $U, \tilde U^{\pm}$ satisfy the assertions of No-entrance  lemma, see Remark \ref{rem-Bound-imply-Sep}. The last assertion of this proposition shows that Proposition \ref{prop-U12-intersect} on connected components is applicable for $U, \tilde U^{+}$.
\begin{proof}
Choose a neighborhood $\tilde U^{*+}$ of $LBS^*(W)$ satisfying Boundary lemma for the vector field $w_{0}$. We may remove its connected components that do not contain connected components of $LBS^*(W)$; if it is sufficiently small, then each its connected component contains only one component of $LBS^*(W)$.   Add small annular neighborhoods of non-interesting cycles of $w_0$ bounded by transversal loops; we get the required neighborhood $\tilde U^+$ of $LBS(W)$.

Now, take a small neighborhood $U^*$ of $LBS^*(V) $ that satisfies the assumptions of Boundary lemma, and add small annular neighborhoods of non-interesting cycles of $v_0$ bounded by transversal loops.  We get a neighborhood $U\supset LBS(V)$. Due to Proposition \ref{prop-heps-detached}, we may and will assume that  for small $\eps$, $H_{\eps}(U)\subset \tilde U^+$. As above, we assume that each connected component of $U$ contains one connected component of $LBS(V)$.

Recall that for any small $\eps_0$, $\cap_{|\eps|<\eps_0} H_{\eps}(U)$ is a neighborhood of $LBS(W)$ due to the definition of moderate equivalence (Requirement \ref{it-nbhd-cond} of Definition \ref{def-moderate-local}).
We choose $\tilde U^{-*}\supset LBS^*(W)$ that satisfies Boundary lemma for $w_0$, and add small annular neighborhoods of non-interesting cycles of $w_0$ bounded by transversal loops, in order to get a neighborhood $\tilde U^-$ of $LBS(W)$. We assume that $\tilde U^{-}$ is sufficiently small so that $\tilde U^{-} \subset (\cap_{|\eps|<\eps_0} H_{\eps}(U))$. Once again, we assume that each connected component of $\tilde U^-$ contains one connected component of $LBS(W)$.

Finally, $\tilde U^- \subset H_{\eps}(U) \subset \tilde U^+$ for small $\eps$ as required.

\end{proof}

From now on, we assume that $U$, $\tilde U^+$ and $\tilde U^-$ satisfy the proposition above.

\subsection{Images of Type $1$, $2$, and $3$ boundary components}
In the proofs of both main theorems, we will also need results on the images of Type $1$, Type $2$, and Type $3$ boundary components of $U^*$ under $H_{\eps}$. In some sence, they say that the boundary component $H_{\eps}(\varphi)$ of $H_{\eps}(U)$ has similar properties to that of $\varphi$, and also provide some control on the location of $H_{\eps}(\varphi)$ for different $\eps$.

For the three subsequent lemmas, $U$, $\tilde U^{\pm}$ are as in Proposition \ref{prop-choice} and are sufficiently small, i.e. belong to some preassigned neighborhoods of the corresponding large bifurcation supports. From now on, we assume that $H_{\eps}$ extends homeomorphically to $\overline U$, otherwise we slightly diminish $U$.

\begin{lemma}[Images of Type $1$ boundary components]
\label{lem-image-case1}
Under the assumptions of Main Theorem, suppose that $\varphi$ is a Type $1$ boundary component of $\partial U$.
Then $H_{\eps}(\varphi)$ bounds an open topological disc $D \subset S^2\setminus H_{\eps}(U)$, and $w_{h(\eps)}$
has no singular points and limit cycles in $D$.

\end{lemma}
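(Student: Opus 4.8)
The claim is that a Type $1$ boundary component $\varphi$ of $\partial U$ retains its ``Type $1$-like'' behavior after applying the conjugacy $H_\eps$: $H_\eps(\varphi)$ bounds a disc in the complement of $H_\eps(U)$ containing no singular points or limit cycles of $w_{h(\eps)}$. The plan is to transport the relevant facts about $\varphi$ along $H_\eps$, then use that $U$ was chosen to satisfy Boundary lemma and No-entrance lemma. First I would recall the structure of a Type $1$ component: by Boundary lemma~\ref{lem-U-arcs-top}, $\varphi$ bounds a disc $D_0 \subset S^2 \setminus \overline U$ on which $v_\eps$ (for small $\eps$, since this persists) is orbitally topologically equivalent to $\partial/\partial x$ on the unit disc, and the trajectories of points of $\varphi$ under $v_\eps|_{\overline U}$ stay inside $U$; in particular $\varphi$ has index $0$ with respect to $v_\eps$ (Remark~\ref{rem-index}). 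Here I should be a bit careful: Boundary lemma is stated for $v_0$ and $U^*$, so I would first invoke Proposition~\ref{prop-inU-or-hyp} and the persistence of the quadratic tangencies and of the flow box on $D_0$ to conclude that for sufficiently small $\eps$ the same picture holds for $v_\eps$, with $D_0$ still free of singular points and limit cycles of $v_\eps$ (no index, no Poincar\'e--Bendixson obstruction in a flow box, plus $D_0\cap LBS(V)=\emptyset$).

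Next I would push forward by the homeomorphism $H_\eps\colon U \to H_\eps(U)$. Since $\varphi \subset \partial U$, the curve $H_\eps(\varphi)$ is a simple closed curve in $S^2$; it bounds two discs, and since $D_0$ lies outside $U$ while $H_\eps$ is only defined on $U$, I need to identify which of the two complementary discs of $H_\eps(\varphi)$ plays the role of $D_0$. The natural choice is $D := $ the component of $S^2 \setminus H_\eps(\varphi)$ disjoint from $H_\eps(U)$; such a component exists and is unique because $H_\eps(\varphi)$ is the boundary of the open set $H_\eps(U)$, so $H_\eps(U)$ lies entirely in the \emph{other} complementary disc (a neighborhood of $\varphi$ inside $U$ maps to a one-sided collar of $H_\eps(\varphi)$ on the $H_\eps(U)$ side). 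Thus $D \subset S^2 \setminus H_\eps(U)$ is an open topological disc bounded by $H_\eps(\varphi)$, which gives the first assertion.

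It remains to show $w_{h(\eps)}$ has no singular points and no limit cycles in $D$. For this I would argue as follows. By Proposition~\ref{prop-inU-or-hyp} (applied to $W$), every singular point and every limit cycle of $w_{h(\eps)}$ is either inside $\tilde U^+$ or belongs to a continuous hyperbolic family whose limit at $\eps=0$ avoids $LBS(W)$. Since $D$ is disjoint from $H_\eps(U) \supset H_\eps(\varphi)$, and since $\partial D = H_\eps(\varphi)$ is a collar boundary of $H_\eps(U)$, the disc $D$ lies in $S^2 \setminus H_\eps(U)$; using Proposition~\ref{prop-heps-detached} to fix $U$ small enough that $H_\eps(U) \subset \tilde U^+$, no singular point or cycle of $w_{h(\eps)}$ inside $\tilde U^+$ can lie in $D$ provided $D \cap \tilde U^+$ is controlled --- here I would instead argue directly by index and by the flow-box structure transported through $H_\eps$. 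Namely, $H_\eps$ conjugates $v_\eps|_{U}$ to $w_{h(\eps)}|_{H_\eps(U)}$, so the germ of $w_{h(\eps)}$ along $H_\eps(\varphi)$ (approached from the $H_\eps(U)$ side) is a homeomorphic image of the germ of $v_\eps$ along $\varphi$: every trajectory of $w_{h(\eps)}$ touching $H_\eps(\varphi)$ from the $H_\eps(U)$ side immediately returns into $H_\eps(U)$, i.e. $H_\eps(\varphi)$ is ``pushed into $H_\eps(U)$'' by the flow, so $H_\eps(\varphi)$ has index $0$ with respect to $w_{h(\eps)}$ (the tangencies are all of the inner type as seen from $H_\eps(U)$, which is the outer type as seen from $D$). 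By the index formula, $D$ contains no singular points of $w_{h(\eps)}$ with nonzero total index, and since all singular points in $Vect^*\,S^2$ have well-defined index, one checks that in fact $D$ contains no singular points at all: if it did, the sum of their indices would be forced by the index-$0$ boundary to vanish, and then a short Poincar\'e--Bendixson / separatrix argument (using No-entrance lemma~\ref{lem-seps} to rule out separatrices of $w_{h(\eps)}$ entering a neighborhood and using that $D$ is disjoint from $LBS(W)$, hence carries trivial dynamics) excludes any nontrivial recurrence, a contradiction. Similarly a limit cycle in $D$ would have to surround singular points of total index $1$, impossible.

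\textbf{Main obstacle.} The delicate point is the passage from ``total index zero in $D$'' to ``no singular points and no cycles in $D$ at all'': zero total index does not by itself forbid, say, a saddle--node pair or a center--saddle configuration inside $D$. The right way around this, which I would carry out carefully, is to not rely on the index alone but to transport the \emph{dynamical} content of Boundary lemma: the flow of $v_\eps$ on $D_0$ is a trivial strip flow and every orbit of $v_\eps$ meeting $\varphi$ from the $U$ side re-enters $U$; conjugating, the same holds for $w_{h(\eps)}$ near $H_\eps(\varphi)$. Combined with Proposition~\ref{prop-inU-or-hyp} for $W$ (any cycle or singular point of $w_{h(\eps)}$ not inside $\tilde U^+$ persists to $\eps=0$ away from $LBS(W)$, and $D$ at $\eps=0$ is, up to the conjugacy $\hat H$, exactly the trivial-dynamics disc $\hat H(D_0^{(0)})$ which contains no singular points or cycles of $w_0$), a limit argument as $\eps \to 0$ forces $D$ to be free of singular points and cycles of $w_{h(\eps)}$ for all sufficiently small $\eps$. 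This $\eps\to 0$ continuity step, and making sure $D$ really does converge to $\hat H(D_0^{(0)})$ rather than jumping, is where the care is needed.
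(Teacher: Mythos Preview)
Your approach diverges from the paper's, and the divergence matters because your final ``$\eps\to 0$ continuity step'' has a genuine gap. You want $H_\eps(\varphi)\to \hat H(\varphi)$ (so that $D$ limits onto the trivial-dynamics disc $\hat H(D_0)$), but moderate equivalence (Definition~\ref{def-moderate-local}, item~\ref{it-H-contin}) only gives continuity of $\mathbf H$ on the set \eqref{eq-set1}, and a Type~$1$ boundary curve $\varphi$ is in general \emph{not} in $S(v_0)\cup\partial((\overline{\Per V}\cup\overline{\Sep V})\cap\{\eps=0\})$. So you have no control on where $H_\eps(\varphi)$ sits as $\eps\to 0$, and the argument ``hyperbolic singular points or cycles of $w_{h(\eps)}$ persist to $\eps=0$ and at $\eps=0$ they are outside $\hat H(D_0)$, hence outside $D$'' does not go through: $D$ itself could jump. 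Your own caveat (``making sure $D$ really does converge \ldots rather than jumping'') is exactly the point, and it is not addressed. The index-$0$ observation is correct but, as you note, insufficient on its own.

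The paper's proof avoids any $\eps\to 0$ limit on $\varphi$. It works directly with $w_{h(\eps)}$ on the component $C$ of $S^2\setminus\overline{H_\eps(U)}$ adjacent to $H_\eps(\varphi)$, using the machinery of canonical regions for vector fields in subdomains (Definition~\ref{def-canonreg-subdomain}, Propositions~\ref{prop-canonreg-limset-subdomain}--\ref{prop-canonreg-parall-subdomain}). One shows that the canonical region of $w_{h(\eps)}|_C$ touching one inner tangency point of $H_\eps(\varphi)$ must in fact swallow all of $H_\eps(\varphi)$ and hence equal $C$: any proper endpoint would lie either on a separatrix of $w_{h(\eps)}|_C$ (ruled out by No-entrance lemma~\ref{lem-seps} applied to $W$ and $\tilde U^+$) or on an orbit tangent to $\partial C$ (ruled out using Proposition~\ref{prop-outer-cont-pt-w}, which says orbits of outer tangency points of $\partial H_\eps(U)$ stay outside $H_\eps(U^*)$). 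Thus $C$ is a single parallel region and carries no singular points or cycles. The key ingredient you are missing is Proposition~\ref{prop-outer-cont-pt-w}; once you have it, the argument is purely about the dynamics of $w_{h(\eps)}$ at the fixed parameter value, and no continuity of $H_\eps$ in $\eps$ on $\varphi$ is needed.
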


The following lemma is important for Correspondence lemma: it shows that $H_{\eps}$ preserves the correspondence between outgoing (ingoing) transversal arcs of Type $2$ boundary components and their $\omega$- (resp. $\alpha$-)limit sets outside $U^*$.

Here and below the orientation on $\partial U$ is clockwise with respect to $U$.

\begin{lemma}[Images of Type $2$ boundary components]
\label{lem-image-Case2}
 Under the assumptions of Main Theorem, suppose that $\beta$ is a transversal outgoing arc of a Type $2$ boundary component $\varphi\subset \partial U^*$.
 Let $\beta_{\eps}\subset \varphi$ be the maximal arc transversal to $v_{\eps}$ and close to $\beta$.
Put $\tilde \beta_\eps:= H_{\eps}(\beta_\eps) $.

Let $l$ be a transversal loop around $\omega(\beta)$. Put $\tilde l = \hat H(l)$. Then for small $\eps$,
positive semi-trajectories of points of $\tilde \beta_\eps$ under $w_{h(\eps)}$ stay in $S^2\setminus H_{\eps}(\overline {U^*})$, and the Poincare map $\tilde P_{\eps} \colon \tilde \beta_\eps \to \tilde l$ along $w_{h(\eps)}$ is well-defined. The map $\tilde P_{\eps}$ takes the clockwise orientation on $\tilde \beta_\eps$ with respect to $H_{\eps}(U)$ to the counterclockwise orientation on $\tilde l$ with respect to $\hat H (\omega (\beta))$.

Moreover, $\tilde P_{\eps}(\tilde \beta_\eps)\subset \tilde l$ intersects $\tilde P_0(\tilde \beta_0)$ for small $\eps$. The analogous statement holds for ingoing arcs.
\end{lemma}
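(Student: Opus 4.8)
The plan is first to pin down the $\omega$-limit set $\omega(\beta)$. By Proposition~\ref{prop-Case2-nonint-ls}, $\omega(\beta)$ is non-interesting and lies outside $U^*$; hence it is at positive distance from $\overline{U^*}$ and is a sink, a hyperbolic attracting cycle, or a non-interesting (non-hyperbolic) cycle of $v_0$, attracting from the side of $\beta$. Choosing $l$ small, the side $B_l$ of $l$ containing $\omega(\beta)$ is disjoint from $\overline{U^*}$ and is a trapping region for $v_0$ (trajectories cross $l$ strictly inward). Then $\tilde l=\hat H(l)$ is a transversal loop around $\hat H(\omega(\beta))$ and $B:=\hat H(B_l)$ is a trapping region for $w_0$; moreover $B$ lies outside $H_\eps(\overline{U^*})$, as follows from Proposition~\ref{prop-choice} (and, in the non-interesting-cycle case, directly from the conjugacy, since then $B_l\subset U\setminus U^*$). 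Since $\tilde l$ is a fixed compact transversal curve of $w_0$ bounding $B$, for small $\eps$ the field $w_{h(\eps)}$ still crosses $\tilde l$ strictly inward, so $B$ remains a trapping region for $w_{h(\eps)}$: any semi-trajectory entering $B$ stays there.

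Next I would settle the case $\eps=0$, which reduces to transporting the Boundary lemma (Lemma~\ref{lem-U-arcs-top}) through $\hat H=H_0$. Applying $\hat H$ to the $v_0$-picture, the forward $w_0$-semi-trajectories of points of $\tilde\beta_0=\hat H(\beta)$ stay in $S^2\setminus\hat H(\overline{U^*})$, cross $\tilde l$ after a time uniformly bounded over the compact arc $\tilde\beta_0$, and sweep out a compact ``transversal tube'' $T_0$ from $\tilde\beta_0$ to $\tilde P_0(\tilde\beta_0)\subset\tilde l$ that, outside any prescribed neighbourhood of $\tilde\beta_0$, stays at positive distance from $\hat H(\partial U^*)$. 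The orientation assertion at $\eps=0$ reduces to the corresponding one for $v_0$, which I would read off the model $\dot r=\pm(1-r),\ \dot\phi=1$ near $\omega(\beta)$ together with the orientation conventions on $\partial U^*$ and on transversal loops, and then transport by the orientation-preserving $\hat H$.

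The heart of the proof, and the main obstacle, is the passage to small $\eps$, since outside $H_\eps(U)$ no conjugacy is available. I would run an isolating-neighbourhood argument: thicken $T_0$ slightly to a compact set $N$ with piecewise-smooth boundary on which $w_0$ is transversal, pointing into $N$ across the $\tilde\beta_0$-face, across $\tilde l\cap N$, and across the two thin lateral faces, and so that every $w_0$-trajectory entering $N$ through the $\tilde\beta_0$-face leaves $N$ through $\tilde l$. For small $\eps$: (i) $w_{h(\eps)}$ is $C^\infty$-close to $w_0$, so transversality and the inward directions on $\partial N$ persist; (ii) since $H_\eps$ conjugates $v_\eps|_U$ to $w_{h(\eps)}|_{H_\eps(U)}$ and $\beta$ is an outgoing arc of $\partial U^*$ with only outer tangencies for $v_0$ (hence for $v_\eps$), the arc $\tilde\beta_\eps=H_\eps(\beta_\eps)$ lies on an outgoing part of $\partial H_\eps(U^*)$, so $w_{h(\eps)}$ points out of $H_\eps(U^*)$ across it; (iii) by Proposition~\ref{prop-choice} one has $H_\eps(U^*)\subset\tilde U^+$ for all small $\eps$, and taking $N$ thin enough one arranges $N\cap\tilde U^+$ to lie in the allotted neighbourhood of $\tilde\beta_0$. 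Combining these, the forward $w_{h(\eps)}$-trajectory of any $y\in\tilde\beta_\eps$ enters $N$, cannot leave through the lateral faces, and cannot re-enter $H_\eps(U^*)$ (which inside $N$ is confined near $\tilde\beta_\eps$), hence exits $N$ through $\tilde l$ into $B\subset S^2\setminus H_\eps(\overline{U^*})$ and stays there. This yields the first two assertions and the well-definedness of $\tilde P_\eps$, whose orientation follows from its $C^0$-closeness to $\tilde P_0$ on the arc. The most delicate point I expect is this confinement step: controlling $\partial H_\eps(U^*)$ near $\tilde\beta_\eps$ using only the inclusion $H_\eps(U^*)\subset\tilde U^+$ and the boundary conjugacy, rather than continuity of $H_\eps$ along all of $\beta$.

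Finally, to see that $\tilde P_\eps(\tilde\beta_\eps)$ meets $\tilde P_0(\tilde\beta_0)$, the plan is to use a separatrix: by the Boundary lemma $\beta$ crosses a separatrix $\sigma$ of $v_0|_{U^*}$, and such a crossing $q$ may be taken in the relative interior of $\beta$ (the crossing is transversal, while the endpoints of $\beta$, if any, are tangency points of $v_0$ with $\partial U^*$). Let $q_\eps\in\beta_\eps$ be the crossing with the continuation $\sigma_\eps$; since $\sigma$ meets $U$, continuity of $\mathbf H$ on $\{\eps=0\}\times\Sep(v_0|_U)$ gives $H_\eps(q_\eps)\to\hat H(q)\in\tilde\beta_0$, hence $\tilde P_\eps(H_\eps(q_\eps))\to\tilde P_0(\hat H(q))$, a point in the relative interior of the nondegenerate arc $\tilde P_0(\tilde\beta_0)\subset\tilde l$. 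So $\tilde P_\eps(H_\eps(q_\eps))\in\tilde P_0(\tilde\beta_0)\cap\tilde P_\eps(\tilde\beta_\eps)$ for small $\eps$. The statement for ingoing transversal arcs follows by reversing time.
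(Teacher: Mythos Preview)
Your overall architecture is reasonable, and the last step (using a separatrix crossing together with continuity of $\mathbf H$ on $\Sep v_0$) is essentially what the paper does. The orientation and trapping-region remarks are also fine. However, the ``isolating-neighbourhood'' argument in your third paragraph has a real gap, and it is precisely the point you yourself flag as delicate.

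The tube $N$ is built around $T_0$, the forward $w_0$-orbit of $\tilde\beta_0=\hat H(\beta)$. For your argument to run, you need the arc $\tilde\beta_\eps=H_\eps(\beta_\eps)$ to lie in (or at least feed into) $N$. But moderate equivalence does \emph{not} give continuity of $\eps\mapsto H_\eps$ on the arc $\beta$; you only have it on the sets in \eqref{eq-set1}, and a generic point of $\beta\subset\partial U^*$ need not belong there. The inclusion $H_\eps(U^*)\subset\tilde U^+$ tells you $\tilde\beta_\eps\subset\tilde U^+$, but it does not force $\tilde\beta_\eps$ to be anywhere near $\tilde\beta_0$ or $N$; it could sit in a distant part of the same component of $\tilde U^+$. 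So the sentence ``the forward $w_{h(\eps)}$-trajectory of any $y\in\tilde\beta_\eps$ enters $N$'' is not justified, and once that fails the whole isolating-block mechanism collapses: you can no longer rule out that such a trajectory wanders back into $H_\eps(U^*)$ before reaching $\tilde l$.

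The paper avoids this difficulty by a genuinely different device. It never tries to locate $\tilde\beta_\eps$. Instead it first proves (Proposition~\ref{prop-outer-cont-pt-w}, via a counting argument over canonical regions of case~5 in the Boundary lemma) that the $w_{h(\eps)}$-orbit of every outer tangency point of $\partial H_\eps(U^*)$ stays in $S^2\setminus H_\eps(U^*)$. Together with No-entrance for $\tilde U^+$, this forces the whole arc $\tilde\beta_\eps$ to lie in a \emph{single canonical region} of $w_{h(\eps)}|_{S^2\setminus H_\eps(U^*)}$ (Definition~\ref{def-canonreg-subdomain}). By Proposition~\ref{prop-canonreg-limset-subdomain}, all forward semi-trajectories from that region behave identically, so it suffices to check the claim at a \emph{single} point of $\tilde\beta_\eps$. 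That single point is exactly your separatrix crossing $H_\eps(r)$, where continuity of $\mathbf H$ \emph{is} available. In short: the paper replaces your spatial control of $\tilde\beta_\eps$ (which is unavailable) by a rigidity statement about canonical regions, fed by a global counting argument on tangency points. That counting argument is the key idea your proposal is missing.
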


\begin{corollary}
\label{cor-image-transvers-Case2}
Let $\varphi$ be an outgoing \textbf{transversal} Type $2$ boundary component of $\partial U$, let $l, \tilde l$ be as in Lemma \ref{lem-image-Case2}.  Then for all small $\eps$, there are no singular points and limit cycles of $w_{h(\eps)}$ between two closed curves  $H_{\eps}(\varphi)$ and $\tilde l$.
\end{corollary}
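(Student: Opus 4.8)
The plan is to reduce the corollary to the Images-of-Type-$2$-boundary-components Lemma~\ref{lem-image-Case2} together with a flow-box argument. First I would pin down the geometry. Since $\varphi\subset\partial U^*$ is a \emph{transversal} Type $2$ component, the whole loop $\varphi$ is transversal to $v_0$; transversality is an open condition, so for all small $\eps$ the loop $\varphi$ is transversal to $v_\eps$, and hence the maximal $v_\eps$-transversal arc of $\varphi$ close to $\beta=\varphi$ is $\varphi$ itself: $\beta_\eps=\varphi$, $\tilde\beta_\eps=H_\eps(\varphi)$. Because $H_\eps$ extends to a homeomorphism of $\overline U$ conjugating $v_\eps$ to $w_{h(\eps)}$ there, and $\hat H$ conjugates $v_0$ to $w_0$, the curve $H_\eps(\varphi)$ is a transversal loop for $w_{h(\eps)}$, and $\tilde l=\hat H(l)$ is a transversal loop for $w_0$, hence for $w_{h(\eps)}$ for all small $\eps$ (openness again).

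Next I would invoke Lemma~\ref{lem-image-Case2}: for small $\eps$ every positive semi-trajectory of a point $x\in H_\eps(\varphi)$ under $w_{h(\eps)}$ leaves $H_\eps(\overline{U^*})$, stays in its complement, and hits $\tilde l$; the first-hitting map is the Poincar\'e map $\tilde P_\eps\colon H_\eps(\varphi)\to\tilde l$. Being a Poincar\'e map along a flow it is injective, and a continuous injection of $S^1$ into $S^1$ is onto, so $\tilde P_\eps(H_\eps(\varphi))=\tilde l$. Let $A$ be the unique annular component of $S^2\setminus(H_\eps(\varphi)\cup\tilde l)$; this is the region "between" the two curves in the statement. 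I would check that each closed trajectory arc $\Gamma_x$ joining $x$ to $\tilde P_\eps(x)$ lies in $\overline A$: its open part avoids $\tilde l$ by the first-hitting property and avoids $H_\eps(\varphi)\subset H_\eps(\overline{U^*})$ by Lemma~\ref{lem-image-Case2}, so it lies in one component of $S^2\setminus(H_\eps(\varphi)\cup\tilde l)$; since $\varphi$ is an \emph{outgoing} arc, near $x$ the arc enters the side of $H_\eps(\varphi)$ away from $H_\eps(U^*)$, and that side must contain $\tilde l$ (otherwise the arc would have to recross the transversal loop $H_\eps(\varphi)$, impossible since the flow crosses it in one direction), so that component is exactly $A$.

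Then I would show the arcs $\Gamma_x$ fill $\overline A$. Let $\tau_\eps(x)>0$ be the first-hitting time of $\tilde l$; it is continuous in $x$ by continuous dependence of solutions and transversality of $\tilde l$. The map $\Phi\colon H_\eps(\varphi)\times[0,1]\to\overline A$, $\Phi(x,s)=$ (time-$s\tau_\eps(x)$ flow of $x$ under $w_{h(\eps)}$), is continuous and injective: distinct $x$ give disjoint trajectory arcs, because trajectories are unique, the arcs cannot recross $H_\eps(\varphi)$, and $\tilde P_\eps$ is injective. By invariance of domain, $\Phi$ maps $H_\eps(\varphi)\times(0,1)$ homeomorphically onto an open subset of $A$; a short convergent-subsequence argument (a limit with $s\in\{0,1\}$ would lie on $\partial\overline A$, contradicting that it lies in $A$) shows this image is also closed in $A$; since $A$ is connected, $\Phi(H_\eps(\varphi)\times(0,1))=A$ and $\Phi$ is a homeomorphism onto $\overline A$. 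Hence $\overline A$ is a flow box, foliated by the non-degenerate trajectory arcs $\Gamma_x$, each entering through $H_\eps(\varphi)$ and leaving through $\tilde l$. Therefore $w_{h(\eps)}$ has no singular point in $\overline A$ (a singular orbit is a point, and $H_\eps(\varphi),\tilde l$ are transversal) and no limit cycle meeting $A$: a periodic orbit cannot cross the transversal loops $H_\eps(\varphi)$ or $\tilde l$, so it would have to lie inside $A$, yet every orbit through $A$ leaves $\overline A$ through the boundary in both forward and backward time. This gives the corollary.

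I expect the only genuinely delicate point to be the \emph{filling} step: verifying with full rigor that the trajectory arcs issuing from $H_\eps(\varphi)$ sweep out the entire annulus $A$ rather than a proper subregion (equivalently, that $\Phi$ is surjective), which is what the invariance-of-domain plus connectedness argument above is for; the rest is bookkeeping combining Lemma~\ref{lem-image-Case2} with elementary planar topology.
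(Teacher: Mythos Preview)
Your proposal is correct and follows the same approach as the paper. The paper's proof is a single sentence---``since the Poincar\'e map $\tilde P_\eps\colon H_\eps(\varphi)\to\tilde l$ along the orbits of $w_{h(\eps)}$ is well-defined''---and you have spelled out in detail the flow-box argument this sentence leaves implicit (that a well-defined Poincar\'e map between two transversal loops forces the region between them to be foliated by trajectory arcs, hence free of singular points and limit cycles).
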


\begin{proof}
This follows from Lemma \ref{lem-image-Case2} above, since the Poincare map $\tilde P_{\eps} \colon H_{\eps}(\varphi) \to \tilde l$ along the orbits of $w_{h(\eps)}$ is well-defined.
\end{proof}

\begin{lemma}[Images of Type  3 boundary components]
\label{lem-image-case3}
Under the assumptions of Main Theorem, let $\varphi\subset \partial U$ be  a Type $3$ boundary component of $\partial U$. Then for all small $\eps$, the oriented curves $H_{\eps}(\varphi)$ and $\hat H(\varphi)$ are homotopic  in $\tilde U^+ \setminus (LBS(W) \cup \Sing w_{h(\eps)} \cup \Per w_{h(\eps)})$.
\end{lemma}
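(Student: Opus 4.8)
The plan is to reduce the statement to a purely homotopy-theoretic fact about curves in an annulus, and then verify that fact using the moderate equivalence near $\partial LBS(W)$ together with the continuity of $H_\eps$ on $\overline U$. Let $\varphi\subset\partial U$ be a Type 3 boundary component. By the Boundary lemma, $\varphi$ is a transversal loop around an $\alpha$- or $\omega$-limit set $P\subset LBS(W)$ (a sink, source, cycle, or polycycle) — wait, $\varphi\subset\partial U$ surrounds an object of $v_0$, so $P\subset LBS(V)$; its image $\hat H(P)\subset LBS(W)$. By the choice of $U$ (Proposition~\ref{prop-choice}), the annulus between $P$ and $\varphi$ lies in $U^*$ and carries the standard spiralling dynamics. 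Consequently $\hat H(\varphi)$ is a transversal loop around $\hat H(P)$, oriented counterclockwise with respect to it, and it bounds, together with a transversal loop sitting even closer to $\hat H(P)$, a small annulus $A_0\subset\tilde U^+$ that is disjoint from $LBS(W)$ except along its ``inner'' ideal boundary $\hat H(P)$ and that contains no singular points or limit cycles of $w_0$ in its interior.

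Next I would fix, once and for all, a small transversal loop $l_0$ around $\hat H(P)$ (lying strictly between $\hat H(P)$ and $\hat H(\varphi)$, inside $\hat H(U^*)$), chosen small enough that the compact annulus $K$ bounded by $l_0$ and $\hat H(\varphi)$ contains no singular points of $w_{h(\eps)}$ and no limit cycles of $w_{h(\eps)}$ for all small $\eps$; this is possible because near a hyperbolic $\alpha$-/$\omega$-object these objects persist and stay in a prescribed small neighborhood, and near a non-hyperbolic one all bifurcating singular points and cycles stay inside $\tilde U^{-}\subset H_\eps(U)$ by Proposition~\ref{prop-inU-or-hyp} and the choice of $\tilde U^{\pm}$. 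The key point is then that both $H_\eps(\varphi)$ and $\hat H(\varphi)=H_0(\varphi)$ are closed curves contained in $K\setminus(\text{core})$ for small $\eps$: indeed $\hat H(\varphi)$ is one boundary component of $K$, while $H_\eps(\varphi)$ converges uniformly to $H_0(\varphi)=\hat H(\varphi)$ as $\eps\to0$ by continuity of $\mathbf H$ on $\{\eps=0\}\times\partial LBS(V)$ (Proposition~\ref{prop-dLBS}; note $\varphi\subset\partial U$ is within a neighborhood on which $H_\eps$ extends continuously to $\overline U$, as assumed). Hence for small $\eps$, $H_\eps(\varphi)$ lies in a small neighborhood of $\hat H(\varphi)$ inside $\tilde U^+\setminus(LBS(W)\cup\Sing w_{h(\eps)}\cup\Per w_{h(\eps)})$.

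Now I invoke the standard fact that in an annulus (here, any annular neighborhood of $\hat H(\varphi)$ inside $\tilde U^+$ that avoids $LBS(W)$, $\Sing w_{h(\eps)}$ and $\Per w_{h(\eps)}$) two simple closed curves are homotopic if and only if they have the same winding number around the core; and both $H_\eps(\varphi)$ and $\hat H(\varphi)$ wind exactly once around $\hat H(P)$ (with the correct, counterclockwise orientation). For $\hat H(\varphi)$ this is the definition of a transversal loop around $\hat H(P)$. For $H_\eps(\varphi)$ it follows because $H_\eps$ is a homeomorphism of $\overline U$ onto its image taking the transversal loop $\varphi$ around $P$ to a Jordan curve around $H_\eps(P)$; since $H_\eps(P)$ and $\hat H(P)$ lie in the same contractible piece $\tilde U^+\setminus(\text{everything else})$ is not quite right — rather, because $H_\eps(\varphi)$ is $C^0$-close to $\hat H(\varphi)$ and neither touches $\hat H(P)$, they have the same winding number around $\hat H(P)$, and a fortiori are freely homotopic in the punctured neighborhood. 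Assembling: $H_\eps(\varphi)$ and $\hat H(\varphi)$ are homotopic in $\tilde U^+\setminus(LBS(W)\cup\Sing w_{h(\eps)}\cup\Per w_{h(\eps)})$, as claimed.

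The main obstacle is the bookkeeping needed to guarantee that the punctured region $\tilde U^+\setminus(LBS(W)\cup\Sing w_{h(\eps)}\cup\Per w_{h(\eps)})$ really does contain an honest annulus neighborhood of $\hat H(\varphi)$ uniformly in small $\eps$ — i.e. that no stray singular point or limit cycle of $w_{h(\eps)}$ creeps into the gap between $l_0$ and $\hat H(\varphi)$, and that $LBS(W)$ does not reach out past $l_0$. This is exactly what Proposition~\ref{prop-inU-or-hyp} (no cycles or singular points of mixed location) and the nested structure $\tilde U^-\subset H_\eps(U)\subset\tilde U^+$ from Proposition~\ref{prop-choice} are designed to supply; one must just check that the transversal loop $l_0$ can be chosen inside $\tilde U^{-}$'s ``core region'' so that everything dangerous is trapped on the $LBS$-side of $l_0$. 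Once that is in place the homotopy statement is the elementary annulus fact above, and the orientation bookkeeping (counterclockwise with respect to $\hat H(P)$) is immediate from the conventions on transversal loops in Section~\ref{sec-LMF}.
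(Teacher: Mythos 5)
The central step of your argument is the claim that $H_\eps(\varphi)$ converges uniformly to $\hat H(\varphi)=H_0(\varphi)$ as $\eps\to 0$, which you justify by ``continuity of $\mathbf H$ on $\{\eps=0\}\times\partial LBS(V)$''. This is a genuine gap: $\varphi$ is a transversal loop that forms part of $\partial U$, not part of $\partial LBS(V)$, and it is also not contained in $S(v_0)\cup\Sep(v_0|_U)$ (a transversal loop is disjoint from singular points, cycles, polycycles, and separatrices). Moderate equivalence (Definition~\ref{def-moderate-local}, Proposition~\ref{prop-dLBS}) gives joint continuity of $\mathbf H$ in $(\eps,x)$ only on those specific invariant sets; it says nothing about the behaviour of $H_\eps$ on a transversal loop as $\eps$ varies. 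The parenthetical ``$H_\eps$ extends continuously to $\overline U$'' only concerns continuity in $x$ for each \emph{fixed} $\eps$ and does not give the uniform-in-$\eps$ statement you need. Without that uniform closeness, you cannot place $H_\eps(\varphi)$ into a preassigned thin annulus around $\hat H(\varphi)$, and the winding-number argument has no room to run.

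The paper avoids this difficulty: it only invokes joint continuity of $\mathbf H$ on the core $c$ itself (the singular point, cycle, or polycycle that $\varphi$ surrounds; this is contained in $S(v_0)$, where Definition~\ref{def-moderate-local} guarantees continuity). That controls the location of $H_\eps(c)$, but not of $H_\eps(\varphi)$. The containment $H_\eps(\varphi)\subset A:=D^+\setminus D^-$ (an annulus between Type~3 boundary components $\varphi^\pm$ of $\tilde U^\pm$ around $\tilde c$) is then obtained by a chain of purely topological arguments: connectivity of the image annulus $H_\eps(A(c,\varphi))$ combined with the inclusion $\tilde U^-\subset H_\eps(U)\subset\tilde U^+$ traps $H_\eps(\varphi)$ in $D^+$, orientation bookkeeping around $H_\eps(c)$ places it in $D_\eps$, and the fact that $D^-\cap D_\eps\subset\tilde U^-$ excludes it from $D^-$. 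Noncontractibility in $A$ then follows from topological transversality of $H_\eps(\varphi)$ to $w_{h(\eps)}$, not from closeness to $\hat H(\varphi)$. To repair your proof you would need to replace the ``uniform convergence'' step with something like the paper's trapping argument; the winding-number endgame is fine once $H_\eps(\varphi)$ is known to lie in a fixed annulus disjoint from the bad set, but getting it there is exactly the content of the lemma.
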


The proofs of these lemmas is postponed till Section \ref{sec-Case123}.

\subsection{Logical relation between subsequent sections}

We now turn to the proof of the main lemmas. The logical relation between sections \ref{sec-annuli} - \ref{sec-Case123} is the following: \ref{sec-U} $\rightarrow$ \ref{sec-Case123} $\rightarrow$ \ref{sec-annuli}  $ \rightarrow$ \ref{sec-Corr}. Yet we start with main lemmas: Annuli faces lemma and Correspondence lemma in Sections \ref{sec-annuli} and \ref{sec-Corr} respectively, making use of the Boundary lemma and Lemmas \ref{lem-image-case1}, \ref{lem-image-Case2}, \ref{lem-image-case3}. Then we prove Boundary lemma and these lemmas in Sections \ref{sec-U}  and \ref{sec-Case123} respectively.

\section{Proof of the Annuli faces lemma}  \label{sec-annuli}

\subsection{Empty annuli lemma}

\begin{definition}\label{def:emptyan}
We say that the annulus $A \subset S^2$ is \emph{empty} with respect to a vector field $v$ if its boundaries are topologically transversal to $v$ and there are no singular points or limit cycles of $v$ inside $A$.

In this case, $v|_A$ is orbitally topologically equivalent to the radial vector field $\partial / \partial r$ in the standard annulus $\{1<r<2\}$.
\end{definition}

Let us now define a collection $L$ of transversal loops around non-interesting nests of $v_0$ (see Definition \ref{def-nic} of non-interesting cycles). This collection will be used in the proof of the Correspondence lemma.

For a non-interesting nest of $v_0$, let us order the cycles by inclusion. The first and the last cycles are called \emph{boundary cycles} of the nest. In the case when we have a nest of non-interesting semi-stable cycles (case \ref{it-semist} in Definition \ref{def-nic}), we suppose that the hyperbolic singular point mentioned in this definition lies inside the \emph{inner} cycle of the nest. This enables us to distinguish the inner and the outer cycle of the nest. 

Transversal loops of the limit cycles of the nest belong to the LMF graph of the vector field $v_0$, but it is possible that they do not belong to the LMF-graph of $v_{\eps}$ because the limit cycles of the nest may destroy. However it is convenient to consider $LMF(v_{\eps})$ together with the transversal loops that encircle the nest. 

\begin{definition}[Collection $L$ of transversal loops] \label{def-tr-loops}
For each non-interesting and \emph{not semi-stable} nest of $v_0$ (case \ref{it-nonsemist} of Definition \ref{def-nic}), fix two transversal loops $l^-, l^+$ of the boundary cycles of the nest such that the whole nest is in the annulus between $l^-, l^+$.

For each non-interesting \emph{semi-stable} nest of $v_0$ (case \ref{it-semist} in Definition \ref{def-nic}), fix an outer transversal loop $l$ of the most outer cycle of the nest, such that $l$  encircles the whole nest.

We orient these loops counterclockwise with respect to the annulus between $l^-, l^+$ or with respect to the disc encircled by $l$ respectively. Let $L$ be a collection of transversal loops thus obtained. We say that $l\in L$ is ingoing if future semi-trajectories of its points enter the corresponding nest, and outgoing otherwise.
\end{definition}

The Annuli faces lemma follows from a more general statement, Empty annuli lemma, which we will also need below in the proof of Correspondence lemma (see Section \ref{sec-Corr}).

\begin{lemma}[Empty annuli lemma]
\label{lem-annuliShaped}
Under the assumptions of Main Theorem, for sufficiently small open $U\supset LBS(V)$, suppose that transversal loops $l_1,l_2$ bound an empty  annulus $A$ for a vector field $v_{\eps}$. Suppose that $l_i$ is either a transversal loop around a hyperbolic singular point or a cycle of $v_\eps$,  or  $l_i\subset \overline U$, or $l_i\in L$.
Let  $\tilde l_i := H_{\eps}(l_i)$ if $l_i\subset \overline U$ and $\tilde l_i := \hat H(l_i)$ in other cases.

Then $\tilde l_1, \tilde l_2$ bound an empty annulus $\tilde A$ for $w_{h(\eps)}$.

Moreover, let the orientation on $\tilde l_i$ be induced by $\hat H$ or $H_{\eps}$ from the orientation on $l_i$.
Then $l_1,l_2$ are oriented with respect to $A$ in the same way as $\tilde l_1, \tilde l_2$ are oriented with respect to $\tilde A$.
\end{lemma}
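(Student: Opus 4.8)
Relabel $l_1,l_2$ so that the flow of $v_{\eps}$ crosses $A$ from $l_1$ to $l_2$; this is legitimate because an empty annulus is orbitally topologically equivalent to the standard radial annulus (Definition~\ref{def:emptyan}). A loop $l_i$ of the first kind encircles a hyperbolic sink, source or cycle $P_{\eps}\notin U$ of $v_{\eps}$, which persists from a hyperbolic object of $v_0$ (see Proposition~\ref{prop-inU-or-hyp}); as in Section~\ref{ssec-tr} we take $l_i$ transversal to $v_0$ and disjoint from $\overline U$. Likewise a loop $l_i\in L$ is a fixed curve transversal to $v_0$. So each $l_i$ is either contained in $\overline U$ (and $\tilde l_i=H_{\eps}(l_i)$) or disjoint from $\overline U$ and transversal to $v_0$ (and $\tilde l_i=\hat H(l_i)$). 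The proof then splits into three cases: (i) $A\subset U$; (ii) $\overline A\cap\overline U=\varnothing$; (iii) the remaining case, in which $A$ meets both $U$ and $S^2\setminus\overline U$.

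\textbf{Each $\tilde l_i$ is a transversal loop for $w_{h(\eps)}$, and $\tilde l_1\cap\tilde l_2=\varnothing$.}
If $l_i\subset\overline U$ this is immediate: $H_{\eps}$ extends to a homeomorphism of $\overline U$ conjugating $v_{\eps}$ to $w_{h(\eps)}$ near $l_i$, and a homeomorphic image of a topologically transversal curve is topologically transversal, with the crossing direction preserved. If $l_i$ is a transversal loop around a hyperbolic object or $l_i\in L$, then, $l_i$ being transversal to $v_0$ and $\hat H$ conjugating $v_0$ to $w_0$, the curve $\hat H(l_i)$ is topologically transversal to $w_0$; since $w_{h(\eps)}\to w_0$ in $C^{\infty}$ and topological transversality of $\hat H(l_i)$ can be certified by a nearby smooth transversal, $\hat H(l_i)$ is topologically transversal to $w_{h(\eps)}$ for small $\eps$, and its crossing direction is inherited from the persistent hyperbolic object, respectively from the non-interesting nest near which $v_{\eps}$ is $C^{\infty}$-close to $v_0$. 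Disjointness of $\tilde l_1,\tilde l_2$ is clear when both are images under $H_{\eps}$ or both under $\hat H$; in the mixed case, a loop $l$ of the second kind is disjoint from $\overline U\supset LBS(V)$, hence $\hat H(l)$ lies at a positive distance from $LBS(W)=\hat H(LBS(V))$, whereas by Proposition~\ref{prop-heps-detached} we may shrink $U$ so that $H_{\eps}(\overline U)$ stays inside an arbitrarily small neighbourhood of $LBS(W)$.

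\textbf{Cases (i) and (ii).}
In case (i), $\partial A=l_1\cup l_2\subset\overline U$, so $\tilde A:=H_{\eps}(A)$ is, as the homeomorphic image of the empty annulus $A$ under the conjugacy $H_{\eps}$, an annulus bounded by $\tilde l_1,\tilde l_2$ free of singular points and limit cycles of $w_{h(\eps)}$; a conjugacy carries the flow direction across $A$ to the flow direction across $\tilde A$, which yields the orientation statement. In case (ii), $A$ is empty for $v_0$ as well: a non-hyperbolic singular point or cycle of $v_0$ in $A$ would lie in $LBS(V)\subset U$, impossible; a hyperbolic one would persist into $A$ for small $\eps$, contradicting emptiness of $A$ for $v_{\eps}$. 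Hence $\hat H(A)$ is empty for $w_0$, and the same dichotomy for $W$ — every non-hyperbolic object of $w_0$ lies in $LBS(W)=\hat H(LBS(V))\subset\hat H(U)$, disjoint from $\hat H(A)$, while the hyperbolic objects of $w_{h(\eps)}$ stay near those of $w_0$ and hence avoid $\hat H(A)$ — shows that $\tilde A:=\hat H(A)$ is empty for $w_{h(\eps)}$, with the correct orientations.

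\textbf{Case (iii), the main obstacle.}
Here $A$ straddles $\partial U$. I would use that, by Proposition~\ref{prop-choice}, $\partial U^{*}$ consists of finitely many Type~$1$, Type~$2$, Type~$3$ components for $v_0$, that this trichotomy is robust under the $C^{0}$-small passage to $v_{\eps}$ (the index of each boundary component is unchanged; cf.\ Remark~\ref{rem-index}), that separatrices of $v_{\eps}$ do not enter $U^{*}$ (No-entrance Lemma~\ref{lem-seps}), and that $U\setminus U^{*}$ is a union of neighbourhoods of non-interesting cycles bounded by transversal loops. Since $A$ is empty for $v_{\eps}$ it lies in a single canonical region of $v_{\eps}$ and contains no separatrix, cycle or singular point; together with No-entrance this forces $A\cap\partial U$ to be a finite union of arcs transversal to $v_{\eps}$ lying on Type~$2$ or Type~$3$ components — $A$ cannot run through a Type~$1$ disc or into a neighbourhood of a non-interesting cycle, for such a crossing would either trap a cycle or a strip-flow obstruction inside the empty $A$, or make a separatrix of $v_{\eps}|_{S^2\setminus U^{*}}$ enter $U^{*}$. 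These arcs cut $A$ into finitely many pieces, each contained in $U$ or in $S^2\setminus\overline U$; on the pieces inside $U$ we apply $H_{\eps}$, on the pieces outside we apply the analysis of Case (ii) together with the Images-of-boundary-components Lemmas~\ref{lem-image-case1}, \ref{lem-image-Case2}, \ref{lem-image-case3}, which locate $H_{\eps}(\varphi)$ for each boundary component $\varphi$ of $U$ and control the region swept by $w_{h(\eps)}$-orbits beyond $H_{\eps}(U)$. Gluing these pieces along the transversal arcs $H_{\eps}(A\cap\partial U)$ produces the empty annulus $\tilde A$ for $w_{h(\eps)}$ bounded by $\tilde l_1,\tilde l_2$, with orientation read off from the piecewise preserved flow direction. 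The genuine difficulty is exactly this bookkeeping: ruling out the "bad" crossings of $\partial U$ by an empty annulus and organising the pieces so that the local models supplied by the Boundary lemma and by Lemmas~\ref{lem-image-case1}--\ref{lem-image-case3} assemble into one annulus with consistent orientation; a secondary point that also needs care is the stability of topological transversality of the non-smooth curves $\hat H(l_i)$ under the passage $w_0\rightsquigarrow w_{h(\eps)}$.
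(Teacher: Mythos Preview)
Your cases (i) and (ii) are fine, but case (iii) has a genuine gap. You claim that $A\cap\partial U$ is a finite union of \emph{arcs} transversal to $v_{\eps}$ lying on Type~$2$ or Type~$3$ components, and that ``$A$ cannot run through a Type~$1$ disc.'' This is false. A Type~$1$ boundary component $\varphi$ bounds a topological disc $D\subset S^2\setminus\overline U$ on which $v_0$ (and $v_{\eps}$) is a flow box---no singular points, no cycles, no separatrices---so there is nothing ``trapped'' that would contradict emptiness of $A$. In fact this is precisely the situation of Lemma~\ref{lem-same-U} and Figure~\ref{fig:onecompu}: for $v_0$ there are saddlenodes inside $A$, surrounded by a component of $U$; for $v_{\eps}$ they vanish and $A$ becomes empty, but the Type~$1$ ``holes'' of $U$ (whole closed curves with inner tangencies, not transversal arcs) sit inside $A$. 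Your decomposition into pieces along transversal arcs simply does not see these closed contractible components of $\partial U$ inside $A$.

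The paper's route is different and does handle this. First one cuts $A$ along the \emph{non-contractible} components of $\partial U$ lying in $A$; these have index~$1$ for $v_{\eps}$, hence for $v_0$, hence are transversal (Type~$2$ or~$3$). This reduces to smaller empty annuli in which no non-contractible boundary component of $U$ appears, and then one shows a dichotomy: the two boundary loops lie either in the same component of $\overline U$ or in the same component of $S^2\setminus U$. In the first case the remaining (contractible) boundary components in $A$ are all Type~$1$, and one uses Lemma~\ref{lem-image-case1} on the images of their bounding discs to assemble $\tilde A$. In the second case one compares $\tilde A$ with $\hat H(A)$ via Lemma~\ref{lem-image-case3} and Proposition~\ref{prop-H0-empty}. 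So the missing idea in your argument is exactly this two-step reduction---splitting by non-contractible components, then dealing with Type~$1$ discs via Lemma~\ref{lem-image-case1}---rather than trying to exclude them.
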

The last assertion implies that $\hat H$, $H_{\eps}$ restricted to $\partial A$ extend to the homeomorphism of $A, \tilde A$. Note that transversal loops from the $LBS(V)$  either surround a hyperbolic singular point or a cycle of $v_\eps$,  or belong to $ \overline U$. The case $l_i\in L$ will be used in the proof of Correspondence lemma in  Section \ref{sec-Corr} below.

\subsection{Reduction}

\begin{proof}[Proof of the Annuli faces lemma modulo Empty annuli lemma]
Let $A$ be the same as in the Annuli faces lemma, that is, an annuli shaped face of the LMF graph of $v_{\eps }$. Due to the classification of faces of LMF graphs (Lemma \ref{lem-faces}), we have two cases. Consider them one by one.
\begin{itemize}
 \item $A$ is an annulus between a transversal loop $l$  of $v_{\eps}$ and the corresponding $\alpha$- or $\omega$-limit set $c$.
\end{itemize}
Due to Remark \ref{rem-Ge-trans}, $G_{\eps}$ preserves the correspondence of transversal loops and their $\alpha$-, $\omega$-limit sets, so the loop $G_{\eps}(l)$ is a transversal loop for $G_{\eps}(c)$ in $LMF(w_{h(\eps)})$. Thus  $G_{\eps}(l)$ and $G_{\eps}(c)$ bound an annulus $\tilde A$. It remains to prove that $G_{\eps}$ extends to a homeomorphism of $A,\tilde A$, i.e. to analyze whether it preserves orientation on $\partial A$. We have two subcases:

1) If $c\subset U$, then $A$ is inside $U$, due to the choice of transversal loops in Section \ref{ssec-tr}. Then $G_{\eps}$ is induced by $H_{\eps}$ on $\partial A$. So $H_{\eps}$ provides a required extension of $G_{\eps}$ to this annuli-shaped face.

2) If $c$ is outside $U$, then $c $ is either a hyperbolic cycle, or a hyperbolic sink, or a source. We only consider the case when $c$ is a cycle; other cases are analogous but simpler. 
 
Recall that the orientation on $c$ and its transversal loop $l$ is chosen in such a way that $c$ is to the left with respect to $l$; suppose that $l$ is to the left with respect to the timewise orientation of $c$. It remains to prove that the mutual orientation of $\tilde l:= G_{\eps}(l)=\hat H(l)$ and $ \tilde c:= G_{\eps}(c) = \tilde \pi_{h(\eps)}\hat H(\pi_{\eps}^{-1}(c))$ is the same as the orientation of $c,l$ described above. This will imply that $G_{\eps}$ matches the orientations on $\partial A, \partial \tilde A$; thus  $G_{\eps}$ extends to a homeomorphism between the faces $A$ and $\tilde A$.

Indeed, $\tilde c$ is to the left with respect to $\tilde l$ due to the choice of orientation on transversal loops.
Further, $\tilde l = \hat H(l)$ is to the left with respect to $\hat H(c) $ because $\hat H$ is an orientation-preserving homeomorphism. The curve $\hat H(c)$ is close to the cycle $G_{\eps}(c)=\tilde \pi_{h(\eps)}\hat H(\pi_{\eps}^{-1}(c))$ which implies the statement.

\begin{itemize}
  \item    $A$ is an annulus between two transversal loops $l_1,l_2$ of $v_{\eps}$.
\end{itemize}

In this case, Lemma \ref{lem-an-faces} follows from the Empty annuli Lemma \ref{lem-annuliShaped}.

Note that $A$ is an empty annulus of $v_{\eps }$ in the sense of Definition \ref{def:emptyan}. By Empty annuli Lemma \ref{lem-annuliShaped}, the annulus $\tilde A$ between $\tilde l_1 \mbox{ and } \tilde l_2$ is empty for $w_{h(\eps )}$. By construction of $G_\eps ,$  its boundaries $ \ \tilde l_1, \tilde l_2$ belong to $LMF(w_{h(\eps )})$.

Let $l_1$ be an outgoing transversal loop, and let $l_2$ be ingoing; clearly, no unstable separatrices may cross an outgoing  transversal loop $\tilde l_1$ of a cycle, source or a monodromic polycycle. Similarly, no stable separatrices may cross $\tilde l_2$. So no separatrices enter an empty annulus $\tilde A$, thus it forms a face of $LMF(w_{h(\eps)})$.

By Lemma \ref{lem-annuliShaped}, the map $G_{\eps }\colon \partial A \to \partial \tilde A$ may be extended to a homeomorphism between $A$ and $\tilde A$.
\end{proof}

\subsection{Plan of the proof of the Empty annuli lemma \ref{lem-annuliShaped}}

The natural way to prove the Empty Annuli lemma is to compare restrictions to $A$ of the phase portraits of $v_{\eps }$ and $v_0$. The first restriction is trivial; the second one may be quite different, see Figures \ref{fig:onecompu} and \ref{fig:onecompcu}.  We will need the Boundary Lemma for the case shown in Fig. \ref{fig:onecompu}, and both Boundary and Correspondence Lemmas for Fig. \ref{fig:onecompcu}.

Let us pass to the formal proof.

Consider all boundary components of $U$ that are inside $A$. It is possible that some of them are non-contractible inside $A$; then $A$ is split into several smaller annuli. Note that all these boundary components have index $1$ with respect to $v_{\eps}$. Hence they  have index $1$ with respect to $v_0$. Therefore, they  are transversal boundary components for $U$ and $v_0$ (see Boundary Lemma \ref{lem-U-arcs-top}). Clearly, each smaller annulus is an empty annulus for $v_\eps$. We are going to prove the Empty annuli lemma  for each of these  smaller annuli.

Any  smaller annulus  does not contain non-contractive boundary components of $U$. So there are two possible cases: the boundary components $l_1$, $l_2$ of a smaller annulus $A$ belong to the same connected component of $\overline U$ or to the same connected component of $CU=S^2\setminus U$.

Indeed, suppose that $l_1$ and $l_2$ do not belong to the same connected component of $U$. The annulus bounded by $l_1$ and $l_2 $ does not contain non-contractive boundary components of $U$, hence the curves $l_1$ and $l_2$ are not separated by $U$. Therefore, they belong to the same connented component of $CU$.

The two possible cases mentioned above are considered below in Lemmas \ref{lem-same-U} and \ref{lem-S2-U}, so these lemmas conclude the proof. See Figures \ref{fig:onecompu} and \ref{fig:onecompcu} respectively.

\begin{figure}[h]
\begin{center}
\includegraphics[width=0.4\textwidth]{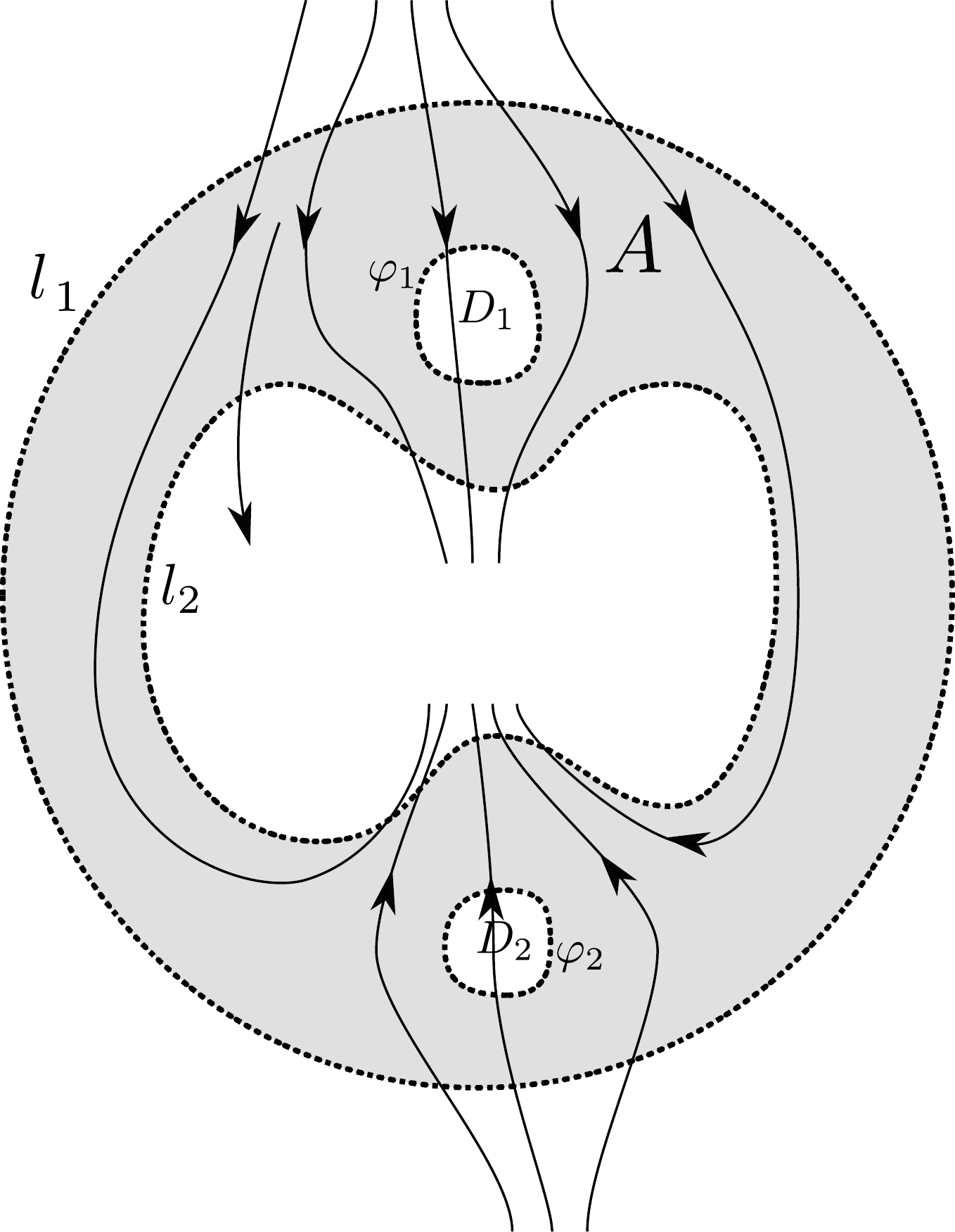}
\hfil
\includegraphics[width=0.4\textwidth]{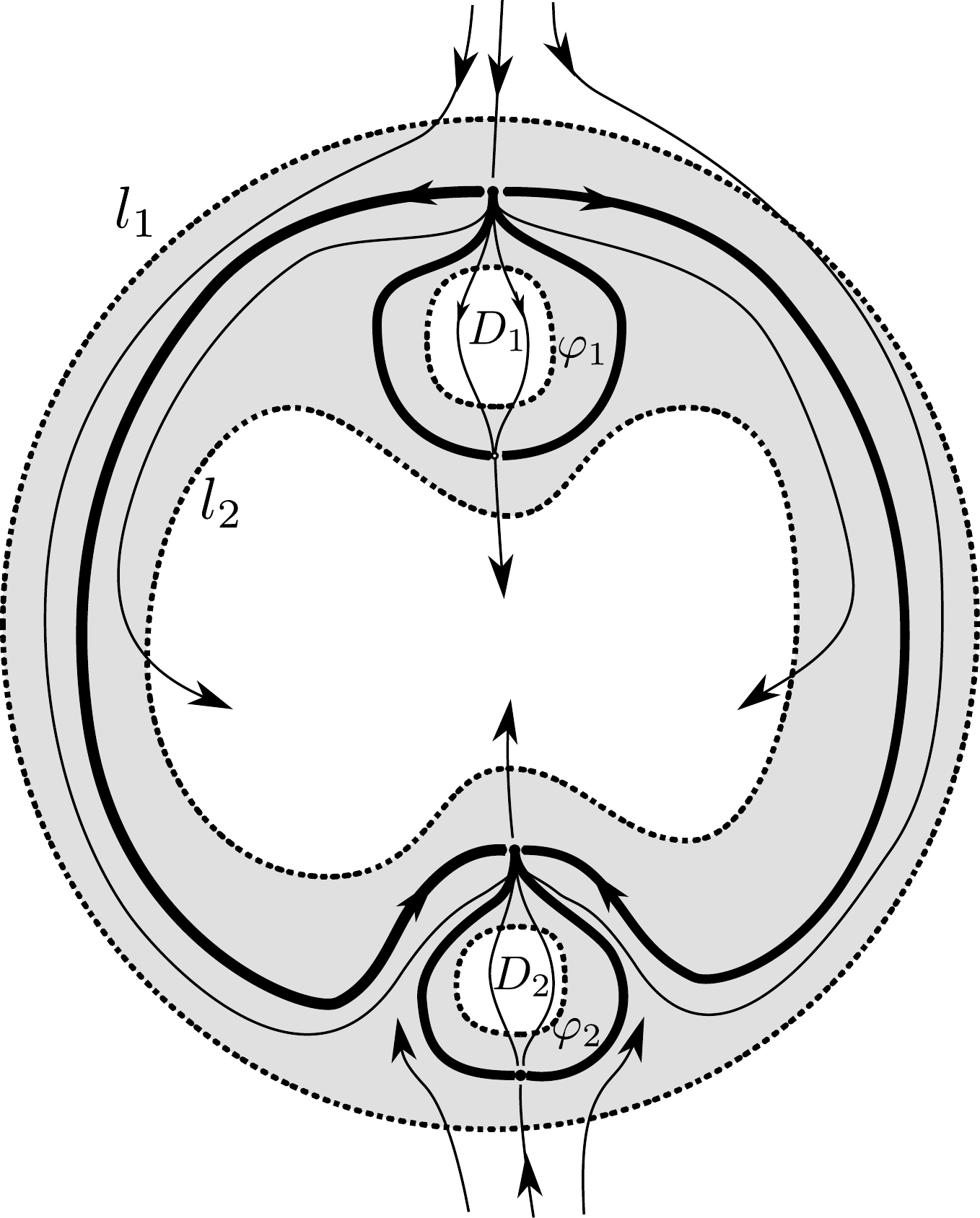}
\caption{Empty annuli lemma: auxiliary lemma \ref{lem-same-U}, orbits of $v_\eps$ (left) and $v_0$ (right). The component of $U$ is shadowed, its boundary is dotted, the large bifurcation support is shown in thick. The vector field $v_0$ has degeneracy of codimension 6; four saddlenodes of $v_0$ vanish as $\eps$ changes, and the annulus $A$ becomes empty. }\label{fig:onecompu}
\end{center}
\end{figure}

\subsection{One connected component of $\bar U$}

\begin{lemma}
\label{lem-same-U}
The statement of Empty annuli lemma holds true if $l_1$, $l_2$ belong to the same connected component of $\overline U$.
\end{lemma}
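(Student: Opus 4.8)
The plan is to build, by hand, an orientation‑preserving homeomorphism $\hat H^{\mathrm{loc}}_{\eps}\colon \overline A \to \overline{\tilde A}$ onto a closed annulus with $\partial\tilde A=\tilde l_1\cup\tilde l_2$, conjugating $v_{\eps}|_{\overline A}$ to $w_{h(\eps)}|_{\overline{\tilde A}}$. Since $A$ is empty for $v_{\eps}$, the field $v_{\eps}|_{\overline A}$ is orbitally topologically the model radial flow on a closed annulus (Definition \ref{def:emptyan}); via $\hat H^{\mathrm{loc}}_{\eps}$ the same then holds for $w_{h(\eps)}|_{\overline{\tilde A}}$, so $\tilde A$ is an empty annulus for $w_{h(\eps)}$, and the compatibility of the orientations of $l_1,l_2$ with $A$ and of $\tilde l_1,\tilde l_2$ with $\tilde A$ follows because $\hat H^{\mathrm{loc}}_{\eps}$ preserves orientation. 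Note $\tilde l_i=H_{\eps}(l_i)$ here, since $l_i\subset\overline U$ by hypothesis.

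First I would pin down how $A$ sits relative to $U$. The (already cut) annulus $A$ contains no boundary component of $U$ that is non‑contractible in $A$, so every boundary component of $U$ lying in $A$ is contractible there; lying in an empty annulus it has index $0$ with respect to $v_{\eps}$, hence with respect to $v_0$, hence by Remark \ref{rem-index} and Proposition \ref{prop-choice} it is a Type $1$ boundary component $\varphi_j$ of $\partial U$, $j=1,\dots,m$. By the Boundary lemma $\varphi_j$ bounds a disc $D_j\subset S^2\setminus\overline U$ carrying a trivial flow; the complementary disc of $\varphi_j$ contains $l_1\cup l_2\subset\overline U$, so $D_j$ is exactly the disc bounded by $\varphi_j$ inside $A$, and the $D_j$ are pairwise disjoint with $\overline{D_j}\subset A$. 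Put $K:=\overline A\setminus\bigcup_j D_j$, a compact planar surface with boundary $l_1\cup l_2\cup\bigcup_j\varphi_j$. A point of $\mathrm{int}(K)=A\setminus\bigcup_j\overline{D_j}$ cannot lie in $S^2\setminus\overline U$ (the component of $S^2\setminus\overline U$ through it would be one of the $D_j$), nor in $\partial U$ (it would lie on some $\varphi_j$), so $\mathrm{int}(K)\subset U$; being connected and accumulating to $l_1$, it lies in the closure $\overline{U_0}$ of the single connected component $U_0$ of $U$ singled out by the hypothesis that $l_1,l_2$ share a component of $\overline U$. Thus $K\subset\overline{U_0}$ (this is where the lemma's hypothesis enters).

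Next I would transport the three pieces. On $K\subset\overline U$ the map $H_{\eps}$ is defined, orientation‑preserving, and conjugates $v_{\eps}$ to $w_{h(\eps)}$; hence $H_{\eps}(K)$ is again a compact planar surface, with boundary $\tilde l_1\cup\tilde l_2\cup\bigcup_j H_{\eps}(\varphi_j)$, whose interior $H_{\eps}(\mathrm{int}\,K)$ contains no singular point or limit cycle of $w_{h(\eps)}$ (since $\mathrm{int}\,K\subset A$ is empty for $v_{\eps}$). For each $j$, Lemma \ref{lem-image-case1} gives an open disc $\tilde D_j\subset S^2\setminus H_{\eps}(U)$ bounded by $H_{\eps}(\varphi_j)$ and free of singular points and limit cycles of $w_{h(\eps)}$; as $H_{\eps}(U_0)$ is connected, disjoint from $H_{\eps}(\varphi_j)$, and meets $H_{\eps}(U)$ while $\tilde D_j$ does not, $H_{\eps}(\overline{U_0})$ lies in the closure of the complementary disc of $\tilde D_j$, so the $\tilde D_j$ are pairwise disjoint and disjoint from $H_{\eps}(K)$ except along their boundaries. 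Finally, both $v_{\eps}|_{D_j}$ and $w_{h(\eps)}|_{\tilde D_j}$ are trivial flows on discs whose boundary carries exactly two quadratic tangencies with the field (persisting for small $\eps$ from the isolated tangencies of $\partial U$ with $v_0$), and $H_{\eps}|_{\varphi_j}$ conjugates the fields along $\varphi_j$ (viewed from the $U$‑side), so it matches tangency points and entry arcs; by the standard parallelizability argument of Markus--Neumann--Peixoto type it extends to an orientation‑preserving conjugating homeomorphism $\overline{D_j}\to\overline{\tilde D_j}$. Gluing $H_{\eps}|_K$ with these extensions along the curves $\varphi_j$ yields the required $\hat H^{\mathrm{loc}}_{\eps}\colon \overline A=K\cup\bigcup_j\overline{D_j}\to H_{\eps}(K)\cup\bigcup_j\overline{\tilde D_j}=\overline{\tilde A}$, a closed annulus with $\partial\tilde A=\tilde l_1\cup\tilde l_2$.

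The main obstacle is the topological bookkeeping of the middle two paragraphs: identifying every boundary component of the post‑cut annulus $A$ as Type $1$ with its disc $D_j\subset A$ (via the index computation of Remark \ref{rem-index} and Proposition \ref{prop-choice}), confining $K$ to $\overline{U_0}$, and checking that the images $\tilde D_j$ reassemble with $H_{\eps}(K)$ into an honest annulus rather than overlapping it (via Lemma \ref{lem-image-case1}). The extension across each $D_j$ is routine, and once $\hat H^{\mathrm{loc}}_{\eps}$ is in hand the conclusion — emptiness of $\tilde A$ and the orientation compatibility — is immediate.
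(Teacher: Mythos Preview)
Your proof is correct and follows essentially the same route as the paper: identify the contractible boundary components of $U$ inside $A$ as Type~1 via the index argument (Remark~\ref{rem-index}), decompose $\overline A$ into the piece $K\subset\overline U$ plus the Type~1 discs $D_j$, apply $H_{\eps}$ on $K$ and Lemma~\ref{lem-image-case1} on the discs, and glue. The only minor difference is in the last step: you push for a full orbital conjugacy on each $\overline{D_j}\to\overline{\tilde D_j}$ (which needs the observation that $w_{h(\eps)}|_{\tilde D_j}$ is a trivial disc flow --- this does follow from Lemma~\ref{lem-image-case1} plus Poincar\'e--Bendixson, since $\tilde D_j$ has no singular points or cycles and its boundary has two topological tangencies), whereas the paper takes an arbitrary homeomorphism $D_j\to\tilde D_j$ extending $H_{\eps}|_{\varphi_j}$ and instead invokes Proposition~\ref{prop-inU-or-hyp} to exclude cycles straddling $\partial H_{\eps}(U)$. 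Both variants work; yours yields a slightly stronger local conjugacy, the paper's avoids analysing the flow on $\tilde D_j$.
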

\begin{proof}
Consider all the boundary components $\varphi_i$ of this connected component of $\overline U$ that are located inside $A$. They are contractive in $A$, so they bound topological discs in $A$. These disks
contain no singular points of $v_{\eps}$, so the index of the vector field $v_\eps$ with respect to each curve $\varphi_i$ is $0$ (we assume that the point $\infty$ on $S^2$ is outside $A$). The same holds for the vector field $v_0$. Thus these components $\varphi_i$ are of Type $1$ (see Remark \ref{rem-index}), they bound discs $D_i \subset CU$, and $A \setminus \cup D_i \subset U$.
So the annulus $\tilde A$ between $H_{\eps}(l_1)$ and $H_{\eps}(l_2)$ is a union of $H_{\eps} (A \setminus \cup D_i)$ and regions $\tilde D_i$ bounded by $H_{\eps}(\varphi_i)$, where $\varphi_i$ are Type $1$ boundary components for $v_0$, see Fig. \ref{fig:onecompu} left. Fig. \ref{fig:onecompu} right shows an example of a vector field with such boundary components of $\overline U$. 

Let us prove that $\tilde A$ is empty for $w_{h(\eps )}$.
Due to Lemma \ref{lem-image-case1}  on the images of Type $1$ components, the regions $\tilde D_i$ inside  $H_{\eps} (\varphi_i)$ do not contain limit cycles and singular points of $w_{h(\eps)}$. The set  $H_{\eps} (A \setminus \cup D_i)$ does not contain singular points and limit cycles of $w_{h(\eps)}$ too, because its preimage under $H_{\eps}$ does not contain  singular points and limit cycles of $v_{\eps}$.

The case when a limit cycle belongs partly to $H_{\eps} (A \setminus \cup D_i)$ and partly to its complement in $\tilde A$ is prohibited by Proposition \ref{prop-inU-or-hyp}: each cycle of $w_{h(\eps)}$ for small $\eps$ either belongs to $\tilde U^- \subset H_{\eps}(U)$, or is close to a hyperbolic cycle of $w_0$ (thus does not intersect $\tilde U^+\supset H_{\eps}(U)$). Thus $\tilde A$ contains no singular points or limit cycles of  $w_{h(\eps)}$.

Let us prove that $\partial \tilde A$ is transversal to  $w_{h(\eps)}$.  Indeed, $\partial  A$ is transversal to $v_0$, thus to $v_{\eps}$; $H_{\eps}$ preserves topological transversality, thus $\partial \tilde A$ is transversal to  $w_{h(\eps)}$.

So $\tilde A$ is empty in this case. Clearly, the map $H_{\eps}|_{\varphi_i}$ may be extended to a homeomorphism $H_i: D_i \to \tilde D_i$. Hence, the map $H_{\eps}|_{\partial A}$ may be extended to a homeomorphism of $A $ by $H_\eps$ on  $A\setminus \cup D_i$ and by $H_i$ on $D_i$.  Thus the last claim of the Empty annuli lemma holds true.
\end{proof}

\subsection{One connected component of $CU$}

\begin{figure}[h]
\begin{center}
\includegraphics[width=0.8\textwidth]{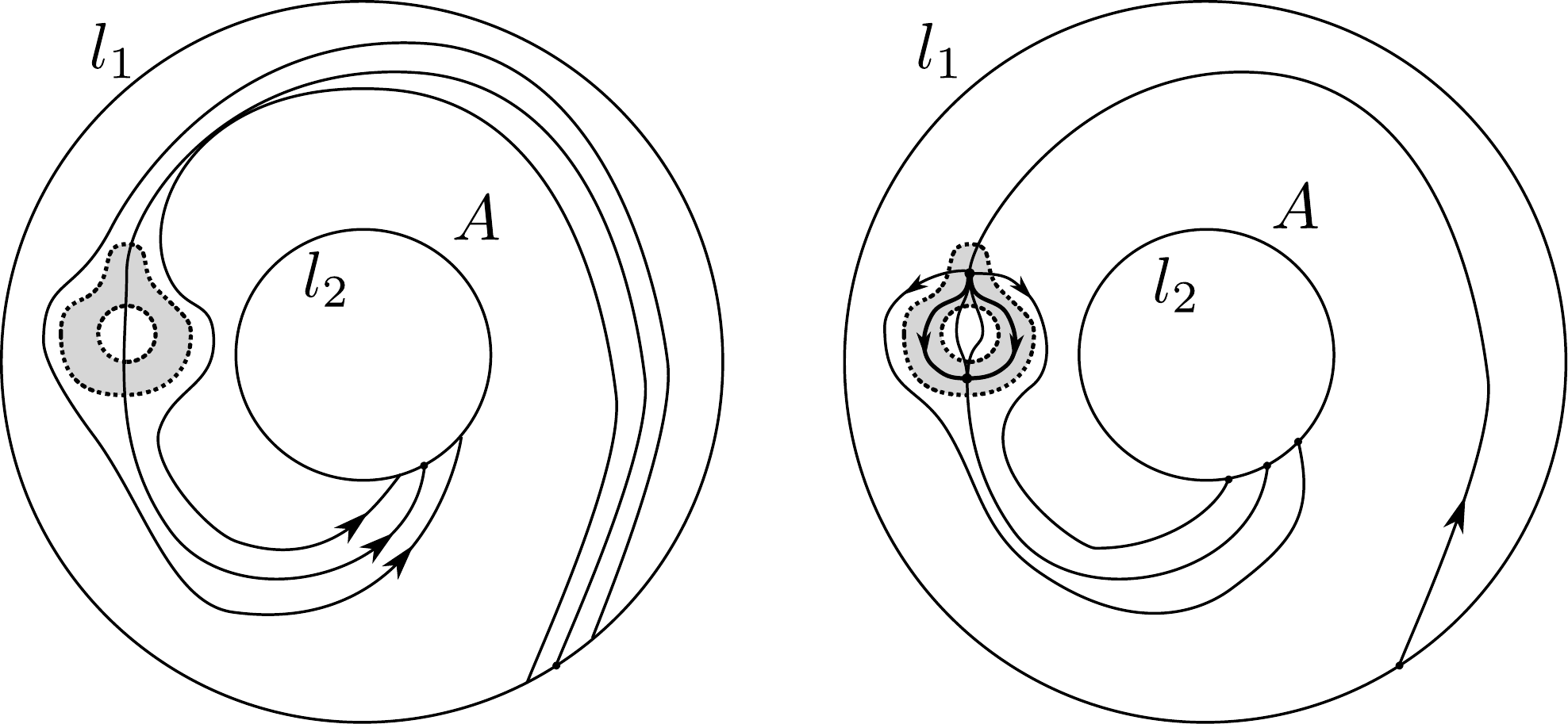}
\caption{Empty annuli lemma: auxiliary lemma  \ref{lem-S2-U}, orbits of $v_\eps$ (left) and of $v_0$ (right). The component of $U$ is shadowed, its boundary is dotted, the large bifurcation support is shown in thick. The vector field $v_0$ has degeneracy of codimension 2. As $\eps$ changes, its two saddlenodes vanish, and the annulus becomes  empty}\label{fig:onecompcu}
\end{center}
\end{figure}
\begin{lemma}
\label{lem-S2-U}
 The statement of the Empty annuli lemma holds true if $l_1$, $l_2$ belong to the same connected component of $CU$.
\end{lemma}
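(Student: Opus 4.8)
The plan is to run the argument of Lemma \ref{lem-same-U} with the roles of $U$ and $CU=S^2\setminus U$ partially interchanged: now the ``core'' of $A$ (the part of $A$ that joins $l_1$ to $l_2$ and is not separated from them inside $A$) lies in $CU$, while the pieces of $\overline U$ inside $A$ appear as islands. Write $S^2=\Delta_1\sqcup A\sqcup\Delta_2$ and $S^2=\tilde\Delta_1\sqcup\tilde A\sqcup\tilde\Delta_2$, where $\Delta_i$ (resp.\ $\tilde\Delta_i$) is the open disc bounded by $l_i$ (resp.\ $\tilde l_i$) that is disjoint from the other boundary loop. In the cases where $\tilde l_i=\hat H(l_i)$, since $\hat H$ is an orientation-preserving homeomorphism with $\hat H(l_i)=\tilde l_i$, one has $\hat H(A)=\tilde A$ and $\hat H(\Delta_i)=\tilde\Delta_i$; in particular $w_0|_{\tilde A}=\hat H_*(v_0|_A)$.

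First I would record the key structural observation: $v_0$ has no hyperbolic singular points and no hyperbolic limit cycles inside $A$. Indeed, any such object persists under the small perturbation and would give $v_\eps$ a singular point or cycle inside $A$ for small $\eps$, contradicting emptiness of $A$ for $v_\eps$. In particular every singular point of $v_0$ in $A$ is non-hyperbolic, hence lies in $LBS(V)\subset U$ by Proposition \ref{prop-nohyp-inLBS}. Transporting this by $\hat H$, and noting that $\partial\tilde A=\tilde l_1\cup\tilde l_2$ is transversal to $w_0$ (as the $\hat H$-image of loops transversal to $v_0$), we get that $w_0$ has no hyperbolic singular points or cycles in $\overline{\tilde A}$.

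Next, in the principal case where $l_1,l_2$ are disjoint from $\overline U$, the set $\overline U$ is split by $l_1\cup l_2$ into the three clopen pieces $\overline U\cap\Delta_1$, $\overline U\cap A$, $\overline U\cap\Delta_2$, each a union of connected components of $\overline U$. For a component $K$ of $\overline U\cap A$, Proposition \ref{prop-choice} yields a connected component $C$ of $LBS(V)$ with $C\subset K$; its topological boundary $\partial C$ is nonempty and contained in $\partial LBS(V)$, so by Proposition \ref{prop-dLBS} the map $\mathbf H$ is continuous on $\{\eps=0\}\times\partial C$, whence $H_\eps(p)\to H_0(p)=\hat H(p)\in\hat H(A)=\tilde A$ for $p\in\partial C\subset A$. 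Since $\tilde A$ is open and $H_\eps(K)$ is connected, $H_\eps(K)\subset\tilde A$ for small $\eps$; likewise $H_\eps(\overline U\cap\Delta_i)\subset\tilde\Delta_i$ (here $\tilde U^+$ is taken small enough that $H_\eps(\overline U)\subset\tilde U^+$ misses $\tilde l_1,\tilde l_2$ and the hyperbolic objects they surround, which is possible by Propositions \ref{prop-nohyp-inLBS} and \ref{prop-heps-detached}). Now I would conclude emptiness of $\tilde A$ via Proposition \ref{prop-inU-or-hyp} applied to $W$: a singular point or cycle of $w_{h(\eps)}$ in $\tilde A$ is either close to a hyperbolic object $q_0$ of $w_0$ with $q_0\notin LBS(W)$ — impossible for small $\eps$ since $q_0\notin\overline{\tilde A}$ by the previous paragraph — or contained in $\tilde U^-\subset H_\eps(U)$, hence equal to $H_\eps(x)$ for a singular point/cycle $x$ of $v_\eps$ in $U$; but then $H_\eps(x)\in\tilde A$ forces $x\in\overline U\cap A$, and as $\partial A\cap U=\emptyset$ in fact $x\in A$, contradicting emptiness of $A$. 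Together with transversality of $\partial\tilde A$ this shows $\tilde A$ is empty; the orientation claim follows since $\hat H$ conjugates $v_0$ to $w_0$ preserving time, so the crossing direction of $w_{h(\eps)}$ through $\tilde l_i$ agrees with that of $v_\eps$ through $l_i$, and $\hat H|_{\partial A}$ then extends to a homeomorphism $A\to\tilde A$ of the two radial models (cf.\ Definition \ref{def:emptyan}).

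The main obstacle is the remaining, non-principal case, in which a component $\varphi$ of $\partial U$ is non-contractible inside the original annulus, so the outer reduction of the Empty annuli lemma leaves $\varphi$ as one of the boundary loops, say $l_1\subset\partial U\subset\overline U$; then $\overline U$ straddles $\partial A$ and the clean splitting used above breaks down. Here I would analyse $\varphi$ by its Boundary-lemma type for $v_0$: its index with respect to $v_\eps$, and hence (by the homotopy $v_\eps\simeq v_0$ through fields non-vanishing on $\partial U$) with respect to $v_0$, together with Remark \ref{rem-index} pins $\varphi$ down to Type $2$ (or Type $3$ in degenerate configurations), so that $\varphi$ carries maximal transversal arcs whose $\alpha$- or $\omega$-limit sets are non-interesting and lie outside $U$ (Proposition \ref{prop-Case2-nonint-ls}). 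By Lemmas \ref{lem-image-case1}, \ref{lem-image-Case2}, \ref{lem-image-case3} and Corollary \ref{cor-image-transvers-Case2} the images of these arcs under $H_\eps$ flow, meeting no singular points or cycles of $w_{h(\eps)}$, into transversal loops of the corresponding hyperbolic non-interesting limit sets of $w_{h(\eps)}$; replacing $\varphi$ by those transversal loops ``clears'' the offending collar of $\tilde A$ and reduces us to the situation handled above. This is precisely the place where the Correspondence-lemma circle of ideas (Lemma \ref{lem-seps-behave}) enters, since one must keep simultaneous track of the separatrices of $v_\eps$ crossing $\varphi$ and of their images along $H_\eps(\varphi)$; I expect the bookkeeping of these boundary collars, rather than any single estimate, to be the technically delicate part.
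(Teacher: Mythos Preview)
Your principal case (both $l_i$ in the interior of $CU$, so $\tilde l_i=\hat H(l_i)$ and $\tilde A=\hat H(A)$) is essentially the paper's Proposition \ref{prop-H0-empty}: you rule out hyperbolic objects of $w_0$ in $\overline{\tilde A}$ by persistence, and you rule out objects of $w_{h(\eps)}$ lying in $\tilde U^-$ by showing the relevant component of $H_\eps(U)$ sits inside $\tilde A$. Your tracking of components via continuity of $\mathbf H$ on $\partial LBS(V)$ is exactly what underlies the paper's use of Proposition \ref{prop-U12-intersect}. This part is fine.

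The gap is in the non-principal case, where one or both $l_i$ lie on $\partial U$. You correctly observe that such $\varphi$ must be transversal (index $1$) and hence of Type~$2$ or Type~$3$, and you name Lemmas \ref{lem-image-Case2}, \ref{lem-image-case3}, and Corollary \ref{cor-image-transvers-Case2}. But you do not actually carry out the argument, and the plan you sketch --- ``replacing $\varphi$ by those transversal loops clears the offending collar and reduces us to the situation handled above'' --- is not what is needed. More seriously, your invocation of the Correspondence lemma (Lemma \ref{lem-seps-behave}) here would be circular: in the paper's logical order, the Empty annuli lemma is proved \emph{before} the Correspondence lemma and is used in its proof (see Section \ref{sssec-corr-inside-ni}). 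No separatrix bookkeeping is required for Lemma \ref{lem-S2-U}.

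The paper's route through the non-principal case is short and avoids all of this. First, if one $l_i$ is a Type~$2$ transversal boundary component of $U$, then by Boundary lemma and Proposition \ref{prop-Case2-nonint-ls} the other $l_j$ is forced to be a transversal loop of the corresponding non-interesting limit set, and Corollary \ref{cor-image-transvers-Case2} gives emptiness of $\tilde A$ directly (the Poincar\'e map $\tilde P_\eps\colon H_\eps(\varphi)\to\tilde l$ is defined on all of $H_\eps(\varphi)$). Second, if $l_i$ is of Type~$3$, Lemma \ref{lem-image-case3} says the oriented curves $H_\eps(l_i)$ and $\hat H(l_i)$ are homotopic in $\tilde U^+\setminus(LBS(W)\cup\Sing w_{h(\eps)}\cup\Per w_{h(\eps)})$; hence one may replace $\tilde l_i=H_\eps(l_i)$ by $\hat H(l_i)$ without changing whether the annulus is empty, reducing to the case $\tilde A=\hat H(A)$ already handled. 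The orientation statement also follows from this homotopy together with Proposition \ref{prop-U12-intersect} (the homotopy stays in one component of $\tilde U^+$, disjoint from the other boundary loop).
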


\begin{proof} As the curves $l_1, l_2$ belong to the same component of $CU$, they may either belong to $\partial U$, or to the interior of $CU$. So for each of them, we have  the following three cases:
\begin{enumerate}
  \item A transversal loop of a hyperbolic sink or source, or a hyperbolic cycle, or a non-interesting cycle (namely $l\in L$);
  \item A boundary component of $\bar U$ of Type 2;
  \item A boundary component of $\bar U$ of Type 3.
  \end{enumerate}

If one of $l_1,l_2$ is a Type 2 transversal boundary component of $U$, then the other one  is a transversal loop around the corresponding non-interesting $\alpha$- or $\omega$-limit set. This follows from Boundary lemma and Proposition \ref{prop-Case2-nonint-ls}. In this case Lemma \ref{lem-S2-U} follows from Corollary \ref{cor-image-transvers-Case2} above.

Let us prove that in all other cases the following implication holds: if $A$ is empty for $v_{\eps}$ and $\hat H (A)$ is empty for $w_{h(\eps)}$, then $\tilde A$ is empty for $w_{h(\eps)}$.

Suppose that both $l_1, l_2$ are of Type 3.
In this case, Lemma \ref{lem-image-case3} implies that $\hat H(l_i)$ and $H_{\eps}(l_i)$ are homotopic in $\tilde U^+\setminus (\Sing w_{h(\eps)} \cup \Per w_{h(\eps)})$ as oriented curves. So we may replace $\tilde l_i=H_{\eps}(l_i)$ by $\hat H(l_i)$: if $\hat H(A)$ is empty, then $\tilde A$ is empty as well.

Suppose that $l_1$ is of Type 3, and $l_2$ falls into the case 1 above. Then $H_\eps (l_1)$ may be replaced by $\hat H(l_1) $ as before, and $\tilde l_2=\hat H(l_2)$. Again, if $\hat H(A)$ is empty, then $\tilde A$ is empty as well.

Suppose that both  $l_1,l_2$ fall into the case 1 above. In this case, $\tilde l_i=\hat H(l_i)$, and there is nothing to prove.

Now, the following proposition implies the first assertion of the Empty annuli lemma.

\begin{proposition}
\label{prop-H0-empty}
In assumptions of Lemma \ref{lem-S2-U}, the annulus $\hat H(A)$ is empty with respect to $w_{h(\eps)}$.
\end{proposition}

\begin{proof}
By contraposition, suppose that some singular point or a cycle $\tilde c$ of $w_{h(\eps)}$ is in $\hat H(A)$. We only consider the case when $\tilde c$ is a cycle; the case of a singular point is analogous.

Due to Proposition \ref{prop-inU-or-hyp} applied to the family $W$, the cycle $\tilde c$ either belongs to $\tilde U^-$ or belongs to a continuous family of hyperbolic cycles $\tilde c_\delta $ of vector fields $w_\delta $ defined for all $\delta$ small.

In the first case, let $H_{\eps}(U_i)$ be a connected component of $H_{\eps}(U)$ that contains $\tilde c$. Then $U_i$ contains a cycle $H_{\eps}^{-1}(\tilde c)$ of $v_{\eps}$.
It remains to prove that  $U_i$ is inside $A$; this will contradict to the fact that $A$ is empty for $v_{\eps}.$

Since $\hat H(U_i)$ is the only component of $\hat H(U)$ that intersects $H_{\eps}(U_i)$ (see Proposition \ref{prop-U12-intersect}) and $\tilde c \subset \tilde U^- \subset \hat H(U)$, we have $\tilde c\subset \hat H(U_i)$. So $\hat H(U_i)$ intersects the annulus $\hat H(A)$. Thus $U_i$ intersects the annulus $A$, and since boundaries of $A$ are in one and the same connected component of $CU$, we have that $U_i \subset A$. We get a contradiction mentioned in the previous paragraph.

In the second case, the cycles $\tilde c_\delta $ belong entirely to $\hat H(A)$ because the boundary of this annulus is transversal to $w_\delta$ for any  $\delta$ small. Hence, no limit cycle of $w_\delta$ can cross this boundary. Therefore, $\tilde c_0$ belongs to $\hat H(A)$ as well.  Therefore, the vector field $v_0$ has a hyperbolic limit cycle    $\hat H^{-1}(\tilde c_0)$ in $A$. The same holds for the vector field $v_\eps$, so $A$ is not empty with respect to $v_{\eps}$.
 We get a contradiction again.
 This finishes the proof.
\end{proof}

The first statement of the Empty Annuli lemma in assumptions of Lemma \ref{lem-S2-U} is proved.

Let us prove the second one:
$G_\varepsilon$ may be extended to a homeomorphism of $A$.  The same statement for $\hat H(A)$ instead of $\tilde A$ is clear, because $\hat H$ is an orientation-preserving homeomorphism. Suppose that both $l_1,l_2$ belong to $\overline U$ (other cases are analogous but simpler). Since $l_1$ and $l_2$ are in different connected components of $U$, their images $H_{\eps}(l_1)$ and $H_{\eps}(l_2)$ are in different connected components of $\tilde U^+$ (see Proposition \ref{prop-U12-intersect}; this proposition is applicable due to the last assertion of  Proposition \ref{prop-choice}). By Lemma \ref{lem-image-case3},  the oriented curves $H_\eps(l_1)$ and $\hat H(l_1)$ are homotopic inside $\tilde U^+$. Thus as we perform the homotopy between $H_\eps(l_1)$ and $\hat H(l_1)$, all the intermediate curves do not intersect $\tilde l_2$. Similar arguments apply to the homotopy between $H_{\eps}(l_2)$ and $\hat H(l_2)$. Finally, $\tilde l_1, \tilde l_2$ are oriented with respect to $\tilde A$ in the same way as $\hat H(l_1), \hat H(l_2)$
with respect to $\hat H(A)$. The latter orientation coincides with the orientation of  $l_1,l_2$ with respect to $A$, which implies the statement.
\end{proof}

\section{Proof of the Correspondence Lemma \ref{lem-seps-behave}}   \label{sec-Corr}
\subsection{Plan of the proof}

Without loss of generality we assume that $l$ is an ingoing transversal loop. Choose $U$ following Sec. \ref{sec-choice-U}. Note that any transversal loop   $l\subset  LMF(v_\eps)$ either belongs entirely to $U$, or to its complement, due to the choice of transversal loops in Sec. \ref{ssec-tr}. So there are the following cases to consider depending on the location of $l$.

\begin{itemize}
 \item The loop $l\in LMF(v_{\eps})$ lies outside $U$. Simultaneously, we will prove the statement of Correspondence lemma for $l\in L$  (see Definition \ref{def-tr-loops} of the collection $L$), though such loops may be not included in $LMF(v_{\eps})$.

 \begin{itemize}
 \item Case 1. Some backward orbit of $l$ under $v_0$ hits $\partial U^*$ at a transversal boundary component.

\item
Case 2. All backward orbits of $l$ under $v_0$ either hit $\partial U^*$ at  non-transversal boundary components,  or do not intersect $\overline {U^*}$.
 \end{itemize}

\item The loop $l\in LMF(v_{\eps})$ lies inside $U$.

\begin{itemize}

\item Case 3. $l \subset U^*$.

\item Case 4. $l \subset U \setminus U^*$, i.e. $l$ is inside a non-interesting nest. Here we will use Correspondence lemma for $l\in L$ (Case 1 above).
\end{itemize}
\end{itemize}

In Sec. \ref{sec-corr-first} -- \ref{sssec-corr-inside-ni}, we prove the first statement of Correspondence Lemma in each of the above four cases.  In Sec. \ref{ssec-corr-second}, we prove the second statement of the Correspondence Lemma.

\subsection{The first statement of the Correspondence lemma: Case 1}\label{sec-corr-first}
   \begin{figure}[h]
 \begin{center}
  \includegraphics[width=0.3\textwidth]{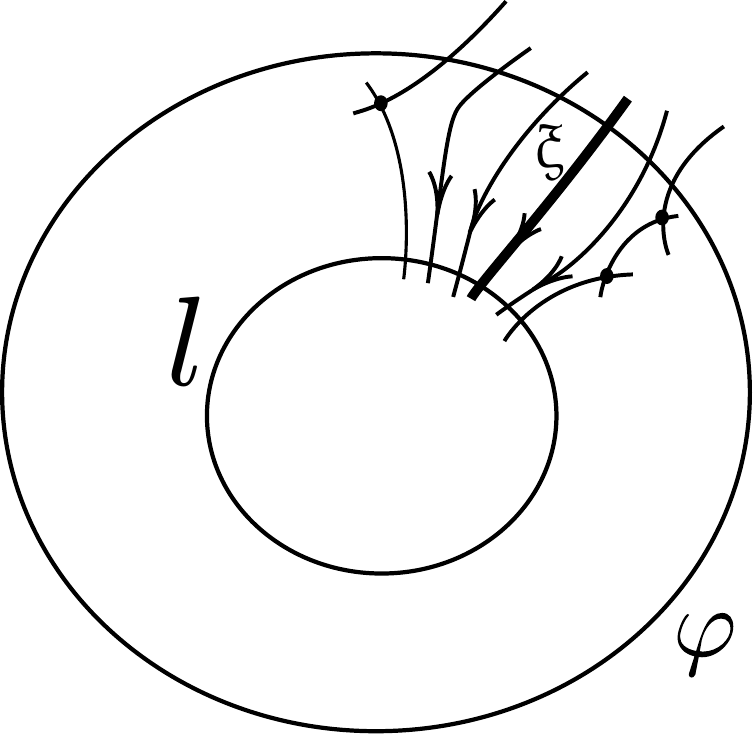}
 \end{center}
\caption{Proposition \ref{prop-corr-case1}: Poincare map between $\varphi$ and $l$.}\label{fig-corrlemma-reg}
\end{figure}
\begin{proposition}
\label{prop-corr-case1}
 The first statement of the Correspondence lemma holds true for an ingoing transversal loop $l\in LMF(v_{\eps})$ or $l\in L$, if backward orbits of \emph{some} points of $l$ under $v_0$ hit a transversal boundary component of $\partial U^*$.
\end{proposition}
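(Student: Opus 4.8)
The plan is to split the trajectory of each separatrix $\gamma_i$ of $v_\eps$ that meets $l$ at the transversal boundary component $\varphi$ of $\partial U^*$: inside $\overline{U^*}$ the graph isomorphism $G_\eps$ is literally $H_\eps$, while outside $H_\eps(\overline{U^*})$ the motion of $w_{h(\eps)}$ is controlled by the Poincar\'e map furnished by Lemma~\ref{lem-image-Case2}; gluing the two pieces along $\varphi$ then yields the crossing and the ordering. Fix $U$, $U^*$ as in Sec.~\ref{sec-choice-U}; in Case~1 the loop $l$ lies outside $U^*$, and I set $\tilde l:=G_\eps(l)=\hat H(l)$ (when $l\in L$ and is not in $LMF(v_\eps)$, read $\tilde l:=\hat H(l)$). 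Since some backward $v_0$-orbit of $l$ hits $\varphi$, Boundary lemma~\ref{lem-U-arcs-top} together with Proposition~\ref{prop-Case2-nonint-ls} gives: $\varphi$ is a transversal outgoing loop, all of whose points share one $\omega$-limit set $c$ under $v_0$; $c$ lies outside $U^*$ and is non-interesting (a hyperbolic sink or a non-interesting limit cycle); and the orbits issuing from $\varphi$ sweep out an annulus $\Pi$ between $\varphi$ and $c$ whose closure contains no singular point or limit cycle other than $c$. As $l$ is chosen close to $c$ (Sec.~\ref{ssec-tr}, Definition~\ref{def-tr-loops}), it lies in $\Pi$ and cuts off an empty sub-annulus $\Pi'$ between $\varphi$ and $l$.

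Next, for small $\eps$ the curves $\varphi$ and $l$ remain transversal to $v_\eps$ and $\overline{\Pi'}$ still carries no singular point or cycle of $v_\eps$, so the Poincar\'e map $P_\eps\colon\varphi\to l$ along $v_\eps$ is a well-defined homeomorphism, $C^0$-close to $P_0$; the usual flow-box argument (the one behind Lemma~\ref{lem-image-Case2}) shows $P_\eps$ carries the clockwise orientation of $\varphi$ with respect to $U^*$ to the counterclockwise orientation of $l$ with respect to $c$. Now take a separatrix $\gamma_i$ of $v_\eps$ meeting $l$ at $p_i$; its backward continuation runs through $\Pi'$ and reaches $\varphi$ at $q_i:=P_\eps^{-1}(p_i)$, so $\gamma_i$ meets $\partial U^*$. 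By the No-entrance lemma~\ref{lem-seps} this forces $P_i:=\alpha(\gamma_i)\in U^*$ and makes $q_i$ the unique point of $\gamma_i\cap\partial U^*$; hence the arc of $\gamma_i$ from $P_i$ to $q_i$ equals $\gamma_i\cap\overline{U^*}\subset\overline U$. On this arc, and on the germ $(\gamma_i,P_i)$, one has $G_\eps=H_\eps$, so $\tilde\gamma_i\supset H_\eps(\gamma_i\cap\overline{U^*})$; since $H_\eps$ preserves topological transversality, $\tilde\gamma_i$ leaves $H_\eps(U^*)$ transversally at $\tilde q_i:=H_\eps(q_i)\in H_\eps(\varphi)$.

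Finally, apply Lemma~\ref{lem-image-Case2} to $\varphi$ with the present $\tilde l=\hat H(l)$: for small $\eps$ the forward $w_{h(\eps)}$-semitrajectory of $\tilde q_i$ stays in $S^2\setminus H_\eps(\overline{U^*})$ and first meets $\tilde l$ at $\tilde p_i:=\tilde P_\eps(\tilde q_i)$, where $\tilde P_\eps\colon H_\eps(\varphi)\to\tilde l$ is the corresponding Poincar\'e map; thus $\tilde\gamma_i$ crosses $\tilde l$, and does so at $\tilde p_i$. For the cyclic order, consider $\tilde P_\eps\circ H_\eps\circ P_\eps^{-1}\colon l\to\tilde l$, which sends $p_i$ to $\tilde p_i$: by the previous paragraph $P_\eps^{-1}$ takes the counterclockwise orientation of $l$ (w.r.t.\ $c$) to the clockwise orientation of $\varphi$ (w.r.t.\ $U^*$); $H_\eps$, being orientation-preserving (since $H_0=\hat H$ is), preserves it; and by Lemma~\ref{lem-image-Case2}, $\tilde P_\eps$ takes the clockwise orientation of $H_\eps(\varphi)$ (w.r.t.\ $H_\eps(U)$) to the counterclockwise orientation of $\tilde l$ (w.r.t.\ $\hat H(c)$), i.e.\ to the proper orientation of $\tilde l$. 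Hence $p_i\mapsto\tilde p_i$ respects the proper orientations of $l$ and $\tilde l$, so the $\tilde p_i$ appear on $\tilde l$ in the same cyclic order as the $p_i$ on $l$, which is the first statement of the Correspondence lemma in Case~1.

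The main obstacle is the geometric reduction of the first paragraph: extracting, from the single hypothesis that one backward orbit of $l$ hits a transversal component of $\partial U^*$, the full picture that $\varphi$ is a transversal loop around one non-interesting limit set $c$, that $\overline\Pi$ and $\overline{\Pi'}$ are empty annuli, and that therefore every separatrix crossing $l$ funnels backward through $\varphi$ into $U^*$ with its saddle in $U$ — together with the orientation bookkeeping that makes the three maps compose into an orientation-matching map $l\to\tilde l$. Once this is in place, the two remaining steps are essentially formal applications of the No-entrance lemma and of Lemma~\ref{lem-image-Case2}.
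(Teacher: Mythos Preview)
Your argument is close in spirit to the paper's, but it has a genuine gap. You invoke Proposition~\ref{prop-Case2-nonint-ls} and Lemma~\ref{lem-image-Case2} to carry the Poincar\'e map to the $w$-side, and both of these results are stated specifically for \emph{Type~$2$} boundary components of $\partial U^*$. However, a transversal boundary component of $\partial U^*$ that backward $v_0$-orbits of $l$ can hit may also be of \emph{Type~$3$}: an outgoing transversal loop around a non-hyperbolic source or a repelling monodromic polycycle lying in $LBS^*(V)$. In that situation your geometric picture (all points of $\varphi$ share a common non-interesting $\omega$-limit set $c$, and $l$ is a transversal loop around $c$) is still correct, but the cited justifications no longer apply; in particular, the proof of Lemma~\ref{lem-image-Case2} relies on Proposition~\ref{prop:sep-pin}, which uses a $v_0$-separatrix crossing $\varphi$ --- something guaranteed by the Boundary lemma for Type~$2$ but not for Type~$3$.

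The paper avoids this case distinction by a different device: after establishing (via the No-entrance lemma) that the Poincar\'e map $\varphi\to l$ along $v_\eps$ is defined on all of $\varphi$, so that $\varphi$ and $l$ bound an empty annulus for $v_\eps$, it applies the Empty Annuli Lemma~\ref{lem-annuliShaped} directly to the pair $(\varphi,l)$ (with $\varphi\subset\overline U$ and $l$ either around a hyperbolic object or in $L$). This yields in one stroke that $H_\eps(\varphi)$ and $\tilde l$ bound an empty annulus for $w_{h(\eps)}$ with matched boundary orientations, from which the crossing and cyclic ordering of the $\tilde\gamma_i$ on $\tilde l$ follow. A secondary point: rather than asserting by perturbation that $\overline{\Pi'}$ stays free of singular points and cycles of $v_\eps$, the paper deduces emptiness of the annulus for $v_\eps$ directly from the well-definedness of the Poincar\'e map, which is cleaner.
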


\begin{proof}

Let $\varphi$ be one of these components of $\partial U^*$.

A trajectory of $v_0$ joins $\varphi$ to $l$, so a close trajectory $\xi$ of $v_{\eps}$ joins $\varphi$ to $l$ as well. We conclude that  a Poincare map along $v_{\eps}$ between some arcs of the   transversal loops $\varphi$ and $l$ is defined. The endpoints of its domain must be intersections of $\varphi$ with separatrices of $v_{\eps}$ (see Fig. \ref{fig-corrlemma-reg}). But separatrices of $v_{\eps}$ do not enter $U^*$ through $\varphi$ due to No-entrance  lemma \ref{lem-seps}. Therefore this Poincare map is defined on the whole $\varphi$, thus $l$ and $\varphi$ bound an annulus $A$ filled by trajectories of $v_{\eps}$. We conclude that the separatrices $\gamma_i$  of $v_{\eps}$ that cross $l$  also cross $\varphi$, and the intersection points $\gamma_i\cap \varphi$ are ordered clockwise with respect to $U$ (see Fig. \ref{fig-corrlemma}).
  \begin{figure}[h]
 \begin{center}
  \includegraphics[width=0.8\textwidth]{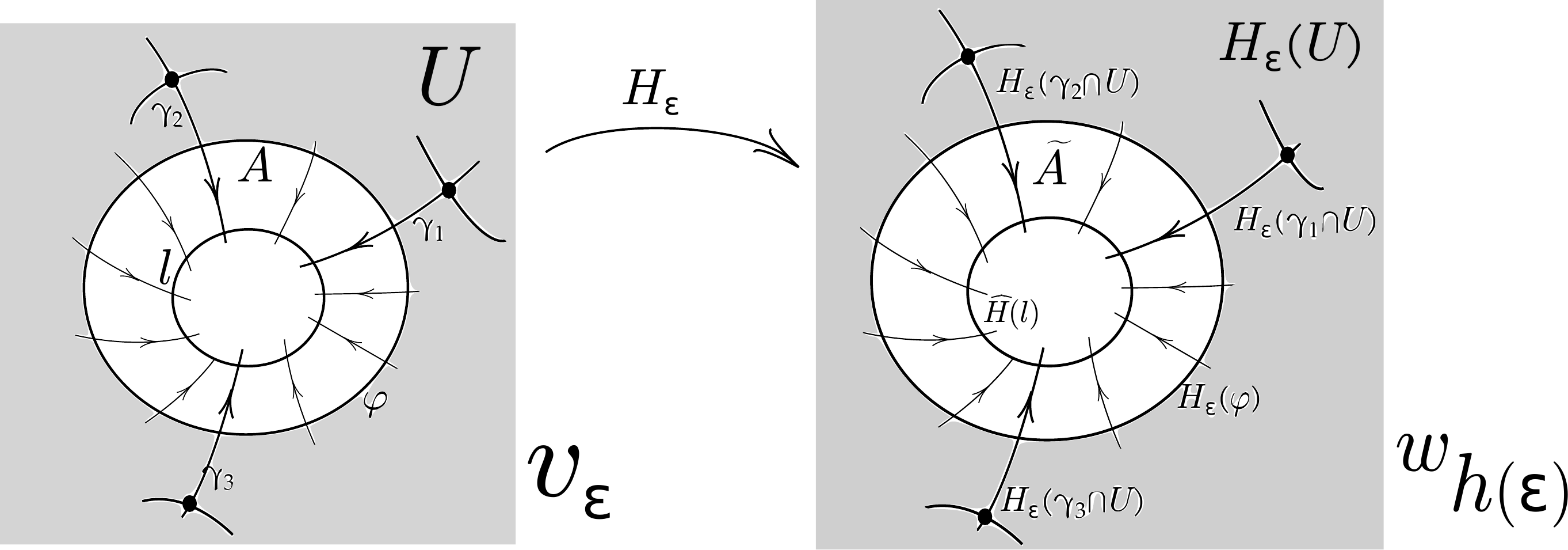}
 \end{center}
\caption{Proposition \ref{prop-corr-case1}, annuli $A$ and $\tilde A$}\label{fig-corrlemma}
\end{figure}
Due to No-entrance  lemma \ref{lem-seps}, separatrices of $v_{\eps}$ cannot enter $U^*$, see Definition \ref{def-entering}. So all separatrices of $v_{\eps}$ that cross $\varphi\subset \partial U^*$ intersect it only once, and $\gamma_i\cap U$ are their arcs starting at the corresponding singular points $P_i \in U^*$. Hence  $\tilde \gamma_i$ are separatrices of $H_\eps (P_i)$ that contain arcs $H_{\eps}(\gamma_i\cap U)$, so $\tilde \gamma_i$ intersect $H_{\eps}(\varphi)$, and the intersection points $H_{\eps}(\gamma_i\cap \varphi)$ are ordered clockwise with respect to $H_{\eps}(U)$.

Finally, note that $l$ and $\varphi$  satisfy assumptions of Empty annuli Lemma, that is, they bound an empty annulus $A$, see Fig \ref{fig-corrlemma}. This lemma yields that  $\tilde l=\hat H(l)$ and $H_{\eps}(\varphi)$ bound an empty annulus $\tilde A$ for $w_{h(\eps)}$, and the orientation on its boundaries is the same as for $A$. Thus the separatrices $\tilde \gamma_i$ that cross $H_{\eps}(\varphi)$ also cross $\tilde l$, and the intersection points $\tilde \gamma_i\cap \tilde l$ are ordered  counterclockwise with respect to it.
 \end{proof}

\subsection{Case 2}

\begin{figure}[h]
   \begin{center}
  \includegraphics[width=0.5\textwidth]{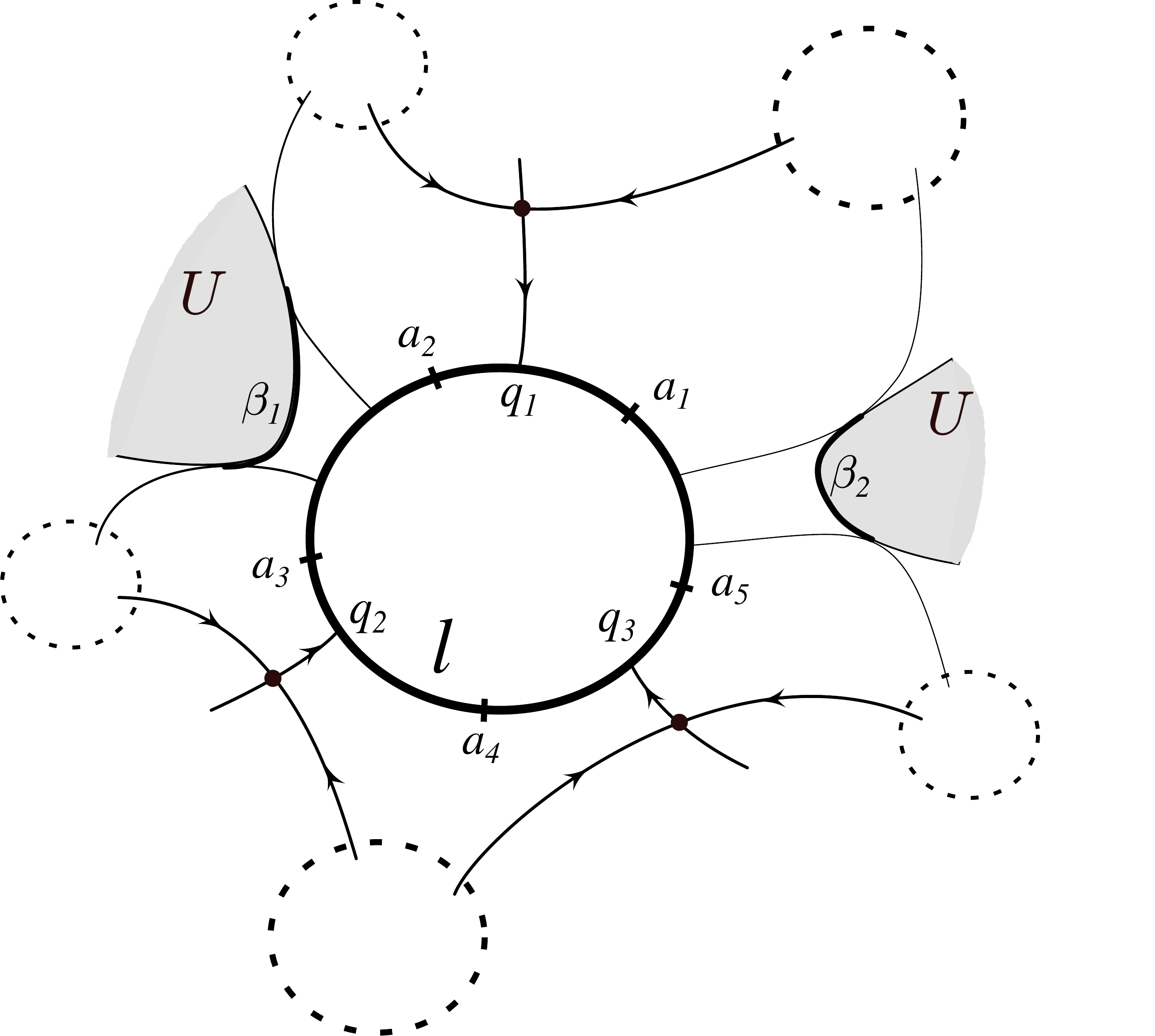}
  \end{center}
\caption{Proposition \ref{prop-corr-case2}: points $q_i, a_j$ on $l$. Dashed circles are transversal loops around non-interesting $\alpha$-limit sets outside $U^*$.}\label{fig-corrlemma-case2}
\end{figure}

\begin{proposition}
\label{prop-corr-case2}
 The first statement of the Correspondence lemma holds true for an ingoing transversal loop $l\subset LMF(v_{\eps})$ or $l\in L$, if backward orbits  of all points of $l$ under  $v_0$ either cross $\partial U^*$  by non-transversal boundary components, or do not intersect $\overline {U^*}$.
\end{proposition}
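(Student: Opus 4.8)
The plan is to locate every separatrix of $v_{\eps}$ that crosses $l$, together with its $w_{h(\eps)}$-counterpart crossing $\tilde l=G_{\eps}(l)$, and to read off the cyclic order from an empty annulus. First I would record that, since $l$ is an ingoing transversal loop around an $\omega$-limit set $c$ (a hyperbolic sink or cycle of $v_{\eps}$, or a non-interesting nest when $l\in L$), any separatrix $\gamma_i$ crossing $l$ enters the trapping region bounded by $l$, has $\omega(\gamma_i)=c$, and is therefore unstable with $\alpha(\gamma_i)=:P_i$ a saddle; and by Proposition \ref{prop-inU-or-hyp} each $P_i$ either lies inside $U^{*}$ or is the continuation $\pi_{\eps}^{-1}(P_i^0)$ of a hyperbolic saddle $P_i^0$ of $v_0$ outside $\overline U$. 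The Case 2 hypothesis enters through the following observation: if $\gamma_i$ were to cross a transversal (Type $2$) component of $\partial U^{*}$, then backward orbits of points of $l$ near $p_i=\gamma_i\cap l$ would shadow $\gamma_i$ and also cross that transversal component, contradicting the hypothesis. Hence no $\gamma_i$ meets a Type $2$ component, and by the No-entrance lemma \ref{lem-seps} each $\gamma_i$ meets $\partial U^{*}$ in at most one point, lying on a Type $1$ or Type $3$ component.

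For the saddles $P_i$ outside $\overline U$, I would argue by continuous dependence on $\eps$ that the arc of $\gamma_i$ from $P_i$ to its first intersection with $l$ stays, uniformly in small $\eps$, away from $\overline{U^{*}}$ and from the singular points of $v_0$ (using Case 2 again to keep it off the transversal part of $\partial U^{*}$, and the Boundary lemma \ref{lem-U-arcs-top} to keep $v_0$-separatrices off $U^{*}$); therefore this arc depends continuously on $\eps$, $v_0$ carries a corresponding unstable separatrix $\gamma_i^0$ from $P_i^0$ crossing $l$, and conversely every separatrix of $v_0$ crossing $l$ from a saddle outside $\overline U$ persists for $v_{\eps}$. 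Then $\tilde\gamma_i=\tilde\pi_{h(\eps)}^{-1}\circ\hat H\circ\pi_{\eps}(\gamma_i)$ is $C^0$-close to the separatrix $\hat H(\gamma_i^0)$ of $w_0$, which crosses $\hat H(l)=\tilde l$; since the corresponding saddle of $w_{h(\eps)}$ is hyperbolic, $\tilde\gamma_i$ crosses $\tilde l$ near $\hat H(p_i^0)$, and the cyclic order of these crossings matches that on $l$ because $\hat H$ is an orientation-preserving homeomorphism of $S^2$ and the continuation maps of Definition \ref{def-pi} preserve orientation.

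For a saddle $P_i$ inside $U^{*}$, the arc of $\gamma_i$ before it leaves $U^{*}$ is carried by $H_{\eps}$ --- a genuine conjugacy on $U$ --- onto the corresponding arc of $\tilde\gamma_i$, terminating on $H_{\eps}(\psi_i)$, where $\psi_i$ is the Type $1$ or Type $3$ component of $\partial U^{*}$ through which $\gamma_i$ exits. Using the Images of Type $1$ and Type $3$ boundary components lemmas \ref{lem-image-case1} and \ref{lem-image-case3}, I would control $H_{\eps}(\psi_i)$: for Type $3$ it is homotopic, in $\tilde U^{+}$ minus the singular trajectories of $w_{h(\eps)}$, to $\hat H(\psi_i)$, and for Type $1$ it bounds a disc of $S^2\setminus H_{\eps}(U)$ containing no singular points or cycles of $w_{h(\eps)}$. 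In both cases the behaviour of $\tilde\gamma_i$ beyond $H_{\eps}(\psi_i)$ reduces to the ``outside'' analysis of the previous paragraph, so $\tilde\gamma_i$ continues out to cross $\tilde l$; matching orientations along $\psi_i$ --- that the $H_{\eps}$-controlled piece and the $\hat H$-controlled piece agree on which side of $H_{\eps}(\psi_i)$ the separatrix lies and in what order --- is exactly what these two lemmas deliver.

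Finally I would make the cyclic-order statement precise with the Empty annuli lemma \ref{lem-annuliShaped}. I would build a curve $l'$ bounding, together with $l$, an empty annulus $A$ for $v_{\eps}$: $l'$ is assembled from transversal loops around the $\alpha$-limit sets of the arcs of $l$ (hyperbolic sources, repelling cycles, or non-interesting nests, so their loops lie outside $U$ or belong to $L$) together with pieces of the relevant Type $1$/Type $3$ components $\psi_i$ and of $\partial U^{*}$, arranged so that every $\gamma_i$ also crosses $\partial A$ and the intersection points inherit the cyclic order of the $p_i$; on each piece of $l'$ the map $G_{\eps}$ is already defined and equals $H_{\eps}$ on the $U$-part and $\hat H$ composed with continuations outside. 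The Empty annuli lemma then produces an empty annulus $\tilde A$ for $w_{h(\eps)}$ bounded by $\tilde l=G_{\eps}(l)$ and $G_{\eps}(l')$ with the same mutual orientation, through which the separatrices $\tilde\gamma_i$ pass in the cyclic order inherited from $\partial A$; since $\tilde A$ is empty, no separatrix of $w_{h(\eps)}$ can reach $\tilde l$ without crossing $\partial A$, which both gives the cyclic ordering of the $\tilde p_i$ and, together with surjectivity of $G_{\eps}$ already established, leaves no room for extra truncation vertices. The main obstacle is precisely the mixed-location case and the patching across $\partial U^{*}$: one must be certain that the conjugacy $H_{\eps}$ inside $U^{*}$ and the homeomorphism $\hat H$ with hyperbolic continuation outside fit together along the $\psi_i$ without scrambling the cyclic order, and it is there, rather than in the bookkeeping, that the images-of-boundary-components lemmas do the real work.
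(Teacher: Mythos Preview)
Your argument rests on a misreading of the Case~2 hypothesis. You claim that under this hypothesis no separatrix $\gamma_i$ can meet a Type~$2$ component of $\partial U^{*}$, and hence each $\gamma_i$ exits $U^{*}$ only through a Type~$1$ or Type~$3$ component. This is backwards. ``Non-transversal'' boundary components are those with tangency points, i.e.\ Type~$1$ or Type~$2$ with outer tangencies; but trajectories meeting a Type~$1$ component stay inside the disc it bounds and never reach $l$. So in Case~2 the boundary components that backward orbits of $l$ cross are precisely the \emph{non-transversal Type~$2$} components, and the separatrices $\gamma_i$ coming from saddles inside $U^{*}$ exit through outgoing arcs of those Type~$2$ components. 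The tool you need and never invoke is Lemma~\ref{lem-image-Case2} (Images of Type~$2$ boundary components); Lemmas~\ref{lem-image-case1} and~\ref{lem-image-case3} are not the right instruments here.

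Consequently your empty-annulus construction breaks down: the pieces $\psi_i$ you want to use for $l'$ are neither Type~$1$ nor Type~$3$, and the patching you describe between the $H_{\eps}$-side and the $\hat H$-side cannot be carried out as stated. The paper instead decomposes $l$ into (i) open ``hyperbolic'' arcs whose backward $v_0$-orbits avoid $\overline{U^{*}}$ and tend to non-interesting sets, (ii) isolated crossings $q_j$ with separatrices of hyperbolic saddles outside $U$, and (iii) closed subarcs that are Poincar\'e images $P_0(\beta)$ of outgoing transversal arcs $\beta$ of Type~$2$ components. One then picks a reference point $a_i$ in each hyperbolic arc and checks the statement on each interval $[a_i,a_{i+1}]$. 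For intervals of kind~(ii) your hyperbolic-continuation argument is essentially correct. For intervals of kind~(iii), the separatrices $\gamma_k$ crossing $[a_i,a_{i+1}]$ also cross $\beta_{\eps}$; applying $H_{\eps}$ puts the $\tilde\gamma_k$ across $\tilde\beta_{\eps}=H_{\eps}(\beta_{\eps})$ in the correct order, and Lemma~\ref{lem-image-Case2} provides a well-defined Poincar\'e map $\tilde P_{\eps}\colon \tilde\beta_{\eps}\to\tilde l$ with the right orientation and with $\tilde P_{\eps}(\tilde\beta_{\eps})$ contained in the corresponding subarc of $\tilde l$. No global empty annulus is built; the argument is local to each $[a_i,a_{i+1}]$.
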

\begin{proof}
Note that these boundary components must be all of Type $2$ (see Boundary lemma for the classification). Indeed, non-transversal boundary components of $\partial U$ are of Type $1$ or Type $2$. But the trajectories originating from a Type $1$ component fill the whole disc with no transversal loops in it.

Clearly, $l$ is the union of the following sets:
\begin{itemize}
 \item (open) \emph{hyperbolic} arcs: a negative semi-trajectory of each point of this arc under $v_0$ tends to a non-interesting set and does not hit $U^*$.
  \item intersections $q_i$ with separatrices of $(v_0)|_{S^2\setminus U}$, i.e. with separatrices of hyperbolic saddles.
 \item (closed) images $P_0(\beta_i)$ of transversal outgoing arcs $\beta_i\subset \varphi_j \subset \partial U$ under Poincare maps $P_0$ along $v_0$, where each $\varphi_j$ is a boundary component of Type $2$, see Fig. \ref{fig-corrlemma-case2}.
\end{itemize}

Pick one point from each hyperbolic arc; let $a_i$ be these points (ordered cyclically along $l$). As $l$ lies outside $U$, again as in Case $1$, $\tilde l = \hat H(l)$.
Put $\tilde a_i = \hat H(a_i)$; these points are ordered cyclically along $\tilde l$.
It is sufficient to prove the statement of Correspondence lemma for each arc $[a_i, a_{i+1}]$. Put $I=[a_i, a_{i+1}]$, $\tilde I = [\tilde a_i, \tilde a_{i+1}]$.

Since each $q_j$ and each  $P_0(\beta_j)$ is adjacent to  open hyperbolic arcs on both sides, we have the following three cases for $I$:
\begin{enumerate}

 \item The arc $I$ contains the point $q$ of intersection with a separatrix $\nu$ of a hyperbolic saddle $P$ of $v_0$; $I\setminus \{q\}$ belongs to two subsequent hyperbolic arcs. The arcs $[a_1, a_2], [a_3, a_4], [a_4, a_5]$ on Fig. \ref{fig-corrlemma-case2} are of that type, as well as all arcs on Fig. \ref{fig-corrlemma-case3}.

     \begin{figure}[h]
   \begin{center}
  \includegraphics[width=0.5\textwidth]{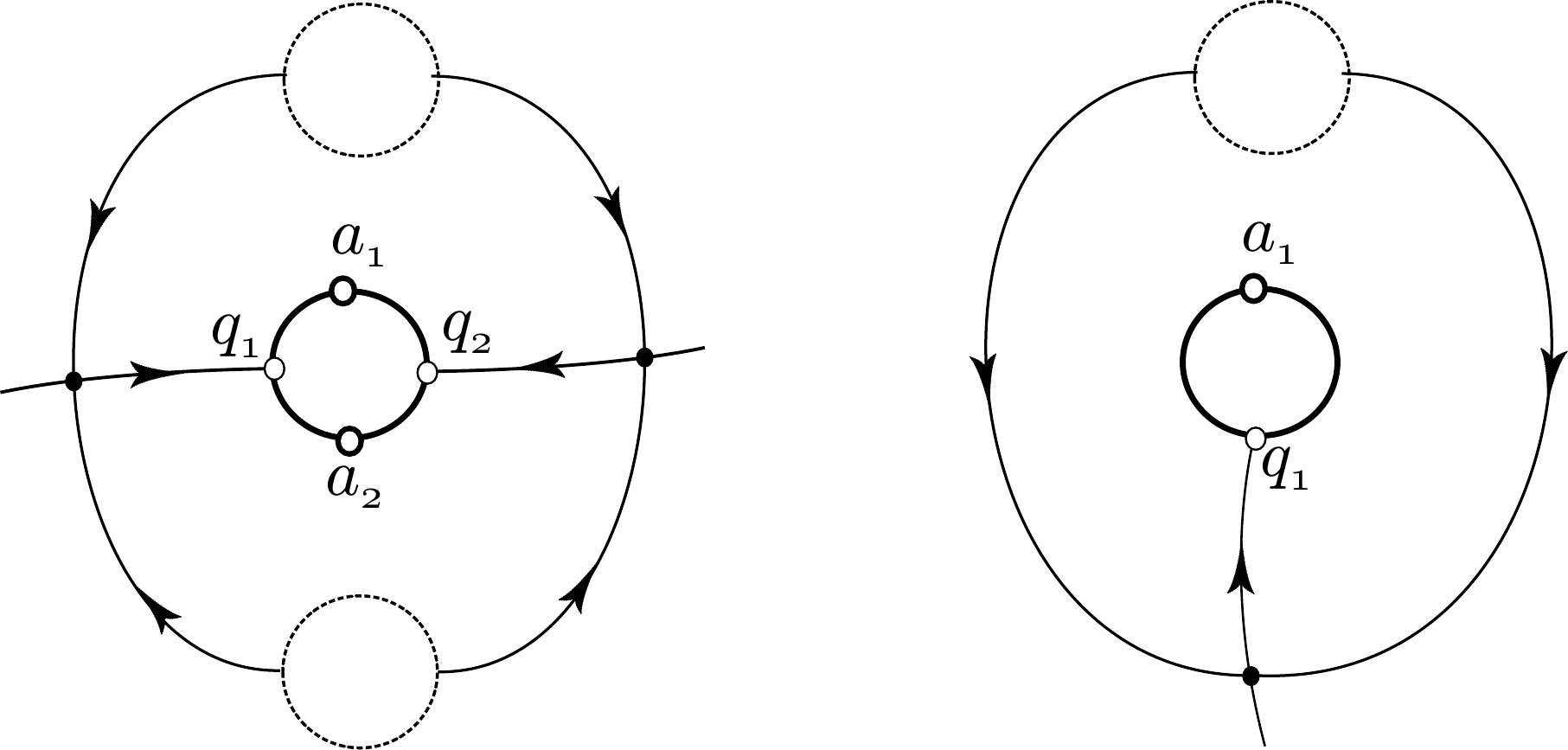}
  \end{center}
\caption{Proposition \ref{prop-corr-case2}: case 1}\label{fig-corrlemma-case3}
\end{figure}

 Then the only separatrix of $v_{\eps}$ that intersects $I$ is $\gamma:= \pi_{\eps}^{-1}(\nu)$.
 Since $\hat H$ conjugates $v_0$ to $w_0$, the separatrix $\hat H(\nu)$ intersects the arc $\hat H(I) = \tilde I$. The close separatrix of $w_{h(\eps)}$, namely the separatrix that contains a germ $\tilde \pi^{-1}_{\eps} (\hat H( \nu, P))$, also intersects  $\tilde I$. This germ is $G_{\eps}(\gamma, P)$, thus this separatrix is $\tilde \gamma$. We conclude that if $\gamma$ intersects $I$, then $\tilde \gamma$ intersects $\tilde I$. This completes the proof of the statement of Correspondence lemma for $I$ in this case.

 \item  The arc $I$ contains the image $P_0(\beta)$ of some transversal outgoing arc $\beta\subset \varphi \subset \partial U$. The set  $I \setminus P_0(\beta)$  belongs to two subsequent hyperbolic arcs. The arcs $[a_2, a_3], [a_5, a_1]$ on Fig. \ref{fig-corrlemma-case2} are of that type.

As in Lemma \ref{lem-image-Case2}, let $\beta_{\eps}$ be the maximal arc of $\varphi$ transversal to $v_{\eps}$ and close to $\beta$. Let $P_{\eps}\colon \beta_{\eps} \to l$ be the Poincare map along $v_{\eps}$.

Since separatrices of $v_{\eps}$ cannot originate from non-interesting sets, each separatrix of $v_{\eps}$ that intersects $I$ also intersects $\beta_{\eps}$. Let $\{\gamma_k\}$ be these separatrices, ordered counterclockwise along $I$; then they intersect $\beta_{\eps}$ and are ordered clockwise along it. Due to No-entrance  lemma, separatrices $\gamma_k$ intersect $\partial U^*$ only once. Thus the separatrices $\tilde \gamma_k$ are the separatrices that contain arcs $ H_{\eps}(\gamma_k \cap U)$. Since $H_{\eps}$ conjugates $v_{\eps}$ to $w_{h(\eps)}$, we conclude that the separatrices $\tilde \gamma_k$ intersect $\tilde \beta_{\eps} := H_{\eps}(\beta_{\eps})$ in a clockwise order along $\tilde \beta_{\eps}$. Lemma \ref{lem-image-Case2} implies that the Poincare map $\tilde P_{\eps}\colon \tilde \beta_{\eps} \to \tilde l $ is well-defined and takes the clockwise orientation on $\tilde \beta_{\eps}$ to the counterclockwise orientation on $\tilde l$. Thus  the  separatrices $\tilde \gamma_k$ intersect $\tilde P_{\eps}(\tilde \beta_{\eps}) \subset \tilde l$ in a counterclockwise order along $\tilde l$. Now it suffices to prove that $\tilde P_{\eps}(\tilde \beta_{\eps}) \subset \tilde I$; this will prove that $\tilde \gamma_k$ intersect $\tilde I$ and are ordered counterclockwise along it.

 Applying $\hat H$ to the inclusion $P_0(\beta) \subset I$, we get that $\tilde P_0(\tilde \beta_0) \subset \tilde I$. Lemma \ref{lem-image-Case2} implies that $\tilde P_{\eps}(\tilde \beta_\eps)$ intersects $\tilde P_0(\tilde \beta_0)$. Now, it is sufficient to prove that $\tilde P_{\eps}(\tilde \beta_\eps)$ does not contain endpoints of $\tilde I$.  Note that the negative semi-trajectories of endpoints of $\tilde I$ under $w_{h(\eps)}$ are close to their trajectories under $w_0$, thus tend to non-interesting sets and do not intersect the closure of $\tilde U^{+*}$. This completes the proof of the statement of Correspondence lemma for $I$ in this case.
\item The arc $I$ does not fall into the two previous cases. So it belongs to one hyperbolic arc, which is only possible if all $a_i $  coinside.  Then $I=l$ and all negative semi-trajectories of points of $l$ under $v_0$ tend to a non-interesting set and do not visit $U^*$. Thus for small $\eps$, no separatrices of $v_{\eps}$ intersect $l$, and there is nothing to prove.
\end{enumerate}
\end{proof}

\subsection{Case 3:  $l \subset U^*$}

For $l\subset U^*$, Correspondence lemma follows directly from No-entrance  lemma \ref{lem-seps}. 

   \begin{proposition}
        \label{prop-corr-lem-in-U-1}
          The first statement of Correspondence lemma holds if $l$ is a transversal loop of an $\alpha$- ($\omega$-) limit set inside $U^*$.
\end{proposition}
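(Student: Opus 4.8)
The plan is to derive this proposition directly from the No-entrance lemma \ref{lem-seps}, which applies to $U^{*}$ by Proposition \ref{prop-choice}. Without loss of generality I assume $l$ is ingoing, so it is a transversal loop of an $\omega$-limit set $c$ of $v_{\eps}$ with $l\subset U^{*}$. By the construction of transversal loops (Section \ref{ssec-tr} and the general requirements on transversal loops in $LMF$ graphs), since $l$ lies in $U^{*}$ the whole annulus $A$ between $l$ and $c$, as well as $c$ itself, lies in $U^{*}\subset U$; inside $A$ the field $v_{\eps}$ is the standard ingoing model $\dot r=\pm(1-r),\ \dot\phi=1$, so any orbit that crosses $l$ does so inward and then converges to $c$. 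Consequently $G_{\eps}$ coincides with $H_{\eps}$ on $A$, on $l$, and on $c$, and $\tilde l=G_{\eps}(l)=H_{\eps}(l)$ is a transversal loop of the $\omega$-limit set $\tilde c=H_{\eps}(c)$ of $w_{h(\eps)}$.

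The first step is to show that every separatrix $\gamma_{i}$ of $v_{\eps}$ crossing $l$ lies entirely in $U^{*}$, together with its base point $P_{i}$. Since $\gamma_{i}$ crosses $l$ inward we get $\omega(\gamma_{i})=c\subset U^{*}$, while the other end of $\gamma_{i}$ accumulates on $P_{i}$. If $P_{i}\notin U^{*}$, then $\gamma_{i}$ meets $\partial U^{*}$ (it reaches $l\subset U^{*}$), contradicting the No-entrance lemma \ref{lem-seps} together with Definition \ref{def-entering}. If $P_{i}\in U^{*}$ but $\gamma_{i}$ were to leave $U^{*}$, it would cross $\partial U^{*}$ at least twice, since both of its ends ($P_{i}$ and $c$) lie in $U^{*}$; again this contradicts the No-entrance lemma. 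Hence $\gamma_{i}$ does not meet $\partial U^{*}$, so, being connected and meeting the open set $U^{*}$, it lies in $U^{*}$; the same argument shows that $\gamma_{i}$ meets $l$ exactly once, so the truncation vertex $p_{i}=\gamma_{i}\cap l$ is well defined.

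Next, since $H_{\eps}$ conjugates $v_{\eps}|_{U}$ to $w_{h(\eps)}|_{H_{\eps}(U)}$ and $\gamma_{i}\subset U^{*}\subset U$, the image $H_{\eps}(\gamma_{i})$ is an orbit arc of $w_{h(\eps)}$ emanating from $G_{\eps}(P_{i})$ and lying in a separatrix of $w_{h(\eps)}$; this separatrix carries the germ $G_{\eps}((\gamma_{i},P_{i}))=H_{\eps}((\gamma_{i},P_{i}))$, hence it is precisely $\tilde\gamma_{i}$. Therefore $\tilde p_{i}:=H_{\eps}(p_{i})\in\tilde\gamma_{i}\cap\tilde l$, i.e. each $\tilde\gamma_{i}$ crosses $\tilde l=H_{\eps}(l)$ at $\tilde p_{i}$. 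For the cyclic order, I would use that $H_{\eps}|_{l}\colon l\to\tilde l$ is a homeomorphism which, on $A\cup l$, conjugates $v_{\eps}$ to $w_{h(\eps)}$ preserving the direction of time, and argue that such a map carries the counterclockwise (``proper'') orientation of $l$ with respect to $c$ to the proper orientation of $\tilde l$ with respect to $\tilde c$; then the cyclic order of $p_{1},\dots,p_{n}$ along $l$ is carried to the cyclic order of $\tilde p_{1},\dots,\tilde p_{n}$ along $\tilde l$, which is exactly the first assertion of Correspondence lemma \ref{lem-seps-behave}.

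The step I expect to be the main obstacle is this last one: showing that the time-preserving conjugacy $H_{\eps}$, restricted to a collar of $c$, cannot reverse the counterclockwise orientation of the transversal loop with respect to $c$. I would handle it by the standard observation that the winding sense of the flow in such a collar is a topological conjugacy invariant: it is detected by following a single orbit and recording on which side of itself it returns to a fixed transversal ray, and a homeomorphism respecting the orbits and the direction of time must preserve that data; equivalently, the collars of $c$ and $\tilde c$, both equivalent to $\dot r=\pm(1-r),\ \dot\phi=1$, admit conjugacies only among orientation-compatible homeomorphisms. Granting this, all remaining steps are immediate consequences of the No-entrance lemma and of $H_{\eps}$ being a conjugacy on $U$.
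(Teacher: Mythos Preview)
Your argument is correct and follows the same route as the paper: use the No-entrance lemma to confine each $\gamma_i$ (and its base point $P_i$) to $U^{*}$, so that $G_\eps$ coincides with $H_\eps$ on $\gamma_i$ and on $l$, and then read off the conclusion from $H_\eps$ being a conjugating homeomorphism. The paper's proof says exactly this in three lines.

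The only place you go beyond the paper is the last paragraph, where you justify that $H_\eps$ carries the proper orientation of $l$ to the proper orientation of $\tilde l$ by a dynamical argument about the winding sense in the collar. The paper simply writes ``the statement follows from the fact that $H_\eps$ is a homeomorphism'' and elsewhere uses freely that $H_\eps$ is orientation-preserving (see e.g.\ the proof of Lemma~\ref{lem-image-case3}). So your extra care is not wrong, just unnecessary under the paper's standing assumptions; your dynamical justification has the small virtue of not relying on that global orientation hypothesis.
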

\begin{proof}
Let $\{\gamma_j\}$ be the set of all separatrices that hit $l$, as in Correspondence lemma. No-entrance  lemma \ref{lem-seps} implies that they belong completely to $U^*$, i.e. $\gamma_j\subset U^*$. So $G_{\eps}$ is induced by $H_{\eps}$ on $\gamma_j$. 
The statement follows from the fact that $H_{\eps}$ is a homeomorphism.


\end{proof}

The same arguments apply if we consider arbitrary separatrices with $\omega$-limit sets inside $U^*$. 
Let $P$ be a singular point of $v_\eps$, let $\gamma$ be its  unstable  separatrix. Suppose that $\tilde \gamma$ is the corresponding  separatrix of $w_{h(\eps)}$: $(\tilde \gamma, \tilde P) = G_{\eps}((\gamma, P))$. 

\begin{proposition}
\label{prop-corr-lem-in-U-2}
In assumptions of Main Theorem, let $\gamma$ be an unstable  separatrix of $v_{\eps}$.
For sufficiently small $\eps$, if  $\omega(\gamma)$ belongs to $U^*$, then $\omega(\tilde \gamma) = G_{\eps}(\omega(\gamma))$.
      
   Here $\omega$-limit sets are with respect to $v_{\eps}, w_{h(\eps)}$. The same holds for stable separatrices and their $\alpha$-limit sets.
\end{proposition}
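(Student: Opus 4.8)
The plan is to reduce the statement to the observation that, under No-entrance lemma \ref{lem-seps}, the separatrix $\gamma$ is trapped inside $U^*$, and then to transport the $\omega$-limit set through the conjugacy $H_\eps$. Recall that $G_\eps$ coincides with $H_\eps$ on everything contained in $U$, and that $H_\eps$ conjugates $v_\eps$ to $w_{h(\eps)}$ on $U$, preserving the direction of time.

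First I would show that $\gamma\subset U^*$. Assume $\gamma$ is unstable with $\alpha(\gamma)=P$ and $\omega(\gamma)\subset U^*$; since $\partial U^*$ consists of transversal arcs and transversal loops of $v_0$ (Boundary lemma \ref{lem-U-arcs-top}), it contains no singular point, so $P\notin\partial U^*$. By No-entrance lemma \ref{lem-seps}, $\gamma$ does not enter $U^*$ in the sense of Definition \ref{def-entering}. If $P\notin U^*$, then $\gamma$ cannot meet $\partial U^*$ at all, hence stays in the closed set $S^2\setminus U^*$, so $\omega(\gamma)\subset S^2\setminus U^*$, contradicting $\omega(\gamma)\subset U^*$. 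Therefore $P\in U^*$, and then $\gamma$ meets $\partial U^*$ at most once; a single crossing would force $\gamma$ to remain in $S^2\setminus U^*$ afterwards (a second crossing being forbidden), again contradicting $\omega(\gamma)\subset U^*$. Hence $\gamma$ never meets $\partial U^*$, i.e. $\gamma\subset U^*$.

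Since $\gamma\subset U^*\subset U$, the germ $G_\eps((\gamma,P))$ equals $H_\eps((\gamma,P))$; as $H_\eps(\gamma)$ is a trajectory of $w_{h(\eps)}$ carrying this germ, it is a sub-arc of the separatrix $\tilde\gamma$ of $\tilde P=H_\eps(P)$, so $H_\eps(x)\in\tilde\gamma$ for every $x\in\gamma$. Now $\omega(\gamma)$ is a compact subset of the open set $U^*$, so it admits a compact neighborhood $\overline N\subset U^*$; since $\overline\gamma\subset\overline{U^*}$ is compact, the positive semi-trajectory of some $x\in\gamma$ stays in $N$ for all forward time. Because $H_\eps$ conjugates $v_\eps$ and $w_{h(\eps)}$ on $U\supset U^*$ preserving time, the forward $w_{h(\eps)}$-semi-trajectory of $H_\eps(x)$ is the $H_\eps$-image of that of $x$, hence stays in $H_\eps(\overline N)\subset H_\eps(U^*)$. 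Consequently $\omega(\tilde\gamma)=\omega_{w_{h(\eps)}}(H_\eps(x))=H_\eps(\omega_{v_\eps}(x))=H_\eps(\omega(\gamma))$, and since $\omega(\gamma)\subset U^*$ we get $H_\eps(\omega(\gamma))=G_\eps(\omega(\gamma))$. The case of a stable separatrix and its $\alpha$-limit set is identical after reversing time.

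The only delicate step is the first one: extracting $\gamma\subset U^*$ from the No-entrance lemma and the definition of ``does not enter'', which needs care about the location of $P$ and about reading ``$\omega(\gamma)\subset U^*$'' as membership in the open set. Everything after that is a routine transport of limit sets across the conjugacy, resting on the fact that once a forward semi-trajectory is trapped in $U^*$ it stays in the domain of $H_\eps$.
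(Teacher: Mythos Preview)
Your proof is correct and follows the same approach as the paper: use No-entrance lemma \ref{lem-seps} to trap $\gamma$ inside $U^*$, then transport the $\omega$-limit set through the conjugacy $H_\eps=G_\eps$. The paper's own proof is a one-liner (``literally repeats the proof of the previous proposition''), so you have simply written out the details that the authors leave implicit, including the case analysis on the location of $P$ and the compactness argument ensuring the forward $w_{h(\eps)}$-orbit of $H_\eps(x)$ never escapes $H_\eps(U)$.
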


The proof literally repeats the proof of the previous proposition.

\subsection{Case 4: $l \subset U\setminus U^*$}
\label{sssec-corr-inside-ni}

Suppose that $l\subset U\setminus U^*$  corresponds to a cycle $c$ that bifurcates from a non-interesting nest,  and suppose that $l$ is not homotopic in $S^2\setminus \Per v_{\eps}$ to the outer transversal loops of the nest. Then  no separatrices of $v_{\eps}$ intersects $l$. Indeed, if a separatrix $\gamma$ accumulates to $c$, then it must enter a non-interesting nest, i.e. intersect its outer transversal loop $l'\in L$. However $l'$ is separated from $l$ by cycles of $v_{\eps}$, and we get a contradiction. So the first statement of the Correspondence lemma  is trivial for $l$.

Suppose that $l$ is homotopic in $S^2\setminus \Per v_{\eps}$  to the outer transversal loop $l'$ of the nest. Then the separatrices $\{\gamma_i\}$ of $v_{\eps}$ that intersect $l$ also intersect $l'$. Cases 1,2 of Correspondence lemma (see Propositions \ref{prop-corr-case1}, \ref{prop-corr-case2})  for $l'\in L$ imply that $\tilde \gamma_i$ intersect $\hat H(l')$ and are ordered cyclically along it. Empty annuli lemma \ref{lem-annuliShaped} implies that the annulus $\tilde A$ between $H_{\eps}(l)$ and $\hat H(l')$ is empty with respect to $w_{h(\eps)}$, so $\{\tilde \gamma_i\}$ intersect $H_{\eps}(l)$. Their order is the same as for $v_{\eps}$, because the curves $H_{\eps}(l)$ and $\hat H(l')$ are oriented with respect to $\tilde A$ in the same way as $l,l'$ are oriented with respect to $A$ (see Empty annuli lemma \ref{lem-annuliShaped}). This completes the proof.

\subsection{Second statement of the Correspondence lemma}
\label{ssec-corr-second}
The first statement of the Correspondence lemma implies the second one because $v_{\eps}$ and $w_{h({\eps})}$ have the same amount of separatrices. For a detailed proof, we will need the following proposition.
\begin{proposition}
   \label{prop-sameAmount}
    For small $\eps$, the vector field $w_{h(\eps)}$ has the same amount of separatrices as $v_{\eps}$.
   \end{proposition}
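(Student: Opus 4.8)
The plan is to count separatrices through their germs. Recall that $\Sep^* u$ denotes the set of germs of separatrices of $u$ at singular points, that a germ $(\gamma,P)$ determines uniquely the separatrix through it, that a separatrix which is \emph{not} a separatrix connection has exactly one germ, and that a separatrix connection has exactly two (one at each end, see Section~\ref{ssec-Ge-skeleton}). Hence for every $u\in Vect^*\,S^2$
\[
\#\{\text{separatrices of }u\}\;=\;\#\Sep^* u\;-\;\#\{\text{separatrix connections of }u\},
\]
and it suffices to show that $G_{\eps}$ induces a bijection $\Sep^* v_{\eps}\to\Sep^* w_{h(\eps)}$ and a bijection between the separatrix connections of $v_{\eps}$ and those of $w_{h(\eps)}$.

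First I would verify that $G_{\eps}$ is a bijection on germs of separatrices, by the argument already used in Proposition~\ref{prop-G-bij}. Injectivity: each such germ sits either at a singular point inside $U$, where $G_{\eps}=H_{\eps}$ is a local homeomorphism, or at a hyperbolic saddle outside $U$, where $G_{\eps}=\tilde\pi_{h(\eps)}^{-1}\circ\hat H\circ\pi_{\eps}$ is a composition of injective maps; by Corollary~\ref{cor-heps-detached} the images of the two kinds of germs land in disjoint parts of $S^2$ for small $\eps$. Surjectivity: given a germ $(\tilde\gamma,\tilde P)$ of a separatrix of $w_{h(\eps)}$, Proposition~\ref{prop-inU-or-hyp} applied to $W$ with the neighborhood $\tilde U^-$ gives two cases. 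If $\tilde P\in\tilde U^-\subset H_{\eps}(U)$, pulling $(\tilde\gamma,\tilde P)$ back along the conjugacy $H_{\eps}$ yields a germ $(\gamma,P)$ of $v_{\eps}$ with $G_{\eps}((\gamma,P))=H_{\eps}((\gamma,P))=(\tilde\gamma,\tilde P)$. Otherwise $\tilde P$ belongs to a continuation family of a hyperbolic saddle $\tilde P_0\notin LBS(W)$ of $w_0$; since $\hat H^{-1}$ conjugates $w_0$ to $v_0$ and carries $ELBS(w_0)\cap\Sing w_0$ onto $ELBS(v_0)\cap\Sing v_0$ — this set is characterized purely topologically (non-hyperbolic points together with saddles having an interesting separatrix, see the remark following Definition~\ref{def:elbs}, and recall $LBS(V)\cap\Sing v_0=ELBS(v_0)\cap\Sing v_0$ by Definition~\ref{def:lbs}) — the point $\hat H^{-1}(\tilde P_0)$ is a hyperbolic saddle of $v_0$ outside $LBS(V)$, whose continuation $P_{\eps}$ carries a germ mapped by $G_{\eps}=\tilde\pi_{h(\eps)}^{-1}\circ\hat H\circ\pi_{\eps}$ onto $(\tilde\gamma,\tilde P)$.

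For the separatrix connections I would simply observe that a separatrix connection joins two characteristic points directly and crosses no transversal loop (its endpoints carry hyperbolic sectors, hence are neither sinks nor sources, so no transversal loop surrounds them), so it is a \emph{non-truncated} separatrix and corresponds to an $LMF$-edge labeled $SC$. By Proposition~\ref{prop-G-bij}, $G_{\eps}$ is a label-preserving bijection on non-truncated separatrices, hence it restricts to a bijection between the $SC$-edges, i.e.\ between the separatrix connections of $v_{\eps}$ and of $w_{h(\eps)}$. (Equivalently one may confine all separatrix connections of $v_{\eps}$ to $U$ and all those of $w_{h(\eps)}$ to $\tilde U^-\subset H_{\eps}(U)$ via Separatrix lemma~\ref{lem-sep-connect} and conjugate by $H_{\eps}$.) Feeding the two bijections into the displayed identity gives $\#\{\text{separatrices of }v_{\eps}\}=\#\{\text{separatrices of }w_{h(\eps)}\}$.

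I expect the only delicate points to be bookkeeping rather than conceptual. One must make sure that ``singular points of $w_{h(\eps)}$ inside $H_{\eps}(U)$'' coincides with ``singular points inside $\tilde U^-$'': this follows from Proposition~\ref{prop-inU-or-hyp} for $W$ together with the fact — arranged by taking $\tilde U^+$ small in Proposition~\ref{prop-choice} — that the hyperbolic singular points of $w_0$ not in $LBS(W)$ lie outside $\overline{\tilde U^+}\supset H_{\eps}(U)$. One must also unwind the topological description of $ELBS\cap\Sing$ to see that $\hat H$ preserves it. Neither is hard: all the substantive dynamical content has already been isolated in Propositions~\ref{prop-inU-or-hyp} and~\ref{prop-G-bij} and in the Separatrix lemma, and this proof is essentially a careful combinatorial assembly of those facts, with no new idea required.
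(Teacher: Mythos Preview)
Your proof is correct and follows essentially the same strategy as the paper: count separatrices as (germs) minus (separatrix connections), then match both quantities via $G_{\eps}$. The paper is slightly more economical on the germ count --- instead of re-proving bijectivity of $G_{\eps}$ on $\Sep^*$, it just invokes Proposition~\ref{prop-G-bij} (bijection on singular points) together with Remark~\ref{rem-Geps-toptype} (topological types preserved), from which equality of germ counts is immediate. For separatrix connections the paper uses the Separatrix-lemma route you mention parenthetically; your primary route via the $SC$ label in Proposition~\ref{prop-G-bij} is equally valid.

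One small wrinkle: the claim that $ELBS(v_0)\cap\Sing v_0$ is ``characterized purely topologically'' is not quite right --- hyperbolicity and the notion of an interesting limit set are not topological invariants. You do not need this, though: item~(3) of Definition~\ref{def-moderate-local} gives $H_0(LBS(V))=LBS(W)$, and since $\hat H|_U=H_0$, the homeomorphism $\hat H$ carries $LBS(V)$ onto $LBS(W)$. So if $\tilde P_0\notin LBS(W)$ then $\hat H^{-1}(\tilde P_0)\notin LBS(V)$, hence is hyperbolic by Proposition~\ref{prop-nohyp-inLBS}, and being topologically a saddle it is a hyperbolic saddle. Replace your topological-characterization sentence with this and the argument is clean.
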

\begin{proof}
Note that the number of separatrices of a vector field equals to the number of germs of separatrices at the corresponding singular points minus the number of separatrix connections.
Due to  Proposition \ref{prop-G-bij}, the map $G_{\eps}$ provides a  one-to-one correspondence on singular points  of $v_{\eps}$ and $w_{h(\eps)}$. This map preserves their topological types due to Remark \ref{rem-Geps-toptype}. So the amount of germs of separatrices at singular points is the same for $v_{\eps}$ and $w_{h(\eps)}$.

Moreover, $v_{\eps}$ and $w_{h(\eps)}$ have the same amount of separatrix connections: due to Separatrix lemma \ref{lem-sep-connect}, all separatrix connections of $v_{\eps}$, $w_{h(\eps)}$ for small $\eps$ are inside small neighborhoods of large bifurcation supports, so $H_{\eps}$ identifies separatrix connections of $v_{\eps}$ and separatrix connections of $w_{h(\eps)}$. The statement follows.
   \end{proof}

The first statement of Correspondence Lemma implies that if $k$ separatrices of $v_{\eps}$ intersect a transversal  loop $l$ outside $U^*$, then \emph{at least} $k$ separatrices of $w_{h(\eps)}$ intersect a transversal loop $\tilde l$.

Proposition \ref{prop-corr-lem-in-U-2} implies that if $k$ unstable separatrices of $v_{\eps}$ have the same $\omega$-limit set $c$ inside $U^*$, then \emph{at least} $k$ separatrices of $w_{h(\eps)}$ have the $\omega$-limit set  $H_{\eps}(c)$; the same holds for $\alpha$-limit sets of stable separatrices.

Clearly, each separatrix of $v_{\eps}$ falls into one of the two cases above.    Due to Proposition \ref{prop-sameAmount}, $v_{\eps}$ and $w_{h(\eps)}$ have the same amount of separatrices. So in each of the two cases above, the amount of separatrices  of $v_{\eps}$ equals the amount of corresponding separatrices of $w_{h(\eps)}$. This completes the proof of Correspondence lemma.

\section{Proof of the Boundary lemma}  \label{sec-U}

\subsection{Boundaries of  canonical regions}
In the proof of the Boundary lemma, we will construct a neighborhood $\Omega$ as the union of its intersections with all canonical regions of $v$. Recall that these regions are described  in Section \ref{sub:skel}. We start with an explicit description of the boundaries of canonical regions.

Let $v \in Vect^*(S^2)$. In the definitions below all the singular points, separatrices and so on are those of $v$.

\begin{definition}   \label{def-chain}
A \emph{separatrix chain} $C \subset S^2$ is one of the following sets:
 \begin{itemize}
\item A union $C = \alpha(\gamma_0) \cup \gamma_0 \cup P_1 \cup \gamma_1 \cup P_2 \cup \dots \cup \gamma_n \cup \omega(\gamma_n)$, where  $\gamma_i$ is an ingoing separatrix of a singular point $P_{i+1}$ and $\gamma_{i+1}$ is an outgoing  separatrix of  $P_{i+1}$. In what follows, we say that $\gamma_0$ is the first separatrix of the chain $C$, and $\gamma_n$ is the last separatrix of the chain; we also say that the chain $C$ connects the limit sets $\alpha(\gamma_0)$ to $\omega(\gamma_n)$.

\item A union $C=\alpha(\gamma)\cup \gamma \cup \omega(\gamma)$ where $\gamma$ is a separatrix. Then $\gamma$ is both the first and the last separatrix in the chain, and the chain connects $\alpha(\gamma)$ to $\omega(\gamma)$.

\item A singular point; it coincides with both its $\alpha$- and $\omega$-limit sets and the corresponding chain has no  separatrices.
 \end{itemize}

\end{definition}

Note that points $P_i$ with different numbers in one and the same chain may coincide.

\begin{definition}
For a canonical region $R$ of a vector field, we denote by $\alpha(R)$ and $\omega(R)$ the common $\alpha$- and $\omega$-limit set of all its points.
\end{definition}

Note that a strip canonical region is simply connected, and a spiral one is a topological annulus. Recall that due to Proposition \ref{prop-parall}, for a strip canonical region, there exists a homeomorphism $\Psi \colon \bbR \times (0,1) \to R$ that conjugates $\partial/\partial x$ to $v$.

\begin{definition}
\emph{Side boundaries} $\nu_1(R), \nu_2(R)$ of a strip canonical region $R$ are upper topological limits
$$\nu_1(R)= \overline \lim_{y\to 0}\Psi(\bbR \times \{y\}),$$

$$\nu_2(R)= \overline \lim_{y\to 1}\Psi(\bbR \times \{y\}).$$
\end{definition}
Clearly, $\partial R$ is a union of two side boundaries of $R$. Each one of the side boundaries includes $\alpha(R)$ and $\omega(R)$.

\begin{lemma}[Side boundaries of strip canonical regions]
\label{lem-sides}
For a vector field $v\in Vect^*\,S^2$, side boundaries $\nu_1(R)$, $\nu_2(R)$ of a strip canonical region $R$ of $v$ are chains of separatrices that join $\alpha(R)$ to $\omega(R)$.
\end{lemma}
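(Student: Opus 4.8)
The plan is to establish four properties of $\nu_1(R)$, in order: it is a closed, connected, $v$-invariant subset of $S(v)$ containing $\alpha(R)$ and $\omega(R)$; it contains no limit cycle other than possibly $\alpha(R)$ or $\omega(R)$; it is a finite union of singular points and separatrices together with $\alpha(R),\omega(R)$; and finally these orbits are arranged into a single chain. The same argument then applies verbatim to $\nu_2(R)$.

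First I would record the soft properties. Fix the conjugation $\Psi\colon\bbR\times(0,1)\to R$ and set $L_y=\Psi(\bbR\times\{y\})$; this is an orbit of $v$ with $\alpha(L_y)=\alpha(R)$, $\omega(L_y)=\omega(R)$, and by definition $\nu_1(R)=\overline\lim_{y\to 0}L_y$. An upper topological limit is always closed; it is $v$-invariant because each $L_y$ is and the flow depends continuously on initial conditions; it is connected because the $L_y$ are connected, live in the compact sphere, and all accumulate on $\alpha(R)\neq\emptyset$, so the lower limit is nonempty. Clearly $\alpha(R),\omega(R)\subset\nu_1(R)$. Since $\Psi$ is a homeomorphism onto $R$, no sequence $x_k\in L_{y_k}$ with $y_k\to 0$ can converge inside $R$, hence $\nu_1(R)\subset\partial R$; and $\partial R\subset S(v)$ because $R$ is a connected component of the open set $S^2\setminus S(v)$ and $S(v)=\Sing v\cup\Per v\cup\Sep v$ is closed (the closure of a separatrix only adds singular points, cycles, or monodromic polycycles, by Poincar\'e--Bendixson, and a monodromic polycycle is a union of separatrix connections).

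Next I would rule out ``interior'' limit cycles. If $c$ is a limit cycle of $v$ with $c\neq\alpha(R)$ and $c\neq\omega(R)$, then, using the topological structure near a limit cycle (a spiral sink, a spiral source, or a semistable cycle), there is a neighborhood $N$ of $c$ such that every orbit meeting $N$ has $c$ as its $\alpha$- or its $\omega$-limit set. Since $\alpha(L_y)=\alpha(R)\neq c$ and $\omega(L_y)=\omega(R)\neq c$, no $L_y$ meets $N$, so $\nu_1(R)\cap N=\emptyset$ and $c\cap\nu_1(R)=\emptyset$. Hence $\nu_1(R)\setminus(\alpha(R)\cup\omega(R))$ avoids $\Per v$ and lies in $\Sing v\cup\Sep v$; as $v\in Vect^*\,S^2$ has finitely many singular points and, by the finite sectorial decomposition (Theorem~\ref{thm:char-pt}), finitely many separatrices, $\nu_1(R)$ is the union of $\alpha(R)$, $\omega(R)$, and finitely many orbits, each of which is a singular point or a separatrix.

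Finally I would assemble the chain, and this is the main obstacle: showing that these finitely many orbits form one chain traversed exactly once — no branching at singular points, no repeated passage — rather than a more complicated invariant continuum. The strip structure of $R$ is what forces this. The device is to look at the sub-strip $R_{y_0}=\Psi(\bbR\times(0,y_0))$ for small $y_0$: its closure is a disc whose boundary circle consists of the arc $L_{y_0}$, the sets $\alpha(R)$ and $\omega(R)$ (replaced by their transversal loops when they are cycles or polycycles), and a piece of $\nu_1(R)$ that the orbits $L_y$, $0<y<y_0$, sweep out monotonically. Concretely, for $p\in\nu_1(R)\setminus(\alpha(R)\cup\omega(R))$ lying on a separatrix $\gamma$, a short transversal $\sigma$ to $\gamma$ at $p$ is crossed by every $L_y$ with $y$ small, and exactly once near $p$, by the classical monotonicity of consecutive intersections of an orbit with a transversal together with non-recurrence of orbits in $R$; this produces a monotone, continuous ``arrival parameter'' on $\{y\ \text{small}\}$ witnessing that $\gamma$ is visited ``in order.'' Following $\gamma$ until it runs into a singular point, then continuing through the unique separatrix of $\nu_1(R)$ leaving that point on the appropriate side, builds the chain; finiteness from the previous step guarantees termination, and connectedness forces the two ends to be exactly $\alpha(R)$ and $\omega(R)$. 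The degenerate cases where $\alpha(R)$ or $\omega(R)$ is a cycle or a monodromic polycycle are treated through their transversal loops exactly as in \cite{DumLlibArt}, Sec.~1.9.
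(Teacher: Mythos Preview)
Your first three steps (closed/connected/invariant, $\nu_1(R)\subset S(v)$, no extra limit cycles, hence $\nu_1(R)\setminus(\alpha(R)\cup\omega(R))\subset\Sing v\cup\Sep v$ with finitely many orbits) match the paper's argument essentially verbatim and are fine.

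The gap is in the chain assembly. You assert that the closure of the sub-strip $R_{y_0}$ is a disc, and that at each singular point one can continue ``through the unique separatrix of $\nu_1(R)$ leaving that point on the appropriate side.'' Neither claim is justified, and the first is in general false: as the paper notes, a singular point may occur several times in the chain, so the closure of $R_{y_0}$ is at best a pinched disc, and in any case its being a disc is essentially what you are trying to prove. More importantly, the uniqueness of the next separatrix is exactly the content of the step you skip. The paper's device is local rather than global: for any separatrix $\gamma\subset\nu_1(R)$, a flow-box argument gives a one-sided semi-neighborhood $U_\gamma\subset R$ (this is your transversal-monotonicity observation, made precise). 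Then at the endpoint $P=\omega(\gamma)$ one invokes the finite sectorial decomposition (Theorem~\ref{thm:char-pt}): the germ of $U_\gamma$ at $P$ lies in exactly one sector. If that sector is parabolic or elliptic, nearby orbits in $R$ are attracted to $P$, so $\omega(R)=P$ and $\gamma$ is the last separatrix of the chain. If it is hyperbolic, the \emph{other} boundary separatrix of that sector is the uniquely determined $\gamma_{i+1}$. This sectorial trichotomy is what makes ``unique separatrix on the appropriate side'' true, and your sketch does not supply it. The same local picture also gives the exhaustion step (the semi-neighborhoods $U_{\gamma_i}$ together with the hyperbolic sectors at the $P_i$ saturate $\Psi(\bbR\times(0,\eps))$, so the chain is all of $\nu_1(R)$), which your ``arrival parameter'' idea gestures at but does not carry out. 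You also omit the degenerate case $\nu_1(R)\setminus(\alpha(R)\cup\omega(R))=\varnothing$, where connectedness forces $\alpha(R)=\omega(R)$ to be a single singular point.
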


We expect that this lemma is known to experts, but we did not find it in the literature.

\begin{remark}
One can prove that the homeomorphism $\Psi$ can be so chosen  that it extends continuously to $\psi_1\colon \bbR\times \{0\}\to S^2$  and $\psi_2\colon \bbR \times \{1\}\to S^2$. The images of $\psi_{1,2}$ contain $\nu_{1,2}(R)\setminus (\alpha(R)\cup \omega(R))$ respectively and are contained in $\nu_{1,2}(R)$. Note that $\psi_1, \psi_2$ may glue subsegments of their domains  in various ways, see Fig. \ref{fig-canonreg-ex}.

\end{remark}

We will not prove this statement, because we are going to use it in some heuristic arguments only. 

\begin{proof}[Proof of Lemma \ref{lem-sides}]
We prove the lemma for $\nu_1(R)$.
Note that $\nu_1(R)$ is a closed and $v$-invariant set; also, $\nu_1(R)\subset \partial R \subset S(v) = \Sing v \cup \Per v \cup \Sep v$.

Note that $\nu_1(R)\setminus \alpha(R) \setminus \omega(R)$ may not contain limit cycles. Indeed, since $\alpha$-, $\omega$-limit sets of all points of $R$ are $\alpha(R)$, $\omega(R)$, the set $\nu_1(R)$ is detached from basins of attraction and repulsion of all other $\alpha$-, $\omega$-limit sets of $v$, and may not contain limit cycles other than $\alpha(R)$ or $\omega(R)$.
Therefore  $\nu_1(R)\setminus \alpha(R) \setminus \omega(R) \subset \Sing v \cup \Sep v$.

Since $\nu_1(R)$ is connected as a limit of connected sets, $\nu_1(R)\setminus \alpha(R) \setminus \omega(R)$ may not contain isolated singular points; it is either empty or contains a separatrix.

If $\nu_1(R)\setminus \alpha(R) \setminus \omega(R)$ is empty, the argument that $\nu_1(R)$ is connected implies that $\alpha(R)$ and $\omega(R)$ intersect. This is only possible if  $\alpha(R)=\omega(R)$ is a singular point, and the statement is proved (this may happen when $R$ is an elliptic sector of a complex singular point).

Suppose that $\nu_1(R)\setminus \alpha(R) \setminus \omega(R)$ contains a separatrix $\gamma$ (this is the last case to consider). Since $\nu_1(R)$ is a limit of trajectories $\Psi(\bbR \times \{y\})$ and $\Psi$ is injective, a local analysis in each flow-box surrounding $\gamma$ shows that there exists a semi-neighborhood $U_{\gamma}$ of $\gamma$ that belongs to $R$.

Note that $\omega(\gamma)\subset \nu_1(R)$ because $\nu_1(R)$ is closed. There are the following possibilities for $\omega(\gamma)$:
\begin{itemize}
 \item $\omega(\gamma)$ is a cycle or a polycycle. Then all the points in a neighborhood of  $\gamma$ are also attracted to this set, including some points of $R$; thus $\omega(R) = \omega(\gamma)$. So $\gamma$ will be the last separatrix in the chain.
 \item $\omega(\gamma)$ is a singular point $P$, and the semi-neighborhood $U_{\gamma}$ contains a piece of parabolic or elliptic sector near $(\gamma, P)$. Similarly, $\omega(R) = \omega(\gamma)$, and $\gamma$ will be the last separatrix in the chain.
 \item $\omega(\gamma)$ is a singular point $P$, and the semi-neighborhood $U_{\gamma}$  contains a piece of a hyperbolic sector near $(\gamma, P)$; so $\gamma$ is a separatrix of $P$. Then $\gamma$ will be a separatrix $\gamma_i$ in the middle of the chain,  $P_{i+1}=P$, and $\gamma_{i+1}$ is another border of the same hyperbolic sector. Now we may repeat our arguments for $\gamma_{i+1}$ and find $P_{i+2}, \gamma_{i+2}$, etc.
\end{itemize}
The same arguments apply to $\alpha(\gamma)$ and allow us to enumerate separatrices of $\nu_1(R)$ as required. Possibly we will have only one separatrix $\gamma_0=\gamma_n$ and no singular points $P_i$.   This may happen, for instance, when $\partial R$ is a union of a singular point and its homoclinic curve, a separatrix, and $R$ is an elliptic sector (see Fig. \ref{fig-canonreg-ex} middle). Note also that one and the same singular point may appear several times in the list $\{P_i\}$, see Fig. \ref{fig-canonreg-ex} right.

\begin{figure}[h]
\begin{center}
\includegraphics[width=0.2\textwidth]{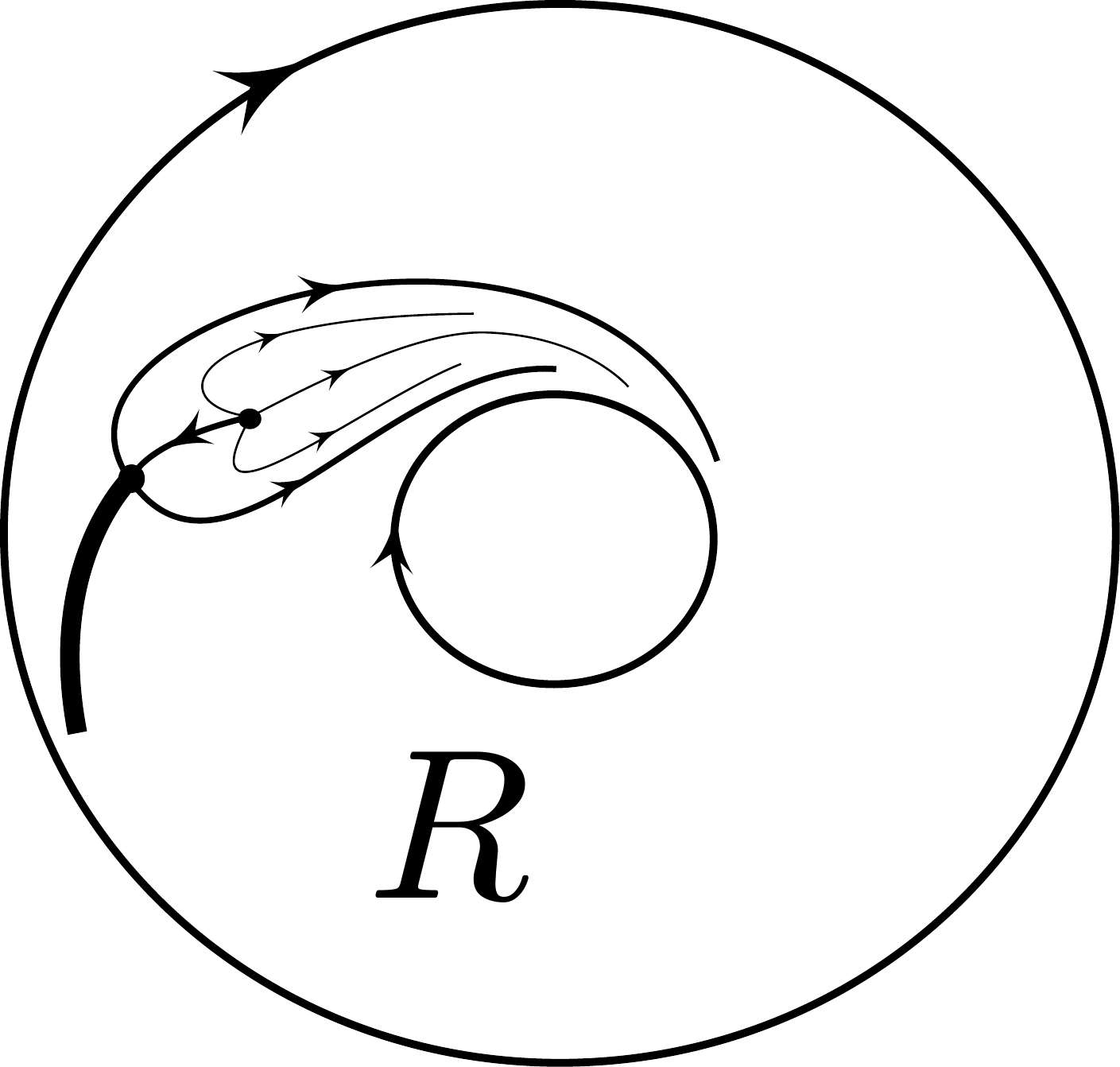}
\hfil
\includegraphics[width=0.7\textwidth]{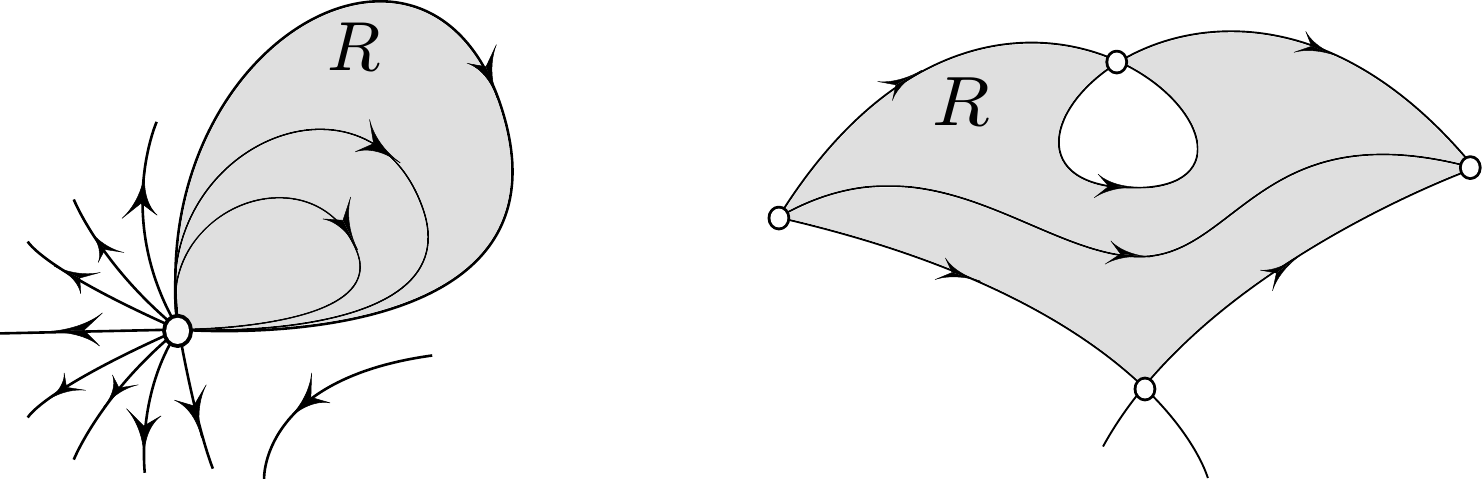}
\end{center}
 \caption{Some possible shapes of canonical regions}\label{fig-canonreg-ex}
\end{figure}

It is easy to see that the union of  semi-neighborhoods of $\gamma_i$ and hyperbolic sectors at $P_i$ is saturated by trajectories of $v$, so it exhausts all $\Psi(\bbR\times(0,\eps) )$ for small $\eps$; hence $\nu_1(R)$ coincides with the chain $\alpha(R)\cup \omega(R)\cup \{\gamma_i\} \cup \{P_i\}$.
\end{proof}

\begin{lemma}\label{lem:spiral}
The boundary of a spiral canonical region is the union of its $\alpha$- and $\omega$-limit sets:
$$
  \partial R = \alpha(R) \cup \omega(R).
$$
\end{lemma}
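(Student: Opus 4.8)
The plan is to prove the two inclusions of $\partial R=\overline R\setminus R$ separately. The inclusion $\alpha(R)\cup\omega(R)\subseteq\partial R$ is immediate: $\alpha(R)$ and $\omega(R)$ are accumulation sets of orbits lying in $R$, hence they lie in $\overline R$; by Poincar\'e--Bendixson each of them is a singular point, a cycle, or a monodromic polycycle, hence contained in $S(v)=\Sing v\cup\Per v\cup\Sep v$; since $R$ is a connected component of $S^2\setminus S(v)$ it misses $S(v)$, so $\alpha(R),\omega(R)\subseteq\overline R\setminus R=\partial R$. The real content is the reverse inclusion, and the plan is to reduce it to a statement about how the orbit structure of $R$ sits inside its closure.

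For the reverse inclusion I would use the spiral structure. After the substitution $t=\log r$ the model of Proposition~\ref{prop-parall} becomes the translation flow $\dot t=1,\ \dot\theta=0$ on $\bbR\times S^1$, so there is a homeomorphism $\Psi\colon\bbR\times S^1\to R$ carrying that flow to $v|_R$ and preserving the direction of time. Set $C_s:=\Psi(\{s\}\times S^1)$: a simple closed curve in $R$, topologically transversal to $v$; the curves $C_s$ are pairwise disjoint, each orbit of $v|_R$ has the form $O_\theta=\Psi(\bbR\times\{\theta\})$ and meets $C_s$ in the single point $\Psi(s,\theta)$, with $s$ increasing along $O_\theta$ in positive time, and by Proposition~\ref{prop-canonreg-common-limset} one has $\alpha_v(O_\theta)=\alpha(R)$, $\omega_v(O_\theta)=\omega(R)$ for all $\theta$. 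Put
$$
N^+:=\bigcap_{T\ge 0}\overline{\Psi([T,\infty)\times S^1)},\qquad
N^-:=\bigcap_{T\ge 0}\overline{\Psi((-\infty,-T]\times S^1)}.
$$
Given $x\in\partial R$, write $x=\lim_n\Psi(s_n,\theta_n)$; since $\Psi$ is a homeomorphism onto $R$, $S^1$ is compact, and $x\notin R$, the sequence $(s_n)$ has no bounded subsequence, so after passing to a subsequence $s_n\to+\infty$ or $s_n\to-\infty$, i.e. $x\in N^+$ or $x\in N^-$; hence $\partial R\subseteq N^+\cup N^-$. Conversely, fixing $y=\Psi(0,\theta_0)\in R$, every point of $\omega(R)=\omega_v(y)$ is a limit of points $\Psi(s,\theta_0)$ with $s\to+\infty$, so lies in $N^+$; thus $\omega(R)\subseteq N^+$ and symmetrically $\alpha(R)\subseteq N^-$. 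Therefore the lemma reduces to the inclusions $N^+\subseteq\omega(R)$ and $N^-\subseteq\alpha(R)$: granting them, $\partial R\subseteq N^+\cup N^-=\omega(R)\cup\alpha(R)\subseteq\partial R$.

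To prove $N^+\subseteq\omega(R)$ I would show that the loops $C_s$ are eventually trapped near $\omega(R)$. By Poincar\'e--Bendixson $\omega(R)$ is a singular point, a cycle, or a monodromic polycycle; in each case it attracts the side facing $R$ and admits a fundamental system $\{W_n\}$ of neighbourhoods of that side with $\bigcap_n W_n=\omega(R)$, each $W_n$ being \emph{trapping}, meaning that an orbit of $R$ that enters $W_n$ stays there. (For a singular point this uses that $\omega(R)$, being the $\omega$-limit of the annular region $R$, has no hyperbolic or elliptic sectors and is a topological sink by Theorem~\ref{thm:char-pt}; for a cycle or a monodromic polycycle one takes the regions cut off by transversal loops on the monodromic side.) Since each $O_\theta$ converges to $\omega(R)$ it enters $W_n$ at a finite parameter $f_n(\theta)$ and stays afterwards; using continuity of $\Psi$, compactness of $S^1$ and monotonicity of the first-return maps near $\omega(R)$ one checks $\sup_\theta f_n(\theta)<\infty$. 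Then $C_s\subseteq W_n$ for all $s\ge\sup_\theta f_n(\theta)$, so $N^+\subseteq\overline{W_n}$, and letting $n\to\infty$ gives $N^+\subseteq\bigcap_n\overline{W_n}=\omega(R)$; the inclusion $N^-\subseteq\alpha(R)$ follows by reversing time.

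The main obstacle is exactly the uniform bound $\sup_\theta f_n(\theta)<\infty$ — equivalently, that the transversal loops $C_s$ shrink onto $\omega(R)$ as $s\to+\infty$ instead of, a priori, accumulating on a larger part of $\partial R$. It is handled by the local analysis near $\omega(R)$ (existence of transversal loops on the side facing $R$, monotonicity of the holonomy there) combined with compactness of $S^1$. A related point worth recording is that $\partial R$ cannot contain a separatrix arc joining a point of $N^+$ to a point of $N^-$ (a ``cross-cut''): deleting such an arc from $R$ would make the annulus $R$ simply connected, contradicting that a spiral canonical region is a topological annulus, and this is ultimately what forces $\partial R$ to split into the two limit sets $\alpha(R)$ and $\omega(R)$.
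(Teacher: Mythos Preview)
Your proof is correct. The paper gives no argument at all---it simply says ``The proof is obvious''---so there is nothing to compare against at the level of approach.

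That said, your argument is considerably more elaborate than what is needed, and you flag the uniform bound $\sup_\theta f_n(\theta)<\infty$ as the ``main obstacle'' when in fact it dissolves once you use the transversal loops directly. Since $\omega(R)$ is a sink, cycle, or monodromic polycycle attracting on the side facing $R$, it admits a transversal loop $\ell^+$ with the annulus between $\ell^+$ and $\omega(R)$ lying entirely in $R$; likewise a loop $\ell^-$ around $\alpha(R)$. The Poincar\'e map along $v$ from your compact transversal $C_0$ to $\ell^+$ is everywhere defined (each orbit of $R$ crosses $\ell^+$) and continuous, hence the region of $R$ between $\ell^-$ and $\ell^+$ is compact and contained in $R$. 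Thus $\overline R$ is this compact piece together with the two closed collar annuli, whose only boundary points outside $R$ are $\alpha(R)$ and $\omega(R)$. This is presumably the one-line picture the authors had in mind; your $N^\pm$ machinery recovers the same conclusion but through a longer route.
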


The proof is obvious.

The following proposition provides a key tool for the proof of the  Boundary lemma.

\begin{proposition} \label{prop-Z-inters-R}
For a vector field $v\in Vect^*\,S^2$, let $Z\subset S^2$ be a closed, $v$-invariant set with Sep-property.

1) Let $C= \{\gamma_i\}_{i=1}^{n-1}\cup \{P_i\}_{i=1}^{n}$ be a union of singular points and separatrix connections of $v$: $\gamma_i$ is  a separatrix connection between $P_i$ and $P_{i+1}$. Then $Z$ either contains $C$, or does not intersect it.

2) For each $\alpha$- or $\omega$-limit set $c$ of $v$, the set $Z \cap c$ is either empty, or coincides with $c$.
\end{proposition}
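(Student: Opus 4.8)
The plan is to base everything on one observation about a single separatrix connection and then propagate it along the chain or polycycle. \emph{The key observation} is this: if $\gamma$ is a separatrix connection of $v$ and at least one of its two limit points $\alpha(\gamma),\omega(\gamma)$ (singular points, possibly equal) lies in $Z$, then $\gamma\subset Z$, and hence — $Z$ being closed — $\alpha(\gamma),\omega(\gamma)$ both lie in $Z$. Indeed, if $\gamma\not\subset Z$, then $\gamma$ is simultaneously a stable and an unstable separatrix not contained in $Z$, so the Sep-property, applied in both of its directions, forces both $\alpha(\gamma)$ and $\omega(\gamma)$ to be detached from $Z$; as $Z$ is closed, a point detached from $Z$ simply fails to belong to $Z$, contradicting the hypothesis. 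This is the only step using the Sep-property, and it is exactly here that one needs it for stable \emph{and} unstable separatrices.

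\emph{Part 1.} Assume $Z\cap C\ne\emptyset$; I want $C\subset Z$. Writing $\overline{\gamma_i}=\{P_i\}\cup\gamma_i\cup\{P_{i+1}\}$ gives $C=\bigcup_{i=1}^{n-1}\overline{\gamma_i}$ (the case $n=1$, $C=\{P_1\}$, being trivial), so $Z$ meets some $\overline{\gamma_i}$. If $Z$ meets $\gamma_i$ at an interior point, then $v$-invariance puts the whole orbit $\gamma_i$ into $Z$ and closedness puts $P_i,P_{i+1}$ into $Z$; otherwise $Z$ already contains an endpoint. Either way $Z$ contains some vertex $P_j$. Now I propagate: whenever a vertex is known to lie in $Z$, the key observation applied to the connections of $C$ incident to it shows each of them lies in $Z$ together with its other endpoint. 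As $C$ is finite, finitely many such steps exhaust all vertices and all edges, so $C\subset Z$.

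\emph{Part 2.} By the Poincar\'e--Bendixson theorem $c$ is a singular point, a cycle, or a monodromic polycycle. A singular point needs no argument; for a cycle, if $Z$ meets it then $v$-invariance immediately yields the whole closed orbit. For a monodromic polycycle I use that it is a union of singular points joined by separatrix connections arranged in a closed loop (monodromic polycycles are built from separatrix connections); then the argument of Part 1 applies verbatim — first force one vertex into $Z$ (absorbing a connecting orbit if $Z$ meets one of them in its interior), then run the propagation around the loop to get $c\subset Z$.

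I expect no real obstacle; the content is the key observation and the rest is bookkeeping. The two points needing care are that a separatrix connection is \emph{both} a stable and an unstable separatrix, so the Sep-property constrains both its limit sets at once, and that for a closed set $Z$ one may read ``detached from $Z$'' as ``not in $Z$'' for a single point, while $v$-invariance together with closedness lets one pass from one point of an orbit lying in $Z$ to the closure of that orbit.
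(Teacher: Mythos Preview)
Your proof is correct and follows essentially the same approach as the paper: isolate the key observation that a separatrix connection not in $Z$ must have both endpoints detached from $Z$ (using the Sep-property in both directions), then propagate along the chain by induction; Part 2 reduces to Part 1 for polycycles and to $v$-invariance for points and cycles. If anything, your write-up is slightly more careful in first passing from ``$Z$ meets $C$'' to ``$Z$ contains some vertex $P_j$'' via invariance and closedness, a step the paper leaves implicit.
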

In particular, 1) applies to any chain of separatrices (see Definition \ref{def-chain}) if we remove $\alpha(\gamma_0), \gamma_0, \gamma_n$, and $\omega(\gamma_n)$ from the chain.
\begin{proof}
1) Suppose that $Z$ contains $P_i\in C$.

A separatrix connection is both a stable and an unstable separatrix; due to Definition \ref{def-Sep-pr} of Sep-property, if $\gamma_i$ does not belong to $Z$, then both its $\alpha$- and $\omega$-limit sets $P_i, P_{i+1}$ are detached from $Z$. So for $i>1$, $P_i\in Z$ implies $\gamma_{i-1}\subset Z$, and due to closedness, $P_{i-1}\in Z$.  Similarly, for $i \not = n$, $P_i\in Z$ implies $\gamma_{i}\subset Z$ and  $P_{i+1}\in Z$. The induction in $i$ proves the statement.

2) If $c$ is a singular point or a cycle, this clearly follows from $v$-invariance of $Z$. If $c$ is a monodromic polycycle, then the statement follows from 1).
\end{proof}

\begin{figure}
\begin{center}
 \includegraphics[width=\textwidth]{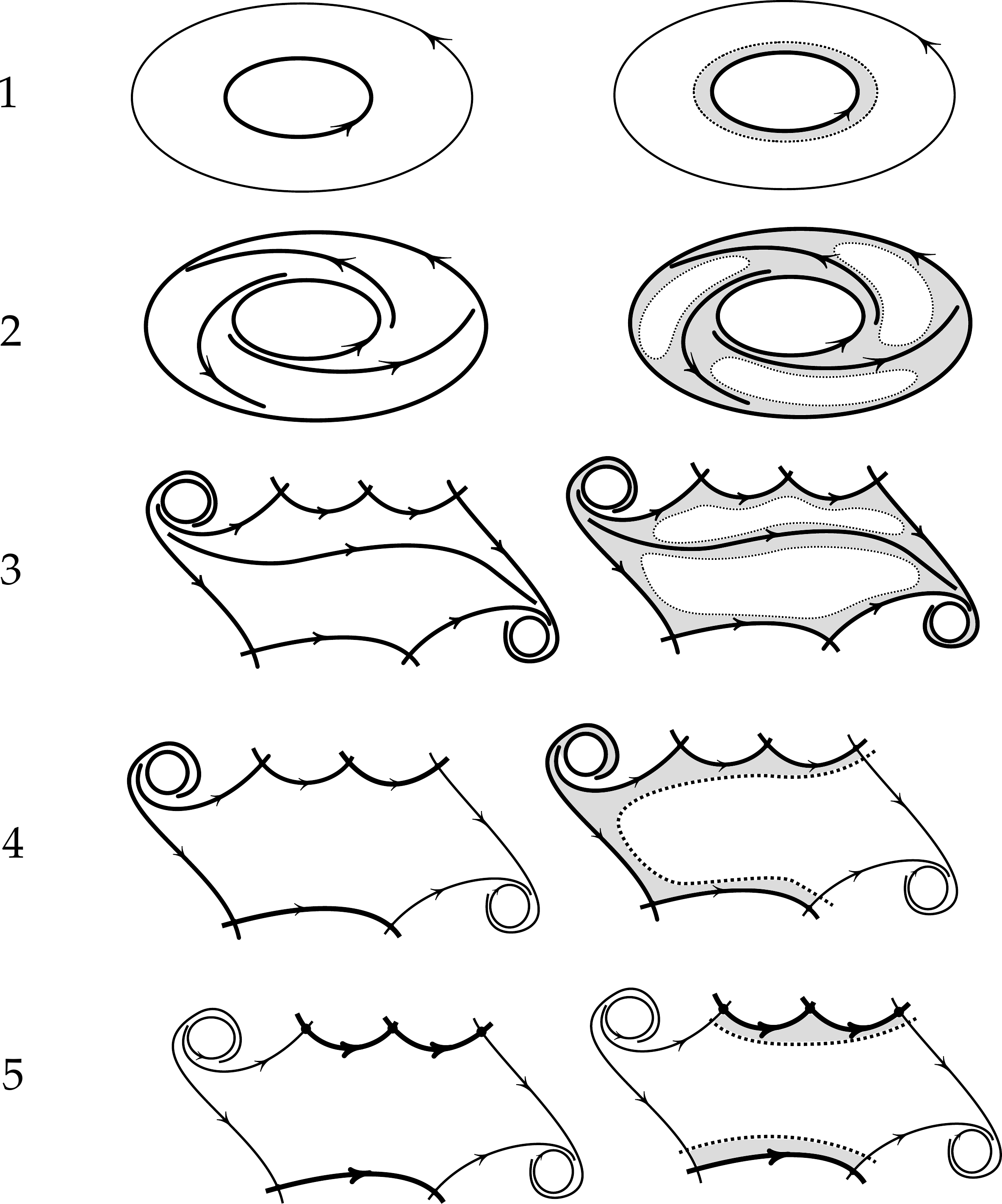}
\end{center}
 \caption{Intersections $Z\cap \overline R$ (left; shown in thick) and $\Omega\cap \overline R$ (right; $\partial \Omega$ is dotted) in all 5 possible cases}\label{fig:5cases}
\end{figure}

\subsection{Plan of the proof of the Boundary lemma}
In order to construct the required neighborhood $\Omega\supset Z$, we describe its intersection with each $\overline R$, where $R$ is a canonical region of $v$:
$$
\Omega_R = \Omega \cap \overline R.
$$
Note that the number of canonical regions for $v\in Vect^*\,S^2$ is finite, because $v\in Vect^*\,S^2$ has a finite number of limit cycles, separatrices and singular points.
So we set

\begin{equation}\label{eqn:ome}
\Omega = \cup \Omega_R.
\end{equation}

If $Z\cap \overline R$ is empty, then $\Omega \cap \overline R$ will be empty; we do not discuss this case any more.

Depending on the type of $R$ and the type of intersection $Z \cap \overline R$, we have the following five cases for $R$. If $R$ is a spiral canonical region, either \textbf{1)} $Z\cap R = \varnothing$ or \textbf{2)} $Z\cap R \neq \varnothing$.  If $R$ is a strip canonical region, either \textbf{3)} $Z\cap \overline R$ contains both $\alpha(R)$ and $\omega(R)$, or \textbf{4)} one of them, or \textbf{5)} none of them (there are no other cases due to Proposition \ref{prop-Z-inters-R} part 2).

The first three cases give rise to Type $1$ and Type $3$ boundary components of $\Omega$ that belong to $R$ entirely. These components are constructed  in Section \ref{sssec-123}.

The last two cases 4), 5) give rise to Type $2$ boundary components of $\Omega$ that belong to the union of several adjacent canonical regions. These components are constructed   in Section \ref{sssec-45}.

Then we define the set $\Omega$ by \eqref{eqn:ome}, and prove that it has the required properties.

\subsection{Construction of $\Omega \cap R$  in  the cases 1), 2), 3)}\label{sub:123}

We have to construct an ``arbitrary small'' neighborhood of $Z$ with certain properties. This means that it must belong to a preassigned neighborhood $\Omega_0$ of $Z$. From now on, this latter neighborhood  is fixed.

\label{sssec-123}
\begin{itemize}
\item 1): $R$ is a spiral canonical region, $ {Z\cap R = \varnothing}$. Due to Proposition \ref{prop-Z-inters-R} part 2, $Z \cap \overline R$ is $\alpha(R)$, $\omega(R)$,  or $\alpha(R) \cup \omega(R)$.
\end{itemize}
Take  $\Omega_R:= \Omega\cap \overline R$ to be a thin strip around $\alpha(R)$ or $\omega(R)$ (or two strips around both) bounded by its smooth transversal loop. This yields one or two Type $3$ boundary components, see row 1 of Figure \ref{fig:5cases}.

Complete semi-trajectories of points of such boundary components under $v|_\Omega$ stay in $\Omega$, because these trajectories wind around $\alpha(R)$ or $\omega(R)$ respectively. Clearly, separatrices of $v|_{S^2\setminus \Omega}$ do not enter $\Omega_R$. The set $\partial \Omega_R \cap R$ consists of one or two topological circles. They are boundary components of Type $3$.

\begin{itemize}
\item 2): $R$ is a spiral canonical region, $ {Z\cap R \neq  \varnothing}$, see row 2 of Figure \ref{fig:5cases}.
\end{itemize}
In the case 2), $Z$ contains a trajectory of $v|_R$. So it contains $\alpha$- and $\omega$-limit sets of this trajectory, i.e. $\alpha(R)$ and $\omega(R)$. Therefore $\overline R\setminus Z$ is a union of at most countably many open strips with parallel flows in them.
For each such strip $S$,
\begin{itemize}
\item[-] If $S \subset \Omega_0$, we  include it completely in $\Omega$.

\item[-] Otherwise, let $D \subset S $ be a large ellipse in the rectifying chart for $v$ in $S$ such that $S\setminus D \subset \Omega_0$, and let $\partial D$ have two quadratic tangency points with the vector field. Let $S\setminus D =: \Omega \cap S$.   

This yields a finite number of Type $1$ boundary components.

Complete semi-trajectories of points of such boundary component under $v|_\Omega$ stay in $\Omega$ because they stay in $S$.
\end{itemize}
Clearly, separatrices of $v|_{S^2\setminus \Omega}$ do not enter $\Omega_R$. Again, the set $\partial \Omega_R \cap R$ consists of a finite number of topological circles. They are boundary components of Type $1$.
%

\begin{itemize}
\item 3): $R$ is a strip canonical region, $Z \cap \overline R$ contains $\alpha(R)$ and $\omega(R)$, see row 3 of Figure \ref{fig:5cases}.
\end{itemize}
Due to Sep-property, $Z$ also contains the first and the last separatrices of $\nu_1(R)$, $\nu_2(R)$; due to closedness, $Z$ contains endpoints of these separatrices.

Note that $\nu_i(R)$ without the first and the last separatrix and without $\alpha(R), \omega(R)$ is a chain that satisfies Proposition \ref{prop-Z-inters-R} part 1. So $Z$ contains the whole $\nu_1(R)$ and $\nu_2(R)$. It can also contain several trajectories of $v|_{R}$. So  this case is analogous to case 2) and yields a finite number of Type $1$ boundary components.

\subsection{Construction of $\Omega\cap R$ in the cases 4), 5).}
\label{sssec-45}
First, on the whole sphere, we choose marked points on all separatrices of $v$ that "leave" a neighborhood of $Z$. In more detail, suppose that for a singular point $P\in Z$, its separatrix $\gamma$ does not belong to $Z$. Then some arc of $\gamma$ starting at $P$ belongs to $\Omega_0$.  Fix one point on this arc; this point will be called marked. We will use marked points later in the construction; namely, $\partial \Omega$ will intersect $\gamma$ at the marked point.

In the cases 4) and 5), $Z\cap R$ is empty; otherwise $Z$ would contain both $\alpha$ and $\omega$-limit set of a trajectory of $v|_{R}$, thus satisfy assumptions of case 3) above.

\begin{itemize}
\item 4): $Z\cap R =\varnothing$,  $\alpha(R)\subset Z$, and $\omega(R)$ does not intersect $Z$ (or vice versa: $\alpha$ and $\omega$ are exchanged), see row 4 of Figure \ref{fig:5cases}.
\end{itemize}
Due to Sep-property, $Z$ contains the first separatrix of $\nu_1(R)$, $\nu_2(R)$; due to closedness, $Z$ contains endpoints of these separatrices. Now, due to Proposition \ref{prop-Z-inters-R} part 1, $Z$ contains all $\nu_1(R), \nu_2(R)$ except their last separatrices and $\omega(R)$. $Z$ cannot contain last separatrices of $\nu_1(R), \nu_2(R)$, because it does not contain their $\omega$-limit set $\omega(R)$.

Finally, $Z\cap \overline R$ is the union of $\alpha(R)$ and $\nu_{1,2}(R)$ except for their last separatrices and  $\omega(R)$.  Note that last separatrices of $\nu_{1,2}(R)$ have marked points on them.

Take $\Omega_R \subset \Omega_0$ to be a  neighborhood of $Z\cap \overline R$ in $\overline R$ bounded by a smooth curve $\varphi(R)\subset \overline R$ that is  transversal to $v$ and connects marked points of last separatrices of $\nu_1(R)$ and $\nu_2(R)$. Take $\varphi(R)$ to be orthogonal to the corresponding separatrices at marked points.
The existence of $\varphi(R)$ follows from the fact that $R$ is parallel. The endpoints of $\varphi(R)$ may coincide, then it is  a topological circle (see Fig. \ref{fig:BL-4}); otherwise $\varphi(R)$ is a topological segment (see Fig. \ref{fig:5cases}, row 4).
After we put $\Omega=\cup \Omega_R$ at the end of the proof, we will have that in the first case, $\varphi (R)$ is a transversal Type $2$ boundary component, and in the second case, it is a part of Type $2$ boundary component, namely a  transversal subarc in $\partial \Omega $ crossed by separatrices of $v$ at its endpoints.

\begin{figure}
\begin{center}
\includegraphics[width=0.3\textwidth]{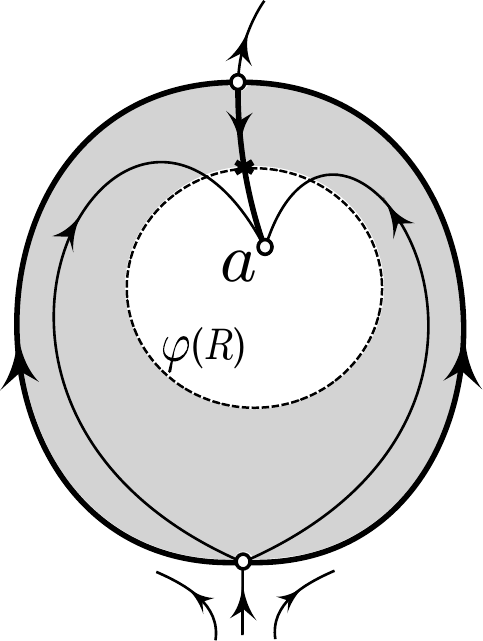}
\end{center}
\caption{Case 4) in the proof of  Boundary lemma; $\varphi(R)$  is a topological circle. The domain $\Omega_R$ is shadowed}\label{fig:BL-4}
\end{figure}

\begin{itemize}
\item 5): $Z\cap R =\varnothing$,  both $\alpha(R)$ and $\omega(R)$ do not intersect $Z$, see row 5 of Figure \ref{fig:5cases}.
\end{itemize}
If $\nu_1(R)$ intersects $Z$, then $Z$ contains the whole $\nu_1(R) $ except for its first and last separatrices, and  $\alpha(R), \omega(R)$, due to Proposition \ref{prop-Z-inters-R} part 1. Note that both the first and the last separatrix of $\nu_1(R)$ have marked points on them.

\begin{figure}[h]
 \begin{center}
\centering
\subcaptionbox{\label{fig-BLemma-5-0}}{  \includegraphics[width=0.3\textwidth]{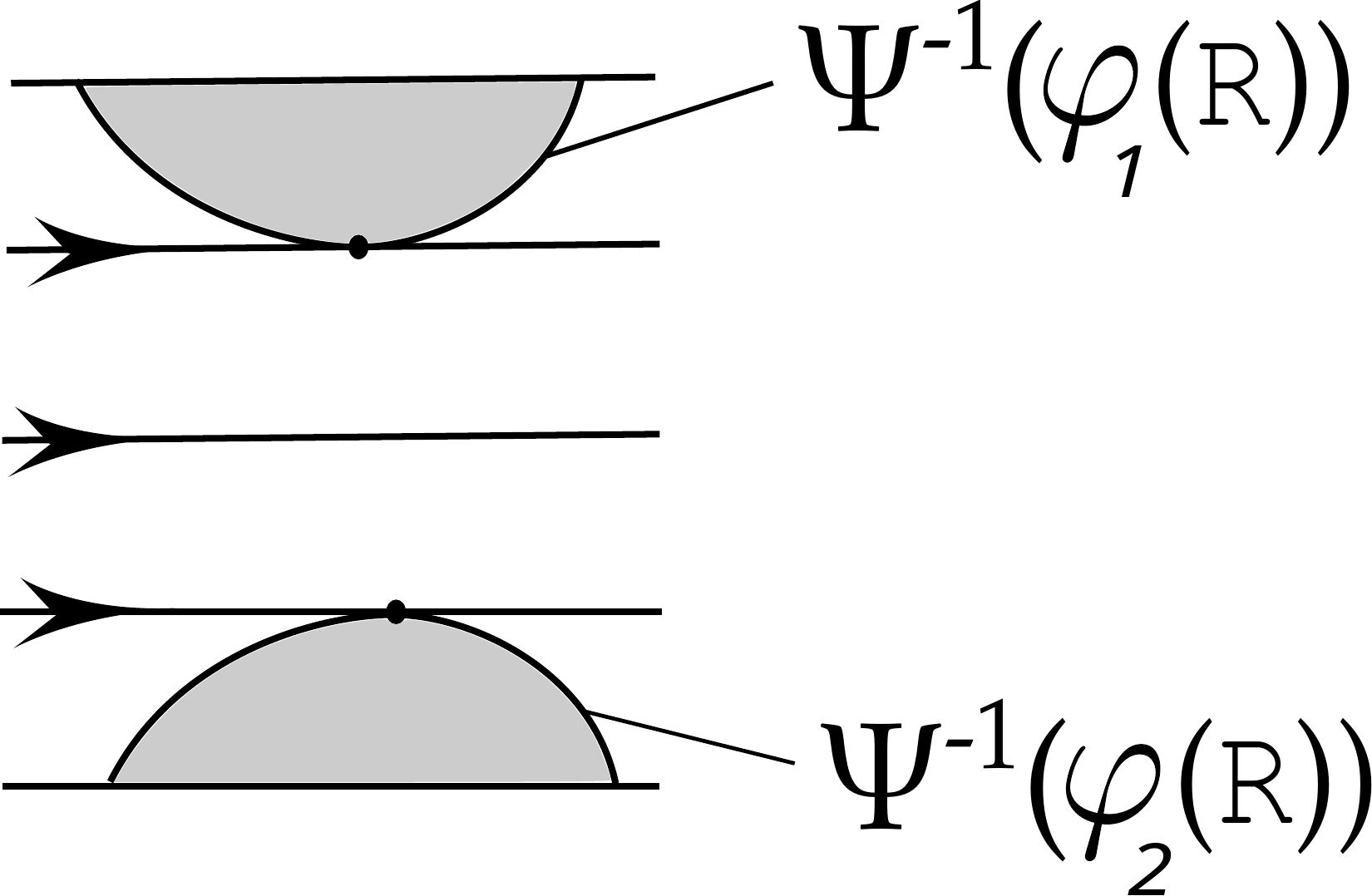} }
 \hfil
\subcaptionbox{\label{fig-BLemma-5-1}}{  \includegraphics[width=0.25\textwidth]{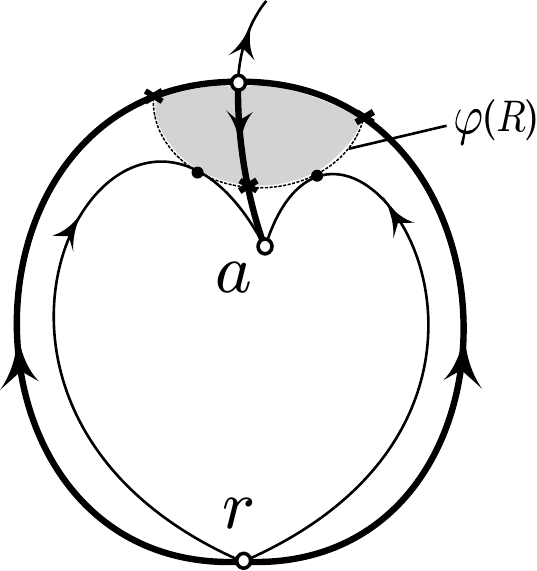} }
 \hfil
\subcaptionbox{\label{fig-BLemma-5-2}}{   \includegraphics[width=0.25\textwidth]{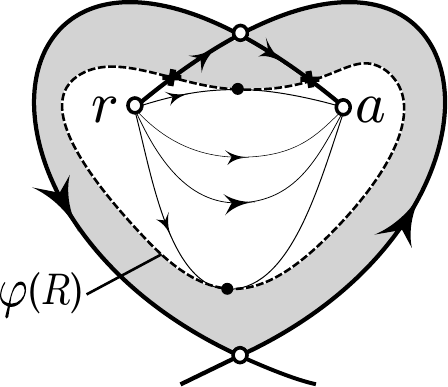}}
\caption{Case 5) in the proof of Boundary lemma. (a) shows $\Psi^{-1}(\varphi(R))$ and the domain $\Psi^{-1} (\Omega)$ (shadowed), (b) and (c) show that $\varphi(R)$ can be a topological segment and a topological circle respectively. The domain $\Omega_R$ is shadowed}\label{fig:BL-5}
\end{center}
\end{figure}

Take a smooth curve $\varphi_1(R) \subset R$ with the following properties: $\varphi_1(R)$ connects the marked points on the first and last separatrices of $\nu_1(R)$,  is close to $Z\cap \nu_1(R)$, is perpendicular to the first and the last separatrix at its endpoints, and has one quadratic tangency point with $v$. It is easy to construct an appropriate curve in the rectifying chart, i.e. in $\bbR\times [0,1]$ (see Fig. \ref{fig-BLemma-5-0}); let $\varphi_1(R)$ be its image under $\Psi$.

If $\nu_1(R)$ intersects $Z$ and $\nu_2(R)$ does not, we put $\varphi(R):=\varphi_1(R)$, and $\Omega_R$ is bounded by $\varphi_1(R)$ and an arc of $\nu_1(R)$.  
If $\nu_2(R)$ also intersects $Z$, we choose the curve $\varphi_2(R)$ in a similar way, and put $\varphi(R):=\varphi_1(R)\cup \varphi_2(R)$. Then $\Omega_R$  is the union of two domains, one between $\varphi_1(R)$ and $\nu_1(R)$ and the other one between $\varphi_2(R)$ and $\nu_2(R)$; see Figure \ref{fig:5cases} row 5.

Note that the two curves $\varphi_1(R)$ and $\varphi_2(R)$ may have one or two common endpoints, see Fig. \ref{fig-BLemma-5-1}, \ref{fig-BLemma-5-2} respectively. So $\varphi(R)$ can be either two smooth curves with one contact point on each, or one simple curve with two contact points, or a closed loop with two contact points.
In any case,  $\varphi(R)$ will be a part of a Type $2$ boundary component; in the third case, it is the whole Type $2$ boundary component with two contact points.

\begin{remark}
\label{rem-trajectory-case5}
 Under assumptions of Boundary lemma, let $R$ be a canonical region satisfying assumptions of case 5) above. Then for any small neighborhood $\Omega$ of $Z$,  $R$ contains a trajectory of $v$ that does not intersect $\overline{\Omega}$ (see Fig. \ref{fig-BLemma-5-0}).
\end{remark}
We will use this remark in the next section.

\subsection{End of the proof of the Boundary lemma }

We have constructed an intersection $\Omega_R$ of the neghbourhood $\Omega $ with the closure of any canonical domain $R$. Now take $\Omega $ to be the union of all $\Omega_R$. This is a neghbourhood of $Z$ that belongs to $\Omega_0$. Let us prove that its boundary components satisfy the Boundary Lemma.

By construction, $\partial \Omega $ is a $C^1$-smooth one-dimensional compact submanifold of the sphere. Hence it is a finite union of topological circles. Consider an arbitrary connected component $\varphi $ of $\partial \Omega $.

If $\varphi $ intersects a canonical region $R$  of case  1), 2), or 3),  then it belongs entirely to $R$ and is of Type $1$ or $3$ as proved in Section \ref{sub:123}.

\begin{figure}[h]
 \begin{center}
  \includegraphics[width=0.4\textwidth]{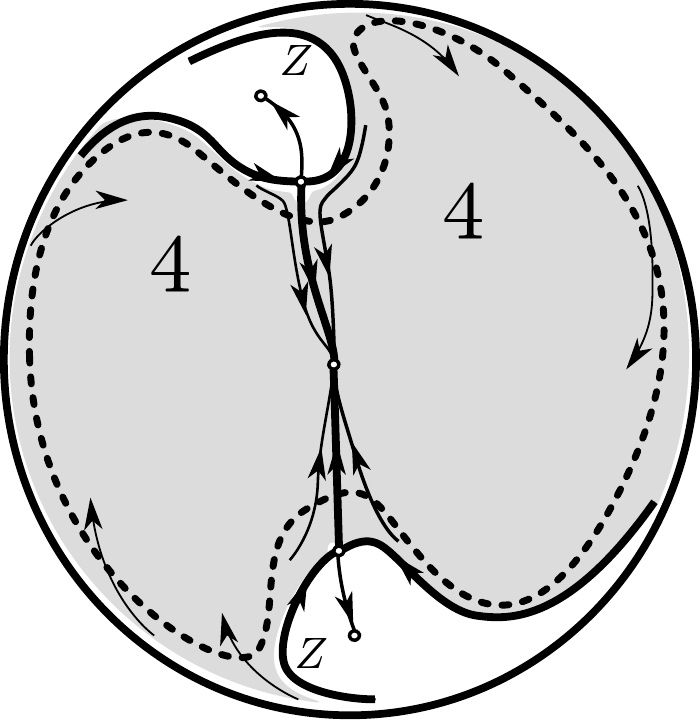}
 \end{center}
\caption{Boundary component of Type $2$ without contact (dashed) in the union of two canonical regions of case 4) (shadowed). Boundaries of canonical regions are shown in thick}\label{fig:CanonReg4}
\end{figure}

Suppose that $\varphi $ intersects canonical regions of case 4) only. Then it has no contacts with $v$, see Fig. \ref{fig:CanonReg4}, i.e. is transversal. Assume that it is outgoing. All future semi-orbits of $v$ that start on $\varphi $ do not intersect $\Omega $ and have the same $\omega$-limit set, which is clear for the orbits located inside each canonical region of case 4). Let $R$ be any canonical region that contains a subarc of  $\varphi $; then $\varphi$ intersects the first or the last separatrices in boundary chains of $R$, so $\varphi$ intersects at least one separatrix of $v|_{\Omega}$.
Hence $\varphi $ is a boundary component of Type $2$ transversal to $v$.

\begin{figure}[h]
 \begin{center}
  \includegraphics[width=0.4\textwidth]{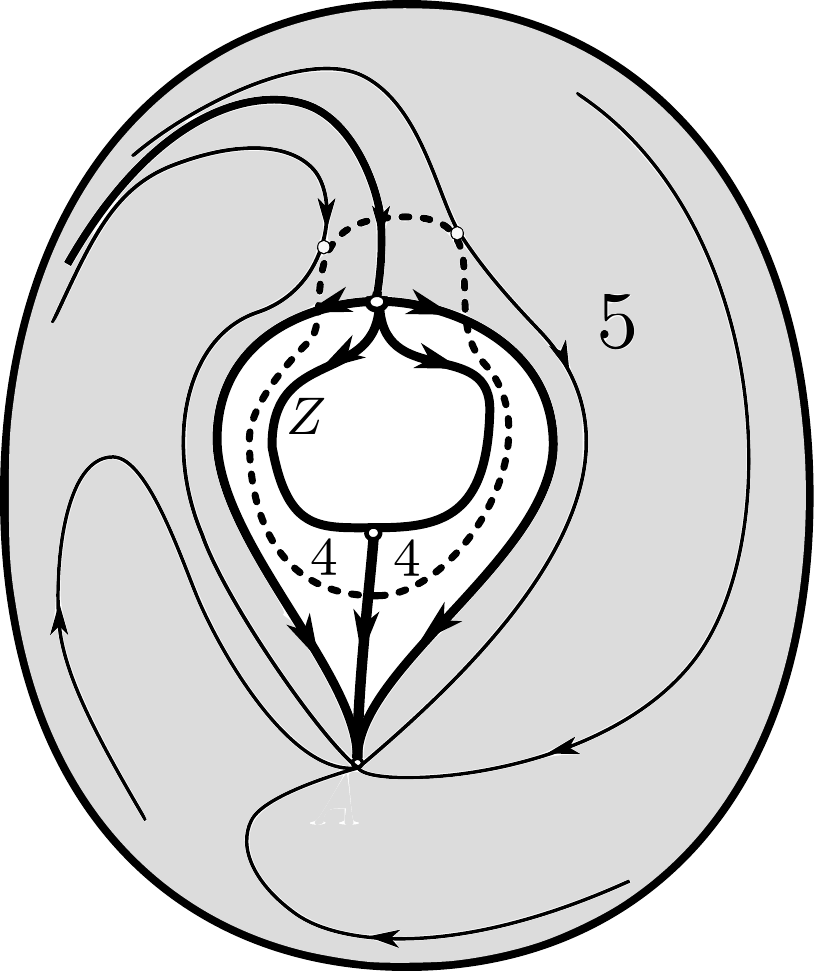}
 \end{center}
\caption{Boundary component of Type $2$  (dashed)  with two contact points in the union of one canonical region of case 5 (shadowed) and two canonical regions of case 4. Boundaries of canonical regions are shown in thick}\label{fig-CR-inters-1}
\end{figure}

Suppose that $\varphi $ intersects at least one canonical region of case 5), see Fig. \ref{fig-CR-inters-1}. Then it has at least one point of outer quadratic tangency with $v$ (thus it has at least two tangency points). Let $\beta $ be a transversal arc of $\varphi $ between two such points. Then
$$\beta = \beta'\cup \bigcup_{i=1}^{k-1} \varphi(R_i) \cup \beta''$$
where $R_j$ are regions of case 4) and $\beta' , \beta''$ are subarcs of $\varphi (R_0), \varphi(R_k)$; here $R_0, R_k$ are canonical regions of case 5). Subarcs $\beta' , \beta''$ contain the endponts of $\beta $.

As before, all the orbits of $v|_{\Omega\cap {R_j}}$ that start at $\varphi (R_j)$ or at $\beta', \beta''$, stay in $\Omega$ and  have the same $\omega$-limit set. Since the arcs $\varphi (R_i), \varphi (R_{i+1})$ have common  endpoints, this holds for the whole arc $\beta $ too.

Finally, $\beta $ is crossed by the first or the last separatrices  of the boundary chains of the corresponding canonical domains. They are separatrices of $v|_{\Omega_R}$, due to the description of boundaries of canonical regions.

Hence $\varphi$ is a boundary component of type 2, with at least two outer tangency points with $v$.
This completes the proof of the Boundary lemma.

\section{Images of  boundary components of $U$} \label{sec-Case123}

Here we prove Lemma \ref{lem-image-case1}, Lemma \ref{lem-image-Case2} and Lemma \ref{lem-image-case3}.
As before,  we assume that $U$, $\tilde U^{\pm}$ are chosen as in Proposition \ref{prop-choice} (recall that this proposition only uses Boundary lemma, and this lemma is already established).

\subsection{Canonical regions for vector fields in open domains on the sphere}

We will need the generalizations of Propositions \ref{prop-canonreg-common-limset}, \ref{prop-parall} to the case of vector fields on subdomains of $S^2$.

Take an open set $D \subset S^2$ such that $\partial D$ is a union of finitely many continuous curves homeomorphic to $S^1$ and having finitely many topological tangencies with  $v$. We assume that singular points, limit cycles and monodromic polycycles that belong to $\overline D$ also belong to   $D$.
\begin{definition}[Canonical regions in domains]
\label{def-canonreg-subdomain}
 For a vector field $v \in Vect^*\,S^2$ and an open set $D\subset S^2$ as above, let $S(v, D)$ be the union of all singular points, separatrices and limit cycles of $v|_D$, and let $Tang(v,D)$ be the union of trajectories under $v|_{D}$ of topological tangency points of $v$ with $\partial D$. A \emph{canonical region} of $v|_{D}$ is a  connected component of $D\setminus (S(v,D)\cup Tang(v,D))$.
 \end{definition}

 \begin{proposition}
 \label{prop-canonreg-limset-subdomain}
  For a vector field $v \in Vect^*\,S^2$ and an open set $D\subset S^2$ as above, all points of the same canonical region of $v|_{D}$
  \begin{itemize}
   \item either have the same $\omega$-limit set under $v$ inside $D$, and their future semi-trajectories stay in $D$;
   \item or their future  semi-trajectories under $v|_{D}$ terminate on the same connected component of $\partial D$.
  \end{itemize}
  The same alternative holds for $\alpha$-limit sets and past semi-trajectories.

 \end{proposition}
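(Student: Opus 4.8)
The plan is to mimic the proof of Proposition~\ref{prop-canonreg-common-limset}, replacing the global Poincar\'e--Bendixson dichotomy by its version for the restricted flow $v|_D$. I would first fix a canonical region $R$ of $v|_D$ and a point $x\in R$, and trace its future semi-trajectory $\phi^{+}(x)$. Two things can happen: either $\phi^{+}(x)$ stays in $D$ for all positive time, or it reaches $\partial D$ at some finite time. These two possibilities should be shown to be locally constant on $R$, and since $R$ is connected, one of them holds for all of $R$.

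For the first alternative, I would argue as in the original proof: $\phi^{+}(x)$ stays in $D$, so by Poincar\'e--Bendixson its $\omega$-limit set is a singular point, cycle or monodromic polycycle; by our standing assumption on $D$ such a limit set, if contained in $\overline D$, is in fact contained in $D$. Now I repeat verbatim the three-case analysis from Proposition~\ref{prop-canonreg-common-limset}: if $\omega(x)$ is a cycle or monodromic polycycle, the future semi-trajectory of $x$ crosses a transversal loop, hence so do nearby trajectories (and they stay in $D$ for slightly smaller times by continuity, then forever by the same transversal-loop argument), so they share $\omega(x)$; if $\omega(x)=P$ is a singular point, use its finite sectorial decomposition (Theorem~\ref{thm:char-pt}) — the future semi-trajectory of $x$ enters an attracting parabolic sector or an elliptic sector (not a hyperbolic one, since $x$ is not on a separatrix and not on $Tang(v,D)$), and the same sectorial analysis shows nearby trajectories also limit to $P$. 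The only delicate point versus the original proof is ensuring nearby trajectories do not escape $D$ before they get trapped; this follows because once a trajectory enters an attracting parabolic/elliptic sector or crosses an inward transversal loop of a limit cycle lying in $D$, it cannot subsequently reach $\partial D$.

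For the second alternative, suppose $\phi^{+}(x)$ hits $\partial D$ at a first time $T$, at a point $y\in\partial D$ on a boundary component $\varphi$. If $y$ is not a topological tangency point of $v$ with $\partial D$, then $\phi^{+}$ is topologically transversal to $\partial D$ at $y$, so nearby trajectories also cross $\varphi$ transversally near $y$ at nearby times and terminate on the same component $\varphi$. If $y$ were a tangency point, then by definition the trajectory of $y$ lies in $Tang(v,D)$, so $y\notin R$ and in fact the whole orbit segment $\phi^{[0,T]}(x)$ would meet $Tang(v,D)$ — but $R$ is a component of the complement of $S(v,D)\cup Tang(v,D)$, contradiction; hence this case does not occur for $x\in R$. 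So in the second alternative the exit point is always a transverse crossing, the exit is locally constant, and by connectedness all of $R$ terminates on one and the same boundary component $\varphi$.

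The main obstacle, and the step I would be most careful with, is the no-escape claim embedded in the first alternative: once a trajectory starting near $x$ enters the attracting parabolic/elliptic sector of $P\in D$ (or crosses a transversal loop around a cycle/polycycle in $D$), I must know that it never afterwards leaves $D$, so that its $\omega$-limit set is genuinely the same object inside $D$. This is where the hypothesis that limit cycles and monodromic polycycles in $\overline D$ lie in $D$, together with the standard local structure of attracting parabolic and elliptic sectors and transversal loops, is used: a small enough such sector or the disk bounded by such a transversal loop is forward-invariant and contained in $D$. Finally, the statement for $\alpha$-limit sets and past semi-trajectories follows by applying everything above to $-v$. I would conclude by noting that the two alternatives are mutually exclusive and exhaust $R$, which is exactly the claimed dichotomy.
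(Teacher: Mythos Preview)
Your proposal is correct and follows essentially the same strategy as the paper's proof: show that each of the two behaviors (``future orbit stays in $D$ with a given $\omega$-limit set'' and ``future orbit exits through a given component of $\partial D$'') defines an open subset of the canonical region $R$, then use connectedness of $R$. The paper phrases this as ``the set $G$ is open'' for each such behavior, while you phrase it as ``locally constant'', but the content is identical.

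There is one minor difference in the exit-point analysis. You argue that the exit point $y$ cannot be a tangency point at all, because the orbit of $x$ would then lie in $Tang(v,D)$, contradicting $x\in R$. The paper instead only notes that $y$ is not an \emph{inner} tangency point, and then uses the finiteness of tangencies to conclude that in a small flow-box around $y$ the curve $\partial D$ meets every trajectory, so nearby orbits also exit on the same boundary component. Your observation is slightly sharper (and correct: an outer tangency is in fact impossible as an exit point since the germ of the trajectory there lies outside $D$), but both routes reach the same conclusion. Your extra care about the ``no-escape'' step in the first alternative---checking that once a nearby orbit enters an attracting sector or crosses a transversal loop lying inside $D$ it cannot subsequently leave $D$---is exactly the ``only new argument to be added'' that the paper mentions in one sentence.
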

\begin{proof}
Let $R$ be a canonical region of $v|_{D}$. Consider a set $G$ of points in $R$ such that their trajectories stay in $D$ and  have one and  the same $\omega$-limit set $A$ under $v$. The set $A$ is inside $\overline D$, thus inside $D$, due to our assumptions on $D$.

The set $G$ is open; the proof is similar to that in Proposition \ref{prop-canonreg-common-limset}. The only new argument to be added is, that if the trajectory of a point stays in $D$ and has its $\omega$-limit set inside $D$, then the
trajectories of close points also stay in $D$.

Now, consider a set $G$ of points in $R$ such that their future semi-trajectories under $v|_{D}$ terminate on one and the same connected component of $\partial D$. We will prove that the set $G$ is also open. Let $x \in G$,  and $y\in \partial D$ be the endpoint of its future semi-trajectory under $v|_{D}$.  In a sufficiently small flow-box around $y$, $\partial D$ is a continuous curve that intersects all trajectories of $v$; this follows from the fact that $\partial D$ has only finitely many tangencies with $v$ and $y$ is not an inner tangency point itself.

Now it suffices to notice that each future semi-trajectory of $v$ that starts near $x$ eventually reaches the flow-box of $y$, thus intersects the same connected component of $\partial D$.

Finally, since $R$ cannot be a union of several open disjoint sets, it coincides with one of the sets above: either all its points have the same  $\omega$-limit set under $v$ inside $D$, and their future semi-trajectories stay in $D$; or their future semi-trajectories  terminate on the same connected component of $\partial D$.
\end{proof}

 \begin{proposition}
\label{prop-canonreg-parall-subdomain}
  For a vector field $v \in Vect^*\,S^2$ and an open set $D\subset S^2$ as above,  each canonical region of $v|_{D}$ is parallel, i.e. equivalent to a strip flow or a spiral flow.
 \end{proposition}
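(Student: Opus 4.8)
The plan is to fix a canonical region $R$ of $v|_{D}$ and build a global cross-section for the flow on $R$, exactly as in the classical proof of parallelizability of canonical regions (the proof of Proposition \ref{prop-parall} in \cite{DumLlibArt}). First I would record the consequences of Proposition \ref{prop-canonreg-limset-subdomain}: the forward semi-trajectories of all points of $R$ share the same fate, and likewise the backward ones. Either they all remain in $D$ with one common $\omega$-limit set $\Lambda^{+}$, which by Poincar\'e--Bendixson is a singular point, a limit cycle, or a monodromic polycycle of $v$ lying in $\overline D$, hence in $D$, hence in $S(v,D)$ and disjoint from $R$; or they all leave $D$ in finite positive time, necessarily crossing $\partial D$ transversally at a non-tangency point (an orbit of $R$ cannot have a tangency point of $\partial D$ among its endpoints, since it would then coincide with the trajectory of that tangency point, which has been removed as part of $Tang(v,D)$). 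In particular no orbit of $R$ is singular or periodic, and every orbit $\gamma\subset R$ is \emph{proper}, i.e. closed in $R$: its closure in $D$ adds only points of $\Lambda^{+}$, $\Lambda^{-}$, or of $\partial D$, all of which lie outside $R$.

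Next I would construct the cross-section. Pick $x_{0}\in R$; since the orbit of $x_{0}$ is proper, there is a short transversal arc through $x_{0}$, contained in $R$, that meets this orbit only at $x_{0}$. Enlarge it to a maximal connected transversal submanifold $\Sigma\subset R$ containing $x_{0}$ along which every orbit of $R$ is crossed at most once (the union of a nested chain of such transversals still has this property, so a maximal one exists by Zorn's lemma); being a connected $1$-manifold, $\Sigma$ is either an arc or a circle. The key point --- and the only place where the argument is not purely formal --- is that every orbit of $R$ actually meets $\Sigma$. The saturation $\mathrm{Sat}(\Sigma)$ of $\Sigma$ under the local flow $g^{t}$ of $v|_{D}$ is open by the flow-box theorem; it is also closed in $R$, because if $y\in R$ is a limit of points $g^{t_{n}}(x_{n})$ with $x_{n}\in\Sigma$, then, using that the orbit of $y$ is proper, one transports a small transversal at $y$ along the flow so as to merge it with $\Sigma$, and concludes from the maximality of $\Sigma$ that the orbit of $y$ already meets $\Sigma$ --- this is verbatim the argument used in the classical parallelizability proof. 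Since $R$ is connected, $\mathrm{Sat}(\Sigma)=R$.

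Finally, the map $(x,t)\mapsto g^{t}(x)$, defined on the open subset of $\Sigma\times\bbR$ where it makes sense and stays in $D$, is a local homeomorphism onto $R$, surjective by the previous step, and injective because each orbit meets the section $\Sigma$ exactly once; it conjugates $v|_{R}$ to $\partial/\partial t$. If $\Sigma$ is an arc, $R$ is simply connected and the flow is a strip flow; if $\Sigma$ is a circle, $R$ is a topological annulus and the flow is the spiral flow $\dot r=r$, $\dot\theta=0$ (the case of, e.g., an annular collar of a boundary circle of $D$, or of a punctured disc around a focus of $v$ lying in $D$). In either case $R$ is parallel, which proves the proposition. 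The main obstacle is precisely the existence of the global cross-section, i.e. the equality $\mathrm{Sat}(\Sigma)=R$; it is handled exactly as for canonical regions of $v$ on the whole sphere, the only genuinely new ingredients being Proposition \ref{prop-canonreg-limset-subdomain} (uniform fate of the orbits of $R$) and the observation that orbits leaving $D$ do so transversally rather than at a tangency point.
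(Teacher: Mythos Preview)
Your argument is correct. The paper's own proof is a two-line reduction to the $D=S^2$ case: it observes that the orbit space $R/\!\!\sim$ is a connected Hausdorff $1$-manifold (hence $\bbR$ or $S^1$) and that the projection $R\to R/\!\!\sim$ is a locally trivial $\bbR$-fibration, giving the strip or spiral model, with the annular case excluded by $v\in Vect^*\,S^2$.

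Your route is equivalent but presented differently: instead of passing to the quotient, you build a global cross-section $\Sigma$ by Zorn and then trivialize via the flow map $\Sigma\times\bbR\to R$. The two are dual---a global section gives a model for the quotient and conversely---and the single nontrivial step (Hausdorffness of the quotient in the paper's version, closedness of $\mathrm{Sat}(\Sigma)$ in yours) is the same fact in disguise; both the paper and you legitimately defer it to the classical argument. What your write-up adds, and the paper omits, is an explicit check that orbits of $R$ are proper in $R$ and that orbits exiting $D$ do so transversally (since tangency trajectories are in $Tang(v,D)$); this is exactly the input needed to run either version of the argument in the relative setting. One small cosmetic point: when $\Sigma$ is an arc, the flow map is defined only on $\{(x,t): a(x)<t<b(x)\}$ rather than on all of $\Sigma\times\bbR$ (because orbits may exit $D$ in finite time), but this set is fiberwise an interval and hence still homeomorphic to $\Sigma\times\bbR$, so the conclusion stands.
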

\begin{proof}
 The proof is the same as for the case of $D=S^2$, see \cite[Proposition 1.42, p. 34]{DumLlibArt} for omitted details. Namely, the quotient space obtained by collapsing orbits of $v|_{R}$ into points is a (Hausdorff) connected one dimensional manifold (i.e. $S^1$ or $\bbR$), and
the natural projection of $R$ to this quotient space is a locally trivial fibering. So it can be homeomorphic to $\bbR \times \bbR \to \bbR$ (then we have a strip flow), or $S^1 \times \bbR \to \bbR$ (spiral flow), or
$\bbR \times S^1 \to \bbR$ (annular flow. However in this case, $v$ has infinitely many periodic orbits which is impossible for $v\in Vect^*\, S^2$).
\end{proof}

\subsection{Images of Type $2$ boundary components}

The following proposition is the main part of the proof of Lemma \ref{lem-image-Case2}.

\begin{figure}[ht]
 \begin{center}
  \includegraphics[width=0.3\textwidth]{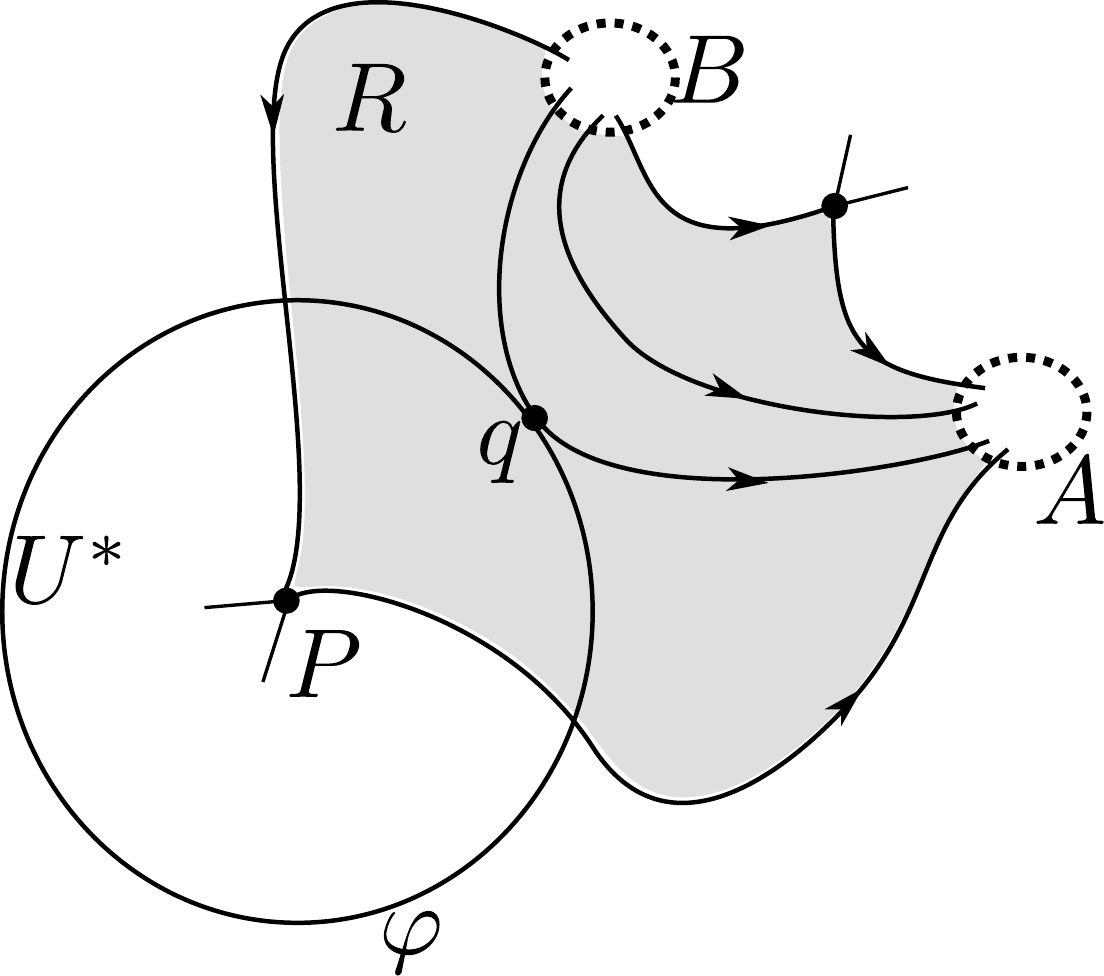}
 \end{center}
\caption{Canonical region of an outer topological tangency point of $v_0$. The sets
$A,B$ are located inside the domains with the dotted boundaries, and not shown
on the figure.}\label{fig-canonreg-of-p}
\end{figure}

 \begin{figure}[ht]
 \begin{center}
\centering
\subcaptionbox{\label{fig-canonreg-of-p-eps-1}}{  \includegraphics[width=0.35\textwidth]{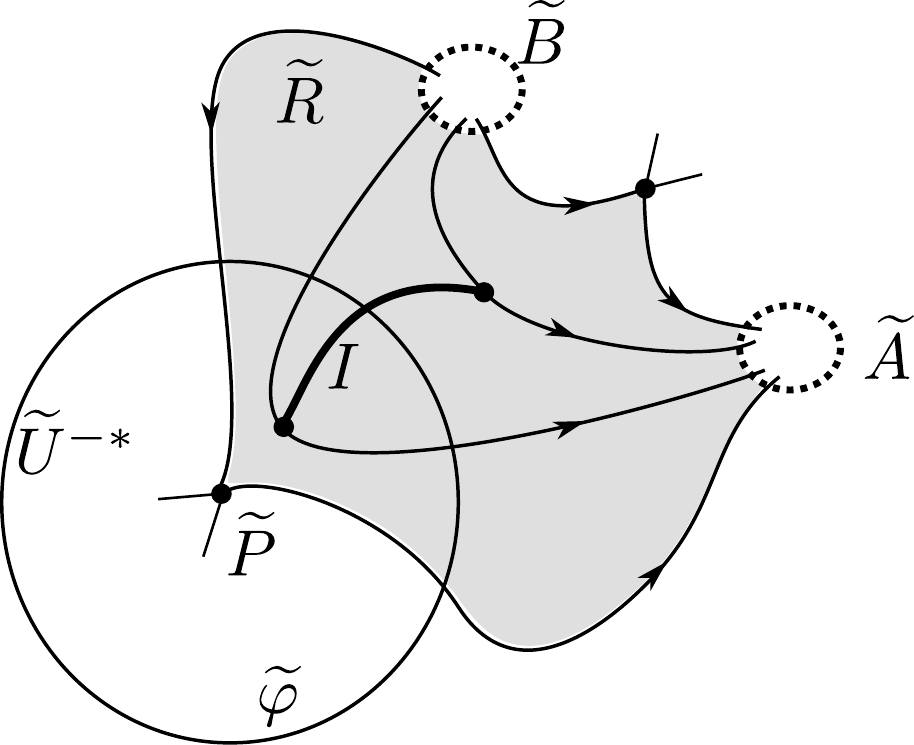} }
 \hfil
\subcaptionbox{\label{fig-canonreg-of-p-eps-2}}{   \includegraphics[width=0.35\textwidth]{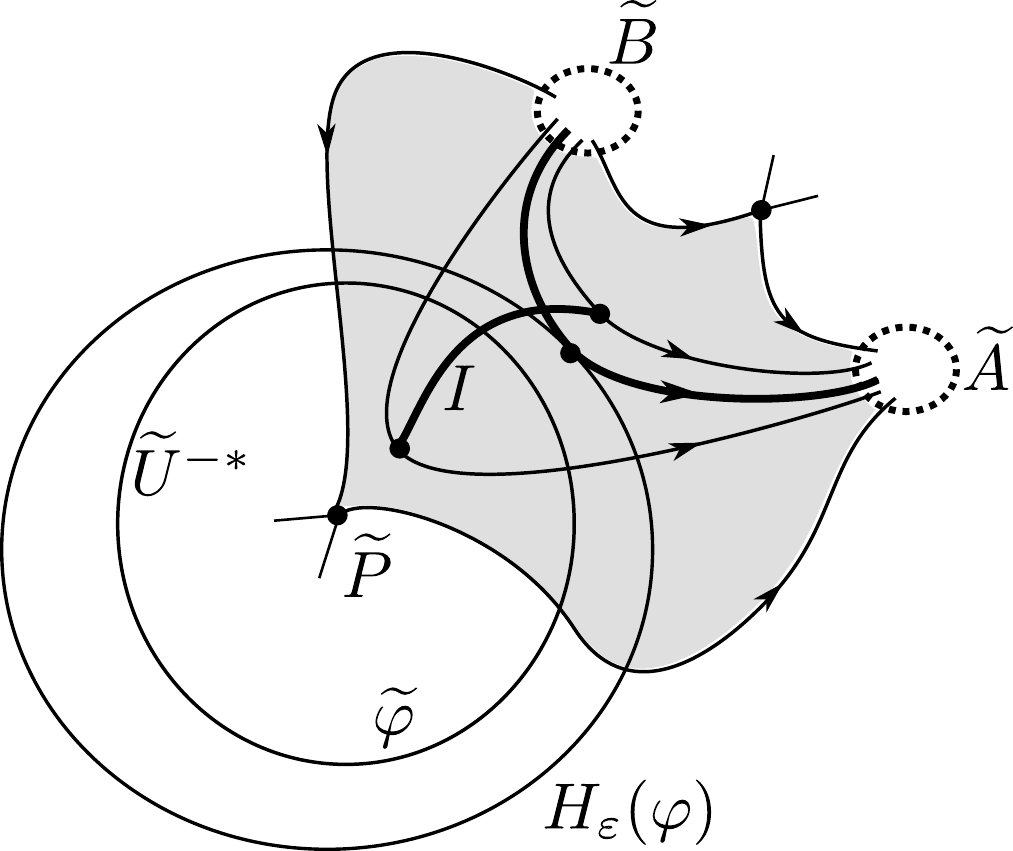}}
\caption{Canonical region of outer tangency points a) for $w_0$ b) for $w_{h(\eps)}$. On this figure, the objects with tilde are the  images under $\hat H$ of the corresponding objects without tilde. The sets $\tilde A, \tilde B$ are located inside the domains with the dotted boundaries, and not shown
on the figure.}\label{fig-canonreg-of-p-eps}
\end{center}
\end{figure}

 \begin{proposition}
 \label{prop-outer-cont-pt-w}
    Under assumptions of the Main theorem and Proposition \ref{prop-choice}, for sufficiently small $\eps$, for each outer topological  tangency point of  $\partial H_{\eps}(U)$ with $w_{h(\eps)}$, its trajectory under $w_{h(\eps)}$ belongs to $S^2\setminus H_{\eps}(U^*)$.

\end{proposition}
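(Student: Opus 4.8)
\emph{Plan of the proof.} The plan is to carry the statement through the conjugacy $H_\eps$ down to $v_\eps$, to describe the trajectory of the relevant tangency point of $v_0$ via the Boundary lemma, to transport this description to $w_0$ by the global conjugacy $\hat H$ (recall $\hat H|_U=H_0$), and finally to pass from $w_0$ to $w_{h(\eps)}$ by a perturbation argument. First I would reduce to $v_\eps$: since $H_\eps$ extends to a homeomorphism of $\overline U$ conjugating $(v_\eps)|_{\overline U}$ to $(w_{h(\eps)})|_{H_\eps(\overline U)}$, an outer topological tangency point $\tilde p$ of $\partial H_\eps(U)$ with $w_{h(\eps)}$ has the form $\tilde p=H_\eps(p)$, where $p\in\partial U$ is an outer topological tangency point of $\partial U$ with $v_\eps$, and $H_\eps$ takes the germ of the $v_\eps$-trajectory of $p$ to the germ of the $w_{h(\eps)}$-trajectory of $\tilde p$. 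It suffices to treat the forward trajectory; the backward one is symmetric, with $\alpha$-limit sets replacing $\omega$-limit sets.

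Next I would locate $p$. By Proposition~\ref{prop-choice} the curve $\partial U$ is smooth with only isolated quadratic tangencies with $v_0$, all of them lying on $\partial U^*$ (the remaining boundary circles are transversal loops around non-interesting cycles, hence transversal to $v_0$). For small $\eps$ the field $v_\eps$ is $C^1$-close to $v_0$ on the compact curve $\partial U$, so, by nondegeneracy, every tangency of $\partial U$ with $v_\eps$ is close to a $v_0$-tangency and of the same (inner or outer) type, while $\partial U$ stays transversal to $v_\eps$ off a fixed neighborhood of the $v_0$-tangencies. Hence $p$ lies on a boundary component $\varphi\subset\partial U^*$ which, for $v_0$, is of Type~$2$ (Boundary lemma~\ref{lem-U-arcs-top}), and $p$ is close to an outer tangency point $p_0\in\varphi$ of $\partial U^*$ with $v_0$. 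By the case~$5$ construction in the proof of the Boundary lemma, such $p_0$ lies in the interior of a strip canonical region $R$ of $v_0$, inside a prescribed small neighborhood of a side-boundary separatrix chain of $R$ that is contained in $S(v_0)$.

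Then I would describe the $v_0$-picture and push it forward by $\hat H$. As $p_0$ lies in the invariant strip region $R$, its whole $v_0$-trajectory stays in $R$; in the rectifying chart of $R$ it is the horizontal line through $p_0$, tangent to $\varphi\cap R$ at $p_0$ from the side of $S^2\setminus\overline{U^*}$, so it never meets $\partial U^*$ again and is disjoint from $U^*$. It converges to $\alpha(R)$ in the past and to $\omega(R)$ in the future, both of which lie outside $\overline{U^*}$ (case~$5$) and are \emph{non-interesting}, i.e. hyperbolic sinks/sources or non-interesting cycles (Proposition~\ref{prop-Case2-nonint-ls}). Applying $\hat H$: the $w_0$-trajectory of $p_0':=\hat H(p_0)=H_0(p_0)$ is disjoint from $\hat H(U^*)$, stays in the $w_0$-canonical region $\hat H(R)$, and converges to the non-interesting sets $\hat H(\alpha(R))$, $\hat H(\omega(R))$ outside $\hat H(\overline{U^*})$; in particular $p_0'$ is an outer tangency point of $\hat H(\varphi)\subset\partial\hat H(U^*)$ with $w_0$.

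The main obstacle is the last, perturbative step: deducing, for small $\eps$, that the $w_{h(\eps)}$-trajectory of $\tilde p=H_\eps(p)$ still avoids $H_\eps(U^*)$. Three regimes have to be controlled uniformly in $\eps$ and reconciled at the interfaces $\partial H_\eps(U)$, $\partial H_\eps(U^*)$. Inside $H_\eps(U)$ the trajectory can be pulled back by $H_\eps^{-1}$ to the $v_\eps$-trajectory of $p$, which stays outside $U^*$ by the persistence of the transversal Type~$2$ picture of $v_\eps$ near $\varphi$ and the robustness of $v_\eps$ near the non-interesting set $\omega(R)$. Outside the fixed neighborhood $\tilde U^+\supset H_\eps(U)$, where $w_{h(\eps)}$ is $C^1$-close to $w_0$ and, being outside $\tilde U^+\supset LBS(W)$, has only hyperbolic limit sets, the trajectory tracks the $w_0$-trajectory of $p_0'$ and is driven towards $\hat H(\omega(R))$, away from $H_\eps(U^*)\subset\tilde U^+$. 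The genuinely delicate part is the transition annulus $\tilde U^+\setminus H_\eps(U)$, where neither description applies verbatim; I expect to handle it in the manner of the No-entrance lemma~\ref{lem-seps} and Proposition~\ref{prop-heps-detached}, by contradiction: if along a sequence $\eps_n\to0$ the $w_{h(\eps_n)}$-trajectory of $\tilde p_n$ entered $H_{\eps_n}(U^*)$, one would extract a limiting first-entry point lying simultaneously on $\partial U^*$ and on the forward $v_0$-trajectory of $p_0$, contradicting the disjointness established above; the identification of the limit configuration uses the continuity of $\mathbf H$ on the set~\eqref{eq-set1}, applied to the separatrices that cross $\varphi$ and therefore belong to $S(v_0)$.
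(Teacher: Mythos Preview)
Your reduction to $v_\eps$ and your analysis of the $v_0$-picture (the tangency lies in a case~5 canonical region $R$, whose $\alpha$- and $\omega$-limit sets are non-interesting and lie outside $\overline{U^*}$) are correct and match the paper. The genuine gap is exactly where you flag it: the last, perturbative step. Your proposed contradiction argument does not go through. Suppose along $\eps_n\to 0$ the $w_{h(\eps_n)}$-trajectory of $\tilde p_n$ first enters $H_{\eps_n}(U^*)$ at a point $\tilde q_n\in\partial H_{\eps_n}(U^*)$. The arc from $\tilde p_n$ to $\tilde q_n$ lives in $S^2\setminus H_{\eps_n}(U^*)$, largely outside $H_{\eps_n}(U)$, so it cannot be pulled back by $H_{\eps_n}^{-1}$; and you have no uniform bound on the time this arc takes (it may linger indefinitely near $\partial H_{\eps_n}(U)$), so closeness of $\tilde p_n$ to $p_0'$ buys you nothing about the location of $\tilde q_n$. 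Worse, you do not even know that $\tilde p_n=H_{\eps_n}(p_n)\to p_0'=H_0(p_0)$: moderate equivalence gives continuity of $\mathbf H$ only on the set~\eqref{eq-set1}, and a generic point of $\partial U^*$ need not lie there. So there is no mechanism forcing a limit point of the $\tilde q_n$'s to lie on the $w_0$-trajectory of $p_0'$ (equivalently, on the $v_0$-trajectory of $p_0$), and the contradiction collapses.

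The paper avoids this by reversing the logic and \emph{counting}. Instead of following a given tangency point, it \emph{manufactures} tangency points on the $w$-side. In $\tilde R=\hat H(R)$ it fixes a transversal arc $I$ whose two endpoints have $w_0$-trajectories that respectively (i) avoid $\overline{\tilde U^{+*}}$ entirely, and (ii) enter $\tilde U^{-*}$ (this uses Remark~\ref{rem-trajectory-case5} and the presence of $P\in LBS^*(V)$ on $\partial R$). For small $\eps$ the same dichotomy persists for $w_{h(\eps)}$ relative to $H_\eps(U^*)$ (since $\tilde U^{-*}\subset H_\eps(U^*)\subset\tilde U^{+*}$), so by continuity in the initial point there is some $x\in I$ whose $w_{h(\eps)}$-trajectory touches $\overline{H_\eps(U^*)}$ but never enters $H_\eps(U^*)$; this trajectory contains an outer tangency point with the desired property \emph{by construction}. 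Doing this for each outer $v_0$-tangency (and twice if $R$ carries two of them) yields at least $N$ such outer tangency points of $\partial H_\eps(U)$ with $w_{h(\eps)}$, where $N$ is the number of outer tangencies of $\partial U$ with $v_\eps$. But $H_\eps$ is a homeomorphism of $\overline U$, so there are exactly $N$ outer tangency points in total; hence the ones constructed are all of them, and every outer tangency has its trajectory in $S^2\setminus H_\eps(U^*)$. This counting trick is the missing idea.
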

\begin{proof}

Let $q$ be an outer tangency point of a boundary component $\varphi \subset \partial U^*$. Let $\beta\subset \varphi$ be an outgoing transversal arc with the endpoint $q$. Consider a canonical region $R$ of $v_0$ that contains $q$. Recall that   $A :=\omega_{v_0}(q)$ and $B:=\alpha_{v_0}(q)$ do not intersect $LBS^*(V)$, and $\beta $ intersects separatrices of $v_{0}|_{U}$, due to  the Boundary lemma. So $R$ cannot contain the whole arc $\beta$: $\partial R$ contains an intersection of a separatrix of $v_{0}|_{U}$ with $\beta$. Thus $\partial R$ contains a singular point $P \in LBS^*(V)$, the $\alpha$-limit set of this separatrix; $P\in LBS^*(V)$ due to No-entrance lemma. Finally, $R$ is of case 5) according to the classification introduced in  the proof of the Boundary lemma: $\alpha(R)$ and $\omega(R)$ do not intersect  $LBS^*(V)$, and $\partial R$ contains a point  $P\in LBS^*(V)$ on its boundary (see Fig. \ref{fig-canonreg-of-p}).

Similarly, if $R$ contains two outer tangency points with $U^*$, then it contains points  $P_1, P_2 \in LBS^*(V)$ on both its side boundaries $\nu_1(R), \nu_2(R)$.

Now, $\tilde R = \hat H(R)$ is a canonical region for $w_0$, its $\alpha$-, $\omega$-limit sets $\tilde A :=\hat H(A)$, $\tilde B :=\hat H(B)$ do not belong to $LBS^*(W)$, and it has a singular point $\tilde P := \hat H(P)\in LBS^*(W)$ on its boundary, see Fig. \ref{fig-canonreg-of-p-eps-1}. It also contains trajectories that do not intersect $\tilde U^{*+}$ (see Remark \ref{rem-trajectory-case5}; it is applicable because $\tilde U^{*+}$ satisfies Boundary lemma, due to Proposition \ref{prop-choice}).

Take an arc $I\subset \tilde R$ transversal to $w_0$, such that the trajectory of one of its endpoint under $w_0$ does not visit $\tilde U^{+*}$ (this is possible due to Remark \ref{rem-trajectory-case5}), and the trajectory of another its endpoint is close to $\hat H(P)$, so visits $\tilde U^{-*}$, see Fig. \ref{fig-canonreg-of-p-eps-1}. Both trajectories connect $\hat H(A)$ to $\hat H(B)$. The same holds for the trajectories of the endpoints of $I$ under $w_{h(\eps)}$ with small $\eps$.
 Namely, both of them connect transversal loop around $\hat H(A)$ and $\hat H(B)$; one of them intersects $H_\eps(\varphi)$, and the other does not.
 Due to the continuity of orbits of $w_{h(\eps)}$ with respect to the initial conditions, and the fact that $\tilde U^{-*}\subset H_{\eps}(U^*) \subset \tilde U^{*+}$, there exists a point in $I$ whose trajectory under $w_{h(\eps)}$ visits  $\overline{H_{\eps}(U^*)}$ and does not visit $H_{\eps}(U^*)$. This trajectory contains one or several topological tangency points  of $H_{\eps}(U)$ with $w_{h(\eps)}$ (see Fig. \ref{fig-canonreg-of-p-eps-2}).

Finally, if $R$ contains a point of outer tangency of $\partial U$ with $v_0$, then $\hat H(R)$ contains at least one point of outer tangency of $H_{\eps}(U)$ with $w_{h(\eps)}$.
If $R$ contains two points of outer tangency of $U$ (i.e. points of $LBS(V)$ on both boundaries), then the same construction yields at least two  tangency points in $\hat H(R)$.

 Let the total number of outer tangency points of $\partial U$ with $v_0$ be $N$. The construction above yields at least $N$ topological outer tangency points of $H_{\eps}(U)$ with $w_{h(\eps)}$, and their trajectories under $w_{h(\eps)}$ do not visit $H_{\eps}(U)$.

 On the other hand, the total number of outer tangency points of $H_{\eps}(U)$ with $w_{h(\eps)}$ is $N$, because $H_{\eps}$ identifies outer tangency points of $U$ and $H_{\eps}(U)$. So we have found all of them. Hence  trajectories of all outer tangency points of $H_{\eps}(U)$ with $w_{h(\eps)}$ stay in $S^2\setminus \overline {H_{\eps}(U^*)}$.

\end{proof}

\begin{proof}[Proof of Lemma \ref{lem-image-Case2}]

Let $\beta, \ \tilde \beta_\eps$ and $\tilde l$ be the same as in Lemma \ref{lem-image-Case2}. Obviously $\tilde \beta_\eps$ is topologically transversal to $w_{h(\eps)}$. Trajectories of its points under $w_{h(\eps)}|_{S^2\setminus H_\eps(U^*)} $ are not trajectories of outer tangency points due to Proposition \ref{prop-outer-cont-pt-w},
and none of them are separatrices of $w_{h(\eps)}|_{S^2\setminus H_{\eps}(U)}$, due to No-entrance  lemma \ref{lem-seps} (applied to $\tilde U^+\supset H_{\eps}(U)$ and the family $W$). Due to Definition \ref{def-canonreg-subdomain} of canonical regions in domains, $\tilde \beta_\eps$ belongs to one canonical region of $w_{h(\eps)}|_{S^2\setminus H_{\eps}(U^*)}$.

Let us prove that all trajectories of this canonical region cross $\tilde l$; this will imply that the Poincare map $\tilde P_{\eps}$ is defined. Due to Proposition \ref{prop-canonreg-limset-subdomain}, it is sufficient to prove this statement for one trajectory. By assumption of Lemma \ref{lem-image-Case2}, $\beta$ is a maximal transversal arc of a boundary component of $U$ of Type $2$. Hence, there exists a separatrix $\gamma$ of $v_0$ that crosses $\beta$. Let $r = \gamma \cap \beta$.
\begin{proposition}
\label{prop:sep-pin}
Let $\gamma, \beta $, and $r$ be the same as above. Then the trajectory of $H_{\eps}(r) \in \tilde \beta_\eps$ under $w_{h(\eps)}|_{S^2\setminus H_{\eps}(U^*)}$ crosses $\tilde l$ for small $\eps$, and the intersection point is close to $p:=\hat H(\gamma)\cap \tilde l$ for small $\eps$.
\end{proposition}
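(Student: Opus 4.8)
The plan is to control the forward $w_{h(\eps)}$-orbit of $H_\eps(r)$ by comparing it with the forward $w_0$-orbit of $\hat H(r)$, and to use continuous dependence of solutions on initial conditions and on the vector field.

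First I would pin down the point $p$. Let $s=\gamma\cap l$ be the \emph{first} point, in forward $v_0$-time from $r$, at which the separatrix $\gamma$ crosses the transversal loop $l$; it exists because the forward $v_0$-orbit of $r$ tends to $\omega(\beta)=\omega_{v_0}(\gamma)$ and $l$ encircles $\omega(\beta)$. Write $\gamma_{rs}\subset\gamma$ for the compact $v_0$-orbit arc from $r$ to $s$, traversed in finite $v_0$-time. By the Type~$2$ clause of the Boundary Lemma~\ref{lem-U-arcs-top}, the forward orbit of $r\in\varphi$ under $v_0|_{S^2\setminus U^*}$ lies in $S^2\setminus\overline{U^*}$, so $\gamma_{rs}$ meets $\overline{U^*}$ only at $r\in\partial U^*$; by the choice of $s$ it meets $l$ only at $s$, and it crosses $l$ there. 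Applying $\hat H$, which conjugates $v_0$ to $w_0$ and sends $l$ to $\tilde l$, I obtain a compact $w_0$-orbit arc $\tilde\gamma_{rs}$ from $\hat H(r)$ to $\hat H(s)$, traversed in finite $w_0$-time $T$, contained in $S^2\setminus\hat H(\overline{U^*})$ except at $\hat H(r)$, meeting the Jordan curve $\tilde l$ only at $\hat H(s)$ and crossing it there. In particular $\hat H(s)=\hat H(\gamma)\cap\tilde l=p$, and $\hat H(r)$ lies strictly on one side of $\tilde l$, at a positive distance from it.

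Next comes continuous dependence. Since $r\in\gamma\subset\Sep v_0\subset S(v_0)$, the moderate equivalence $\mathbf H$ is continuous at $(0,r)$ (by the definition of moderate equivalence and Proposition~\ref{prop-dLBS}), so $H_\eps(r)\to H_0(r)=\hat H(r)$ as $\eps\to0$; also $w_{h(\eps)}\to w_0$ in the $C^\infty$ topology, because $W$ is a smooth family and $h(\eps)\to0$. Hence, for a fixed $\delta>0$, the forward $w_{h(\eps)}$-orbit of $H_\eps(r)$ over the compact interval $[0,T+\delta]$ converges uniformly, as $\eps\to0$, to the $w_0$-orbit of $\hat H(r)$ over $[0,T+\delta]$. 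The latter starts at $\hat H(r)$ on a fixed side of $\tilde l$ and at a positive distance from it, reaches $p\in\tilde l$ at time $T$, and for $t\in(T,T+\delta]$ lies on the opposite side of $\tilde l$. Therefore, for all small $\eps$, the $w_{h(\eps)}$-orbit of $H_\eps(r)$ crosses $\tilde l$ at a point within any prescribed distance of $p$ (at a time close to $T$). This yields the ``intersection point close to $p$'' assertion, and, since $\tilde P_0(\tilde\beta_0)$ is a nondegenerate subarc of $\tilde l$ containing $p$, it also gives $\tilde P_\eps(\tilde\beta_\eps)\cap\tilde P_0(\tilde\beta_0)\neq\emptyset$, which is what Lemma~\ref{lem-image-Case2} needs.

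Finally I would have to check that this crossing occurs while the orbit is still in $S^2\setminus H_\eps(U^*)$, i.e.\ that it is a crossing of the restricted flow $w_{h(\eps)}|_{S^2\setminus H_\eps(U^*)}$. Here I would use the part of the proof of Lemma~\ref{lem-image-Case2} already established: $\tilde\beta_\eps=H_\eps(\beta_\eps)$ is an outgoing topologically transversal arc of $\partial H_\eps(U^*)$, so the forward orbit of $H_\eps(r)$ leaves $H_\eps(U^*)$ at once and lies in a single canonical region $\tilde R^*$ of $w_{h(\eps)}|_{S^2\setminus H_\eps(U^*)}$, none of whose trajectories are trajectories of outer tangency points of $\partial H_\eps(U^*)$ (Proposition~\ref{prop-outer-cont-pt-w}) or separatrices of $w_{h(\eps)}|_{S^2\setminus H_\eps(U)}$ (No-entrance Lemma~\ref{lem-seps} for $\tilde U^{+}$ and $W$). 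By Proposition~\ref{prop-canonreg-limset-subdomain} the forward orbit of $H_\eps(r)$ in $\tilde R^*$ either stays in $S^2\setminus H_\eps(U^*)$ forever, or reaches $\partial H_\eps(U^*)$ at a point that is not an inner tangency, hence at an \emph{ingoing} transversal point; in the second case the orbit would lie on $\partial H_\eps(U^*)\subset\tilde U^{+*}$ at a finite moment, whereas by the uniform convergence above, for small $\eps$ it stays in an arbitrarily small neighborhood of the fixed compact set $\tilde\gamma_{rs}|_{[\tau,T+\delta]}$ ($\tau>0$ small), which is disjoint from $\hat H(\overline{U^*})$ and hence from $\overline{LBS^*(W)}$. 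Reconciling these two facts — showing that for small $\eps$ the set $H_\eps(U^*)$ cannot reach the arc $\tilde\gamma_{rs}$ before the orbit crosses $\tilde l$ — is the delicate point, since moderate equivalence controls $H_\eps$ uniformly only along $\Sep v_0$ and $\partial LBS(V)$, not on all of $\overline{U^*}$; I expect to resolve it using that $H_\eps(\overline{U^*})$ is squeezed between $\tilde U^{-*}$ and $\tilde U^{+*}$ together with the canonical-region alternative above, which rules out the bad case and leaves only the crossing of $\tilde l$ near $p$.
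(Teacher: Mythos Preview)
Your first two paragraphs are the paper's argument verbatim: $r\in\gamma\subset\Sep v_0$, so moderate equivalence gives $H_\eps(r)\to\hat H(r)$; the forward $w_0$-orbit of $\hat H(r)$ is the separatrix arc of $\hat H(\gamma)$ and hits $\tilde l$ at $p$; continuous dependence of solutions then makes the $w_{h(\eps)}$-orbit of $H_\eps(r)$ cross $\tilde l$ near $p$.

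The last paragraph is where your route and the paper's differ. You try to control the orbit relative to $H_\eps(U^*)$ by first showing the $w_0$-arc $\tilde\gamma_{rs}$ avoids $\hat H(\overline{U^*})$, and then invoking the squeeze $\tilde U^{-*}\subset H_\eps(U^*)\subset\tilde U^{+*}$. But avoiding $\hat H(\overline{U^*})$ (or $\overline{LBS^*(W)}$) is much weaker than avoiding $\tilde U^{+*}$, and there is no uniform comparison between $\hat H(U^*)$ and $H_\eps(U^*)$ --- moderate equivalence controls $H_\eps$ only on $S(v_0)$ and $\partial LBS(V)$, not on all of $\partial U^*$ --- so this sketch does not close the gap. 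The paper resolves it in one line by applying the No-entrance lemma directly to the family $W$ and the \emph{fixed} neighborhood $\tilde U^{+*}$ (which satisfies the Boundary lemma by the choice in Proposition~\ref{prop-choice}, hence No-entrance by Remark~\ref{rem-Bound-imply-Sep}): since $\hat H(\gamma)$ is itself a separatrix of $w_0$, its forward arc from $\hat H(r)$ to $p$ does not visit $\tilde U^{+*}$. The nearby $w_{h(\eps)}$-orbit then also avoids $\tilde U^{+*}$, and $H_\eps(U^*)\subset\tilde U^{+*}$ finishes the job. With this switch, the canonical-region alternative in your last paragraph becomes unnecessary.
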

\begin{proof}
The future semi-trajectory of $\hat H(r)$ under $w_0$ (i.e. the part of the  separatrix $\hat H(\gamma)$) crosses $\tilde l$ at the point $p$; it does not visit $H_{\eps}(U)\subset \tilde U^{+*}$ due to No-entrance  lemma \ref{lem-seps} applied to the family $W$ and $\tilde U^{+*}$.

Since $\mathbf H$ is continuous on  $\Sep v_0$ (see Requirement \ref{it-H-contin} of Definition \ref{def-moderate-local} of moderate equivalence), the point  $H_{\eps}(r)$ is close to $\hat H(r)$. Since $w_{h(\eps)}$ is close to $w_0$, the trajectory of $H_{\eps}(r)$ under $w_{h(\eps)}$ intersects $\tilde l$ and does not visit $\tilde U^{+*}$. The intersection point is close to $p$.
\end{proof}
We showed that one trajectory starting at $\tilde \beta_\eps$ does not visit $\tilde U^{+*}$ and crosses $\tilde l$. Since $\tilde \beta_\eps$ belongs to one canonical region, the same holds for all trajectories of $\tilde \beta_\eps$ (Proposition \ref{prop-canonreg-limset-subdomain}), and the Poincare map $\tilde P_{\eps}$ is well-defined. Proposition \ref{prop:sep-pin} also implies that  $\tilde P_{\eps}(\tilde \beta_\eps)$ contains a point close to $p\in \tilde P_0(\tilde \beta_0)$. Since $p$ is the inner point of $\tilde P_0(\tilde \beta_0)$, we conclude that for small $\eps$, $\tilde P_{\eps}(\tilde \beta_\eps)$ intersects $\tilde P_0(\tilde \beta_0)$. This completes the proof.

\end{proof}

\subsection{Images of Type $1$ boundary components}

\begin{proof}[Proof of Lemma \ref{lem-image-case1}]

 Let  $\varphi$ be the same as in Lemma \ref{lem-image-case1}. Let $C$ be a connected component of $S^2 \setminus \overline{H_{\eps}(U)}$ adjacent to $H_{\eps}(\varphi)$. Clearly, $H_{\eps}(\varphi)$ is a union of two topologically transversal arcs to $w_{h(\eps)}$.
 The endpoints of these arcs are points of inner topological  tangency of $H_{\eps}(\varphi)$  with $w_{h(\eps)}$ (here "inner" means "inner with respect to $H_\eps(U)$").
  Let $p$ be one of these endpoints.

Consider the canonical region $R$ of $w_{h(\eps)}|_{C}$ that contains $p$ on its boundary. Let us prove that it contains $H_{\eps}(\varphi)$ and coincides with $C$. By contraposition, suppose that the intersection $R \cap H_{\eps}(\varphi)$ is a proper subarc of $H_{\eps}(\varphi)$.

Consider an endpoint $q$ of this arc. By  Definition \ref{def-canonreg-subdomain} of canonical regions, $q$
either belongs to an orbit tangent to $\partial C$, or belongs to a separatrix of $w_{h(\eps)}|_C$. The second option (see Fig. \ref{fig-case1-a})is impossible due to No-entrance lemma \ref{lem-seps} applied to $W$ and $\tilde U^+$. Prove that the first option is also impossible.

First, it is not possible that the point $q$ is itself a point of tangency of $w_{h(\varepsilon )}$ and $H_\varepsilon (\varphi )$  whose orbit locally belongs to $\overline C $ (see Fig. \ref{fig-case1-b}). Indeed,  all orbits of points of $\varphi $ under $v_\varepsilon $ enter $U$ either in the positive, or in the negative time. So do the
 orbits of the points of $H_\varepsilon (\varphi )$ under $w_{h(\varepsilon )}$. Hence, none of these orbits locally belong to $\overline C$.

\begin{figure}[h]
 \begin{center}
 \subcaptionbox{\label{fig-case1-a}}{\includegraphics[width=0.25\textwidth]{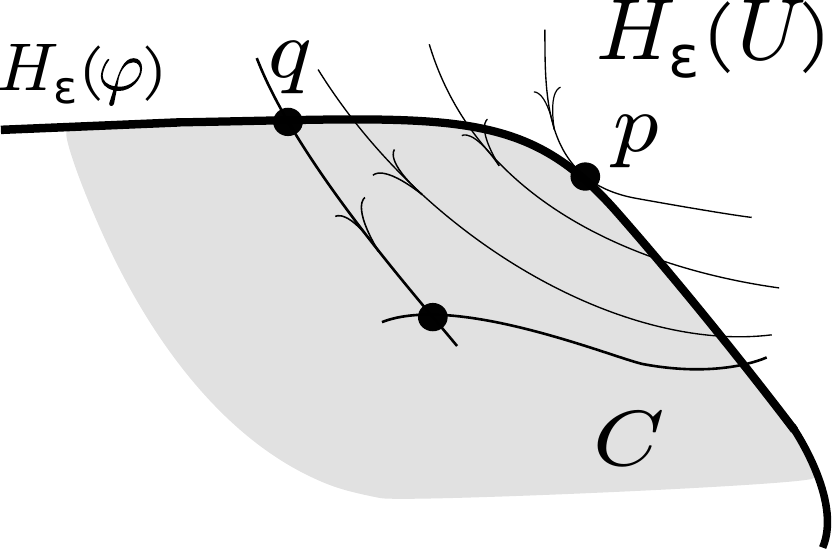}}
 \hfil
 \subcaptionbox{\label{fig-case1-b}}{\includegraphics[width=0.25\textwidth]{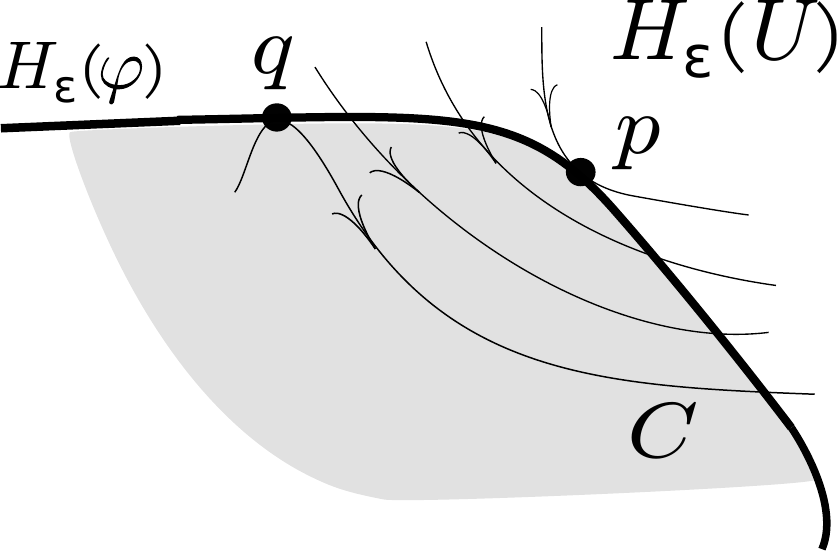}  }
 \hfil
 \subcaptionbox{\label{fig-case1-c}}{\includegraphics[width=0.25\textwidth]{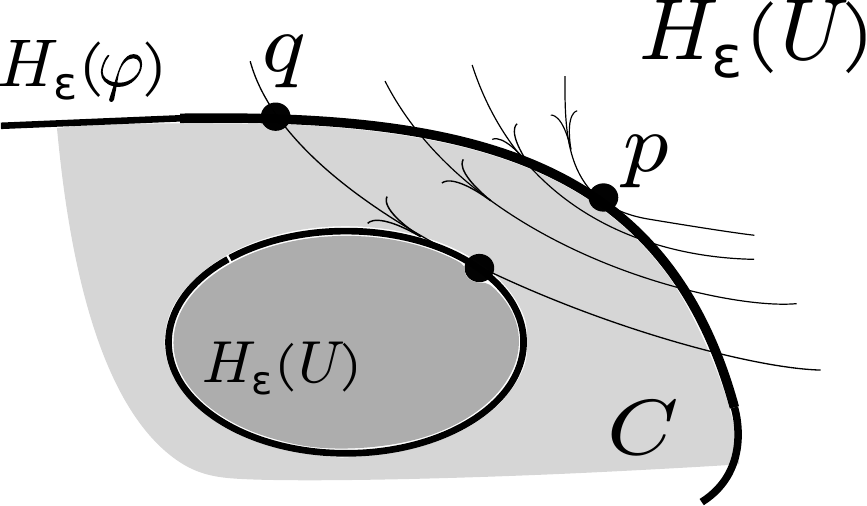}  }
 \end{center}
\caption{Images of Type $1$ components; all three pictures are impossible}\label{fig-case1}
\end{figure}

Second, it is not possible that the orbit of $q$ under $w_{h(\varepsilon )}|_{\overline C}$ contains another point of outer tangency with $\partial H_{\eps}(U)$ -- say, with some connected component of $H_\varepsilon (U)$ that belongs to  $C$, see Fig. \ref{fig-case1-c}. Here ``outer'' is understood as ``outer with respect to $H_{\eps}( U)$''. Indeed, by Proposition \ref{prop-outer-cont-pt-w}, the orbit of this outer tangency point must stay in the complement of $H_{\eps}( U)$, and thus cannot reach any point $q \in \partial H_{\eps}( U)$.

The contradiction obtained proves that the canonical region of $w_{h(\eps)}|_{C}$  that contains $p$ must contain the whole $H_{\eps}(\varphi)$, thus coincides with $C$. Due to Proposition \ref{prop-canonreg-parall-subdomain}, this canonical region is parallel. This completes the proof.
\end{proof}

\subsection{Images of Type $3$ boundary components}

\begin{proof}[Proof of Lemma \ref{lem-image-case3} (see Fig. \ref{fig-case3})]

 In what follows, the open annulus between two curves $\gamma_1, \gamma_2$ is denoted by $A(\gamma_1, \gamma_2)$. We also use this notation for the annulus between a polycycle or a singular point and its transversal loop.

\begin{figure}[h]
 \begin{center}
  \includegraphics[width=0.8\textwidth]{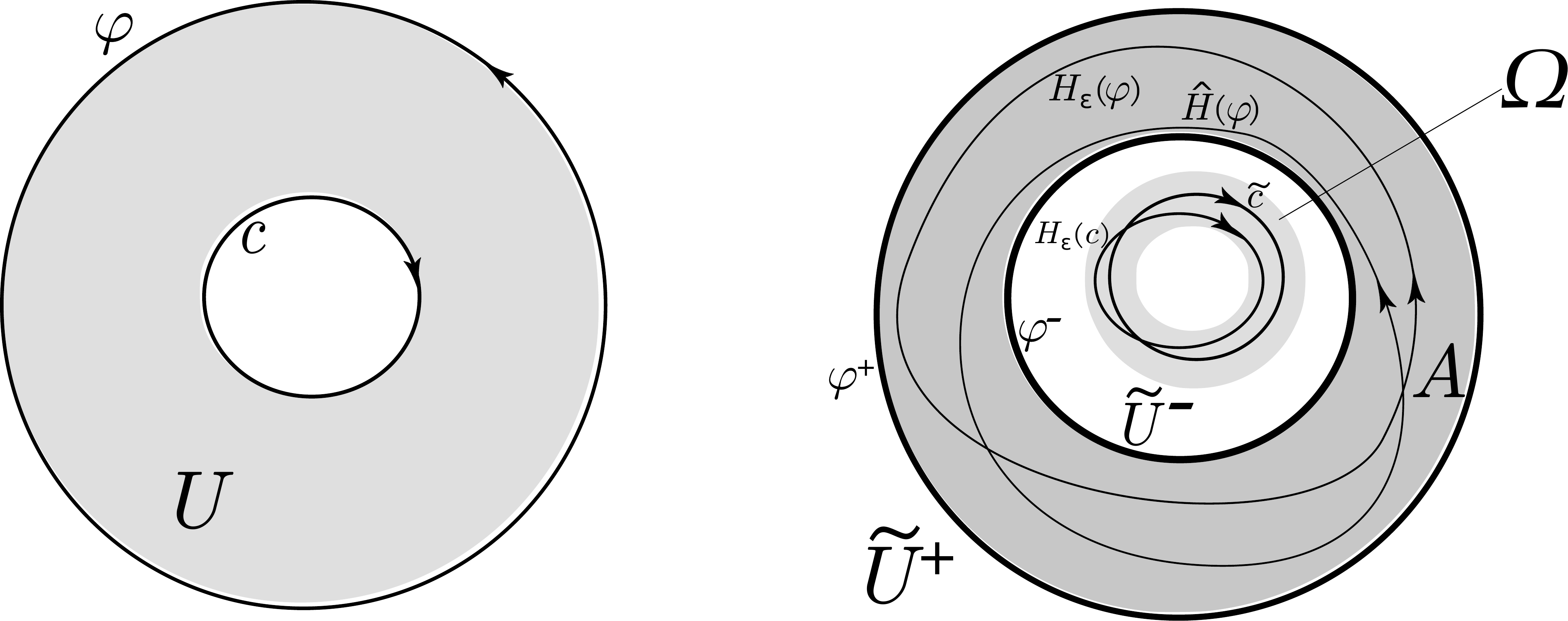}
 \end{center}
\caption{Images of Type $3$ components}\label{fig-case3}
\end{figure}

Recall that $\varphi$ is a boundary component of Type $3$, i.e. a transversal loop of some singular point, limit cycle, or polycycle $c$ of $v_0$. Consider the corresponding object  $\tilde c:= \hat H(c)$  of $w_0$.
Let $D$ be a disc bounded by  $\tilde c$  that contains $\hat H(\varphi)$.
Since $\tilde U^{+}, \tilde U^-$ satisfy Boundary lemma, they contain Type 3 boundary components $\varphi^{\pm}$ that correspond to $\tilde c$ and are in $D$. So  $\varphi^{\pm}$ are two transversal loops of $\tilde c$. Let $D^+ \ (D^-)$ be a disc bounded by $\varphi^+ \ (\varphi^-)$ and containing $\tilde c$.

 We proceed in the following steps.

 \textbf{Step 1: $H_{\eps}(\varphi)$ belongs to $A:=D^+ \setminus D^-$ for any small $\eps$ (including $\eps=0$).}

 Since $\tilde U^- \subset H_{\eps}(U)\subset \tilde U^+$ (see Proposition \ref{prop-choice}), we have $\partial H_{\eps}(U) \subset \tilde U^+\setminus \tilde U^-$. In particular, $H_{\eps}(\varphi) \subset \tilde U^+\setminus \tilde U^-$.  However it is not clear why this curve belongs to $A$ and not to  another connected component of $\tilde U^+\setminus \tilde U^-$.

 \textbf{Step 1.1: $H_{\eps}(\varphi)$ belongs to $D^+$}

Take a small neighborhood $\Omega$ of $\tilde c $ that belongs to $\tilde U^-$. Take $\eps$ so small that $H_{\eps}(c) \subset \Omega $; this is possible because $H_{\eps}(c)$ is close to $H_0(c)=\tilde c$, see  Definition  \ref{def-moderate-local} of moderate equivalence. Let $D_{\eps}$ be the disc bounded by $H_{\eps}(c)$ that contains $\varphi^-$ and $\varphi^+$; this disc is close to $D$, and $D \bigtriangleup D_{\eps} \subset \Omega$.

Let $U_c$ be the connected component of $\tilde U^+$ that contains $\tilde c$; then $U_c \subset D^+$.  The annulus $H_{\eps}(A(c, \varphi))\subset H_{\eps}(U)$ is connected,  belongs to $ \tilde U^+ $ and contains $H_{\eps}(c)\subset \Omega$, thus has a non-empty intersection with $U_c$.  Hence it belongs to $U_c$, therefore to $D^+$. Finally, $H_{\eps}(\varphi)\subset D^+ $.

\textbf{Step 1.2: $H_{\eps}(\varphi)$ belongs to $D_{\eps}$}

  If $c$ is a singular point, this is clear because $S^2 \setminus D_{\eps}$ is one point $H_{\eps}(c)$. Let $c$ be a limit cycle or a monodromic polycycle. Consider the time orientation on it. 
 Without loss of generality, we may assume that $\varphi$ is to the left with respect to this orientation of  $c$. The maps $H_{\eps}, H_0$ induce an orientation on  $H_{\eps}(c), H_0(c)=\tilde c$. With this orientation, $H_0(\varphi)$ is to the left with respect to $\tilde c$, thus $\varphi^{\pm}$ are to the left of $\tilde c$. 

  The curve $H_{\eps}(c)$ is close to $\tilde c$ by Definition  \ref{def-moderate-local} of moderate equivalence.
  Hence, the curves $\varphi^{\pm}$ lie to the left of both $H_{\eps}(c)$ and $\tilde c$; therefore, the disc $D_{\eps}$ is to the left of $H_{\eps}(c)$.

  On the other hand, as  $H_{\eps}$ preserves the orientation, and $\varphi$ is to the left of $c$, we conclude that  $H_{\eps}(\varphi)$ is to the left of  $H_{\eps}(c)$. Finally,  $H_{\eps}(\varphi)\subset D_{\eps}$, q.e.d.

\textbf{Step 1.3: $H_{\eps}(\varphi)$ does not intersect $D^-\cap D_{\eps}$ (thus does not intersect $D^-$). }
Indeed, $D \cap D^- =A(c,\varphi^-)\subset \tilde U^-$ and $D \bigtriangleup D_{\eps}\subset \Omega\subset \tilde U^-$. So $D^-\cap D_{\eps}\subset \tilde  U^-$, and the curve  $H_{\eps}(\varphi)\subset \tilde U^+ \setminus \tilde U^-$ cannot intersect this set.

We conclude that $H_{\eps}(\varphi) \subset D^+ \setminus D^-=A$.

\textbf{Step 2: The curve $H_{\eps}(\varphi)$ is non-contractive in the annulus $A$}

Indeed, this annulus is between two transversal loops of $\tilde c$, thus is saturated by trajectories of $w_0$ and by trajectories of a close vector field $w_{h(\eps)}$. Since $H_{\eps}$ preserves topological transversality, $H_{\eps}(\varphi)$ is topologically transversal to $w_{h(\eps)}$, thus is non-contractive in $A$.

\textbf{Step 3: The curves $H_0(\varphi)$ and  $H_{\eps}(\varphi)$ are homotopic in $A$}

Recall that we orient $\varphi$ so that $U$ is to the left with respect to it.
Both curves $H_0(\varphi)$ and  $H_{\eps}(\varphi)$  are non-contractive in $A$ and oriented so that $H_0(U), H_{\eps}(U)$ are to the left with respect to them, i.e. $\varphi^-$ is to the left of them. So they are oriented in the same way, thus homotopic in $A$ as oriented curves.

\textbf{Step 4: End of the proof}

Recall that each singular point and each limit cycle of $w_{h(\eps)}$ either belongs to $\tilde U^- \subset H_{\eps}(U)$, or is close to a hyperbolic singular point or a cycle of $w_0$ (thus does not intersect $\tilde U^+\supset H_{\eps}(U)$), see Proposition \ref{prop-inU-or-hyp}. Also, $LBS(W)\subset \tilde U^-$. In the previous step, we have proved that $H_0(\varphi)$ and $H_{\eps}(\varphi)$ are homotopic in $A \subset  \tilde U^+ \setminus \tilde U^-$. So they are  homotopic in a larger domain $S^2 \setminus (LBS(W) \cup \Sing w_{h(\eps)} \cup \Per w_{h(\eps)})$, q.e.d.


\end{proof}

Thus all the auxiliary lemmas are proved. Together with them, the Main Theorem is proved too.

\section*{Acknowledgements}
The authors were supported in part by the grant RFBR 16-01-00748-a and Laboratory Poncelet.

The authors are grateful to Dmitry Filimonov who read the paper and made fruitful comments, and to Ilya Schurov who drew Fig. \ref{fig:onecompu}, \ref{fig:onecompcu}, \ref{fig:CanonReg4}, \ref{fig-CR-inters-1}, \ref{fig-canonreg-of-p}, \ref{fig-canonreg-of-p-eps} (based on  handwritten sketches).

\section*{ Note of the second author}
The first part of this paper (Sections \ref{sec-intro} -- \ref{sec-proof}) was prepared by the two authors together. The most difficult second part was designed and written by the first author; the second made only editorial changes in this part, when the first draft was already written.

\end{document}